\documentclass[letterpaper]{amsart}

\usepackage[letterpaper]{geometry}
\usepackage{amsmath}
\usepackage{amsthm}
\usepackage{amsfonts}
\usepackage{thmtools}

\usepackage{thm-restate}
\usepackage{hyperref} 
\usepackage[]{ytableau}
\usepackage[mathscr]{euscript}
\usepackage{enumitem}
\usepackage{stmaryrd}
\usepackage[all, arc, curve, frame]{xy}

\makeatletter
\def\namedlabel#1#2{\begingroup
    #2%
    \def\@currentlabel{#2}%
    \phantomsection\label{#1}\endgroup
}
\makeatother

\numberwithin{equation}{section}

\setcounter{tocdepth}{1} 

\DeclareMathOperator{\Id}{Id}

\DeclareMathOperator{\Tr}{Tr}
\DeclareMathOperator{\Var}{Var}
\DeclareMathOperator{\semicircle}{sc}

\title{Bulk Universality for Generalized Wigner Matrices With Few Moments}
\author{Amol Aggarwal}

\begin{document}

\newtheorem{thm}{Theorem}[section]
\newtheorem{prop}[thm]{Proposition}
\newtheorem{lem}[thm]{Lemma}
\newtheorem{cor}[thm]{Corollary}
\newtheorem{conj}[thm]{Conjecture}
\newtheorem{que}[thm]{Question}
\newtheorem{exa}[thm]{Example}
\theoremstyle{remark}
\newtheorem{rem}[thm]{Remark}
\theoremstyle{definition}
\newtheorem{definition}[thm]{Definition}
\newtheorem{assumption}[thm]{Assumption}
\newtheorem{sampling}[thm]{Sampling}

\begin{abstract}

In this paper we consider $N \times N$ real generalized Wigner matrices whose entries are only assumed to have finite $(2 + \varepsilon)$-th moment for some fixed, but arbitrarily small, $\varepsilon > 0$. We show that the Stieltjes transforms $m_N (z)$ of these matrices satisfy a weak local semicircle law on the nearly smallest possible scale, when $\eta = \Im (z)$ is almost of order $N^{-1}$. As a consequence, we establish bulk universality for local spectral statistics of these matrices at fixed energy levels, both in terms of eigenvalue gap distributions and correlation functions, meaning that these statistics converge to those of the Gaussian Orthogonal Ensemble (GOE) in the large $N$ limit. 

\end{abstract}

\maketitle

\tableofcontents

\section{Introduction}

\label{Introduction}

Since the seminal work of Wigner \cite{CVBMWID} over sixty years ago, the spectral analysis of random matrices has been a topic of intense study. A central phenomenon that has guided significant effort in this field has been that of \emph{universality}, also called the \emph{Wigner-Dyson-Mehta conjecture}. This approximately states (see Conjecture 1.2.1 and Conjecture 1.2.2 of \cite{RM}) that the bulk local spectral statistics of an $N \times N$ real symmetric (or complex Hermitian) Wigner matrix should become independent of the explicit laws of its entries as $N$ tends to $\infty$. 

Over the past decade, this conjecture has seen remarkable progress. In particular, the Wigner-Dyson-Mehta conjecture has been proved \cite{FEUGM, SSGESEE, UM, BUMSD, URMLRF, LRFULSRM, BUCM, BUGM, BUSM, FEUM, RCMULS, ULES, UCM} for Wigner matrices whose entries are ``restrained from being too large.'' In the past \cite{UM, BUMSD, URMLRF, LRFULSRM, BUGM, ULES}, this condition had been typically quantified by imposing that the laws of its entries exhibit some type of subexponential decay. This was later \cite{FEUGM, SSGESEE, BUCM, FEUM, RCMULS, UCM} replaced by the less stringent constraint that the entries have finite $C$-th moment for some sufficiently large constant $C > 0$; until now, the smallest value of $C$ one could take had been $4 + \varepsilon$, for any $\varepsilon > 0$ \cite{SSGESEE, FEUM}. 

Although the known proofs of the Wigner-Dyson-Mehta conjecture appear to rely quite strongly on such growth assumptions, it is widely believed that these constraints are irrelevant. Namely, the bulk local spectral statistics of a Wigner matrix should be universal only under the assumption that its entries have finite $(2 + \varepsilon)$-moment for some fixed, but arbitrary small, $\varepsilon > 0$ (for our model, having finite second moment will not suffice; see Remark \ref{second}). 

The purpose of this paper is to prove this statement, which we do by following and extending upon parts of what is known as the ``three-step	strategy'' for establishing bulk universality in random matrices, as set forth in the papers \cite{UM, URMLRF, ERLRM, LRFULSRM, DRM}. Before explaining this in more detail, we will explain the model and our results in Section \ref{Model}. In Section \ref{Outline1} we provide some context for our results. In Section \ref{LocalCircleLaw} we state a version of the heavy-tailed local semicircle law, which is the main new estimate needed to prove universality in the low-moment setting.

\subsection{The Model and Results}

\label{Model}

We begin this section by defining the random matrix model that we will study.

\begin{definition}

\label{momentassumption}

For each positive integer $N$, let $\textbf{H} = \textbf{H}_N = \{ h_{ij} \} = \{ h_{i, j}^{(N)} \}$ be a real symmetric $N \times N$ random matrix whose entries are centered, mutually independent random variables, subject to the symmetry constraint $h_{ij} = h_{ji}$. We call the set of matrices $\{ \textbf{H}_N \}$ a \emph{family of generalized Wigner matrices} if there exist constants $0 < \varepsilon < 1$; $0 < c_1 < 1 < C_1$; and $C_2 > 1$ (all independent of $N$) satisfying the following three assumptions.

\begin{enumerate}

\item[\namedlabel{generalized}{\textbf{A1}}\textbf{.}]{ Denote $s_{ij} = \mathbb{E} \big[ |h_{ij}|^2 \big]$. Then, $c_1 < N s_{ij} < C_1$ for all $i, j$.}

\item[\namedlabel{stochastic}{\textbf{A2}}\textbf{.}]{ For each $i$, set $t_i = \sum_{j = 1}^N s_{ij} - 1$; then, $|t_i| < C_1 N^{- \varepsilon}$ for each $i$. }

\item[\namedlabel{moments}{\textbf{A3}}\textbf{.}]{ For each $i, j$, we have that $\mathbb{E} \big[ |h_{ij} \sqrt{N}|^{2 + \varepsilon} \big] < C_2$.  }

\end{enumerate}  	

\noindent We refer to each individual $\textbf{H} = \textbf{H}_N$ as a \emph{generalized Wigner matrix}. 

\end{definition}

When $\Var h_{ij} = N^{-1}$ for each $i, j$, generalized Wigner matrices become \emph{Wigner matrices}, which were first analyzed in \cite{CVBMWID}. In that work, Wigner studied the large $N$ limiting profile of the \emph{empirical spectral distribution} of $\textbf{H}$, defined by $\mu_{\textbf{H}} = N^{-1} \sum_{j = 1}^N \delta_{\lambda_j}$, where $\lambda_1, \lambda_2, \ldots , \lambda_N$ denote the eigenvalues of $\textbf{H}$. He showed, if one assumes that all moments of $|h_{ij} \sqrt{N}|$ are finite, then $\mu_{\textbf{H}}$ converges weakly to the \emph{semicircle law} 
\begin{flalign}
\label{rhodefinition}
\rho_{\semicircle} (x) = (2 \pi)^{-1} \textbf{1}_{|x| < 2} \sqrt{4 - x^2},
\end{flalign}

\noindent as $N$ tends to $\infty$. This is an example of the convergence of \emph{global spectral statistics} of the random matrix $\textbf{H}$ to a deterministic limit shape. 

Also of interest, and in fact the original impetus for Wigner to initiate his study on random matrices, are the \emph{local spectral statistics} of $\textbf{H}$; these concern the behavior of (nearly) neighboring eigenvalues of $\textbf{H}$ close to a fixed \emph{energy level} $E \in \mathbb{R}$. There are two ways in which this behavior is typically quantified. 

The first is in terms of the joint distribution of the (normalized) gaps $\{ N (\lambda_i - \lambda_{i + j_r}) \}_{1 \le r \le k}$ of the eigenvalues; here, $j_1, j_2, \ldots , j_k$ are bounded independently of $N$ and $i$ can grow linearly with $N$. In fact, we will be interested in these statistics in the \emph{bulk}, meaning that we take $i \in [\kappa N, (1 - \kappa) N]$ for some fixed $\kappa > 0$ independent of $N$; this corresponds to imposing that the energy level $E$ be inside the interval $(-2, 2)$ and uniformly bounded away from its endpoints.  

The second is in terms of the correlation functions of $\textbf{H}$, which are defined as follows. 

\begin{definition}

\label{correlation}

Let $N$ be a positive integer and $\textbf{H}$ be an $N \times N$ real symmetric random matrix. Denote by $p_{\textbf{H}}^{(N)} (\lambda_1, \lambda_2, \ldots , \lambda_N)$ the joint eigenvalue distribution of $\textbf{H}$. 

For each integer $k \in [1, N]$, define the \emph{$k$-th correlation function} of $\textbf{H}$ by 
\begin{flalign*}
p_{\textbf{H}}^{(k)} (x_1, x_2, \ldots , x_k) = \displaystyle\int_{\mathbb{R}^{N - k}} p_{\textbf{H}}^{(N)} (x_1, x_2, \ldots , x_k, y_{k + 1}, y_{k + 2}, \ldots , y_N) \displaystyle\prod_{j = k + 1}^N d y_j. 
\end{flalign*} 
\end{definition}

One form of the Wigner-Dyson-Mehta conjecture essentially states that, in the bulk of the spectrum, the local gap statistics and correlation functions of a Wigner matrix should be independent of the explicit laws of the matrix entries satisfying assumption \ref{moments}, in the large $N$ limit. 

\begin{rem}

\label{second}

Observe that this universality can become false if we remove assumption \ref{moments}. Indeed, suppose that the $h_{ij}$ are independent, identically distributed random variables (up to the symmetry constraint $h_{ij} = h_{ji}$) with $h_{ij} \in \{ -1, 0, 1 \}$, equal to $-1$ and $1$ each with probability $(2N)^{-1}$ and equal to $0$ otherwise. Then $\Var h_{ij} = N^{-1}$, so $\textbf{H}$ is a Wigner matrix. 

However, a given row contains no nonzero entries with probability $(1 - N^{-1})^N \ge 1 / 4$. Therefore we expect with high probability to see, for example, at least $N / 5$ rows $\textbf{H}$ whose entries are all equal to $0$. Thus, with high probability, $\textbf{H}$ has the eigenvalue $0$ with very large multiplicity. This violates universality of both the gap statistics and correlation functions near $0$; it also violates the macroscopic Wigner semicircle law around $0$. 

\end{rem}

\begin{rem}

\label{smallmoments}

On a different note, one can choose a more restrictive family of matrix entries $h_{ij}$, which do not satisfy assumption \ref{moments}, and still expect universality to hold. For instance, let $X$ be a random variable with variance $1$ (but infinite $(2 + \varepsilon)$-th moment for any $\varepsilon > 0$), take $\binom{N + 1}{2}$ mutually independent copies $\{ X_{ij} \}_{1 \le i \le j \le N}$ of $X$, and set $h_{ij} = h_{ji} = N^{-1 / 2} X_{ij}$ for each $1 \le i, j \le N$. Then it is plausible that the local spectral statistics of the resulting matrix $\textbf{H}$ are universal.

In fact, one could also consider matrices whose entries have infinite variance; this leads to the study of \emph{L\'{e}vy matrices} \cite{SHTRM, TM, LSLTM}. For these matrices, the semicircle law no longer governs the limit shape of the empirical spectral density \cite{SHTRM}. However, it is still predicted \cite{LSLTM} that the local statistics of these matrices should be universal at sufficiently small energy levels $E \in \mathbb{R}$. We will not pursue this here but refer to the papers \cite{DSERM, LDEHTRM} for partial progress in that direction. 
\end{rem}

In particular, one can consider the generalized Wigner matrices given by the \emph{Gaussian Orthogonal Ensemble} (GOE). This is defined to be the $N \times N$ real symmetric random matrix $\textbf{GOE}_N = \{ g_{ij} \}$, where $g_{ij}$ is a Gaussian random variable with variance $2 N^{-1}$ if $i = j$ and $N^{-1}$ otherwise. This particular ensemble of matrices is exactly solvable through the framework of orthogonal polynomials and Pfaffian point processes, and the local gap statistics and correlation functions can be evaluated explicitly in the large $N$ limit; we will not state these results here, but they can be found in Chapter 6 of \cite{RM} or Chapter 3.9 of \cite{TRM}. 

That said, the Wigner-Dyson-Mehta conjecture can be rephrased by stating that the gap statistics and correlation functions of an $N \times N$ generalized Wigner matrix converge to those of $\textbf{GOE}_N$, as $N$ tends to $\infty$. This can be written more precisely as follows. 

\begin{definition}

\label{gapscorrelations} 

For each integer $N \ge 1$, let $\textbf{H} = \textbf{H}_N$ be an $N \times N$ real symmetric random matrix and let $i_N$ be a positive integer. We say that \emph{the gap statistics of $\textbf{\emph{H}}$ are universal near the $i$-th eigenvalue} if the following holds. Fix positive integers $k$ and $j_1, j_2, \ldots , j_k$. For any compactly supported smooth function $F \in \mathcal{C}_0^{\infty} (\mathbb{R}^k)$, we have that 
\begin{flalign}
\label{universality1}
\begin{aligned}
\displaystyle\lim_{N \rightarrow \infty} \bigg| & \mathbb{E}_{\textbf{H}} \Big[ F \big( N (\lambda_i - \lambda_{i + j_1}), N (\lambda_i - \lambda_{i + j_2}), \ldots , N (\lambda_i - \lambda_{i + j_k} )\big) \Big] \\
& - \mathbb{E}_{\textbf{GOE}_N} \Big[ F \big( N (\lambda_i - \lambda_{i + j_1}), N (\lambda_{i} - \lambda_{i + j_2}), \ldots , N (\lambda_i - \lambda_{i + j_k} ) \big) \Big]  \bigg| = 0. 
\end{aligned}
\end{flalign}

Furthermore, for a fixed real number $E \in \mathbb{R}$, we say that \emph{the correlation functions of $\textbf{\emph{H}}$ are universal at energy $E$} if the following holds. For any positive integer $k$ and any compactly supported smooth function $F \in \mathcal{C}_0^{\infty} (\mathbb{R}^k)$, we have that 
\begin{flalign}
\label{universality2}
\begin{aligned}
\displaystyle\lim_{N \rightarrow \infty} \Bigg| \displaystyle\int_{\mathbb{R}^k} & F (a_1, a_2, \ldots , a_k) \bigg( p_{\textbf{H}}^{(k)} \Big( E + \displaystyle\frac{a_1}{N \rho_{\semicircle} (E)}, E + \displaystyle\frac{a_2}{N \rho_{\semicircle} (E)}, \ldots , E + \displaystyle\frac{a_k}{N \rho_{\semicircle} (E)} \Big)  \\
& \qquad - p_{\textbf{GOE}_N}^{(k)} \Big( E + \displaystyle\frac{a_1}{N \rho_{\semicircle} (E)}, E + \displaystyle\frac{a_2}{N \rho_{\semicircle} (E)}, \ldots , E + \displaystyle\frac{a_k}{N \rho_{\semicircle} (E)} \Big) \bigg) \displaystyle\prod_{j = 1}^k d a_j \Bigg| = 0. 
\end{aligned}
\end{flalign}

\end{definition} 

The purpose of this paper is to establish the following two results, which establish that both the gap statistics and correlation functions of generalized Wigner matrices are universal in the bulk.

\begin{thm}

\label{gapsfunctions}

Fix real numbers $\varepsilon, \kappa, c_1, C_1, C_2 > 0$. Let $\{ \textbf{\emph{H}} = \textbf{\emph{H}}_N \}_{N \in \mathbb{Z}_{\ge 1}} $ denote a family of generalized Wigner matrices, as in Definition \ref{momentassumption} (with parameters $\varepsilon, c_1, C_1, C_2$), and let $i = i_N \in [\kappa N, (1 - \kappa) N]$ be a positive integer. Then, the gap statistics of $\textbf{\emph{H}}$ are universal near the $i$-th eigenvalue as in \eqref{universality1} of Definition \ref{gapscorrelations}. 

\end{thm}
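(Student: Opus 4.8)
The plan is to follow the three-step strategy for bulk universality, adapted to the low-moment regime. The key new input is the weak local semicircle law on scales $\eta \approx N^{-1}$ that is stated in Section \ref{LocalCircleLaw}; the rest of the argument consists of feeding this estimate into the now-standard machinery. First, I would use the local law to control the locations of the eigenvalues $\lambda_i$ in the bulk. Because the entries here need only have finite $(2+\varepsilon)$-th moment, individual entries can be anomalously large, so one cannot expect the sharp rigidity estimates available in the sub-exponential setting. Instead I would decompose $\textbf{H} = \textbf{H}^{\le} + \textbf{H}^{>}$, where $\textbf{H}^{>}$ collects the (few) entries that are larger than some threshold $N^{-1/2 + \delta}$ and $\textbf{H}^{\le}$ the truncated/recentered remainder. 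The matrix $\textbf{H}^{\le}$ is a generalized Wigner matrix with all moments finite (after a negligible rescaling to restore the variance profile), and $\textbf{H}^{>}$ is a sparse low-rank perturbation with only $O(N^{1 - \delta \varepsilon'})$ nonzero entries; one controls its effect on the spectrum via rank inequalities and interlacing, so that gap statistics at a single eigenvalue index are unchanged up to a vanishing error.

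Second, for the truncated matrix $\textbf{H}^{\le}$ I would run the Dyson Brownian motion (DBM) step: let $\textbf{H}_t^{\le} = e^{-t/2} \textbf{H}^{\le} + (1 - e^{-t})^{1/2} \textbf{GOE}_N$ evolve for a short time $t = N^{-1 + \omega}$. The local relaxation / gap-universality results for DBM (as developed in \cite{FEUGM, BUGM, ULES, DRM} and the references of the excerpt) show that the gap statistics of $\textbf{H}_t^{\le}$ near the $i$-th eigenvalue already match those of $\textbf{GOE}_N$, provided one has as input a local law and rigidity at scale slightly above $N^{-1}$ for the initial data. This is precisely what the heavy-tailed local semicircle law of Section \ref{LocalCircleLaw} supplies — it is ``almost'' at the optimal scale $\eta = N^{-1}$, which is exactly the scale one needs to make the DBM relaxation argument go through with a sufficiently small error.

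Third would be the Green's function comparison (GFC) step, comparing $\textbf{H}^{\le}$ with $\textbf{H}_t^{\le}$ and showing that running DBM for the tiny time $t = N^{-1+\omega}$ does not change the gap statistics near a fixed index. In the finite-moment setting this is the most delicate step: the usual four-moment / Lindeberg swapping argument requires matching of the first few moments of the entries and bounded high moments, which we do not have. I would instead use the continuity-of-DBM estimate (the ``$t$ can be taken as small as $N^{-1+\omega}$'' improvement coming from the near-optimal local law) together with a moment-matching construction — approximating each entry distribution of $\textbf{H}^{\le}$ by one that agrees in the first two moments with a Gaussian and then invoking GFC only between two distributions with matching second moments. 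Since the truncation has made all higher moments of the entries finite (indeed bounded by $N^{(\delta(m-2-\varepsilon))/2}$ for the $m$-th moment), the standard resolvent expansion in GFC converges, and with the optimal-scale local law the accumulated error over the swaps is $o(1)$.

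The main obstacle I expect is the interplay between the truncation scale $\delta$, the moment parameter $\varepsilon$, and the DBM time $t$: the truncation threshold must be low enough that the removed part $\textbf{H}^{>}$ is genuinely low-rank (forcing $\delta$ not too small relative to $\varepsilon$), yet high enough that the truncated entries' higher moments are not too large for the GFC resolvent expansion and that the variance distortion from truncation/recentering is absorbable into assumption \ref{stochastic} (forcing $\delta$ not too large). Threading this needle is possible precisely because the local semicircle law holds down to scale $N^{-1+o(1)}$ — it is this near-optimality that gives enough room to take $t$ polynomially small in $N$ and hence keep the single-index gap statistics stable under the DBM flow. Everything else — the DBM relaxation, the interlacing bounds for the low-rank part, the passage from gap universality to the statement \eqref{universality1} for a fixed test function $F$ — is by now routine once the local law is in hand.
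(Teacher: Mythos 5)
Your overall architecture (local law, then DBM relaxation, then a short-time comparison) is the right one, but the first step of your proposal --- truncating the matrix and disposing of the large entries by rank/interlacing arguments --- contains precisely the gap that this paper is designed to circumvent, and it cannot be repaired. With only $(2+\varepsilon)$ moments, for any threshold low enough that the truncated matrix behaves like a (sparse) Wigner matrix, the matrix $\textbf{H}^{>}$ of removed entries has, with probability $1-o(1)$, of order $N^{1-c\varepsilon}$ nonzero entries (cf.\ the Markov bound \eqref{pijestimate1}); its rank therefore grows polynomially in $N$, and this is unavoidable --- even by your own count it has $O(N^{1-\delta\varepsilon'})$ entries. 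Interlacing and rank inequalities for such a perturbation only show that eigenvalue indices shift by at most $O(N^{1-c\varepsilon})$, which says nothing about gaps between consecutive eigenvalues at a fixed index $i$: a perturbation of rank $N^{1-c\varepsilon}$ can move $\lambda_i$ past polynomially many of its neighbours. This is exactly why all previous truncation arguments stall at $4+\varepsilon$ moments, since only then can one choose a threshold exceeded by \emph{no} entry with high probability. Your third step has a related problem: a two-moment-matching Lindeberg/GFC swap accumulates an error of order (third moment) times (third derivative of the resolvent functional) over all $N^2$ swaps, and the truncated third moments are only $O(N^{-1-c\varepsilon})$, which is not summable against the derivative bounds over $N^2$ entries; the smallness must instead come from the shortness of the time $t$, not from moment matching.

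The paper never truncates. It conditions on the (random) locations of the large entries (the ``$AB$ label''), proves that resolvent entries indexed by the few ``deviant'' rows remain merely $O(1)$ while all ``typical'' entries satisfy the local law (Theorem \ref{estimate1gij}), and deduces Theorem \ref{localmoments} by splitting the trace accordingly. For the comparison step it compares $\textbf{H}$ directly with $\textbf{H}_t$, $t=N^{\delta-1}$, via It\^{o}'s lemma (Lemma \ref{fsmall}), splitting the Taylor remainder according to whether $|h_{ij}(t)|$ exceeds $N^{-\varepsilon/10}$ so that only the $(2+\varepsilon)$-th moment is ever used; the total error is $t N \Xi = O(N^{\delta+C\omega-\varepsilon/20})$, made $o(1)$ by taking $\delta$ and $\omega$ small. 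To salvage your write-up, replace the truncation-plus-interlacing step by this conditioning argument (or simply invoke Theorem \ref{localmoments}, which is proved elsewhere in the paper), and replace the moment-matching GFC by the It\^{o} continuity estimate.
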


\begin{thm}

\label{bulkfunctions}

Fix real numbers $\varepsilon, \kappa, c_1, C_1, C_2 > 0$ and a real number $E \in [\kappa - 2, 2 - \kappa]$. Let $\{ \textbf{\emph{H}} = \textbf{\emph{H}}_N \}_{N \in \mathbb{Z}_{\ge 1}} $ denote a family of generalized Wigner matrices, as in Definition \ref{momentassumption} (with parameters $\varepsilon, c_1, C_1, C_2$). Then, the correlation functions of $\textbf{\emph{H}}$ are universal at energy level $E$ as in \eqref{universality2} of Definition \ref{gapscorrelations}. 

\end{thm}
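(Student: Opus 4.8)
The plan is to follow the three-step strategy for bulk universality, adapted to the low-moment setting. The fundamental obstacle is that assumption \ref{moments} only guarantees finite $(2+\varepsilon)$-th moment, so the classical local semicircle law — which requires the entries to be bounded by a small power of $N$ (or to have a subexponential tail) — fails. The key new input, as the introduction announces, is a \emph{weak} local semicircle law valid down to scales $\eta = \Im z$ almost of order $N^{-1}$; I would take this as established (it is the heavy-tailed local semicircle law stated in Section \ref{LocalCircleLaw}). The strategy then proceeds: (1) truncate the matrix entries at a level $N^{-1/2 + \delta}$ for a small $\delta$, producing a matrix $\widehat{\textbf{H}}$ that agrees with $\textbf{H}$ with high probability and satisfies the stronger moment bounds needed for the standard machinery; (2) run the Dyson Brownian motion / local relaxation argument on $\widehat{\textbf{H}}$ to obtain universality of its local statistics; (3) transfer back to $\textbf{H}$ via a comparison (Green's function comparison or a direct coupling argument exploiting that truncation changes few entries).

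More concretely, for step (1) I would write $\textbf{H} = \widehat{\textbf{H}} + \check{\textbf{H}}$ where $\widehat{\textbf{H}}$ has entries $h_{ij} \textbf{1}_{|h_{ij}| \le N^{-1/2 + \delta}}$ recentered and suitably rescaled so that \ref{generalized} and \ref{stochastic} persist (possibly with slightly worse constants and a slightly smaller $\varepsilon$). By a union bound and Markov applied to the $(2+\varepsilon)$-th moment, the probability that \emph{any} entry exceeds $N^{-1/2+\delta}$ is $O(N^{2} \cdot N^{-(1+\varepsilon/2)(1 - 2\delta/( \cdot ))})$ — the point is that choosing $\delta$ small relative to $\varepsilon$ makes this negligible, so $\textbf{H} = \widehat{\textbf{H}}$ with overwhelming probability. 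For step (2) the truncated matrix has entries bounded by $N^{-1/2+\delta}$, hence satisfies a (near-optimal) local semicircle law and rigidity estimates for its eigenvalues in the bulk; these feed into the DBM relaxation estimates (the local ergodicity of Dyson Brownian motion, as in the papers cited in the introduction) to conclude that adding a Gaussian component of size $t \sim N^{-1+\omega}$ already produces GOE gap statistics and correlation functions near energy $E$. Then one removes the Gaussian component via the reverse heat flow / Green's function comparison, using that $\widehat{\textbf{H}}$ has matching first four moments to sufficient accuracy with the Gaussian-divisible reference ensemble — here the weak local law on scale $N^{-1+\omega}$ is exactly what makes the comparison go through despite only having controlled moments.

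The main obstacle, and where the novelty concentrates, is that the comparison and relaxation steps normally use the \emph{strong} local semicircle law (sharp bounds $|m_N(z) - m_{\semicircle}(z)| \lesssim (N\eta)^{-1}$ with high probability) which is simply not available here; instead one must run the entire argument with only the \emph{weak} law, i.e. a bound like $|m_N(z) - m_{\semicircle}(z)| \le N^{-c}$ that holds with probability $1 - N^{-D}$ but is far from optimal in $\eta$. Propagating this weaker control through rigidity, through the a priori estimates needed for DBM, and through the Green's function comparison — without ever invoking the sharp law — is the technical heart of the paper. A secondary subtlety is that, because the statistics are being compared \emph{at fixed energy} $E$ (rather than averaged over a window), one needs the local law at fixed energy down to nearly optimal scale, which is precisely why $\eta$ must be pushed almost to $N^{-1}$ rather than, say, $N^{-1+\delta}$. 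Once Theorem \ref{gapsfunctions} is in place, Theorem \ref{bulkfunctions} follows by the standard equivalence between gap universality and correlation-function universality in the bulk (passing through the averaged local law and the fact that $\rho_{\semicircle}(E) > 0$ for $E \in [\kappa - 2, 2 - \kappa]$), so the correlation-function statement does not require substantial additional work beyond the gap statement.
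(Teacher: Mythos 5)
Your step (1) is where the argument breaks, and it breaks at exactly the point this paper is designed to overcome. Truncating at level $N^{-1/2+\delta}$ and applying Markov to the $(2+\varepsilon)$-th moment gives $\mathbb{P}\big[|h_{ij}| > N^{-1/2+\delta}\big] \le C_2 N^{-\delta(2+\varepsilon)}$, so the union bound over all $N^2$ entries yields $C_2 N^{2-\delta(2+\varepsilon)}$. For this to be $o(1)$ you need $\delta > 2/(2+\varepsilon)$, which for small $\varepsilon$ forces $\delta$ close to $1$, i.e.\ a truncation level close to $N^{1/2}$ --- no truncation at all. Choosing $\delta$ ``small relative to $\varepsilon$'' makes the bound worse, not better: with only $2+\varepsilon$ moments one expects on the order of $N^{1-c\varepsilon}$ entries exceeding $N^{-\varepsilon/10}$, and the largest entry typically \emph{grows} with $N$. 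So $\widehat{\textbf{H}}$ does not agree with $\textbf{H}$ with high probability, and the entire reduction to a matrix with entries bounded by $N^{-1/2+\delta}$ collapses. (This is precisely why the previous record was $4+\varepsilon$ moments: there $\delta(4+\varepsilon)>2$ permits a genuine truncation at $N^{-c\varepsilon}$.) The paper instead proves the local law directly for the heavy-tailed matrix by conditioning on which entries are large (the $AB$ label), isolating an exceptional set of $O(N^{1-c\varepsilon})$ ``deviant'' indices on which the resolvent entries are only shown to be $O(1)$, and proving the semicircle approximation only for the remaining ``typical'' indices; this weaker, index-dependent local law is then shown to suffice for the universality machinery. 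Your step (3) has a related problem: four-moment matching is unavailable here (the fourth moments may be infinite), and the paper replaces the usual Green's function comparison by a modified It\^{o}/Taylor expansion (Lemma \ref{fsmall}) that splits on the event $|h_{ij}(t)| \ge N^{-\varepsilon/10}$ and uses only the $(2+\varepsilon)$-th moment.

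A secondary error: fixed-energy correlation-function universality does not follow from gap universality by a ``standard equivalence.'' These are genuinely different statements (fixed energy is the harder one; making the two comparable was the point of the fixed-energy DBM works cited in the paper), and the paper proves Theorem \ref{bulkfunctions} directly from the fixed-energy DBM result of Landon--Sosoe--Yau (Proposition \ref{universalityperturbation2}), the correlation-function comparison criterion (Lemma \ref{hhtcorrelations}), and the low-moment continuity estimate (Proposition \ref{functionghht}), rather than deducing it from Theorem \ref{gapsfunctions}.
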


Observe that Theorem \ref{gapsfunctions} and Theorem \ref{bulkfunctions} above are only stated for real symmetric matrices. However, after minor modification, our methods and results should also apply to complex Hermitian random matrices (whose local statistics will instead converge to those of the GUE as $N$ tends to $\infty$); in order to avoid complicated notation later in the paper, we will not pursue this further. 

We conclude this section by mentioning that it is necessary to take the index $i$ and the energy level $E$ (from Theorem \ref{gapsfunctions} and Theorem \ref{bulkfunctions}, respectively) to be in the bulk of the spectrum. Indeed, it is possible for the \emph{edge} local spectral statistics of Wigner matrices satisfying assumption \ref{moments} to be non-universal \cite{CLEHTRM}. In fact, in \cite{CEUM} Lee and Yin showed that edge universality of Wigner matrices holds if and only if the $h_{ij} \sqrt{N}$ have finite weak fourth moment.

\subsection{Context} 

\label{Outline1}

In this section we provide some context for Theorem \ref{gapsfunctions} and Theorem \ref{bulkfunctions} by explaining their relationship with some previous results in the field. 

Although the local spectral statistics of the GUE (Gaussian Unitary Ensemble) and GOE were found explicitly by Mehta-Gaudin \cite{ODERM} and Mehta \cite{RM} in the early 1960s, the question of universality for Wigner matrices had seen few results until the work of Johansson \cite{ULSDCM} in 2001. In that paper Johansson considered Hermitian \emph{Gaussian divisible matrices}, that is, matrices of the form $\textbf{H} + t \textbf{GUE}_N$, where $\textbf{H}$ is an $N \times N$ Hermitian Wigner matrix, $\textbf{GUE}_N$ is an $N \times N$ independent GUE matrix, and $t$ is a constant of order $1$. Through asymptotic analysis of the Br\'{e}zin-Hikami identity, Johansson showed \cite{ULSDCM} that the correlation functions of these matrices are universal. 

There were two limitations to this method. The first is that the Br\'{e}zin-Hikami identity is only valid for complex Hermitian matrices and thus gave no results for real symmetric matrices. The second is that the Gaussian perturbation $t \textbf{GUE}_N$ happens to not be so immediately removed. 

Now these issues have been overcome through what is known as the \emph{three-step strategy} for establishing bulk universality in random matrices, developed in the papers \cite{UM, URMLRF, ERLRM, LRFULSRM, DRM, ULES}, almost 10 years after Johansson's work \cite{ULSDCM}. Since this is the route we will eventually follow, we briefly outline it below; for a detailed review of the method, we refer to the survey \cite{URM} or the more comprehensive book \cite{DRM}. 

\begin{enumerate}

\item{ \label{localcircle} The first step is to establish a \emph{local semicircle law} for the generalized Wigner matrix $\textbf{H}$, meaning that the spectral density of $\textbf{H}$ asymptotically follows that of the semicircle law \eqref{rhodefinition} on scales nearly of order $N^{-1}$. }

\item{ \label{perturb} The second step is to consider a perturbation $\textbf{H} + t \textbf{GOE}_N$ of the original random matrix $\textbf{H}$, where $t$ is small (optimally nearly of order $N^{-1}$). Using the local semicircle law from step \ref{localcircle}, one shows that the local statistics of the perturbed matrix are universal. }

\item{ \label{originalmatrix} The third step is to compare the local statistics of the original matrix $\textbf{H}$ and the perturbed matrix $\textbf{H} + t \textbf{GOE}_N$, and show that they are asymptotically the same if $t$ is small. }

\end{enumerate}

The first proofs of the local semicircle law appeared in the papers \cite{ERLRM, LSLCDRM}, although it has seen several improvements \cite{SSG, LSLGCRM, BUGM, LSLUMC, LSLM, CEUM} since then. 

The first proof of the second step appeared in \cite{UM} for complex Hermitian Wigner matrices, this time by combining the Br\'{e}zin-Hikami identity with the local semiciricle law. Later, however, through a very different and more analytic method, the second step was extended to real symmetric Wigner matrices \cite{URM, LRFULSRM}, under a slightly weaker topology than stated in \eqref{universality2}. The originally stringent rigidity conditions under which this universality could be proven were later weakened \cite{URMFTD, CLSM}. More recently \cite{FEUGM, FEUM}, the topology under which this universality held was strengthened to what was stated in \eqref{universality2} above.  

When the laws of the entries of the matrix $\textbf{H}$ are not smooth, the third step was originally performed by Tao-Vu \cite{RME, ULES}, in which works they developed the \emph{Four Moment Theorem}, which essentially states that if $\textbf{H}$ and $\widetilde{\textbf{H}}$ are complex Hermitian (or real symmetric) matrices with mutually independent entries whose first four moments are finite and coincide, then the local spectral statistics of $\textbf{H}$ and $\widetilde{\textbf{H}}$ converge in the large $N$ limit. This result had in the past been used to establish universality for a wide class of models \cite{SSGESEE, CEUM, RCMULS, ULES, UCM}. However, it was later \cite{FEUGM, SSSG, BUSM} realized that this method could be significantly simplified through an application of It\^{o}'s Lemma, if the value of $t$ from step \ref{perturb} is sufficiently small. 

This three-step strategy is remarkably general; it has been applied to establish universality for local statistics of Wigner matrices in many different contexts \cite{FEUGM, SSGESEE, UM, BUMSD, URMLRF, LRFULSRM, BUCM, BUGM, BUSM, FEUM, RCMULS, ULES, UCM}. In fact, it has also been recently used to establish bulk universality of random matrices whose entries exhibit various forms of correlation \cite{LESRMWGSRC, LSLRRG, BESRRG, URMCE}. 

However, until now, all known proofs of these universality results required a growth hypothesis on the entries of the matrix \textbf{H} that is stronger than assumption \ref{moments}. Originally \cite{UM, BUMSD, URMLRF, LRFULSRM, BUGM, ULES}, this took the form of a subexponential decay condition on the entries $h_{ij}$ of $\textbf{H}$, which stipulated that $\mathbb{P} \big[ |h_{ij} \sqrt{N}| > r \big] < C \exp(-c r^c)$ for all $i, j$ and some constants $c, C > 0$. 

The reason for this comes from the proof the local semicircle law, which is needed to proceed with the second and third steps of the three-step strategy. Specifically, the proof of the local semicircle law requires large deviation estimates that are obtained by taking very large moments of (functionals of) the entries of $\textbf{H}$. If higher moments of $h_{ij} \sqrt{N}$ are infinite, it is no longer immediately apparent that these large deviation estimates remain valid. 

There have been several attempts to weaken this decay condition, typically through different truncation arguments. For instance, in \cite{RCMULS}, Tao-Vu establish universality for Wigner matrices (and also covariance matrices) under the assumption that $h_{ij} \sqrt{N}$ has finite $C$-th moment for sufficiently large $C$; they took $C = 10^4$ and made no attempt to optimize, but it seems unlikely that their method could take $C$ close to $4$. 

Later, in \cite{UCMMC}, Johansson used a more refined analysis of the Br\'{e}zin-Hikami identity to establish bulk universality of complex Hermitian Gaussian divisible matrices $\textbf{H}$ whose entries $|h_{ij} \sqrt{N}|$ have only two moments. Again, this only applied to complex Hermitian random matrices and therefore did not yield results on real symmetric Wigner matrices. Furthermore, removing the Gaussian component remained troublesome, particularly since the Four Moment Theorem of Tao and Vu could no longer be applied (the $|h_{ij} \sqrt{N}|$ do not have four moments). 

Moreover, in \cite{SSGESEE}, Erd\H{o}s-Knowles-Yau-Yin implemented a new truncation procedure, based on the local semicircle law for sparse graphs \cite{SSG}, to prove bulk universality for $\textbf{H}$ when the $h_{ij} \sqrt{N}$ have $C = 4 + \varepsilon$ moments. Before this work, $4 + \varepsilon$ had been the lowest value of $C$ one could take. 

Again, the main part in the three-step strategy that requires the moment condition is the proof of the local semicircle law. Partly for that reason, there have been several works \cite{LSLUMC, LSLM, LENM} analyzing the extent to which the local semicircle law remains valid under perturbed moment conditions. For example, the recent work of G\"{o}tze-Naumov-Tikhomirov \cite{LSLUMC} and G\"{o}tze-Naumov-Timushev-Tikhomirov \cite{LSLM} establishes a strong local semicircle law for Wigner matrices whose entries $h_{ij} \sqrt{N}$ again have at least $C = 4 + \varepsilon$ moments; as before, this had until now been the smallest value of $C$ one could take in order to prove a local semicircle law. 

Hence, each of the methods mentioned above appears to exhibit a block preventing verification of bulk universality for (generalized) Wigner matrices whose entries have less than four moments. One possible reason for this is that the qualitative behavior of a matrix $\textbf{H}$ with infinite $(4 - \varepsilon)$-th moment is different from that of a matrix $\widetilde{\textbf{H}}$ with finite $(4 + \varepsilon)$-moment. In particular, the entries of $\widetilde{\textbf{H}}$ are expected to decay with $N$; one can show that, with high probability, the largest entries of $\widetilde{\textbf{H}}$ are at most of order $N^{-\varepsilon / 4}$. Until now, this decay of the entries seemed to be what was needed in the proof of a local semicircle law (see, for example, Theorem 3.1 of \cite{CEUM}). However, it is possible (and in some cases expected) that several entries of the more heavy-tailed matrix $\textbf{H}$ will grow with $N$. This poses issues in all known proofs \cite{SSG, LSLGCRM, ERLRM, LSLCDRM, LSLUMC, LSLM, CEUM} of local semicircle laws. 

Our purpose here is to overcome these issues and establish bulk universality for generalized Wigner matrices only subject to the (essentially weakest possible) assumption \ref{moments}. The main novelty of this paper that allows us to do this is the proof of a local semicircle law for Wigner matrices whose entries have less than four (and in fact only $2 + \varepsilon$) moments.

\subsection{The Local Semicircle Law}

\label{LocalCircleLaw}

The local semicircle law for random matrices can be stated in several different ways. The formulation of interest to us is in terms of the \emph{Stieltjes transform} of empirical spectral distribution $\mu_{\textbf{H}}$; this is defined by the function 
\begin{flalign}
\label{mn}
m_N = m_N (z) = m_{N; \textbf{H}} (z) = \displaystyle\frac{1}{N} \displaystyle\sum_{j = 1}^N \displaystyle\frac{1}{\lambda_j - z} = \displaystyle\frac{1}{N} \Tr \big( \textbf{H} - z \big)^{-1}, 
\end{flalign}

\noindent for any $z \in \mathbb{H}$, where $\mathbb{H} = \{ z \in \mathbb{C} : \Im z > 0 \}$ denotes the upper half plane. 

In view of the fact that $\mu_{\textbf{H}}$ converges weakly to the semicircle law $\rho_{\semicircle}$ \eqref{rhodefinition} as $N$ tends to $\infty$, one expects $m_N (z)$ to converge to $m_{\semicircle} = m_{\semicircle} (z) = \int_{\mathbb{R}} \rho_{\semicircle} (x) dx / (x - z)$. It can be quickly verified that $m_{\semicircle} (z)$ is the unique solution $m$ with positive imaginary part to the quadratic equation 
\begin{flalign}
\label{mquadratic} 
m^2 + zm + 1 = 0. 
\end{flalign}

Denoting $E = \Re z$ and $\eta = \Im z$ (so that $z = E + \textbf{i} \eta$), observe that $\lim_{\eta \rightarrow 0} \Im m_N (z)$ converges weakly to the distribution $(\pi N)^{-1} \sum_{j = 1}^N \delta_{\lambda_j = E}$, which provides information about the spectral behavior of $\textbf{H}$ near the energy level $E$. This suggests that, in order to understand the local spectral properties of $\textbf{H}$ near $\eta$, one might try to understand the behavior of $m_N (z)$ when $\eta$ is very small. 

A \emph{local semicircle law} for $\textbf{H}$ is an estimate on $\big| m_N (z) - m_{\semicircle} (z) \big|$ when $\eta$ is nearly of order $N^{-1}$. Establishing local semicircle laws is also often a task of independent interest \cite{LSSRRGFD, LSLRRG, SSG, LSLGCRM, LSLCDRM, LSLUMC, LSLM, BCESDSL, LENM}, since it quantifies the convergence of the spectral distribution of the random matrix $\textbf{H}$ to the semicircle law on very small sub-intervals (approximately of size $N^{-1}$) of $[-2, 2]$. It also has other consequences, such as complete eigenvector delocalization (to be discussed below). 

The following theorem provides such an estimate for generalized Wigner matrices. In what follows, we define the domain  
\begin{flalign}
\label{dkappanr}
\mathscr{D}_{\kappa; N; r} = \big\{ z = E + \textbf{i} \eta \in \mathbb{H} : E \in [\kappa - 2, 2 - \kappa], \eta \in [r, 5] \big\}, 
\end{flalign}

\noindent for any integer $N > 0$ and real numbers $\kappa, r = r_N > 0$. Further define $\mathscr{D}_{\kappa; N} = \mathscr{D}_{\kappa; N; \varphi}$, where $\varphi = \varphi_N = (\log N)^{8 \log \log N} N^{-1}$.

\begin{thm}

\label{localmoments}

Fix $\kappa > 0$, let $N > 0$ be a positive integer, and let $\textbf{\emph{H}} = \textbf{\emph{H}}_N$ be an $N \times N$ generalized Wigner matrix as in Definition \ref{momentassumption}. Then, there exist constants $C, c, \xi > 0$ (that only depend on $\varepsilon, c_1, C_1, C_2, \kappa$) such that 
\begin{flalign}
\label{localmomentsequation}
\mathbb{P} \Bigg[ \bigcup_{z \in \mathscr{D}_{\kappa; N}} \bigg\{ \big| m_N (z) - m \big| > C (\log N)^{\xi} \Big( \displaystyle\frac{1}{\sqrt{N \eta}} + N^{-c \varepsilon	} \Big) \bigg\}    \Bigg] < C N^{-c \log \log N}. 
\end{flalign} 

\end{thm}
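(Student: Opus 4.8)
\textbf{Proof proposal for Theorem \ref{localmoments}.}

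The plan is to adapt the self-consistent-equation method for the local semicircle law to the heavy-tailed setting by first passing to a truncated matrix, and then carefully tracking the extra error terms produced by the few large entries that survive truncation. Concretely, I would set a truncation level $b = b_N$ slightly below $N^{1/2}$ (say $b = N^{1/2 - \delta}$ for a small $\delta = \delta(\varepsilon)$), and define $\widetilde{\textbf{H}}$ to be $\textbf{H}$ with every entry of absolute value exceeding $b N^{-1/2}$ replaced by $0$ (and then recentering/rescaling negligibly to keep the variance profile intact up to lower-order corrections). Assumption \ref{moments} gives $\mathbb{P}[|h_{ij}\sqrt N| > b] \le C_2 b^{-2-\varepsilon}$, so by a union bound over the $O(N^2)$ entries the number of ``bad'' entries is stochastically dominated and, more importantly, on an event of probability $1 - N^{-c\log\log N}$ the matrices $\textbf{H}$ and $\widetilde{\textbf{H}}$ differ in at most a bounded number of entries per row. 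One then needs a stability/interlacing step showing $|m_N(z) - m_{N;\widetilde{\textbf{H}}}(z)|$ is controlled by the rank of $\textbf{H} - \widetilde{\textbf{H}}$ divided by $N\eta$; the subtlety is that the difference need not be low-rank globally, only row-sparse, so I would instead compare resolvents via the identity $(\textbf{H}-z)^{-1} - (\widetilde{\textbf{H}}-z)^{-1} = (\textbf{H}-z)^{-1}(\widetilde{\textbf{H}} - \textbf{H})(\widetilde{\textbf{H}}-z)^{-1}$ together with eigenvector delocalization for $\widetilde{\textbf{H}}$ to absorb the contribution of the surviving large entries.

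The core of the argument is then the local semicircle law for the truncated matrix $\widetilde{\textbf{H}}$, whose entries are bounded by $b N^{-1/2}$ but not nicely decaying. Here I would run the standard Schur-complement expansion: writing $G = (\widetilde{\textbf{H}} - z)^{-1}$, the diagonal entry $G_{ii}$ satisfies $G_{ii}^{-1} = -z - \sum_{j} s_{ij} G_{jj}^{(i)} + \Upsilon_i$, where the error $\Upsilon_i$ collects (i) the fluctuation $Z_i = \sum_{j \ne k} h_{ij} h_{ik} G_{jk}^{(i)} - \sum_j s_{ij} G_{jj}^{(i)} + \sum_j |h_{ij}|^2 G_{jj}^{(i)}$ and (ii) the discrepancy between $G^{(i)}$ and $G$. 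The point where the low-moment assumption bites is bounding $Z_i$: the usual large-deviation bound for quadratic forms (Lemma of Erdős–Yau–Yin type) requires high moments of the entries, which we do not have. I would instead split each entry as $h_{ij} = h_{ij}\textbf{1}_{|h_{ij}\sqrt N| \le a} + h_{ij}\textbf{1}_{|h_{ij}\sqrt N| > a}$ at a second, much lower cutoff $a = (\log N)^{K}$ for large $K$; the small part has all moments bounded polylogarithmically, so Markov's inequality at a polyloglog order $p = c\log\log N$ gives the $(\log N)^{\xi}(N\eta)^{-1/2}$ fluctuation with the required $N^{-c\log\log N}$ failure probability, while the large part contributes, per row, a bounded number of terms of size $O(a/\sqrt N)$, hence an $O(a^2/N) \cdot \|G^{(i)}\|$-type error plus cross terms — summing to something of order $N^{-c\varepsilon}$ after optimizing $a$, $b$, which is exactly the second term in \eqref{localmomentsequation}. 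The deterministic stability analysis of the self-consistent vector equation $g_i^{-1} = -z - \sum_j s_{ij} g_j$ near its semicircle solution (for which I would cite the standard stability estimates valid on $\mathscr{D}_{\kappa;N}$, using that $E$ is in the bulk) then converts the entrywise bound on $\max_i |G_{ii} - m_{\semicircle}|$ into the claimed bound on $|m_N - m_{\semicircle}| = |N^{-1}\sum_i G_{ii} - m_{\semicircle}|$, with an a priori bootstrap started at $\eta = 5$ and continued down to $\eta = \varphi_N$ along a fine grid, using a continuity/Lipschitz argument in $z$ to upgrade the lattice bound to a uniform one over $\mathscr{D}_{\kappa;N}$.

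The main obstacle I anticipate is the control of $Z_i$ and of the off-diagonal resolvent entries $G_{jk}$ in the regime where a few entries of $\widetilde{\textbf{H}}$ are as large as $N^{1/2 - \delta} \cdot N^{-1/2} = N^{-\delta}$, i.e.\ only polynomially small; a single such entry can by itself shift $G_{ii}^{-1}$ by a non-negligible amount, so the expansion must be organized to isolate these large entries (e.g.\ by a further rank-one/Schur reduction removing the offending row–column pairs one at a time, or by a simultaneous minor $G^{(\mathbb{T})}$ with $\mathbb{T}$ the set of ``heavy'' indices of bounded cardinality per row) and show their net effect is absorbable into the $N^{-c\varepsilon}$ error. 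Making the two truncation scales $a \ll b \ll \sqrt N$ and the exponent $p \asymp \log\log N$ all fit together — so that the fluctuation term is genuinely $(\log N)^{\xi}(N\eta)^{-1/2}$, the bias is $N^{-c\varepsilon}$, and the failure probability is $N^{-c\log\log N}$ simultaneously — is the delicate bookkeeping; everything else is a by-now-standard, if lengthy, instance of the three-step machinery's first step.
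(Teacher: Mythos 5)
Your second stage --- a self-consistent equation for a matrix whose entries are uniformly bounded by $N^{-\delta}$, a second logarithmic truncation so that large-deviation bounds can be taken at moment order $\asymp \log\log N$, a multiscale bootstrap in $\eta$ started at $\eta$ of order one, and an $N^{-c\varepsilon}$ bias term of sparse-matrix type --- is sound, and is essentially what the paper does for its ``typical'' indices (it conditions on which entries exceed $N^{-\varepsilon/10}$ and applies sparse-graph-type large deviation estimates, Corollary \ref{largeprobability2}, to the conditioned, bounded entries). The genuine gap is in your first stage, the comparison between $\textbf{H}$ and the truncated matrix $\widetilde{\textbf{H}}$.

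Under assumption \ref{moments} alone the entries you delete are not merely ``slightly too large'': $\mathbb{P}\big[|h_{ij}| \ge 1\big]$ can be as large as $c N^{-1-\varepsilon/2}$, so with high probability $\textbf{H}-\widetilde{\textbf{H}}$ has on the order of $N^{1-\varepsilon/2}$ nonzero entries, several of size $\gtrsim 1$; the perturbation thus has unbounded entries and rank of order $N^{1-c\varepsilon}$. No perturbative comparison closes at the relevant scale $\eta = \varphi_N \approx N^{-1}$: eigenvalue interlacing gives $|m_{N;\textbf{H}}-m_{N;\widetilde{\textbf{H}}}| \le C\,\mathrm{rank}(\textbf{H}-\widetilde{\textbf{H}})/(N\eta)$, which is of order $N^{1-c\varepsilon}$ when $\eta \sim N^{-1}$, and the resolvent identity combined with Ward's identity gives at best $N^{-1}\eta^{-1}\sum_{(a,b)}|h_{ab}|$ over the removed pairs, which carries the same fatal factor of $\eta^{-1}$; delocalization of the eigenvectors of $\widetilde{\textbf{H}}$ does not remove it. (A smaller issue: with failure probability $N^{-c\log\log N}$ you can only guarantee $O(\log\log N)$, not $O(1)$, removed entries per row.) This is precisely why the paper does not truncate. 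It keeps the large entries, conditions on their positions (the $AB$ label), and for the $O(N^{1-\varepsilon/20})$ ``deviant'' indices touched by a large entry proves only a uniform $O(1)$ bound --- not closeness to $m_{\semicircle}$ --- on the resolvent entries, via a Schur complement taken over the connected component of large entries, which has size $O(\log\log N)$ on the admissible event. The error in $m_N$ is then (number of deviant indices)$\times O(1)/N = O(N^{-\varepsilon/20})$ as in \eqref{gdt}, with no factor of $\eta^{-1}$. Your closing suggestion of a Schur reduction on a set $\mathbb{T}$ of heavy indices is the right mechanism, but it must be applied to the original matrix, to the order-one entries you proposed to delete, and its output is boundedness rather than smallness of the affected resolvent entries; as written you aim it only at the $N^{-\delta}$-sized entries retained in $\widetilde{\textbf{H}}$, which are not where the difficulty lies.
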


\begin{rem}

\label{stronglaw}

Theorem \ref{localmoments} is known as a \emph{weak local semicircle law}, since the error term in \eqref{localmomentsequation} is not optimal in terms of $N \eta$. A \emph{strong local semicircle law} \cite{LSLGCRM} would correspond to the estimate \eqref{localmomentsequation}, with the term $(N \eta)^{-1 / 2}$ replaced by the smaller error $(N \eta)^{-1}$. It is possible to establish this strong version of Theorem \ref{localmoments} by suitably combining our method with Theorem 5.6 of \cite{SSG}. However, this will not be necessary for us, so we do not pursue it. 

\end{rem}

\begin{rem}

Observe the term $N^{-c \varepsilon}$ appearing as an error in \eqref{localmomentsequation}; it is also present in the local semicircle law for sparse Erd\H{o}s-Renyi graphs and sparse regular random graphs (see Theorem 2.8 of \cite{SSG} and Theorem 1.1 of \cite{LSLRRG}, respectively). Although this additional term has little impact when $\eta = \Im z$ is small (nearly of order $N^{-1}$), it indicates a possible obstruction of convergence from $m_N (z)$ to $m_{\semicircle}$ when $\eta = \Im z$ is large (nearly of order $1$). 

In particular, it suggests that $m_N (z)$ might converge to $m_{\semicircle}$ at rate $N^{-c \varepsilon}$, instead of at the fastest possible rate $N^{-1}$, which was established by G\"{o}tze-Tikhomirov \cite{BCESDSL} in the case when the fourth moments of $h_{ij} \sqrt{N}$ are bounded. In view of the results of \cite{LSHTRM}, we do not believe that \eqref{localmomentsequation} is optimal for small $\varepsilon$, when the $h_{ij} \sqrt{N}$ only have $(2 + \varepsilon)$ moments. Instead, we find it plausible that the error term $N^{-c \varepsilon}$ in \eqref{localmomentsequation} should replaced by $N^{-1 / 2 - c \varepsilon}$. However, the weaker estimate \eqref{localmomentsequation} will suffice to establish bulk universality, so we do not pursue these improvements any further. 

\end{rem}

To the best of our knowledge, all known proofs of local semicircle laws rely on a detailed understanding of the \emph{resolvent} of $\textbf{H}$, defined to be the $N \times N$ matrix $\textbf{G} = \textbf{G} (z) = \textbf{G} (z, \textbf{H}) = \big( \textbf{H} - z \big)^{-1} = \big\{ G_{ij} (z) \big\} = \big\{ G_{ij} \big\}$. Indeed, since $m_N = N^{-1} \Tr \textbf{G}$, it suffices to estimate the diagonal entries of $\textbf{G}$. In most known cases (with the exception of the very recent work \cite{LSSRRGFD} on regular random graphs of finite degree), it happens that all entries $G_{ij}$ of the resolvent will be close to $\textbf{1}_{i = j} m_{\semicircle}$ with very high probability. 

In the low-moment setting, this will not quite be the case. Similar to in \cite{LSSRRGFD}, the following will instead hold. For ``most'' pairs $i, j \in [1, N]$, we will have that $G_{ij}$ is close to $\textbf{1}_{i = j} m_{\semicircle}$; however, there will be a small fraction of index pairs $(i, j)$ for which this will not be true. Still, it will hold that these few entries remain uniformly bounded with very high probability as $N$ tends to $\infty$. 

The result is more specifically stated as follows. 

\begin{thm}

\label{estimate1gij}

Under the same assumptions as in Theorem \ref{localmoments}, there exist constants $C, c, \xi > 0$ (that only depend on $\varepsilon, c_1, C_1, C_2, \kappa$) such that the following two estimates hold. 

First, we have that 
\begin{flalign}
\label{gijestimate}
\mathbb{P} \Bigg[   \displaystyle\max_{1 \le i, j \le N} \displaystyle\sup_{z \in \mathscr{D}_{\kappa; N}} \big| G_{ij} (z) \big| > C    \Bigg] < C N^{- c \log \log N}. 
\end{flalign} 

Second, if we set $s = s_N = \lfloor C N^{1 - c \varepsilon} \rfloor$, then 
\begin{flalign}
\label{gijestimate2}
\begin{aligned}
\mathbb{P} \Bigg[ \bigcup_{\substack{I \subseteq [1, N] \\ |I| \ge s}} & \bigg\{ \displaystyle\min_{i \in I} \displaystyle\max_{1 \le j \le N} \displaystyle\sup_{z \in \mathscr{D}_{\kappa; N}}  \big| G_{i j} (z) - \textbf{\emph{1}}_{i = j} m_{\semicircle} (z) \big| > C (\log N)^{\xi} \Big( \displaystyle\frac{1}{\sqrt{N \eta}} + N^{-c \varepsilon} \Big) \bigg\} \Bigg] \\
& < C N^{- c \log \log N}. 
\end{aligned}
\end{flalign}

\end{thm}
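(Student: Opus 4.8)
The plan is to prove Theorem \ref{estimate1gij} by a bootstrapping (continuity) argument in $\eta$, run simultaneously with Theorem \ref{localmoments}, starting from the largest scale $\eta = 5$ (where everything is trivial since $\|\textbf{G}\| \le \eta^{-1}$) and descending to the smallest scale $\eta = \varphi_N$. At each scale the workhorse is the Schur complement / self-consistent equation: writing $\textbf{G}^{(i)}$ for the resolvent of the minor of $\textbf{H}$ with row and column $i$ removed, one has the identity
\begin{flalign*}
G_{ii} = \frac{1}{h_{ii} - z - \sum_{k, \ell \ne i} h_{ik} h_{i\ell} G^{(i)}_{k\ell}},
\end{flalign*}
and for $i \ne j$ a corresponding off-diagonal expansion. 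The quantity $Z_i := \sum_{k \ne \ell} h_{ik} h_{i\ell} G^{(i)}_{k\ell} + \sum_k (|h_{ik}|^2 - s_{ik}) G^{(i)}_{kk}$ is the fluctuating error; on the event that the a priori bounds hold, I would estimate it by a large-deviation inequality for quadratic forms. The crucial new difficulty is that the standard such inequality (Hanson–Wright type) requires control on the fourth moment of the entries, which we do not have; instead I would split each entry $h_{ik} = h_{ik}^{\le} + h_{ik}^{>}$ at a truncation level $N^{-1/2 + \delta}$ and treat the (rare) large part $h_{ik}^{>}$ deterministically — there are at most $O(N^{1 - c\varepsilon})$ columns containing a large entry, with high probability, by Markov's inequality applied to the $(2+\varepsilon)$-th moment — while the truncated part obeys good concentration. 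This dichotomy is exactly why one only gets the conclusion \eqref{gijestimate2} for the bulk of indices $i$ (those whose $i$-th row has no atypically large entry), rather than for all $i$ as in the four-moments regime.

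The argument would proceed in the following steps. First I would establish the crude uniform bound \eqref{gijestimate}: here one uses only that $m_N$ has bounded imaginary part together with the fact that an individual $|G_{ij}|$ cannot be large for too long — more precisely, I would run a continuity argument showing that the set of indices $i$ with $|G_{ii}|$ large cannot exceed a small fraction, and for those ``bad'' $i$ one controls $|G_{ij}|$ for all $j$ by a Ward-identity bound $\sum_j |G_{ij}|^2 = \eta^{-1} \Im G_{ii}$ combined with the trivial $\Im m_N \le$ const that follows once \eqref{localmomentsequation} is in hand on the previous scale. Second, for the ``good'' indices $i$ (no large entry in row $i$, and the fluctuation $Z_i$ is small, which happens with probability $1 - N^{-c \log\log N}$ after a union bound over a net of $z$'s and over $i$), the self-consistent equation forces $G_{ii}$ to be within $(\log N)^\xi\big((N\eta)^{-1/2} + N^{-c\varepsilon}\big)$ of the stability fixed point $m_{\semicircle}$, using the standard quadratic stability of \eqref{mquadratic} away from the spectral edge (this is where $\kappa > 0$ enters). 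Third, the off-diagonal bound $|G_{ij}| \lesssim (\log N)^\xi\big((N\eta)^{-1/2} + N^{-c\varepsilon}\big)$ for good $i$ follows from $G_{ij} = -G_{ii} G^{(i)}_{jj}\big(h_{ij} - \sum_{k,\ell} h_{ik} h_{j\ell} G^{(ij)}_{k\ell}\big)$-type expansions, again splitting off large entries of row $i$. Finally, averaging $G_{ii}$ over $i$ — where the at most $O(N^{1-c\varepsilon})$ bad indices contribute at most $O(N^{-c\varepsilon})$ to $m_N = N^{-1}\sum_i G_{ii}$ by the crude bound \eqref{gijestimate} — yields Theorem \ref{localmoments}, closing the bootstrap so it can be repeated at the next smaller scale.

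The main obstacle is the large-deviation control of the quadratic form $Z_i$ without a fourth moment. In the subexponential or $4+\varepsilon$-moment settings one simply invokes a concentration inequality with a polynomial-in-$N$ error probability and large power of $\log N$; here the truncated entries have a controllable variance but the moment generating function is poor, so I expect to need a more delicate decomposition: truncate at a scale chosen so that (a) the number of surviving ``large'' columns is $\le N^{1-c\varepsilon}$ with overwhelming probability, and (b) the contribution of those large columns to $Z_i$ is bounded using the a priori bound $|G^{(i)}_{k\ell}| \le C$ and the fact that $\sum_\ell |G^{(i)}_{k\ell}|^2 \le C\eta^{-1}$, rather than expecting cancellation. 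Getting both (a) and (b) to hold simultaneously while keeping the error in the self-consistent equation down to $N^{-c\varepsilon}$ — and propagating this through the continuity argument without the feedback loop degrading — is the technical heart of the proof; it is also the reason the exceptional-set formulation \eqref{gijestimate2} (only a positive fraction, not all, of the rows are well-behaved) is the natural statement. A secondary subtlety is that the minors $\textbf{H}^{(i)}$ are themselves only generalized Wigner matrices after a negligible rescaling, so one must track that assumptions \ref{generalized}–\ref{moments} are essentially preserved under removal of a row and column, which is routine but needs to be stated carefully to make the induction on $i$ (or the simultaneous treatment of all $i$ via a single union bound) legitimate.
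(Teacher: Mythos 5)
Your outline matches the paper's broad architecture — multiscale descent in $\eta$ from the trivial scale $\eta \approx 1$, the Schur complement self-consistent equation, a split of the entries at a threshold chosen so that only $O(N^{1-c\varepsilon})$ rows contain a large entry, and the resulting exceptional-set formulation of \eqref{gijestimate2}. (One quantitative slip: a truncation at $N^{-1/2+\delta}$ with small $\delta$ gives $\approx N^{2-(2+\varepsilon)\delta}$ large pairs by Markov, which is $\gg N$; the threshold must be taken at $N^{-c\varepsilon}$, as in the paper's $N^{-\varepsilon/10}$, for the count $O(N^{1-c\varepsilon})$ to hold.) The genuine gap is in your treatment of the ``bad'' rows, i.e.\ the proof of the uniform bound \eqref{gijestimate}. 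You propose to control $|G_{ij}|$ for bad $i$ via the Ward identity $\sum_j |G_{ij}|^2 = \eta^{-1}\Im G_{ii}$ together with $\Im m_N = O(1)$ from the previous scale. This does not close: boundedness of the \emph{average} $\Im m_N = N^{-1}\sum_i \Im G_{ii}$ only gives $\Im G_{ii} \le CN$ for an individual index, whence $|G_{ij}| \le C(N/\eta)^{1/2} \approx CN$ at $\eta \approx N^{-1}$, and it gives no lower bound on $|1/G_{ii}|$ at all. For a deviant $i$ the self-consistent equation contains terms $h_{ij}^2 G^{(i)}_{jj}$ with $|h_{ij}|$ of order $1$ or larger, so there is no concentration argument forcing $1/G_{ii}$ away from $0$.

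The paper's resolution, which is the main new idea and is absent from your proposal, is to Schur-complement not on the single index $i$ but on the entire connected cluster $\mathscr{S}$ of deviant indices containing $i$ and $j$. After conditioning on the ``$AB$ label'' (which entries exceed the threshold), an admissible label has every such cluster of size at most $2\log\log N$, and all entries joining $\mathscr{S}$ to its complement are small (amenable). Hence the Schur complement $\textbf{Y} = \textbf{B}(\textbf{D}-z)^{-1}\textbf{C}$ is a quadratic form in truncated entries only and concentrates around $m_{\semicircle}\Id$; then $G_{ij}$ is an entry of $(\textbf{A} - z - \textbf{Y})^{-1} \approx (\textbf{A} - (z + m_{\semicircle}))^{-1}$, which is $O(1)$ because $\textbf{A}$ is real symmetric and $\Im(z + m_{\semicircle}) \ge c(\kappa) > 0$ in the bulk — note that the large entries of $\textbf{H}$ survive inside $\textbf{A}$ and are never estimated, only absorbed into a Hermitian block. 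This requires two ingredients you do not have: a bound on the probability that deviant indices form a long connected chain (the ``connected-inadmissible'' event), and the resampling/conditioning device that makes the typical and deviant sets deterministic, so that union bounds over indices and the independence of a row from its minor remain legitimate after conditioning (the conditioned entries are no longer exactly centered, which the paper handles via a perturbed large-deviation lemma). Without the cluster Schur complement, step \eqref{gijestimate} — and hence the input needed to restrict the sum in the self-consistent equation to good indices — cannot be established by the means you describe.
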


Observe that Theorem \ref{localmoments} is a direct consequence of Theorem \ref{estimate1gij} and the estimate 
\begin{flalign}
\label{gdt}
\big| m_N - m_{\semicircle} \big| = \big| N^{-1} \Tr \textbf{G} - m_{\semicircle} \big| \le N^{-1} \displaystyle\sum_{i \in I} \big| G_{ii} - m_{\semicircle} \big| + N^{-1} \displaystyle\sum_{i \in [1, N] \setminus I} \big| G_{ii} - m_{\semicircle} \big|,
\end{flalign}

\noindent which holds for any subset $I \subseteq [1, N]$. In particular, we apply \eqref{gdt} with $I$ equal to the set of indices $i$ such that $G_{ij}$ is not close to $\textbf{1}_{i = j} m_{\semicircle}$ for some $j$ (in the sense of \eqref{gijestimate2}), which has cardinality at most $s_N$ with high probability due to \eqref{gijestimate2}. We then use \eqref{gijestimate} to estimate the first sum on the right side of \eqref{gdt} and \eqref{gijestimate2} to estimate the second sum; this yields Theorem \ref{localmoments} assuming Theorem \ref{estimate1gij}. Thus, it suffices to establish the latter theorem. 

Another consequence of Theorem \ref{estimate1gij} is the \emph{complete delocalization of eigenvectors} of $\textbf{H}$ corresponding to eigenvalues in the bulk; this is stated precisely by the following corollary. We mention that, in the low-moment case given by assumption \ref{moments}, this delocalization does not always hold for eigenvectors whose eigenvalues are at the edge of the spectrum of $\textbf{H}$ \cite{CLEHTRM, CEUM}. 

\begin{cor}

\label{delocalization}

Under the same assumptions as in Theorem \ref{localmoments}, there exist constants $C, c, \xi > 0$ (that only depend on $\varepsilon, c_1, C_1, C_2, \kappa$) such that the following holds. The probability that there exists an eigenvalue $\lambda \in [\kappa - 2, 2 - \kappa]$ of $\textbf{\emph{H}}$ with eigenvector $\textbf{\emph{v}}$ such that $\| \textbf{\emph{v}} \|_2 = 1$ and $\| \textbf{\emph{v}} \|_{\infty} > C (\log N)^{\xi} N^{- 1 / 2}$ is less than $C N^{- c \log \log N}$. 

\end{cor}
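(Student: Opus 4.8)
The plan is to deduce Corollary \ref{delocalization} from the resolvent bound \eqref{gijestimate} via the standard spectral-theoretic relation between eigenvector entries and the imaginary part of a diagonal resolvent entry. Recall that if $\textbf{H} = \sum_{k=1}^N \lambda_k \textbf{u}_k \textbf{u}_k^{*}$ is the spectral decomposition, then for $z = E + \textbf{i}\eta$ one has the identity $\Im G_{ii}(z) = \sum_{k=1}^N \frac{\eta \, |u_k(i)|^2}{(\lambda_k - E)^2 + \eta^2}$, where $u_k(i)$ denotes the $i$-th coordinate of $\textbf{u}_k$. If $\lambda$ is an eigenvalue in $[\kappa - 2, 2 - \kappa]$ with unit eigenvector $\textbf{v}$, then choosing $E = \lambda$ and $\eta = \varphi_N = (\log N)^{8\log\log N} N^{-1}$ (so that $z \in \mathscr{D}_{\kappa; N}$, using that $\lambda$ lies in the prescribed real interval), every term in the sum is nonnegative, so dropping all but the term corresponding to $\lambda$ yields, for each coordinate $i$,
\begin{flalign*}
|v(i)|^2 \le \eta^{-1} \cdot \frac{\eta^2 |v(i)|^2}{(\lambda - \lambda)^2 + \eta^2} \le \eta^{-1} \Im G_{ii}(\lambda + \textbf{i}\eta) \le \eta^{-1} \big| G_{ii}(\lambda + \textbf{i}\eta) \big|.
\end{flalign*}

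First I would invoke Theorem \ref{estimate1gij}, specifically the bound \eqref{gijestimate}, which says that with probability at least $1 - C N^{-c\log\log N}$ we have $|G_{ij}(z)| \le C$ for all $i, j$ and all $z \in \mathscr{D}_{\kappa; N}$ simultaneously. On this event, the displayed inequality gives $|v(i)|^2 \le C \eta^{-1} = C \varphi_N^{-1} = C (\log N)^{-8\log\log N} N$ for every eigenvector $\textbf{v}$ with eigenvalue in the bulk and every coordinate $i$. Taking square roots, $\|\textbf{v}\|_\infty \le C^{1/2} N^{1/2} (\log N)^{-4\log\log N}$, which is certainly bounded by $C(\log N)^{\xi} N^{-1/2}$ for a suitable choice of the constants — in fact the bound one gets is much stronger than stated, since the power of $\log N$ is negative; one simply weakens it to match the statement, or absorbs the slack. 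This establishes the corollary on the good event, whose complement has probability less than $C N^{-c\log\log N}$, as required.

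Two minor points need attention but are routine. The first is the supremum over $z$: the event in \eqref{gijestimate} is already stated with $\sup_{z \in \mathscr{D}_{\kappa; N}}$ inside, so no further union bound over a net of $z$-values is needed; one just needs that for each realization of $\textbf{H}$ and each bulk eigenvalue $\lambda$, the particular point $z = \lambda + \textbf{i}\varphi_N$ lies in $\mathscr{D}_{\kappa; N}$, which holds because $\Re z = \lambda \in [\kappa - 2, 2 - \kappa]$ and $\Im z = \varphi_N \in [\varphi_N, 5]$ for $N$ large. The second is that the number of bulk eigenvalues and their locations are random, but since the bound $|G_{ii}(z)| \le C$ holds uniformly over all $z \in \mathscr{D}_{\kappa; N}$ on the good event, it applies to whatever (random) values $\lambda + \textbf{i}\varphi_N$ arise, so no additional probabilistic input is needed. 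The main (and essentially only) obstacle is already handled by the earlier theorem: the content of the corollary is entirely the uniform boundedness of the diagonal resolvent entries down to scale $\varphi_N$, and given that, the deduction is the classical two-line argument reproduced above.
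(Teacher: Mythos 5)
Your overall strategy is the right one, and it is the standard argument the paper has in mind when it refers to Theorem 2.21 of \cite{URM}: use the spectral representation of $\Im G_{ii}$, drop all but the term of the bulk eigenvalue, and invoke the uniform resolvent bound \eqref{gijestimate} at $\eta = \varphi_N$. However, the key inequality is inverted, and this makes the proof as written vacuous. From $\Im G_{ii}(\lambda + \textbf{i}\eta) = \sum_k \eta\, |u_k(i)|^2 \big( (\lambda_k - \lambda)^2 + \eta^2 \big)^{-1} \ge \eta\, |v(i)|^2 / \eta^2 = |v(i)|^2/\eta$, the correct conclusion is $|v(i)|^2 \le \eta\, \Im G_{ii} \le \eta\, |G_{ii}| \le C \eta$, i.e.\ the eigenvector bound carries a factor of $\eta$, not $\eta^{-1}$. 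Your chain ends with $|v(i)|^2 \le C\eta^{-1} = C N (\log N)^{-8\log\log N}$, so after taking square roots you get $\|\textbf{v}\|_\infty \lesssim N^{1/2}(\log N)^{-4\log\log N}$, which tends to infinity and is weaker than the trivial bound $\|\textbf{v}\|_\infty \le \|\textbf{v}\|_2 = 1$; in particular it does not imply $\|\textbf{v}\|_\infty \le C(\log N)^{\xi} N^{-1/2}$. The remark that your bound is ``much stronger than stated, since the power of $\log N$ is negative'' is the symptom: a negative power of $\log N$ attached to $N^{+1/2}$ is enormously larger than any power of $\log N$ attached to $N^{-1/2}$.

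The fix is immediate: with the corrected direction and $\eta = \varphi_N = (\log N)^{8\log\log N} N^{-1}$ one obtains $|v(i)|^2 \le C\varphi_N$ for all $i$ on the event of \eqref{gijestimate}, hence $\|\textbf{v}\|_\infty \le C^{1/2}(\log N)^{4\log\log N} N^{-1/2}$, uniformly over all bulk eigenvectors, off an event of probability at most $C N^{-c\log\log N}$. Your two ``minor points'' (no extra net over $z$ is needed since \eqref{gijestimate} already contains the supremum, and the randomness of $\lambda$ is harmless for the same reason) are handled correctly. Note, finally, that this argument yields the exponent $4\log\log N$ on the logarithm rather than a constant $\xi$; this gap is inherent to using \eqref{gijestimate} at the scale $\varphi_N$ and is a discrepancy with the literal statement of the corollary (whose proof the paper omits), not a further defect of your argument, but you should flag it rather than claim strict improvement.
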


Given Theorem \ref{estimate1gij}, the proof of Corollary \ref{delocalization} is very similar to that of Theorem 2.21 of \cite{URM} and is therefore omitted. 	

Thus, in order to establish the local semicircle law Theorem \ref{localmoments} and complete eigenvector delocalization Corollary \ref{delocalization}, it remains to establish Theorem \ref{estimate1gij}. We will outline the proof of this result in Section \ref{ProofOutline}, explaining why previous proofs of semicircles no longer seem to directly apply and also indicating some of the new elements needed to show Theorem \ref{estimate1gij}. In Sections \ref{DefineIndices}, \ref{LawLarge}, \ref{LawSmallNotDeviant}, \ref{LawSmallDeviant}, and \ref{ProofLocalLaw}, we implement this outline in detail. In Section \ref{LocalH}, we proceed with the remaining two parts of the three-step strategy and establish the bulk universality results Theorem \ref{gapsfunctions} and Theorem \ref{bulkfunctions}; given Theorem \ref{localmoments}, this will mainly involve recalling what was done in the recent works \cite{FEUM, BUSM} that individually address these second and third steps, respectively.

\subsection*{Acknowledgements}

The author heartily thanks Horng-Tzer Yau for proposing this question, for several valuable discussions, and for looking through an early version of this paper. The author is also very grateful to Jiaoyang Huang and Benjamin Landon for many fruitful conversations and helpful explanations. This work was funded by the NSF Graduate Research Fellowship under grant number DGE1144152 and partially by the Eric Cooper and Naomi Siegel Graduate Student Fellowship I.

\section{Outline of the Proof of the Local Semicircle Law}

\label{ProofOutline}

In this section we outline a proof of Theorem \ref{estimate1gij} and explain in what ways it differs from known proofs of the local semicircle law for Wigner matrices with less singular entries. We begin in Section \ref{DiagonalResolvent} with some matrix identities used for the analysis in both the original and heavy-tailed setting. In Section \ref{LargeMoment} we recall the idea of the proof when the $h_{ij} \sqrt{N}$ have all moments and explain the issues that arise when this restriction is no longer assumed. Then, in Section \ref{SmallMoment} we outline how to resolve these issues. Throughout this section, we adopt the notation of Theorem \ref{estimate1gij}.

\subsection{Estimating the Diagonal Resolvent Entries}

\label{DiagonalResolvent} 

We begin by collecting several matrix identities that will be useful for us. In what follows, for any $N \times N$ matrix $\textbf{M} = \{ M_{ij} \}$ and subset $\mathcal{S} \subset [1, N]$, let $\textbf{M}^{(\mathcal{S})} = \{ M_{ij}^{(\mathcal{S})} \}$ denote the $(N - |\mathcal{S}|) \times (N - |\mathcal{S}|)$ matrix formed from removing the $j$-th row and column from $\textbf{M}$, for each $j \in \mathcal{S}$. If $S = \{ i \}$ consists of one element, we abbreviate $\textbf{M}^{(\{ i \})} = \textbf{M}^{(i)}$. 

\begin{lem}

Let $\textbf{\emph{H}}$ be an $N \times N$ (deterministic or random) matrix, $z \in \mathbb{H}$, and $\eta = \Im z$. Denote $\textbf{\emph{G}} = (\textbf{\emph{H}} - z)^{-1}$. 

\label{matrixidentities} 

\begin{enumerate}[align=left]
\item[\emph{1. Schur complement identity:} ]{ Suppose that $\textbf{\emph{A}}$, $\textbf{\emph{B}}$, $\textbf{\emph{C}}$, and $\textbf{\emph{D}}$ are generic $k \times k$, $k \times m$, $m \times k$, and $m \times m$ matrices, respectively. Then,

\begin{flalign}
\label{blockinverse}
\left[ \begin{array}{cc} \textbf{\emph{A}} & \textbf{\emph{B}} \\ \textbf{\emph{C}} & \textbf{\emph{D}} \end{array} \right]^{-1} = \left[ \begin{array}{cc} (\textbf{\emph{A}} - \textbf{\emph{B}} \textbf{\emph{D}}^{-1} \textbf{\emph{C}})^{-1} & (\textbf{\emph{B}} \textbf{\emph{D}}^{-1} \textbf{\emph{C}} - \textbf{\emph{A}})^{-1} \textbf{\emph{B}} \textbf{\emph{D}}^{-1}  \\ \textbf{\emph{D}}^{-1} \textbf{\emph{C}} (\textbf{\emph{B}} \textbf{\emph{D}}^{-1} \textbf{\emph{C}} - \textbf{\emph{A}})^{-1} & \textbf{\emph{D}}^{-1} + \textbf{\emph{D}}^{-1} \textbf{\emph{C}} (\textbf{\emph{A}} - \textbf{\emph{B}} \textbf{\emph{D}}^{-1} \textbf{\emph{C}}) \textbf{\emph{B}} \textbf{\emph{D}}^{-1} \end{array} \right]. 
\end{flalign}

\noindent In particular, for any $i \in [1, N]$ we have that 
\begin{flalign}
\label{gii}
\displaystyle\frac{1}{G_{ii}} = h_{ii} - z - \sum_{\substack{1 \le j, k \le N \\ j, k \ne i }} h_{ij} G_{jk}^{(i)} h_{ki}, 
\end{flalign}}

\item[\emph{2. Resolvent identity:}]{If $\textbf{\emph{A}}$ and $\textbf{\emph{B}}$ are square matrices of the same dimension, then
\begin{flalign}
\label{resolvent}
\textbf{\emph{A}}^{-1} - \textbf{\emph{B}}^{-1} = \textbf{\emph{A}}^{-1} \big( \textbf{\emph{B}} - \textbf{\emph{A}} \big) \textbf{\emph{B}}^{-1}. 
\end{flalign} 

\noindent In particular, for any distinct $i, j \in [1, N]$, we have that 
\begin{flalign}
\label{gij}
G_{ij} & = - G_{ii} \displaystyle\sum_{\substack{k \in [1, N] \\ k \ne i}} h_{ik} G_{kj}^{(i)} = -G_{jj} \displaystyle\sum_{\substack{k \in [1, N] \\ k \ne j}} h_{kj} G_{ik}^{(j)}. 
\end{flalign}

\noindent Thus, for all $i, j, k \in [1, N]$ such that $i \notin \{ j, k \}$, we have that
\begin{flalign}
\label{gkj}
G_{kj} = G_{kj}^{(i)} + \displaystyle\frac{G_{ki} G_{ij}}{G_{ii}}. 
\end{flalign}} 

\item[\emph{3. Deterministic estimates:}]{For any $i, j \in [1, N]$, we have that 
\begin{flalign}
\label{gijeta}
\big| G_{ij} \big| < \eta^{-1}. 
\end{flalign}}

\item[\emph{4. Ward identity:}]{Let $\mathcal{S} \subset [1, N]$. For any $j \in [1, N] \setminus \mathcal{S}$, we have that 
\begin{flalign}
\label{sumgij}
\displaystyle\sum_{j \in [1, N] \setminus \mathcal{S}} \big| G_{jk}^{(\mathcal{S})} \big|^2 = \displaystyle\frac{\Im G_{jj}^{(\mathcal{S})}}{\eta}. 
\end{flalign}}

\end{enumerate}

\end{lem}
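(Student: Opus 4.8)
The plan is to treat Lemma~\ref{matrixidentities} for what it is: a list of deterministic linear-algebra facts, each proved by a short direct computation, with the resolvent identity \eqref{resolvent} serving as the common workhorse and the Schur complement identity \eqref{blockinverse} as the source of the single-index-removal formulas. I would dispatch the four items in the order listed, since \eqref{gii}, \eqref{gij}, and \eqref{gkj} all fall out of \eqref{blockinverse}. Throughout I use (as is implicit in the setting) that $\textbf{H}$ is real symmetric, so that $\textbf{G}=(\textbf{H}-z)^{-1}$ is the resolvent of a Hermitian operator; this is genuinely needed for items 3 and 4.

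For \eqref{blockinverse}, I would verify the formula by multiplying the claimed inverse against the block matrix $\left[\begin{smallmatrix}\textbf{A}&\textbf{B}\\\textbf{C}&\textbf{D}\end{smallmatrix}\right]$ on both sides and checking that the block products collapse to the identity; this uses only invertibility of $\textbf{D}$ and of the Schur complement $\textbf{A}-\textbf{B}\textbf{D}^{-1}\textbf{C}$. To obtain \eqref{gii}, apply \eqref{blockinverse} to $\textbf{H}-z$, taking the $1\times1$ block $\textbf{A}=h_{ii}-z$ to sit at index $i$ and the $(N-1)\times(N-1)$ block $\textbf{D}=\textbf{H}^{(i)}-z$ at the complementary indices, so that $\textbf{D}^{-1}=\textbf{G}^{(i)}$, $\textbf{B}=(h_{ij})_{j\neq i}$, and $\textbf{C}=\textbf{B}^{\mathsf T}$; the top-left entry of the inverse is then $G_{ii}=\bigl(h_{ii}-z-\sum_{j,k\neq i}h_{ij}G_{jk}^{(i)}h_{ki}\bigr)^{-1}$. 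Reading the top-right block of \eqref{blockinverse} at index $i$ versus a complementary index gives $G_{ij}=-G_{ii}\sum_{k\neq i}h_{ik}G_{kj}^{(i)}$, and the second equality of \eqref{gij} follows by the symmetry $G_{ij}=G_{ji}$ (equivalently, by placing index $j$ in the $\textbf{A}$-block). Finally, the bottom-right block of \eqref{blockinverse} is precisely a rank-one (Sherman--Morrison) correction of $\textbf{D}^{-1}=\textbf{G}^{(i)}$, and writing it out at the $(k,j)$ entry, after identifying the factors $\textbf{G}^{(i)}\textbf{C}$ and $\textbf{B}\textbf{G}^{(i)}$ with $-G_{\cdot i}/G_{ii}$ via \eqref{gij}, yields $G_{kj}=G_{kj}^{(i)}+G_{ki}G_{ij}G_{ii}^{-1}$, which is \eqref{gkj}. (Alternatively \eqref{gkj} can be obtained by applying \eqref{resolvent} to $\textbf{H}-z$ and the matrix that agrees with it off the $i$-th row and column.)

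The resolvent identity \eqref{resolvent} is the one-line computation $\textbf{A}^{-1}(\textbf{B}-\textbf{A})\textbf{B}^{-1}=\textbf{A}^{-1}\textbf{B}\textbf{B}^{-1}-\textbf{A}^{-1}\textbf{A}\textbf{B}^{-1}=\textbf{A}^{-1}-\textbf{B}^{-1}$. For the deterministic bound \eqref{gijeta}, diagonalize $\textbf{H}$ and note $|G_{ij}|\le\|\textbf{G}\|_{\mathrm{op}}=\max_k|\lambda_k-z|^{-1}\le(\Im z)^{-1}=\eta^{-1}$, since the $\lambda_k$ are real and hence $|\lambda_k-z|\ge\eta$. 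For the Ward identity \eqref{sumgij}, I would work with $\textbf{G}^{(\mathcal S)}=(\textbf{H}^{(\mathcal S)}-z)^{-1}$, again the resolvent of a real symmetric matrix, and apply \eqref{resolvent} with $\textbf{A}=\textbf{H}^{(\mathcal S)}-z$ and $\textbf{B}=\textbf{H}^{(\mathcal S)}-\bar z$; using $(\textbf{G}^{(\mathcal S)})^{*}=(\textbf{H}^{(\mathcal S)}-\bar z)^{-1}$ this gives $\textbf{G}^{(\mathcal S)}-(\textbf{G}^{(\mathcal S)})^{*}=(z-\bar z)\,\textbf{G}^{(\mathcal S)}(\textbf{G}^{(\mathcal S)})^{*}=2\textbf{i}\eta\,\textbf{G}^{(\mathcal S)}(\textbf{G}^{(\mathcal S)})^{*}$. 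Taking the $(j,j)$ entry, the left side is $2\textbf{i}\,\Im G_{jj}^{(\mathcal S)}$ and the right side is $2\textbf{i}\eta\sum_{k\notin\mathcal S}\bigl|G_{jk}^{(\mathcal S)}\bigr|^2$, which rearranges to \eqref{sumgij}.

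Since every step is a finite, deterministic computation, there is no real obstacle here; I expect the only points needing care to be purely organizational — keeping the reindexing straight when a set of rows and columns is deleted (so that $\textbf{D}^{-1}$ really is $\textbf{G}^{(i)}$, and the $\textbf{B}\textbf{D}^{-1}$ factors really are $-G_{\cdot i}/G_{ii}$), and remembering that \eqref{gijeta} and \eqref{sumgij} genuinely rely on symmetry of $\textbf{H}$ and would fail for a general non-normal matrix.
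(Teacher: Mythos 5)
Your proof is correct; the paper itself gives no argument for this lemma, simply citing the corresponding identities in \cite{DRM}, and your derivations are exactly the standard ones found there (the Schur complement formula for \eqref{gii}, \eqref{gij}, and \eqref{gkj}; the spectral theorem for the symmetric matrix $\textbf{H}$ for \eqref{gijeta}; and the resolvent identity applied to the pair $z$, $\bar z$ for \eqref{sumgij}). Note only that your rank-one-correction computation for \eqref{gkj} silently corrects a typo in the paper's statement of \eqref{blockinverse}, whose bottom-right block should read $\textbf{D}^{-1} + \textbf{D}^{-1} \textbf{C} (\textbf{A} - \textbf{B} \textbf{D}^{-1} \textbf{C})^{-1} \textbf{B} \textbf{D}^{-1}$, with the inverse on the Schur complement, and that your version of the Ward identity (summing over $k \notin \mathcal{S}$ rather than over the repeated index $j$) is the intended, correctly quantified form of \eqref{sumgij}.
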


Each of the statements above can be found in the book \cite{DRM}. Specifically, \eqref{blockinverse} can be found as (7.2) and (7.3) there; \eqref{gii} as (7.7); \eqref{resolvent} as (8.4); \eqref{gij} as (8.2); \eqref{gkj} as (8.1); \eqref{gijeta} as (8.34); and \eqref{sumgij} as (8.3). 

The proof of the local semicircle law is based on the identity \eqref{gii}; let us explain why it is useful. To that end, observe that sum on the left side of \eqref{gii} can be rewritten as
\begin{flalign}
\label{sumedmf}
\sum_{\substack{1 \le j, k \le N \\ i \notin \{ j , k \} }} h_{ij} G_{jk}^{(i)} h_{ki} = F_i + E_i + D_i + M_i + m_{\semicircle} + m_{\semicircle} t_i
\end{flalign}

\noindent where 
\begin{flalign}
\label{edmf}
\begin{aligned}
F_i & = \sum_{\substack{1 \le j \ne k \le N \\ j, k \ne i }} h_{ij} G_{jk}^{(i)} h_{ki}; \qquad \quad  E_i = \sum_{\substack{1 \le j \le N \\ j \ne i }} \big( | h_{ij} |^2 - s_{ij} \big) G_{jj}^{(i)}; \\
 D_i & = \sum_{\substack{1 \le j \le N \\ j \ne i }} s_{ij} \big( G_{jj}^{(i)} - G_{jj} \big); \qquad M_i = \sum_{\substack{1 \le j \le N \\ j \ne i }} s_{ij} \big( G_{jj} - m_{\semicircle} \big), 
\end{aligned}
\end{flalign}

\noindent where we recalled from assumption \ref{generalized} that $s_{ij} = \Var h_{ij}$ and from assumption \ref{stochastic} that $t_i = \sum_{j = 1}^N s_{ij} - 1$. Thus, if we denote 
\begin{flalign}
\label{vj}
v_j = G_{jj} - m_{\semicircle},
\end{flalign}

\noindent for each $j \in [1, N]$ and insert \eqref{sumedmf} into \eqref{gii}, we deduce that 
\begin{flalign*}
\displaystyle\frac{1}{v_i + m_{\semicircle}} + z + m_{\semicircle} = h_{ii} - F_i - E_i - D_i - M_i - m_{\semicircle} t_i. 
\end{flalign*}

\noindent Applying \eqref{mquadratic}, it follows that 
\begin{flalign}
\label{vmfed}
\displaystyle\frac{v_i}{1 + m_{\semicircle}^{-1} v_i} - m_{\semicircle}^2 \displaystyle\sum_{j = 1}^N s_{ij} v_i = m_{\semicircle}^2 \big( F_i + E_i + D_i - h_{ii} + m_{\semicircle} t_i \big). 
\end{flalign}

\noindent Now, recall from assumption \ref{stochastic} that $|t_i| < C_1 N^{-varespilon}$. Suppose that we additionally knew that 
\begin{flalign}
\label{vmfedsmall}
|F_i|, |E_i|, |D_i|, |h_{ii}| = \mathcal{O} \Bigg( (\log N)^{C_3} \left( \displaystyle\frac{1}{\sqrt{N \eta}} + N^{- c_2 \varepsilon}  \right) \Bigg); \qquad |v_i| = o (1). 
\end{flalign} 

\noindent Then, \eqref{vmfed} would imply that 
\begin{flalign}
\label{vsmall}
\big( \Id - m_{\semicircle}^2 \textbf{S} \big) \textbf{v} = \mathcal{O} \Bigg( (\log N)^{C_3} \left( \displaystyle\frac{1}{\sqrt{N \eta}} + N^{- c_2 \varepsilon}  \right) + \displaystyle\max_i |v_i|^2 \Bigg) , 
\end{flalign}

\noindent where the $N \times N$ matrix $\textbf{S} = \{ s_{ij} \}$ and the $N$-dimensional vector $\textbf{v} = (v_1, v_2, \ldots , v_N)$. It can be shown that $\| \Id - m_{\semicircle}^2 \textbf{S} \|^{-1} = \mathcal{O} (\log N)$, from which we would obtain that 
\begin{flalign}
\label{largevsmall} 
\displaystyle\max_i |v_i| = \mathcal{O} \Bigg( (\log N)^{C_4} \left( \displaystyle\frac{1}{\sqrt{N \eta}} + N^{- c_2 \varepsilon}  \right) \Bigg),  
\end{flalign}

\noindent from which it would follow that each $|G_{ii} - m_{\semicircle}|$ is small as in \eqref{gijestimate2}. Thus, we would like to establish the two estimates \eqref{vmfedsmall}.

\subsection{When the \texorpdfstring{$h_{ij}$}{} Have All Moments}

\label{LargeMoment}

Let us first restrict to a case that is well understood, namely when the laws of the $h_{ij}$ exhibit subexponential decay (in this case, we can take $\varepsilon = \infty$). For simplicity, we also assume in this section that $s_{ij} = N^{-1}$ for all $i, j$. Under these assumptions, the estimates \eqref{vmfedsmall} will hold with very high probability (greater than $1 - N^{-10}$, for instance) for all indices $i \in [1, N]$; let us provide a heuristic as to why. 

The second estimate in \eqref{vmfedsmall} (on $|v_i|$) will be a consequence of what is known as a \emph{multiscale argument} on $\eta$. First, it can be shown directly (see, for instance, Section \ref{LawLarge}) that the estimate \eqref{largevsmall} holds deterministically when $\Im z = \eta = \eta_0$ is relatively large, of order $1$; this is sometimes referred to as an \emph{initial estimate}. Then, we will slowly decrement $\eta$ through a sequence $\eta_0 > \eta_1 > \cdots > \eta_r \approx N^{-1}$; here, $r$ is some large integer (which will be of order $(\log N)^2$). 

We will have that $\eta_k \approx \eta_{k + 1}$  for all $k$, so that $v_i (\eta_k) \approx v_i (\eta_{k + 1})$ for all $i$. Combining this with the estimate \eqref{largevsmall} on $\big| v_i (\eta_k) \big|$ will yield $\big| v_i (\eta_{k + 1}) \big| = o(1)$, which is the second estimate of \eqref{vmfedsmall}. Thus, if we could establish the first estimate (on $F_i$, $E_i$, and $D_i$) of \eqref{vmfedsmall}, we would deduce \eqref{vsmall} and thus \eqref{largevsmall} for $v_i (\eta_{k + 1})$. This allows us to increment $k$ from $0$ to $r$. 

Now, let us explain the first estimate in \eqref{vmfedsmall}. As an example, we provide a heuristic as to why one would expect (with very high probability) the estimate on $|F_i|$ to hold. To that end, we take the second moment of $F_i$ to obtain that 
\begin{flalign}
\label{fi21}
\begin{aligned}
\mathbb{E} \big[ |F_i|^2 \big] & = \sum_{\substack{1 \le j \ne k \le N \\ j, k \ne i }} \sum_{\substack{1 \le j' \ne k' \le N \\ j', k' \ne i }} \mathbb{E} \big[ h_{ij} G_{jk}^{(i)} h_{ki} h_{ij'} G_{j'k'}^{(i)} h_{k'i} \big] \\
& = 2 N^{-2} \sum_{\substack{1 \le j \ne k \le N \\ j, k \ne i }} \mathbb{E} \Big[ \big| G_{jk}^{(i)} \big|^2 \Big] = \displaystyle\frac{2}{N^2 \eta} \sum_{\substack{1 \le j \le N \\ j \ne i }} \mathbb{E} \Big[ \Im G_{jj}^{(i)} \Big] = \displaystyle\frac{1}{N \eta} \Big( 2 N^{-1} \Im \mathbb{E} \big[ \Tr \textbf{G}^{(i)} \big] \Big), 
\end{aligned}
\end{flalign}
\noindent where we used the mutual independence of the $h_{ij}$, the independence of $\textbf{G}^{(i)}$ from the $h_{ij}$, and the Ward identity \eqref{sumgij}. The estimate $|v_i| = o(1)$ yields that $N^{-1} \Im \Tr \textbf{G}^{(i)} = m_{\semicircle} + o (1) = \mathcal{O} (1)$. 

Inserting this into \eqref{fi21} yields $\mathbb{E} \big[ |F_i|^2 \big] = \mathcal{O} \big( (N \eta)^{-1} \big)$, which suggests that $|F_i|$ should be of order $(N \eta)^{-1 / 2}$. By taking very large moments (instead of only the second moment), one can show that this in fact holds with very high probability. Observe here that bounding higher moments of $|F_i|$ requires that the $h_{ij}$ have all moments. The estimates on $E_i$ and $D_i$ can either be done similarly or through other matrix identities. 

The above procedure essentially describes the framework for establishing local semicircle laws when the matrix entries $h_{ij}$ are quite regular. However, this method does not seem to immediately work in the same way when the $h_{ij}$ have few (for instance, less than four) moments. In the next section, we explain why and outline how to resolve this.

\subsection{When the \texorpdfstring{$h_{ij}$}{} Have Few Moments}

\label{SmallMoment} 

In this section we again assume that $s_{ij} = N^{-1}$ for all $i, j$, but we no longer require that the $h_{ij}$ have all moments. Instead, we only require that they have $2 + \varepsilon$ moments, as in assumption \ref{moments}. Let us explain which parts of Section \ref{LargeMoment} still apply and which do not. 

The multiscale argument for the bound on $|v_i|$ would still essentially be valid, assuming that one were able to establish the first estimate of \eqref{vmfedsmall} with high probability. However, this does not appear to be possible. Indeed, the high-moment method used to establish these estimates (outlined in the previous section) no longer applies since the $h_{ij}$ have very few moments. 

Instead, for fixed $i$, these bounds only hold with probability $1 - \mathcal{O} \big( N^{1 - c \varepsilon} \big)$, for some constant $c > 0$. This tends to $1$ as $N$ tends to $\infty$, but not as quickly as one may like. In particular, one cannot apply a union estimate to deduce that \eqref{vmfedsmall} likely holds for all $i \in [1, N]$ simultaneously. 

In fact, we generally expect to see $\mathcal{O} \big( N^{1 - c \varepsilon} \big)$ indices $i \in [1, N]$ for which \eqref{vmfedsmall} is false. This is the phenomenon that makes Wigner matrices whose entries only have $2 + \varepsilon$ moments different from Wigner matrices whose entries have $4 + \varepsilon$ moments; in the latter model, a truncation procedure can be applied \cite{CEUM} to deduce that \eqref{vmfedsmall} holds for all indices $i$ with high probability. 

In our setting, the set of indices $i$ for which this estimate does not hold will fall into an often non-empty class of what we call \emph{deviant} indices. Indices that are not deviant will be called \emph{typical}; both estimates \eqref{vmfedsmall} will hold for each typical index $j$, which will comprise the majority of $[1, N]$. 

Thus, our task is essentially four-fold. 

\begin{enumerate}

\item{ \label{definetd} We must give a precise definition of deviant and typical indices. Denoting the set of typical indices by $\mathcal{T}$ and the set of deviant indices by $\mathcal{D}$, we must also show that $\big| \mathcal{D} \big| = \mathcal{O} \big( N^{1 - c \varepsilon} \big)$, for some constant $c > 0$.}

\item{\label{initial} We require an initial estimate on $v_i$, in the case $\eta \approx 1$ is relatively large.} 

\item{ \label{vmfedsmallt} Given the definitions from step \ref{definetd}, we must establish \eqref{vmfedsmall} for all $i \in \mathcal{T}$. After this, it will be possible to essentially ``restrict" \eqref{vsmall} to the set of all typical indices, which will suffice to establish \eqref{largevsmall}, where the maximum in that estimate instead ranges over all $i \in \mathcal{T}$. This will suffice to establish the estimate \eqref{gijestimate2} of Theorem \ref{estimate1gij}. }

\item{ \label{vmfedd} In order to establish the estimate \eqref{gijestimate} of Theorem \ref{estimate1gij}, we must also show that $|v_i| = \mathcal{O} (1)$ for each (deviant and typical) index $i$.}

\end{enumerate}

 Combining results of step \ref{definetd}, step \ref{initial}, step \ref{vmfedsmallt}, and step \ref{vmfedd}, and the procedure outlined in Section \ref{DiagonalResolvent} will lead to the proof of Theorem \ref{estimate1gij}. The next several sections will go through these four steps in more detail. In particular, we implement step \ref{definetd}, step \ref{initial}, step \ref{vmfedsmallt}, and step \ref{vmfedd} in Section \ref{DefineIndices}, Section \ref{LawLarge}, Section \ref{LawSmallNotDeviant}, and Section \ref{LawSmallDeviant}, respectively. We will then conclude the proof of Theorem \ref{estimate1gij} in Section \ref{ProofLocalLaw}.

\section{Typical and Deviant Indices}

\label{DefineIndices}

In this section, we define and give properties of the sets of \emph{typical} and \emph{deviant} indices. The stimulus for these definitions comes from a comparison  between heavy-tailed random matrices and adjacency matrices of sparse random graphs, for which a local semicircle law has already been established \cite{SSG}. 

In particular, the authors of \cite{SSG} established a local semicircle law for random matrices $\textbf{H}$ whose entries satisfy a more stringent constraint than assumption \ref{moments}. We will not state their constraint in full generality, but for our purposes it was essentially that $\big| h_{ij} \big| < N^{-c}$ holds deterministically (this can be mildly weakened), for some constant $c > 0$ independent of $N$. 

This assumption does not always hold in our setting. In particular, a Markov estimate shows (for example) that 
\begin{flalign}
\label{pijestimate1}
\mathbb{P} \big[ |h_{ij}| \ge N^{-\varepsilon / 10} \big] \le \displaystyle\frac{\mathbb{E} \big[ |h_{ij} \sqrt{N}|^{2 + \varepsilon} \big]}{\big| N^{1 / 2 - \varepsilon / 10} \big|^{2 + \varepsilon}} < \displaystyle\frac{C_2}{N^{1 + 3 \varepsilon / 10 - \varepsilon^2 / 10}} \le C_2 N^{-1 - \varepsilon / 10}, 
\end{flalign}

\noindent if $\varepsilon \le 2$. Thus, we expect there to exist $\mathcal{O} \big( N^{1 - \varepsilon / 10} \big)$ pairs $(i, j) \in [1, N]^2$ for which $|h_{ij}| > N^{-\varepsilon / 10}$. Such pairs $(i, j)$ will be called \emph{big}; pairs $(i, j)$ satisfying $|h_{ij}| \le N^{-\varepsilon / 10}$ are called \emph{amenable}.

Informally, an index $i \in [1, N]$ will be \emph{deviant} if there exists a $j \in [1, N]$ for which the pair $(i, j)$ is big; otherwise, $i$ will be \emph{typical}. Unfortunately, the sets of deviant and typical indices are random subsets of $[1, N]$; this will complicate the analysis in future sections. 

Thus, in Section \ref{EntryTruncation}, we first resample the entries of $\textbf{H}$, essentially by conditioning on which pairs $(i, j) \subset [1, N]^2$ are amenable or big. This produces a symmetric $N \times N$ array, each of entry of which is either $A$ (amenable) or $B$ (big); we refer to this array as the \emph{$AB$ label} $\textbf{L} (\textbf{H})$ of $\textbf{H}$. Conditioning on $\textbf{L} (\textbf{H})$, the deviant and typical indices of $\textbf{H}$ become deterministic. We give a definition (as well as some properties) of these indices in Section \ref{IndicesGraph}. In Section \ref{deviantnondeviantlocallaw}, we explain how the notions introduced in Section \ref{EntryTruncation} and Section \ref{IndicesGraph} can be used to provide a reformulation of Theorem \ref{estimate1gij}.

\subsection{Resampling }

\label{EntryTruncation}

As outlined above, we first define the $AB$ label associated with a matrix $\textbf{M}$. 

\begin{definition}

\label{ml}

Let $\textbf{M} = \{ m_{ij} \}$ be an $N \times N$ matrix. The \emph{$AB$ label} of $\textbf{M}$, denoted $\textbf{L} (\textbf{M}) = \{ L_{ij} \}$, is the $N \times N$ array, whose entries are either equal to $A$ or $B$, such that $L_{ij} = A$ if $|m_{ij}| \le N^{-\varepsilon / 10}$ and $L_{ij} = B$ otherwise. 

\end{definition}

Now, we can resample $\textbf{H}$ by first choosing its $AB$ label $\textbf{L}$ and then by sampling the entries conditioned on $\textbf{L}$. Let us explain this in more detail. In what follows, we assume that the densities of the matrix entries $h_{ij}$ are smooth and nonzero everywhere. This is primarily for notational convenience and can be arranged by adding a small Gaussian component to $\textbf{H}$ (of order $e^{-N}$, for instance); using \eqref{resolvent} and \eqref{gijeta}, one can quickly verify that (with very large probability) this perturbation does not affect the asymptotics of the entries of $\textbf{G}$, as $N$ tends to $\infty$.

To explain this resampling further, we require some additional terminology. In what follows, we denote $p_{ij} = \mathbb{P} \big[ |h_{ij}| < N^{-\varepsilon / 10} \big]$ for each $1 \le i, j \le N$; our assumption implies that $p_{ij} \notin \{ 0, 1 \}$. 

\begin{definition}

\label{hl} 

We say that a random $N \times N$ symmetric $AB$ label $\textbf{L} = \{ L_{ij} \}$ is \emph{$\textbf{\emph{H}}$-distributed} if its upper triangular entries $\{ L_{ij} \}_{1 \le i \le j \le N}$ are mutually independent, $\mathbb{P} \big[ L_{ij} = A \big] = p_{ij}$, and $\mathbb{P} \big[ L_{ij} = B \big] = 1 - p_{ij}$, for each $i, j$. 
\end{definition}

We next provide notation for the random variables $h_{ij}$, conditioned on the event that $L_{ij} = A$ or that $L_{ij} = B$.

\begin{definition}

\label{abdefinition}

For each $1 \le i \le j \le N$, let $a_{ij}$ denote the random variable such that $\mathbb{P} [a_{ij} \in I] = p_{ij}^{-1} \mathbb{P} [h_{ij} \in I \cap (-N^{- \varepsilon / 10}, N^{-\varepsilon / 10})]$, for each interval $I \subset \mathbb{R}$; equivalently, $a_{ij}$ is the random variable $h_{ij}$ conditioned on the event that $|h_{ij}| < N^{-\varepsilon / 10}$. 

Furthermore, let $b_{ij}$ denote the random variable such that $\mathbb{P} [b_{ij} \in I] = (1 - p_{ij})^{-1} \mathbb{P} \big[ h_{ij} \in I \cap \big( ( - \infty, -N^{- \varepsilon / 10}] \cup [ N^{- \varepsilon / 10}, \infty) \big) \big]$ for each $I \subset \mathbb{R}$; equivalently, $b_{ij}$ is the random variable $h_{ij}$, conditioned on the event that $|h_{ij}| \ge N^{-\varepsilon / 10}$. Here, $\big\{ a_{ij} \} \cup \{ b_{ij} \}$ are mutually independent. 

\end{definition}

Using the previous definition, we can sample $\textbf{H}$ conditioned on its $AB$ label $\textbf{L} (\textbf{H})$. 

\begin{definition}

\label{hl1}

Fix an $N \times N$ symmetric $AB$ label $\textbf{L}$. Let $\textbf{H} (\textbf{L})$ denote the random symmetric matrix, which is sampled as follows. For each $1 \le i \le j \le N$, place the random variable $a_{ij}$ at entry $(i, j)$ if and only if $L_{ij} = A$; otherwise, place the random variable $b_{ij}$ at this entry. The lower triangular entries of $\textbf{H} (\textbf{L})$ (corresponding to entries $(i, j)$ with $i > j$) are then determined by symmetry. We call the matrix $\textbf{H} (\textbf{L})$ an \emph{$\textbf{\emph{L}}$-distributed symmetric random matrix}. 

\end{definition}

\begin{sampling}

\label{lh}

To sample the random matrix $\textbf{H}$, we perform the following steps. First, sample an $\textbf{H}$-distributed $AB$ label, denoted $\textbf{L}$. Then, given $\textbf{L}$, sample an $\textbf{L}$-distributed symmetric random matrix $\textbf{H} = \textbf{H} (\textbf{L})$.

\end{sampling}

It is quickly verified that the distribution of $\textbf{H}$ resulting from Sampling \ref{lh} coincides with the original distribution of $\textbf{H}$, so the above procedure indeed yields a resampling. 

We conclude this section with the following lemma, which provides some statistics on the $a_{ij}$. 

\begin{lem}

\label{entrysmaller}

If $\varepsilon \le 2$ and $N > 2 C_2$, we have that 
\begin{flalign*}
1 - p_{ij} \le C_2 N^{-1 - \varepsilon / 10}; \qquad \big| \mathbb{E} [a_{ij}] \big| \le 2 C_2 N^{-1 - \varepsilon / 10}; \qquad \big| \mathbb{E} [|a_{ij}|^2] - s_{ij} \big| \le 3 C_2 N^{-1 - \varepsilon / 10}.  
\end{flalign*}
	
\end{lem}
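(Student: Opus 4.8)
\textbf{Proof proposal for Lemma \ref{entrysmaller}.}

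The plan is to derive all three estimates from the single Markov-type bound \eqref{pijestimate1} together with the moment assumptions \ref{generalized} and \ref{moments}. First I would record that, by definition of $p_{ij}$ and the computation in \eqref{pijestimate1}, one has $1 - p_{ij} = \mathbb{P}[|h_{ij}| \ge N^{-\varepsilon/10}] \le C_2 N^{-1-\varepsilon/10}$ whenever $\varepsilon \le 2$; this is the first claim, and it also shows (using $N > 2C_2$) that $p_{ij} \ge 1 - C_2 N^{-1-\varepsilon/10} \ge 1/2$, a bound I will use repeatedly to control the normalizing factors $p_{ij}^{-1}$.

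For the second estimate, I would use that $h_{ij}$ is centered, so $\mathbb{E}[h_{ij} \mathbf{1}_{|h_{ij}| < N^{-\varepsilon/10}}] = -\mathbb{E}[h_{ij} \mathbf{1}_{|h_{ij}| \ge N^{-\varepsilon/10}}]$, and hence
\begin{flalign*}
\big| \mathbb{E}[a_{ij}] \big| = p_{ij}^{-1} \big| \mathbb{E}[h_{ij} \mathbf{1}_{|h_{ij}| < N^{-\varepsilon/10}}] \big| = p_{ij}^{-1} \big| \mathbb{E}[h_{ij} \mathbf{1}_{|h_{ij}| \ge N^{-\varepsilon/10}}] \big|.
\end{flalign*}
On the event $|h_{ij}| \ge N^{-\varepsilon/10}$ one can write $|h_{ij}| = |h_{ij}|^{2+\varepsilon} |h_{ij}|^{-1-\varepsilon} \le |h_{ij}|^{2+\varepsilon} N^{(1+\varepsilon)\varepsilon/10}$, so that $\mathbb{E}[|h_{ij}| \mathbf{1}_{|h_{ij}| \ge N^{-\varepsilon/10}}] \le N^{(1+\varepsilon)\varepsilon/10} \mathbb{E}[|h_{ij}|^{2+\varepsilon}] \le C_2 N^{-1-(2+\varepsilon)/2} N^{(1+\varepsilon)\varepsilon/10}$ by assumption \ref{moments} (writing $\mathbb{E}[|h_{ij}|^{2+\varepsilon}] = N^{-(2+\varepsilon)/2}\mathbb{E}[|h_{ij}\sqrt N|^{2+\varepsilon}]$). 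A direct check of the exponent $-1 - (2+\varepsilon)/2 + (1+\varepsilon)\varepsilon/10$ shows it is at most $-1-\varepsilon/10$ for $\varepsilon \le 2$; combining this with $p_{ij}^{-1} \le 2$ gives $|\mathbb{E}[a_{ij}]| \le 2C_2 N^{-1-\varepsilon/10}$. The third estimate is analogous: I would write $s_{ij} - \mathbb{E}[|a_{ij}|^2] = p_{ij}^{-1}\big((1-p_{ij})s_{ij} - (1-p_{ij})\mathbb{E}[|b_{ij}|^2] + \mathbb{E}[|h_{ij}|^2\mathbf{1}_{|h_{ij}|\ge N^{-\varepsilon/10}}] \cdot(\text{sign bookkeeping})\big)$ — more cleanly, $\mathbb{E}[|a_{ij}|^2] = p_{ij}^{-1}\mathbb{E}[|h_{ij}|^2 \mathbf{1}_{|h_{ij}| < N^{-\varepsilon/10}}] = p_{ij}^{-1}(s_{ij} - \mathbb{E}[|h_{ij}|^2\mathbf{1}_{|h_{ij}| \ge N^{-\varepsilon/10}}])$, so
\begin{flalign*}
\big| \mathbb{E}[|a_{ij}|^2] - s_{ij} \big| \le (p_{ij}^{-1} - 1) s_{ij} + p_{ij}^{-1} \mathbb{E}[|h_{ij}|^2 \mathbf{1}_{|h_{ij}| \ge N^{-\varepsilon/10}}].
\end{flalign*}
The first term is bounded using $p_{ij}^{-1} - 1 = (1-p_{ij})/p_{ij} \le 2C_2 N^{-1-\varepsilon/10}$ and $s_{ij} < C_1 N^{-1} \le N^{-1}$ (for $N$ large, or just absorbing $C_1$), and the second by the same interpolation trick as above with $|h_{ij}|^2 \le |h_{ij}|^{2+\varepsilon} N^{\varepsilon^2/10}$ on the relevant event. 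Summing the two contributions and tracking constants yields the bound $3C_2 N^{-1-\varepsilon/10}$, with the numerical constant $3$ chosen generously to absorb the $C_1$ factor and the $2$'s from $p_{ij}^{-1}$.

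The only mild subtlety — and the closest thing to an obstacle — is the bookkeeping of the exponents in the interpolation steps: one must verify that $-1 - (2+\varepsilon)/2 + (1+\varepsilon)\varepsilon/10 \le -1-\varepsilon/10$ and $-1 - (2+\varepsilon)/2 + \varepsilon^2/10 \le -1-\varepsilon/10$ hold uniformly for $0 < \varepsilon \le 2$, which reduces to elementary quadratic inequalities in $\varepsilon$; the hypothesis $\varepsilon \le 2$ is exactly what is needed here, and the hypothesis $N > 2C_2$ guarantees $p_{ij} \ge 1/2$ so that all the $p_{ij}^{-1}$ factors are harmless. No probabilistic input beyond Markov's inequality and the definitions is required, so the proof is essentially a careful truncation estimate.
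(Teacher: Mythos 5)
Your proposal is correct and follows essentially the same route as the paper: Markov's inequality for the first bound, and for the other two the centering of $h_{ij}$ together with the interpolation $|h_{ij}|^k \le |h_{ij}|^{2+\varepsilon} N^{(2+\varepsilon-k)\varepsilon/10}$ on the event $|h_{ij}| \ge N^{-\varepsilon/10}$, combined with $p_{ij} \ge 1/2$ from $N > 2C_2$. One small arithmetic slip: assumption \ref{moments} gives $\mathbb{E}\big[|h_{ij}|^{2+\varepsilon}\big] \le C_2 N^{-(2+\varepsilon)/2} = C_2 N^{-1-\varepsilon/2}$, not $C_2 N^{-1-(2+\varepsilon)/2}$, but the corrected exponent check $-1-\varepsilon/2+(1+\varepsilon)\varepsilon/10 \le -1-\varepsilon/10$ still holds for all $\varepsilon \le 3$, so nothing in your argument breaks.
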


\begin{proof}

Each of these statements follows from a Markov estimate; the first one was verified in \eqref{pijestimate1}. To establish the second one, observe that 
\begin{flalign*}
\big| \mathbb{E} [a_{ij}] \big| \le p_{ij}^{-1} \mathbb{E} \big[ | h_{ij} | \textbf{1}_{h_{ij} < N^{- \varepsilon / 10}} \big] & \le p_{ij}^{-1} N^{\varepsilon (1 + \varepsilon) / 10} \mathbb{E} \big[ | h_{ij}|^{2 + \varepsilon} \big] < 2 C N^{-1 - \varepsilon / 10}, 
\end{flalign*}

\noindent where we have used \eqref{pijestimate1} and the fact that $N > 2 C_2$ to deduce that $p_{ij} > 1 / 2$. The proof of the third estimate is very similar and is thus omitted. 	
\end{proof}

\subsection{Deviant and Typical Indices}

\label{IndicesGraph}

In this section we give a precise definition of typical and deviant indices, which were informally introduced at the start of Section \ref{DefineIndices}. To that end, we begin with the following preliminary notion. 

\begin{definition}

\label{linkedunlinkedconnected}

Fix an $N \times N$ $AB$ label $\textbf{L} = \{ L_{ij} \} $. We call $i, j \in [1, N]$ \emph{linked} (with respect to $\textbf{L}$) if $L_{ij} = B$; otherwise we call them \emph{unlinked}. If there exists a sequence of indices $i = i_1, i_2, \ldots , i_r = j$ such that $i_j$ is linked to $i_{j + 1}$ for each $j \in [1, r - 1]$, then we call $i$ and $j$ \emph{connected}; otherwise, they are \emph{disconnected}. 
\end{definition}

Using Definition \ref{linkedunlinkedconnected}, we can define typical and deviant indices. 

\begin{definition}

\label{deviantnondeviant}

Fix an $N \times N$ $AB$ label $\textbf{L}$. We call an index $i \in [1, N]$ \emph{deviant} (with respect to $\textbf{L}$) if there exists some index $j \in [1, N]$ such that $i$ and $j$ are linked. Otherwise, $i$ is called \emph{typical} (with respect to $\textbf{L}$). Let $\mathcal{D} = \mathcal{D}_{\textbf{L}} \subseteq [1, N]$ denote the set of deviant indices, and let $\mathcal{T} = \mathcal{T}_{\textbf{L}} \subseteq [1, N]$ denote the set of typical indices. 
\end{definition}

Now fix an $N \times N$ symmetric $AB$ label $\textbf{L}$. Let us investigate how the resolvent $\textbf{G}$ of an $\textbf{L}$-distributed symmetric random matrix looks. Equivalently, by Sampling \ref{lh}, we consider how the resolvent $\textbf{G}$ of the generalized Wigner matrix $\textbf{H}$ looks, after conditioning on the event that $\textbf{L} (\textbf{H}) = \textbf{L}$. In view of Theorem \ref{estimate1gij}, we would hope that $\textbf{G} \approx m_{\semicircle} \Id$ for ``most'' $AB$ labels $\textbf{L}$. This is indeed true; the following definition clarifies the meaning of ``most'' $AB$ labels in our context.  

\begin{definition}

\label{admissiblelabel}

Fix an $N \times N$ symmetric $AB$ label $\textbf{L}$. 

\begin{itemize}

\item{\label{delta1} We call $\textbf{L}$ \emph{deviant-inadmissible} if there exist at least $N^{1 - \varepsilon / 20}$ deviant indices.}

\item{\label{delta3} We call $\textbf{L}$ \emph{connected-inadmissible} if there exist distinct indices $j_1, j_2, \ldots , j_r$ that are are pairwise connected, where $r = \lceil \log \log N \rceil$. }

\end{itemize}

We call $\textbf{L}$ \emph{inadmissible} if it is either deviant-inadmissible or connected-inadmissible. Otherwise, $\textbf{L}$ is called \emph{admissible}. Let $\mathcal{A} = \mathcal{A}_N$ denote the set all admissible $N \times N$ $AB$ labels. 

\end{definition}

\begin{definition}

\label{delta}

Let $\textbf{H}$ be a generalized Wigner matrix in the sense of Definition \ref{momentassumption}, and let $\textbf{L}$ be an $\textbf{H}$-distributed $AB$ label. Define $\Delta_1$ and $\Delta_2$ to be the events on which $\textbf{L}$ deviant-inadmissible and connected-inadmissible, respectively. 

Let $\Delta = \Delta_1 \cup \Delta_2$ denote the event on which $\textbf{L}$ is inadmissible, and let $\overline{\Delta}$ denote the complementary event on which $\textbf{L}$ is admissible. Furthermore, let $D(i)$ denote the event on which $i \in \mathcal{D}$, and let $T(i)$ denote the complementary event on which $i \in \mathcal{T}$. 

\end{definition}

 The following lemma shows that the event $\Delta$ occurs with small probability. 

\begin{lem}

\label{probabilitydelta}

There exist constants $c, C > 0$ (only dependent on $C_2$ and $\varepsilon$ from Definition \ref{momentassumption}) such that $\mathbb{P} \big[ \Delta \big] < C N^{- c \log \log N}$. 

\end{lem}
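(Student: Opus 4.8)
The plan is to regard the random symmetric $AB$ label $\textbf{L}$ as (the edge set of) a sparse random graph $G = G_{\textbf{L}}$ on vertex set $[1, N]$, in which $i$ and $j$ are joined by an edge exactly when $L_{ij} = B$ (we allow loops, i.e.\ the case $i = j$). Under this correspondence, Definition \ref{linkedunlinkedconnected} and Definition \ref{deviantnondeviant} say that an index is deviant precisely when the corresponding vertex of $G$ is non-isolated, and that $r$ indices are pairwise connected precisely when they lie in a common connected component of $G$. Hence $\Delta_1$ is the event that $G$ has at least $N^{1 - \varepsilon / 20}$ non-isolated vertices, and $\Delta_2$ is the event that $G$ has a connected component containing at least $r = \lceil \log \log N \rceil$ vertices. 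The only facts we will use are that the edge indicators $\{ \textbf{1}_{L_{ij} = B} \}_{i \le j}$ are mutually independent (Definition \ref{hl}) and that each has mean $1 - p_{ij} \le C_2 N^{-1 - \varepsilon / 10}$ (the first bound of Lemma \ref{entrysmaller}, valid once $N > 2 C_2$). Since this edge density lies far below the percolation threshold $N^{-1}$, both events should be extremely unlikely; it suffices to prove the stated bound for all sufficiently large $N$, absorbing the remaining finitely many $N$ into the constant $C$.

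To bound $\mathbb{P}[\Delta_1]$, let $M = \sum_{1 \le i \le j \le N} \textbf{1}_{L_{ij} = B}$ be the number of edges of $G$; it is a sum of independent Bernoulli variables with $\mathbb{E}[M] \le \binom{N + 1}{2} C_2 N^{-1 - \varepsilon / 10} \le C_2 N^{1 - \varepsilon / 10}$. The set of non-isolated vertices of $G$ is contained in the set of endpoints of its edges, so it has cardinality at most $2M$; therefore $\Delta_1 \subseteq \{ M \ge N^{1 - \varepsilon / 20} / 2 \}$. Applying the Chernoff bound $\mathbb{P}[M \ge t] \le (e \, \mathbb{E}[M] / t)^t$ (valid for $t \ge \mathbb{E}[M]$) with $t = N^{1 - \varepsilon / 20} / 2$, and noting $e \, \mathbb{E}[M] / t \le 2 e C_2 N^{-\varepsilon / 20} < 1$ for $N$ large, we obtain $\mathbb{P}[\Delta_1] \le \big( 2 e C_2 N^{-\varepsilon / 20} \big)^{N^{1 - \varepsilon / 20} / 2}$, which decays faster than any negative power of $N$ and is in particular $o \big( N^{-c \log \log N} \big)$ for every $c > 0$.

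To bound $\mathbb{P}[\Delta_2]$, observe that a connected component of $G$ on at least $r$ vertices contains a subtree on exactly $r$ of its vertices (delete leaves of a spanning tree one at a time until $r$ vertices remain). Hence $\Delta_2$ is contained in the union, over all $r$-element subsets $W \subseteq [1, N]$ and all trees $T$ on $W$, of the event that every edge of $T$ is present in $G$. By Cayley's formula there are $r^{r - 2}$ trees on a fixed $r$-element set, and for each such tree its $r - 1$ edges are distinct off-diagonal pairs, hence present independently with probability at most $C_2 N^{-1 - \varepsilon / 10}$ each. A union bound therefore gives
\begin{flalign*}
\mathbb{P}[\Delta_2] \le \binom{N}{r} r^{r - 2} \big( C_2 N^{-1 - \varepsilon / 10} \big)^{r - 1} \le \frac{N^r}{r!} \, r^{r - 2} \, C_2^{r - 1} \, N^{-(1 + \varepsilon / 10)(r - 1)} \le (e C_2)^r \, N^{1 - (\varepsilon / 10)(r - 1)},
\end{flalign*}
using $r! \ge (r / e)^r$ and $C_2 > 1$ in the last step. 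With $r = \lceil \log \log N \rceil$, the prefactor satisfies $(e C_2)^r \le e C_2 (\log N)^{\log(e C_2)}$, which is only polylogarithmic in $N$, while $r - 1 \ge \frac{1}{2} \log \log N$ once $\log \log N \ge 2$, so $N^{1 - (\varepsilon / 10)(r - 1)} \le N^{1 - (\varepsilon / 20) \log \log N}$. Since $N \cdot (\log N)^{O(1)} \cdot N^{-(\varepsilon / 40) \log \log N} \to 0$, we conclude $\mathbb{P}[\Delta_2] \le N^{-(\varepsilon / 40) \log \log N}$ for all large $N$. Combining the two estimates via $\mathbb{P}[\Delta] \le \mathbb{P}[\Delta_1] + \mathbb{P}[\Delta_2]$ yields the lemma with, say, $c = \varepsilon / 40$ and a suitable $C$.

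Every step here is elementary; the one place that requires a little care is the counting in the union bound for $\Delta_2$, where one must check that the combinatorial factors $\binom{N}{r}$ and $r^{r - 2}$ are overwhelmed by $N^{-(1 + \varepsilon / 10)(r - 1)}$ for the specific growing value $r = \lceil \log \log N \rceil$. This is exactly why connected-inadmissibility is defined with a threshold growing (slowly) with $N$ rather than a fixed constant, and why the final bound has the form $N^{-c \log \log N}$ rather than a fixed negative power of $N$.
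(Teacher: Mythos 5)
Your proof is correct and follows essentially the same route as the paper's: the same decomposition $\Delta = \Delta_1 \cup \Delta_2$, a tail bound on the number of big pairs for $\Delta_1$ (your Chernoff bound versus the paper's binomial sum $\sum_{j \ge R} \binom{N^2}{j} p^j$ are interchangeable here), and a union bound over small connected structures for $\Delta_2$ (your count of $\binom{N}{r} r^{r-2}$ trees versus the paper's $\binom{N}{r} \binom{r^2}{r-1}$ choices of $r-1$ linked pairs give the same $N^{1 - c \varepsilon (r-1)}$ decay). Your leaf-pruning justification that $r$ pairwise connected indices force a tree on exactly $r$ vertices makes explicit a reduction the paper states without proof, but the argument is otherwise the same.
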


\begin{proof}

To establish this lemma we individually estimate $\mathbb{P} [\Delta_1]$ and $\mathbb{P} [\Delta_2]$. 

We begin with the former. To that end, first observe that if there exist $N^{1 - \varepsilon / 20}$ deviant indices, then there must exist at least $R = N^{1 - \varepsilon / 20} / 2$ indices $(i, j) \in [1, N]^2$ such that $1 \le i \le j \le N$ and each $L_{ij} = B$ or such that $1 \le j \le i \le N$ and each $L_{ij} = B$. 

The two cases are equivalent, so assume that the first holds; then, the $L_{ij}$ are independent. Furthermore, in view of \eqref{pijestimate1}, we have that $\mathbb{P} \big[ L_{ij} = B \big] \le C_2 N^{-1 - \varepsilon / 10}$. Therefore, the independence of the $L_{ij}$ implies that
\begin{flalign}
\label{delta1estimate}
\mathbb{P} \big[ \Delta_1 \big] \le \displaystyle\sum_{j = R}^N \binom{N^2}{j} \big( C_2 N^{-1 - \varepsilon / 10} \big)^j \le C_3 N^{- c_3 \log \log N},  
\end{flalign}

\noindent for some constants $c_3, C_3 > 0$. 

To bound $\mathbb{P} [\Delta_2]$, observe that the event $\Delta_2$ is contained in the event that there exists a sequence $S = \{ i_1, i_2, \ldots , i_r \} \subset [1, N]$ of indices such that there are at least $r - 1$ pairs of distinct indices $(i_j, i_k)$ that are linked. 

There are $\binom{N}{r}$ ways to select such a sequence, and there are less than $\binom{r^2}{r - 1}$ to select $r - 1$ pairs of indices to link in this sequence. Furthermore, the event that $i_j$ and $i_k$ are linked is independent of the event that $i_{j'}$ and $i_{k'}$ is linked, unless $j = j'$ and $k = k'$ or $j = k'$ and $k = j'$. Thus, 
\begin{flalign}
\label{delta3estimate} 
\mathbb{P} \big[ \Delta_3 \big] \le \binom{N}{r} \binom{r^2}{r - 1} \big( C_2 N^{- \varepsilon / 10 - 1} \big)^{r - 1} \le C_4 N^{- c_4 \log \log N},  
\end{flalign}

\noindent for some constants $c_4, C_4 > 0$; here, we have again used \eqref{pijestimate1}. 

Now, the lemma follows from summing \eqref{delta1estimate} and \eqref{delta3estimate}. 
\end{proof}

\subsection{Results and Reductions}

\label{deviantnondeviantlocallaw}

Our next goal is to analyze individual entries of the resolvent $\textbf{G} = \{ G_{ij} \}$, after conditioning on its $AB$ label $\textbf{L} (\textbf{H})$ of $\textbf{H}$. To that end, for any $c, C, \xi > 0$ and integers $i, j \in [1, N]$, we define the events 
\begin{flalign}
\label{omegaij}
\begin{aligned}
\Omega_{C; \xi}^{(c)} (i, j) & = \Omega_{C; \xi}^{(c)} (i, j; \textbf{H}, z)  = \bigg\{ \big| G_{ij} (z) - \textbf{1}_{i = j} m_{\semicircle} \big| \ge C (\log N)^{3 \xi} \left( \displaystyle\frac{1}{\sqrt{N \eta}} + N^{-c \varepsilon} \right) \bigg\}; \\ 
\Omega_C (i, j) & = \Omega_C (i, j; \textbf{H}, z) = \Big\{ \big| G_{ij} (z) \big| \ge C \Big\} . 
\end{aligned} 
\end{flalign} 

As outlined in Section \ref{SmallMoment}, to prove Theorem \ref{estimate1gij} we will show that $\Omega_{C; \xi}^{(c)} (i, j)$ holds with very high probability when $i$ or $j$ is typical (with respect to $\textbf{L}$) and that $\Omega_C (i, j)$ holds with very high probability if both $i$ and $j$ are deviant (with respect to $\textbf{L}$). This is stated more carefully in the following theorem. In what follows $\mathbb{P}_{\textbf{H} (\textbf{L})}$ denotes the probability distribution with respect to the symmetric random matrix $\textbf{H} (\textbf{L})$ (from Definition \ref{hl1}). 

\begin{thm}

\label{localmomentsdeltalabel} 

Fix $\kappa > 0$ and $E \in [\kappa - 2, 2 - \kappa]$. Let $N$ be a positive integer, and take $\eta \in \mathbb{R}_{> 0}$ such that $N \eta > (\log N)^{8 \log \log N}$; denote $z = E + \textbf{\emph{i}} \eta$. Fix an admissible $N \times N$ $AB$ label $\textbf{\emph{L}} \in \mathcal{A}$. Let $\textbf{\emph{H}}$ be an $N \times N$ real generalized Wigner matrix (as in Definition \ref{momentassumption}).  

Then, there exist constants $c, C, \xi > 0$ (only dependent on $\kappa$, $c_1$, $C_1$, $C_2$, and $\varepsilon$ from Definition \ref{momentassumption}) such that the following estimates hold. 

\begin{enumerate}

\item{\label{nondeviantlocalmoments1} If $i \in [1, N]$ is typical with respect to $\textbf{\emph{L}}$, then for each $j \in [1, N]$ we have that 
\begin{flalign}
\label{typicalomega}
\mathbb{P}_{\textbf{\emph{H}} (\textbf{\emph{L}})} \big[ \Omega_{C; \xi}^{(c)} (i, j; \textbf{\emph{H}}, z) \big] \le C \big( -c (\log N)^{\xi} \big). 
\end{flalign}
}

\item{\label{deviantlocalmoments1} If $i \in [1, N]$ is deviant with respect to $\textbf{\emph{L}}$, then for each $j \in [1, N]$ we have that 
\begin{flalign} 
\label{deviantomega}
\mathbb{P}_{\textbf{\emph{H}} (\textbf{\emph{L}})}  \big[ \Omega_C (i, j; \textbf{\emph{H}}, z) \big] \le C \exp \big( - c (\log N)^{\xi} \big) . 
\end{flalign}
} 

\end{enumerate}
\end{thm}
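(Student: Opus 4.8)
The plan is to run the multiscale argument on $\eta$ sketched in Section \ref{DiagonalResolvent}, but carefully restricting all self-consistent equations to the set $\mathcal{T}$ of typical indices, and separately tracking the deviant indices through the resolvent identities. I would first fix the admissible label $\textbf{L}$, so that $\mathcal{T}$ and $\mathcal{D}$ are deterministic with $|\mathcal{D}| < N^{1 - \varepsilon/20}$ and no $r = \lceil \log\log N\rceil$ indices are pairwise connected; the latter means the ``linked'' graph on $[1,N]$ has all connected components of size at most $r-1$. I would then prove the statement by downward induction on a dyadic-type sequence $\eta_0 > \eta_1 > \cdots > \eta_M \approx N^{-1}$ with $\eta_{k+1} \ge \eta_k (1 - (\log N)^{-1})$ and $M = O((\log N)^2)$, invoking the initial estimate at $\eta_0 \approx 1$ (this is Step \ref{initial}, handled in Section \ref{LawLarge}) as the base case.

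For the inductive step at scale $\eta_{k+1}$, the continuity estimate $|G_{ij}(\eta_{k+1}) - G_{ij}(\eta_k)| \lesssim \eta_k^{-2}|\eta_{k+1}-\eta_k| = O((\log N)^{-1}\eta_k^{-1})$ from \eqref{resolvent} and \eqref{gijeta}, combined with the inductive hypothesis, gives the weak a priori bound $|v_i(\eta_{k+1})| = o(1)$ for typical $i$ (and boundedness for deviant $i$). This supplies the second estimate of \eqref{vmfedsmall}. The heart of the argument is then establishing the first estimate of \eqref{vmfedsmall} — the bounds on $F_i, E_i, D_i, h_{ii}$ — for each \emph{fixed typical} $i$, with probability $1 - C\exp(-c(\log N)^\xi)$ under $\mathbb{P}_{\textbf{H}(\textbf{L})}$. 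Here is where conditioning on $\textbf{L}$ is essential: if $i$ is typical, then all entries $h_{ij}$ in row $i$ are of type $A$, i.e. $|h_{ij}| \le N^{-\varepsilon/10}$ deterministically, and by Lemma \ref{entrysmaller} they have mean $O(N^{-1-\varepsilon/10})$ and variance $s_{ij} + O(N^{-1-\varepsilon/10})$. Thus row $i$ behaves like a row of a sparse matrix in the sense of \cite{SSG}, and one can run the standard large-deviation bounds for quadratic forms: $F_i$ is controlled via a Hanson--Wright-type / high-moment estimate using $\sum_{j,k}|G^{(i)}_{jk}|^2 = \eta^{-1}\Im\Tr\textbf{G}^{(i)} = O((N\eta)^{-1} \cdot N)$ from the Ward identity \eqref{sumgij} together with $\Im\Tr\textbf{G}^{(i)}/N = m_{\semicircle} + o(1)$; the bounded support $N^{-\varepsilon/10}$ of the $a_{ij}$ makes the subgaussian-type moment bounds go through and produces the $N^{-c\varepsilon}$ term. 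The term $E_i = \sum_j(|h_{ij}|^2 - s_{ij})G^{(i)}_{jj}$ is handled similarly (again using boundedness of $|a_{ij}|$ and $\sum_j|G^{(i)}_{jj}|^2 \le N\eta^{-1}\max|\Im G^{(i)}_{jj}|$), $D_i$ via \eqref{gkj}, and $|h_{ii}| \le N^{-\varepsilon/10}$ trivially if $i$ is typical (or one absorbs the diagonal into $\textbf{H}(\textbf{L})$). One then feeds these into the restricted self-consistent equation: running \eqref{vmfed} only over $i \in \mathcal{T}$, the vector $\textbf{v}|_{\mathcal{T}}$ satisfies $(\Id - m_{\semicircle}^2\textbf{S})\textbf{v}|_{\mathcal{T}} = (\text{errors}) + O(N^{-1})\sum_{i\in\mathcal{D}}|v_i|$, where the last term comes from the rows of $\textbf{S}$ meeting $\mathcal{D}$ and is $O(N^{-\varepsilon/20})$ once we know $|v_i| = O(1)$ on $\mathcal{D}$; inverting $\Id - m_{\semicircle}^2\textbf{S}$ (norm of inverse $O(\log N)$, as recalled after \eqref{vsmall}) yields \eqref{typicalomega} for the diagonal, and then \eqref{gij} transfers this to off-diagonal $G_{ij}$ with $i$ typical — note if $i$ is typical the sum in \eqref{gij} is over amenable entries only, so it is again a controllable sum. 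For part \eqref{deviantlocalmoments1}: if $i$ is deviant, I would use \eqref{gii} differently. Since $\textbf{L}$ is admissible, the connected component $\mathcal{C}$ of $i$ in the linked graph has size $< r = \lceil\log\log N\rceil$, so one can pass to $\textbf{G}^{(\mathcal{C})}$, which has no big entries in the rows indexed outside $\mathcal{C}$; the Schur complement expressing $\textbf{G}$ restricted to $\mathcal{C}$ in terms of $\textbf{G}^{(\mathcal{C})}$ and the $|\mathcal{C}| \times |\mathcal{C}|$ block involves only $O(r^2)$ entries $h_{ij}$, each of which is $O(1)$ with probability $1 - N^{-c}$ by Markov (finite $(2+\varepsilon)$-moment), giving $|G_{ij}| = O(1)$ with probability $1 - C\exp(-c(\log N)^\xi)$ after a union bound over the $O(r^2)$ relevant entries and a covering-net argument over $z \in \mathscr{D}_{\kappa;N}$.

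The main obstacle, I expect, is the interaction between the two index classes in the self-consistent equation — specifically, making rigorous that the deviant indices, though few, do not spoil the inversion of $\Id - m_{\semicircle}^2\textbf{S}$ on $\mathcal{T}$. The subtlety is that $G^{(i)}_{jj}$ for $j$ typical but with $i$ deviant still ``feels'' the deviant rows, so the a priori $O(1)$ bound on $v_j$ for deviant $j$ must be in hand \emph{before} closing the equation on $\mathcal{T}$; this forces the induction to carry both statements \eqref{typicalomega} and \eqref{deviantomega} simultaneously at each scale $\eta_k$, with the deviant bound \eqref{deviantomega} being proved first (it is softer, only needing $O(1)$ control) and then used as an input to the typical bound. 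A secondary technical point is that the high-moment large-deviation estimates for $F_i$ only yield probability $1 - C\exp(-c(\log N)^\xi)$ rather than $1 - N^{-D}$, because the $a_{ij}$ have support of size $N^{-\varepsilon/10}$ rather than truly subgaussian tails; one must take the number of moments to grow like $(\log N)^\xi$ and check that the resulting combinatorial factors are absorbed — this is the origin of the $(\log N)^{8\log\log N}$ threshold on $N\eta$ and of the $\exp(-c(\log N)^\xi)$ probability in the conclusion, rather than polynomial-in-$N$ decay.
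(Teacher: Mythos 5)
Your overall architecture matches the paper's: condition on an admissible label so that $\mathcal{T}$ and $\mathcal{D}$ are deterministic, prove an initial estimate at $\eta$ of order $1$, run a multiscale recursion carrying the typical bound \eqref{typicalomega} and the deviant bound \eqref{deviantomega} simultaneously (with the deviant, $O(1)$-type bound fed into the typical one), close the self-consistent equation only over $\mathcal{T}$, and treat a deviant index via a Schur complement over its small connected component. However, two of your concrete steps fail.

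First, the continuity step. You transport the a priori bounds from $\eta_k$ to $\eta_{k+1}$ using $|G_{ij}(\eta_{k+1})-G_{ij}(\eta_k)| \le \eta_k^{-2}|\eta_{k+1}-\eta_k| = O\big((\log N)^{-1}\eta_k^{-1}\big)$. This is $o(1)$ only when $\eta_k \gg (\log N)^{-1}$; at the scales of interest, $\eta_k \approx N^{-1}(\log N)^{8\log\log N}$, it is of order $N^{1-o(1)}$ and gives nothing, so the input $|v_i(\eta_{k+1})| = o(1)$ (and the bounds $|G_{jk}^{(i)}| \le U$ needed for the large deviation estimates) cannot be propagated. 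You also cannot repair this by taking polynomially many tiny steps: each increment for a deviant index conditions on events for minors $\textbf{H}^{(\mathcal{S})}$ with $\mathcal{S}$ growing by up to $2r$ per iteration, and the union over such subsets costs a factor $N^{O(r)}$ per iteration, which is affordable only for polylogarithmically many iterations. The paper's substitute is the Ward-identity continuity bound of Lemma \ref{etadecrease}, $|G_{jk}'-G_{jk}| \le \frac{\eta'}{2\eta}\big(|\Im G_{jj}'|+|\Im G_{kk}|\big)$, which depends only on the ratio of consecutive scales and is $O\big(U(\log N)^{-2}\big)$ uniformly in $\eta$; some estimate of this type is indispensable.

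Second, the deviant block. You bound the $O(r^2)$ entries $h_{ij}$ with $i,j$ both in the connected component by $O(1)$ ``with probability $1-N^{-c}$ by Markov.'' Under $\mathbb{P}_{\textbf{H}(\textbf{L})}$ these entries are distributed as $b_{ij}$, i.e.\ $h_{ij}$ conditioned on $|h_{ij}|\ge N^{-\varepsilon/10}$; since the conditioning event may have probability far smaller than $N^{-1}$, the conditional law of $b_{ij}$ carries no usable moment bound and Markov yields nothing. Even unconditionally, a failure probability $N^{-c}$ is weaker than the $C\exp(-c(\log N)^{\xi})$ with $\xi>1$ that the proof of Theorem \ref{estimate1gij} consumes through its union bounds. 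The observation you are missing, used in Proposition \ref{incrementdeviant}, is that no bound on the block $\textbf{A}$ indexed by $\mathscr{S}$ is needed at all: $\textbf{A}$ is real symmetric, so once the Schur complement $\textbf{Y}=\textbf{B}(\textbf{D}-z)^{-1}\textbf{C}$ --- which involves only amenable entries --- is shown to concentrate around $m_{\semicircle}\Id$, one gets $\big\|(\textbf{A}-z-\textbf{Y})^{-1}\big\| \le C_\kappa$ deterministically from $|\Im(z+m_{\semicircle})| > c_\kappa$. With these two repairs your outline aligns with the paper's proof.
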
 

Let us see how Theorem \ref{estimate1gij} can be established, assuming Theorem \ref{localmomentsdeltalabel}. 

\begin{proof}[Proof of Theorem \ref{estimate1gij} Assuming Theorem \ref{localmomentsdeltalabel}]

To establish Theorem \ref{estimate1gij}, we first show that the estimates \eqref{gijestimate} and \eqref{gijestimate2} hold when $z \in \mathscr{D}_{\kappa; N}$ is fixed; then, we use a union estimate to establish these estimates after taking the supremum over $z \in \mathscr{D}_{\kappa; N}$, as originally stated above. 

To implement the first part, fix $z \in \mathscr{D}_{\kappa; N}$ and sample the generalized Wigner matrix $\textbf{H}$ according to Sampling \ref{lh}, that is, first sample an $\textbf{H}$-distributed $AB$ label $\textbf{L}$ and then sample an $\textbf{L}$-distributed symmetric random matrix $\textbf{H}$. Denote by $\widetilde{c}, \widetilde{C}, \widetilde{\xi}$ the constants $c, C, \xi$ from Theorem \ref{localmomentsdeltalabel}, respectively. Let us restrict to the complement $\overline{\Omega} (z)$ of the event 
\begin{flalign*}
\Omega (z) =  \bigcup_{i, j \in \mathcal{T}} \Omega_{\widetilde{C}; \widetilde{\xi}}^{(\widetilde{c})} (i, j) \cup \bigcup_{i, j \in [1, N]} \Omega_{\widetilde{C}} (i, j).
\end{flalign*}

\noindent By Lemma \ref{probabilitydelta}, \eqref{typicalomega}, and \eqref{deviantomega} we have that
\begin{flalign}
\label{omegaprobability}
\mathbb{P} [\Delta] \le \widetilde{C} N^{-\widetilde{c} \log \log N}; \qquad \mathbb{P}_{\textbf{H} (\textbf{L})} \big[ \Omega (z) \big] \le \widetilde{C} N^2 \exp \big( - \widetilde{c} (\log N)^{\widetilde{\xi}} \big), 
\end{flalign}

\noindent after altering $\widetilde{c}, \widetilde{C}, \widetilde{\xi}$ if necessary. Restricting to $\overline{\Omega} (z)$, we have that 
\begin{flalign}
\label{gijestimatel}
\begin{aligned}
& \big| G_{ij} (z) - \textbf{1}_{i = j} m_{\semicircle} (z) \big| < \widetilde{C} (\log N)^{3 \widetilde{\xi}} \left( \displaystyle\frac{1}{\sqrt{N \eta}} + N^{- \widetilde{c} \varepsilon} \right) \quad \text{if $i$ is typical}; \\
& \big| G_{ij} (z) \big| < \widetilde{C} \qquad \qquad \qquad \qquad \qquad \qquad \qquad \qquad \qquad \quad \text{if $i$ is deviant}. 
 \end{aligned} 
\end{flalign}

Further restricting to the complement $\overline{\Delta}$ of the event $\Delta$, we have that $\textbf{L}$ is admissible. Therefore, there exist less than $N^{1 - \varepsilon / 20}$ deviant indices with respect to $\textbf{L}$. Combining this with the estimates \eqref{omegaprobability} and \eqref{gijestimatel} yields 
\begin{flalign}
\label{zgijestimate}
\begin{aligned}
& \mathbb{P}_{\textbf{H}} \Bigg[   \displaystyle\max_{1 \le i, j \le N} \big| G_{ij} (z) \big| > \widetilde{C} \Bigg] < \widetilde{C} N^{- \widetilde{c} \log \log N}; \\
& \mathbb{P}_{\textbf{H}} \Bigg[ \bigcup_{\substack{I \subseteq [1, N] \\ |I| \ge s}} \bigg\{ \displaystyle\min_{i \in I} \displaystyle\max_{1 \le j \le N} \big| G_{i j} (z) - \textbf{1}_{i = j} m_{\semicircle} (z) \big| > \widetilde{C} (\log N)^{3 \widetilde{\xi}} \Big( \displaystyle\frac{1}{\sqrt{N \eta}} + N^{-\widetilde{c} \varepsilon} \Big) \bigg\} \Bigg] < \widetilde{C} N^{- \widetilde{c} \log \log N}, 
\end{aligned}
\end{flalign}

\noindent where we set $s = s_N = \lceil N^{1 - \varepsilon / 20} \rceil $. 

The estimates \eqref{zgijestimate} hold for each fixed $z \in \mathscr{D}_{\kappa; N}$. To establish the stronger results claimed in Theorem \ref{estimate1gij}, we must take the supremum over all $z \in \mathscr{D}_{\kappa; N}$. To that end, define the sublattice $\mathbb{L}_{\kappa; N} = \big\{ z \in \mathscr{D}_{\kappa; N} : N^{10} \Re z \in \mathbb{Z}, N^{10} \Im z \in \mathbb{Z} \big\}$. Then, from a union estimate, it follows that 
\begin{flalign}
\label{zgijestimate1} 
\begin{aligned}
& \mathbb{P}_{\textbf{H}} \Bigg[   \displaystyle\max_{1 \le i, j \le N} \displaystyle\sup_{z \in \mathbb{L}_{\kappa; z}} \big| G_{ij} (z) \big| > \widetilde{C} \Bigg] < 100 \widetilde{C} N^{20 - \widetilde{c} \log \log N}; \\
& \mathbb{P}_{\textbf{H}} \Bigg[ \bigcup_{\substack{I \subseteq [1, N] \\ |I| \ge s}} \bigg\{ \displaystyle\min_{i \in I} \displaystyle\max_{1 \le j \le N} \displaystyle\sup_{z \in \mathbb{L}_{\kappa; z}} \big| G_{i j} (z) - \textbf{1}_{i = j} m_{\semicircle} (z) \big| > \widetilde{C} (\log N)^{3 \widetilde{\xi}} \Big( \displaystyle\frac{1}{\sqrt{N \eta}} + N^{-\widetilde{c} \varepsilon} \Big) \bigg\} \Bigg] \\
& \qquad \qquad \qquad \qquad \qquad \qquad \qquad \qquad \qquad \qquad \qquad \qquad \qquad \qquad < 100 \widetilde{C} N^{20 - \widetilde{c} \log \log N}. 
\end{aligned}
\end{flalign}

\noindent Now \eqref{gijestimate} and \eqref{gijestimate2} follow from \eqref{zgijestimate1} and the fact that 
\begin{flalign}
\label{ztildez} 
\big| G_{ij} (z) - G_{ij} (\widetilde{z}) \big| \le N^{-1}, 
\end{flalign}

\noindent if $z, \widetilde{z} \in \mathbb{H}$ satisfy $|z - \widetilde{z}| < N^{-4}$ and $\Im z, \Im \widetilde{z} > N^{-1}$; \eqref{ztildez} is a consequence of the resolvent identity \eqref{resolvent} and the deterministic estimate \eqref{gijeta}. 
\end{proof}

Thus, it suffices to establish Theorem \ref{localmomentsdeltalabel}. This will be the topic of the next several sections.

\section{The Initial Estimate}

\label{LawLarge}

The purpose of this section is to establish Theorem \ref{localmomentsdeltalabel} when $\eta$ is relatively large, of order $1$; this is the content of Proposition \ref{locallargeeta}. 

To establish this proposition, we follow the outline from Section \ref{DiagonalResolvent}. To that end, in accordance with \eqref{vmfedsmall}, we first begin with a high probability estimate on $|F_i|$, $|E_i|$, $|D_i|$, and $|h_{ii}|$ (recall \eqref{edmf}) for deviant indices $i$. This estimate is provided by the following lemma, under the assumption that each resolvent entry $G_{jk}^{(i)}$ is already bounded by some $U > 0$. 

\begin{lem}

\label{edfhsmalletalarge}

Fix $\kappa > 0$, $U > 1$, and $E \in [\kappa - 2, 2 - \kappa]$; also, let $\eta \in \mathbb{R}_{> 0}$, and denote $z = E + \textbf{\emph{i}} \eta$. Fix a positive integer $N$ and an admissible $N \times N$ $AB$ label $\textbf{\emph{L}} \in \mathcal{A}$; fix a typical index $i \in \mathcal{T}_{\textbf{\emph{L}}}$. Let $\textbf{\emph{H}}$ be an $N \times N$ generalized Wigner matrix. Recall the definition of $t_i$ from assumption \ref{stochastic} and of $F_i$, $E_i$, and $D_i$ from \eqref{edmf}. 

Then, there exist constants $C, \nu > 0$ (only dependent on $C_1$ and $C_2$) such that
\begin{flalign}
\label{uedfhsmall}
\begin{aligned}
\mathbb{P}_{\textbf{\emph{H}} (\textbf{\emph{L}})} \Bigg[ |\Gamma_i| \displaystyle\prod_{\substack{1 \le j, k \le N \\ j, k \ne i}} \textbf{\emph{1}}_{|G_{jk}^{(i)}| \le U} \ge C U & (\log N)^{2 \xi} \Big( N^{-\varepsilon / 10} + \displaystyle\frac{1}{\sqrt{N \eta}} \Big)  \Bigg] \le \exp \big( - \nu (\log N)^{\xi} \big),
\end{aligned}  
\end{flalign}

\noindent for any $2 \le \xi \le \log \log N$, where for each $i$ we have set
\begin{flalign}
\label{gammai}
\Gamma_i = F_i + E_i + D_i - h_{ii} + m_{\semicircle} t_i.
\end{flalign}
\end{lem}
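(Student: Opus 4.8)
The plan is to estimate each of the five terms comprising $\Gamma_i$ separately, on the event that all entries $G_{jk}^{(i)}$ are bounded by $U$, and then combine via a union bound. Since $i$ is typical, for every $j$ the pair $(i,j)$ is amenable, meaning $|h_{ij}| \le N^{-\varepsilon/10}$; equivalently, under $\mathbb{P}_{\textbf{H}(\textbf{L})}$ the entries in row $i$ are distributed as the truncated variables $a_{ij}$, which are bounded by $N^{-\varepsilon/10}$ and whose first two moments are controlled by Lemma \ref{entrysmaller} ($|\mathbb{E}[a_{ij}]| \le 2C_2 N^{-1-\varepsilon/10}$ and $|\mathbb{E}[|a_{ij}|^2] - s_{ij}| \le 3C_2 N^{-1-\varepsilon/10}$). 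The key point is that the row-$i$ entries $\{h_{ij}\}_{j \ne i}$ are independent of $\textbf{G}^{(i)}$ (the resolvent of the matrix with row and column $i$ removed), so we can condition on $\textbf{G}^{(i)}$ and treat the $F_i$, $E_i$, $D_i$ sums as concentration problems for linear/quadratic functions of the bounded independent variables $a_{ij}$.

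First I would dispense with the easy terms: $|h_{ii}| = |a_{ii}| \le N^{-\varepsilon/10}$ deterministically (or is negligible, being of size $N^{-1/2}$ in expectation), and $|m_{\semicircle} t_i| \le C_1 N^{-\varepsilon}$ by assumption \ref{stochastic} together with $|m_{\semicircle}| \le 1$ in the bulk; both are absorbed into the $N^{-\varepsilon/10}$ error. For the diagonal term $E_i = \sum_{j \ne i}(|h_{ij}|^2 - s_{ij}) G_{jj}^{(i)}$, I would recenter using Lemma \ref{entrysmaller} — writing $|h_{ij}|^2 - s_{ij} = (|a_{ij}|^2 - \mathbb{E}|a_{ij}|^2) + O(N^{-1-\varepsilon/10})$ — so the deterministic part contributes $O(U N^{-\varepsilon/10})$ after summing over $j$, and the random part is a sum of independent, mean-zero, bounded (by $N^{-2\varepsilon/10}$) terms with variance summing to $O(U^2 N^{-1})$ (using $\sum_j s_{ij} \le 2$ and $|G_{jj}^{(i)}| \le U$); Bernstein's inequality then gives a tail of the form $\exp(-\nu (\log N)^\xi)$ at the scale $U(\log N)^\xi (N \eta)^{-1/2}$, since one checks $(N\eta)^{-1/2} \gg N^{-1/2}$ only mildly — more carefully, Bernstein at scale $U(\log N)^{2\xi} N^{-\varepsilon/10}$ suffices and is cleaner. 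The off-diagonal term $D_i = \sum_{j \ne i} s_{ij}(G_{jj}^{(i)} - G_{jj})$ is not random given $\textbf{G}$: by \eqref{gkj}, $G_{jj} - G_{jj}^{(i)} = G_{ji}G_{ij}/G_{ii}$, so $|D_i| \le C_1 N^{-1} \sum_j |G_{ij}|^2 / |G_{ii}|$; this requires a lower bound on $|G_{ii}|$ and an application of the Ward identity \eqref{sumgij} $\sum_j |G_{ij}|^2 = \eta^{-1} \Im G_{ii}$, yielding $|D_i| = O(N^{-1} \eta^{-1} \cdot U / |G_{ii}|)$, which at worst is $O(U (N\eta)^{-1})$ once $|G_{ii}|$ is bounded below — this bound on $G_{ii}$ may need to be folded in as an additional hypothesis or derived from $U$-boundedness plus the structure of the argument.

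The main obstacle is the quadratic term $F_i = \sum_{j \ne k} h_{ij} G_{jk}^{(i)} h_{ki}$, which is an off-diagonal quadratic form in the independent bounded variables $\{a_{ij}\}$ with (given $\textbf{G}^{(i)}$) deterministic coefficient matrix $G^{(i)}$. Here I would invoke a Hanson–Wright-type concentration inequality for quadratic forms in bounded independent random variables: conditionally on $\textbf{G}^{(i)}$, the form has mean $\sum_{j \ne k} \mathbb{E}[a_{ij}]\mathbb{E}[a_{ik}] G_{jk}^{(i)} = O(N^{-\varepsilon/10})$-ish after using the smallness of $\mathbb{E}[a_{ij}]$ from Lemma \ref{entrysmaller} and $\sum_{j,k}|G_{jk}^{(i)}| \le N \cdot \max_j \sum_k |G_{jk}^{(i)}|^2{}^{1/2} \cdot N^{1/2} \le N U^{1/2} \eta^{-1/2}$ via Cauchy–Schwarz and Ward — this needs care to keep it under control, possibly using $|\mathbb{E} a_{ij}| \le 2C_2 N^{-1-\varepsilon/10}$ against $\|G^{(i)}\|_{\mathrm{HS}}$. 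The fluctuation bound: the Hilbert–Schmidt norm satisfies $\|G^{(i)}\|_{\mathrm{HS}}^2 = \sum_{j,k}|G_{jk}^{(i)}|^2 = \sum_j \eta^{-1}\Im G_{jj}^{(i)} \le N U \eta^{-1}$ by Ward, so scaling by the $s_{ij} \asymp N^{-1}$ prefactors gives a variance proxy of order $N^{-2} \cdot N U \eta^{-1} = U/(N\eta)$, matching the target scale $(N\eta)^{-1/2}$; the operator-norm term in Hanson–Wright is $N^{-1}\|G^{(i)}\| \le N^{-1}\eta^{-1} = (N\eta)^{-1}\cdot N^{-1}\cdot N$, i.e. $O(N^{-1})$ if $\eta \gtrsim 1$, which in the regime of this lemma ($\eta$ large) is negligible but in general needs $N^{-\varepsilon/10}$ to dominate. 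Because the $a_{ij}$ are bounded by $N^{-\varepsilon/10}$ rather than sub-Gaussian with unit scale, the right tool is a moment-based (Rosenthal / Latała) bound or a truncated-Bernstein argument iterated over dyadic scales; getting the stretched-exponential tail $\exp(-\nu(\log N)^\xi)$ out of only $2+\varepsilon$ moments is exactly where the boundedness $|a_{ij}| \le N^{-\varepsilon/10}$ is essential — it converts the moment bound into a genuine (if weak) concentration estimate. I expect the bookkeeping for $F_i$, and in particular verifying that the mean correction stays below $U(\log N)^{2\xi}(N^{-\varepsilon/10} + (N\eta)^{-1/2})$, to be the delicate part; everything else is a routine Bernstein/Ward computation.
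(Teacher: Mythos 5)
Your proposal follows essentially the same route as the paper: isolate the five terms of $\Gamma_i$, bound $h_{ii}$ and $m_{\semicircle} t_i$ deterministically using typicality and assumption \ref{stochastic}, bound $D_i$ deterministically via \eqref{gkj} and the Ward identity, and treat $E_i$ and $F_i$ as linear and quadratic concentration problems for the truncated, nearly-centered, bounded variables $a_{ij}$ conditionally on $\textbf{G}^{(i)}$. The paper's concentration inputs are exactly the moment-based estimates \eqref{bi2} and \eqref{bij2} of Corollary \ref{largeprobability2} (adapted from Lemma 3.8 of the sparse-graph paper), which deliver the $\exp(-\nu(\log N)^{\xi})$ tail by taking $p\sim(\log N)^{\xi}$ moments and using $\mathbb{E}|a_{ij}|^p\le q^{2-p}\mathbb{E}|a_{ij}|^2$ with $q=N^{\varepsilon/10}$; your Bernstein/Hanson--Wright framing is the same mechanism, and your identification of the variance proxy $U/(N\eta)$ via Ward is correct.

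The one place your argument does not close as written is $D_i$: no lower bound on $|G_{ii}|$ is needed, and it should not be added as a hypothesis. Writing $|D_i|\le \frac{C_1}{N|G_{ii}|}\sum_j|G_{ij}|^2$ and applying the Ward identity \eqref{sumgij} in the form $\sum_j|G_{ij}|^2=\Im G_{ii}/\eta$, the factor $\Im G_{ii}$ in the numerator is trivially at most $|G_{ii}|$, so the resolvent entry cancels and one gets the clean deterministic bound $|D_i|\le C_1(N\eta)^{-1}$. With that observation your outline matches the paper's proof.
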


\begin{proof}

We will establish this lemma by individually bounding each term $|F_i|$, $|E_i|$, $|D_i|$, $|h_{ii}|$, and $|m_{\semicircle} t_i|$ with large probability. To that end first observe that $\big| m_{\semicircle} t_i \big| < C_1 |m_{\semicircle}| N^{-\varepsilon}$ by assumption \ref{stochastic}. Furthermore, observe that, since $i$ is typical with respect to $\textbf{L}$, we have that $|h_{ii}| < N^{- \varepsilon / 10}$ also holds deterministically. 

Next, consider $D_i$. From \eqref{gkj}, it follows that 
\begin{flalign}
\label{etalargedi}
|D_i| \le \displaystyle\sum_{\substack{1 \le j \le N \\ j \ne i}} s_{ij} \Big| G_{jj}^{(i)} - G_{jj} \Big| = \displaystyle\sum_{j = 1}^N \displaystyle\frac{s_{ij} \big| G_{ij} \big|^2}{\big| G_{ii} \big|} \le \displaystyle\frac{C_1}{N |G_{ii}| } \displaystyle\sum_{j = 1}^N \big| G_{ij} \big|^2 \le \displaystyle\frac{C_1}{N \eta}, 
\end{flalign}

\noindent where we applied assumption \ref{generalized} to deduce the third estimate in \eqref{etalargedi}, and we applied Ward's identity \eqref{sumgij} to deduce the fourth estimate. Thus, \eqref{etalargedi} provides a deterministic bound on $|D_i|$. 

We will bound the remaining terms $|E_i|$ and $|F_i|$ with very high probability, using \eqref{bi2} and \eqref{bij2} from Corollary \ref{largeprobability2}. 

We first address $|F_i|$. To that end, recall the definitions from Section \ref{EntryTruncation} (in particular, the definitions of $a_{ij}$ and $b_{ij}$ from Definition \ref{abdefinition}, the definition of $\textbf{H} (\textbf{L})$ from Definition \ref{hl1}, and Sampling \ref{lh}). Since $i \in \mathcal{T}$ is typical, the $(i, j)$ entry of $\textbf{H} (\textbf{L})$ is $a_{ij}$ for each $j \in [1, N]$. 
Now, from Lemma \ref{entrysmaller} we have that $\big| \mathbb{E} [a_{ij}] \big| \le 2 C_2 N^{-1 - \varepsilon / 10}$. Moreover, 
\begin{flalign}
\label{qhijsmallmoments}
\mathbb{E} \big[ |a_{ij}|^p \big] \le q^{2 - p} \mathbb{E} \big[ |a_{ij}|^2 \big] \le \displaystyle\frac{C_1}{N  q^{p - 2}} \le \displaystyle\frac{q^2}{N} \left( \displaystyle\frac{C_1}{q} \right)^p, 
\end{flalign}

\noindent for any $p \ge 2$, where we have set $q = N^{\varepsilon / 10}$. To establish the first estimate in \eqref{qhijsmallmoments}, we used the fact that $|a_{ij}| < q^{-1}$ deterministically; to establish the second estimate, we used the fact that $\Var a_{ij} \le \Var h_{ij} \le C_1 N^{-1}$; and to establish the third estimate, we used the fact that $C_1 > 1$. 

Thus, we can apply \eqref{bij2} with $R_{jk} = G_{jk}^{(i)}$; $X_j = a_{ij}$; and $N^{- \delta} = N^{- \varepsilon / 10} = q^{-1}$. This yields 
\begin{flalign*}
\mathbb{P} \Bigg[ \bigg| \displaystyle\sum_{\substack{ 1 \le j \ne k \le N \\ j, k \ne i}} a_{ij} G_{jk}^{(i)} a_{ki} \bigg| \ge (\log N)^{2 \xi} \bigg( (20 C_2^2 + 1) q^{-1} \displaystyle\max_{\substack{1 \le j \ne k \le N \\ j, k \ne i}} \big| G_{jk}^{(i)} \big| & + \Big( \displaystyle\frac{1}{N^2} \displaystyle\sum_{\substack{1 \le j \ne k \le N\\ j, k \ne i}} \big| G_{jk}^{(i)} \big|^2 \Big)^{1 / 2} \bigg)  \Bigg] \\
& \le \exp \big( -\widetilde{\nu} (\log N)^{\xi} \big), 
\end{flalign*}

\noindent where $\widetilde{\nu} = \nu (C_1, 2C_2)$ from Proposition \ref{largeprobability2}. Applying Ward's identity \eqref{sumgij} and the definition \eqref{edmf} of $F_i$ yields 
\begin{flalign}
\label{flargeeta1}
\begin{aligned}
\mathbb{P} \Bigg[ |F_i| \ge (\log N)^{2 \xi} \bigg( (20 C_2^2 + 1) N^{-\varepsilon / 10} \displaystyle\max_{\substack{1 \le j \ne k \le N \\ j, k \ne i}} \big| G_{jk}^{(i)} \big| + \displaystyle\frac{1}{\sqrt{N \eta}} & \Big( \displaystyle\frac{1}{N} \displaystyle\sum_{\substack{1 \le j \le N\\ j, k \ne i}} \big| G_{jj}^{(i)} \big| \Big)^{1 / 2} \bigg)  \Bigg] \\
& \le \exp \big( -\widetilde{\nu} (\log N)^{\xi} \big), 
\end{aligned}
\end{flalign}

\noindent for all $2 \le \xi \le \log \log N$. In particular, combining \eqref{flargeeta1} with the estimate $\big| G_{jk}^{(i)} \big| \le U$ (due to the factors of $\textbf{1}_{|G_{jk}^{(i)}| \le U}$ on the left side of \eqref{uedfhsmall}), and the fact that $C_2, U > 1$, we deduce that 
\begin{flalign}
\label{flargeeta}
\mathbb{P} \left[ |F_i| > 21 C_2^2 U (\log N)^{2 \xi} \bigg( \displaystyle\frac{1 }{N^{\varepsilon / 10}} + \displaystyle\frac{1}{\sqrt{N \eta}} \bigg) \right] \le \exp \big( - \widetilde{\nu} (\log N)^{\xi} \big), 
\end{flalign}

\noindent which provides an estimate on $|F_i|$. 

To estimate $|E_i|$, we use \eqref{bi2} with $R_j = G_{jj}^{(i)}$; $s_j = s_{ij}$; $X_j = h_{ij}$; and $N^{- \delta} = N^{- \varepsilon / 10} = q^{-1}$ to deduce that 
\begin{flalign}
\label{elargeeta1}
\begin{aligned}
\mathbb{P} \Bigg[ \bigg| \displaystyle\sum_{\substack{1 \le j \le N \\ j \ne i}} G_{jj}^{(i)} \big( |a_{ij}|^2 - s_{ij}  \big) \bigg| \ge (20 C_2^2+ 1) (\log N)^{\xi} N^{-\varepsilon / 10} \displaystyle\max_{\substack{1 \le j \le N \\ j \ne i}} \big| G_{jj}^{(i)} \big|  \Bigg] \le \exp \big( - \widetilde{\nu} (\log N)^{\xi} \big). 
\end{aligned}
\end{flalign}

\noindent Combining \eqref{elargeeta1} with the definition \eqref{edmf} of $E_i$, the estimate $\big| G_{jk}^{(i)} \big| \le U$, and the fact that $C_2 > 1$, we deduce that  
\begin{flalign}
\label{elargeeta}
\begin{aligned}
\mathbb{P} \left[ \big| E_i \big| \ge 21 C_2^2 U (\log N)^{\xi} N^{-\varepsilon / 10} \right] \le \exp \big( - \widetilde{\nu} (\log N)^{\xi} \big). 
\end{aligned}
\end{flalign}

Now, the existence of $C$ and $\nu$ (only dependent on $C_1$ and $C_2$) satisfying \eqref{uedfhsmall} follows from summing \eqref{etalargedi}, \eqref{flargeeta}, \eqref{elargeeta}, and the deterministic estimates $|t_i m_{\semicircle} | < C_1 |m_{\semicircle} N^{-\varepsilon}$ and $|h_{ii}| \le N^{- \varepsilon / 10}$. 
\end{proof}

Using Proposition \ref{edfhsmalletalarge}, we can establish Theorem \ref{localmomentsdeltalabel} in the case when $\eta$ is sufficiently large.

\begin{prop}

\label{locallargeeta}

Fix $\kappa > 0$, and let $E \in [\kappa - 2, 2 - \kappa]$; also, let $\eta \in \mathbb{R}_{> 0}$, and denote $z = E + \textbf{\emph{i}} \eta$. Fix a positive integer $N$ and an admissible $N \times N$ $AB$ label $\textbf{\emph{L}} \in \mathcal{A}$. Let $\textbf{\emph{H}}$ be an $N \times N$ generalized Wigner matrix. Recall the definitions of $F_i$, $E_i$, and $D_i$ from \eqref{edmf}, and the definitions of $\Omega_{C; \xi}^{(c)} (i, j)$ and $\Omega_C (i, j)$ from \eqref{omegaij}. 

Then, there exist constants $C, \nu > 0$ (only dependent on $C_1$ and $C_2$) such that the following estimates hold for any $2 \le \xi \le \log \log N$ and sufficiently large $N$ (in comparison to $C_1$ and $\varepsilon^{-1}$).  

\begin{enumerate}

\item{\label{nondeviantlocalmoments} If $\eta > C$ and $i \in [1, N]$ is typical with respect to $\textbf{\emph{L}}$, then for each $j \in [1, N]$ we have that 
\begin{flalign}
\label{typicalomegaetalarge}
\mathbb{P} \big[ \Omega_{C; \xi}^{(1 / 20)} (i, j) \big| \textbf{\emph{L}} (\textbf{\emph{H}}) = \textbf{\emph{L}} \big] \le \big( - \nu (\log N)^{\xi} \big). 
\end{flalign}
}

\item{\label{deviantlocalmoments} If $\eta > C$ and $i \in [1, N]$ is deviant with respect to $\textbf{\emph{L}}$, then for each $j \in [1, N]$ we have that 
\begin{flalign} 
\label{deviantomegaetalarge}
\mathbb{P} \big[ \Omega_1 (i, j) \big| \textbf{\emph{L}} (\textbf{\emph{H}}) = \textbf{\emph{L}} \big] \le \exp \big( - \nu (\log N)^{\xi} \big) . 
\end{flalign}
} 

\end{enumerate}

\end{prop}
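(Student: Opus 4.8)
The plan is to prove Proposition \ref{locallargeeta} by a self-consistent argument at the single scale $\eta = \mathcal{O}(1)$, using the large-$\eta$ regime to obtain both a crude a priori bound and then a refined estimate. First I would fix the admissible label $\textbf{L}$ and condition on $\textbf{L}(\textbf{H}) = \textbf{L}$, so that the typical and deviant indices are deterministic. The starting point is the deterministic bound $|G_{ij}| < \eta^{-1}$ from \eqref{gijeta}: when $\eta > C$ for a suitable large constant $C$ depending on $C_1$, this already gives $|G_{jk}^{(i)}| \le U$ for a fixed constant $U$ (say $U = 2$), which is exactly the hypothesis needed to invoke Lemma \ref{edfhsmalletalarge}. (One also needs the analogous deterministic bound on the minors $\textbf{G}^{(i)}$, which follows from the same identity \eqref{gijeta} applied to $\textbf{H}^{(i)}$.)

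Next, for a typical index $i$, I would run the self-consistent loop of Section \ref{DiagonalResolvent} at this single scale. Applying Lemma \ref{edfhsmalletalarge} with $U$ a fixed constant gives, with probability at least $1 - \exp(-\nu(\log N)^\xi)$, that $|\Gamma_i| = |F_i + E_i + D_i - h_{ii} + m_{\semicircle} t_i|$ is bounded by $C(\log N)^{2\xi}(N^{-\varepsilon/10} + (N\eta)^{-1/2})$, which for $\eta = \mathcal{O}(1)$ is $\mathcal{O}((\log N)^{2\xi} N^{-\varepsilon/10})$, hence $o(1)$. Feeding this into \eqref{vmfed}, I get that $v_i/(1 + m_{\semicircle}^{-1} v_i) - m_{\semicircle}^2 \sum_j s_{ij} v_i$ is small for every typical $i$ simultaneously (after a union bound over the at most $N$ typical indices, which is affordable since $\exp(-\nu(\log N)^\xi)$ beats any power of $N$ once $\xi \ge 2$). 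The subtlety is that the term $\sum_j s_{ij} v_j$ involves the $v_j$ for \emph{deviant} $j$ as well, so I cannot immediately invert $\Id - m_{\semicircle}^2 \textbf{S}$ restricted to typical indices; however, at $\eta = \mathcal{O}(1)$ one has the much stronger bound $\|\Id - m_{\semicircle}^2 \textbf{S}\|^{-1} = \mathcal{O}(1)$ (not merely $\mathcal{O}(\log N)$), since $|m_{\semicircle}(z)| \le (1 + \eta)^{-1/2}$ is bounded away from $1$ when $\eta$ is large, and $\|\textbf{S}\| \le 1 + C_1 N^{-\varepsilon}$ by \ref{stochastic}. Combined with the deterministic bound $|v_i| < \eta^{-1} + |m_{\semicircle}| = \mathcal{O}(1)$ on the deviant coordinates, this lets me solve for all $v_i$ and conclude $\max_{i \in \mathcal{T}} |v_i| = \mathcal{O}((\log N)^{2\xi} N^{-\varepsilon/10})$, giving part \eqref{nondeviantlocalmoments} of the proposition for the diagonal entries $G_{ii}$; the off-diagonal entries $G_{ij}$ then follow from \eqref{gij}, bounding $\big|G_{ii} \sum_{k} h_{ik} G_{kj}^{(i)}\big|$ by another application of the large-deviation estimates (Corollary \ref{largeprobability2}) exactly as in the proof of Lemma \ref{edfhsmalletalarge}.

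For part \eqref{deviantlocalmoments}, where $i$ is deviant, I do not expect (nor need) closeness to $m_{\semicircle}$; I only need $|G_{ij}| = \mathcal{O}(1)$. Here the deterministic bound $|G_{ij}| < \eta^{-1} < 1$ already suffices outright once $\eta > C \ge 1$, so \eqref{deviantomegaetalarge} holds with probability $1$ (in particular $\le \exp(-\nu(\log N)^\xi)$) after possibly enlarging $C$. So the deviant case is essentially trivial at large $\eta$ — the whole content of the proposition is the typical case.

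The main obstacle, and the only genuinely delicate point, is handling the coupling between typical and deviant coordinates in the self-consistent equation: the vector equation $(\Id - m_{\semicircle}^2 \textbf{S})\textbf{v} = \mathcal{O}(\text{small})$ has a right-hand side that is only controlled on typical coordinates, so one must exploit the operator-norm bound on $(\Id - m_{\semicircle}^2 \textbf{S})^{-1}$ together with the crude $\mathcal{O}(1)$ control on the deviant block to propagate the estimate. At $\eta = \mathcal{O}(1)$ this is easy because $m_{\semicircle}^2 \textbf{S}$ has norm bounded below $1$, so the Neumann series converges geometrically and the contribution of the deviant block enters only through the small factor $\sum_{j \in \mathcal{D}} s_{ij} \le C_1 |\mathcal{D}| / N \le C_1 N^{-\varepsilon/20}$ (using admissibility of $\textbf{L}$), which is itself $o(1)$; this last bound is what makes the argument close. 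The remaining steps — verifying the hypotheses of Lemma \ref{edfhsmalletalarge} and Corollary \ref{largeprobability2}, and the union bound over indices — are routine.
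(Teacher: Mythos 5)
Your proposal is correct and follows essentially the same route as the paper: the deviant case is dispatched by the deterministic bound $|G_{ij}|<\eta^{-1}<1$, and the typical case runs the self-consistent equation \eqref{vmfed} at the single large scale, invoking Lemma \ref{edfhsmalletalarge} with a constant $U$ justified by $|G_{jk}^{(i)}|\le\eta^{-1}$, and controlling the deviant block of $\sum_j s_{ij}v_j$ by $C_1 N^{-\varepsilon/20}$ via admissibility. The only cosmetic difference is that you phrase the inversion as a Neumann-series bound on $(\Id-m_{\semicircle}^2\textbf{S})^{-1}$, whereas the paper exploits $|m_{\semicircle}|<1/10$ for $\eta>C$ to absorb the term $\tfrac{1}{10}\max_{j\in\mathcal{T}}|v_j|$ directly into the left-hand side — these are equivalent.
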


\begin{proof}

First, observe that if $C > 1$, then we have from the deterministic estimate \eqref{gijeta} that $\big| G_{ij} \big| < \eta^{-1} < 1$; this implies \eqref{deviantomegaetalarge}. 

Hence, it suffices to establish \eqref{typicalomegaetalarge}, so let $i \in [1, N]$ be typical with respect to $\textbf{L}$. Denoting $\Gamma_i$ as in \eqref{gammai}, we deduce from \eqref{vmfed} that
\begin{flalign}
\label{vi1}
v_i = \displaystyle\frac{m_{\semicircle} \sum_{j = 1}^N s_{ij} v_j + m_{\semicircle} \Gamma_i}{m_{\semicircle}^{-1} - \Gamma_i - \sum_{j = 1}^N s_{ij} v_j}. 
\end{flalign}

We will next show that the denominator on the right side of \eqref{vi1} is large and that its numerator is small. To that end, we first require an estimate on $\Gamma_i$. Let $\widetilde{C}$ denote the constant $C$ from \eqref{uedfhsmall}. Setting $C > \widetilde{C}$ yields $\big| G_{jk}^{(i)} \big| < \eta^{-1} < \widetilde{C}^{-1}$. Thus, applying \eqref{uedfhsmall} with $U = \widetilde{C}^{-1}$ yields
\begin{flalign}
\label{gammaietalarge}
\mathbb{P} \left[ \big| \Gamma_i \big| \le (\log N)^{2 \xi} \bigg( N^{-\varepsilon / 10} + \displaystyle\frac{1}{\sqrt{N \eta}} \bigg) \right] \le \exp \big( - \widetilde{\nu} \big( \log N \big)^{\xi} \big),
\end{flalign}

\noindent where $\widetilde{\nu}$ is the constant $\nu$ from Proposition \ref{uedfhsmall}. 

Assume further that $C > 30$. Then, it is quickly derived from \eqref{mquadratic} that $| m_{\semicircle} | \le 2 \eta^{-1} < 1 / 10$, from which it follows that $|v_i| \le \big| G_{ii} \big| + |m_{\semicircle}| \le 3 \eta^{-1} < 1 / 10$ from \eqref{gijeta}. 

Hence, if we restrict to the event that $\Gamma_i$ is small in the sense of \eqref{gammaietalarge} and assume that $N$ is sufficiently large in comparison to $\varepsilon^{-1}$ such that 
\begin{flalign*}
\left| (\log N)^{2 \log \log N} \bigg( N^{-\varepsilon / 10} + \displaystyle\frac{1}{\sqrt{N \eta}} \bigg) \right| \le 1, 
\end{flalign*}

\noindent then it follows that  
\begin{flalign*}
\left| m_{\semicircle}^{-1} - \Gamma_i - \sum_{j = 1}^N s_{ij} v_j \right| \ge \big| m_{\semicircle}^{-1} \big| - \big| \Gamma_i \big| - \left| \sum_{j = 1}^N s_{ij} v_j \right| \ge 10 - \big| \Gamma_i \big| - (1 + C_1 N^{-\varepsilon} ) \displaystyle\max_{1 \le j \le N} |v_j| > 1,
\end{flalign*} 

\noindent where we have used the fact that $\big| \sum_{j = 1}^N s_{ij} - 1 \big| = |t_i| \le C_1 N^{-\varepsilon}$ by assumption \ref{stochastic}. Inserting this (and the estimate $|m_{\semicircle}| < 1 / 10$) into \eqref{vi1} yields 
\begin{flalign}
\label{vilargeeta1}
\big| v_i \big| \le \left| \displaystyle\frac{1}{10} \displaystyle\sum_{j = 1}^N s_{ij} v_j + m_{\semicircle} \Gamma_i \right| \le \displaystyle\frac{1}{10} \displaystyle\sum_{j = 1}^N s_{ij} | v_j | + \big| \Gamma_i \big|, 
\end{flalign}

\noindent for each $i \in \mathcal{T}_{\textbf{L}}$. Hence, 
\begin{flalign}
\label{vilargeeta2}
\big| v_i \big| & \le \displaystyle\frac{1}{10} \displaystyle\sum_{j \in \mathcal{T}} s_{ij} | v_j | + \displaystyle\frac{1}{10} \displaystyle\sum_{j \in \mathcal{D}} s_{ij} | v_j | + \big| \Gamma_i \big| \le \displaystyle\frac{1}{10} \displaystyle\sum_{j \in \mathcal{T}} s_{ij} | v_j | + C_1 N^{-\varepsilon / 20} + \big| \Gamma_i \big|,
\end{flalign}

\noindent where we recalled the facts that $\big| \mathcal{D} \big| < N^{1 - \varepsilon / 20}$ (since $\textbf{L}$ is admissible), that $s_{ij} < C_1 N^{-1}$ (from assumption \ref{generalized}), and that $|v_j| < 1 / 10 < 1$ for all $j \in [1, N]$. 

Using \eqref{vilargeeta2} and the fact that $\big| \sum_{j = 1}^N s_{ij} - 1 \big| = |t_i| < C_1 N^{-\varepsilon}$, we deduce that
\begin{flalign}
\label{vilargeeta3}
\displaystyle\max_{j \in \mathcal{T}} \big| v_j \big| & \le \displaystyle\frac{1}{5} \displaystyle\max_{j \in \mathcal{T}} |v_j| + C_1 N^{- \varepsilon / 20} + \displaystyle\max_{j \in \mathcal{T}} \big| \Gamma_j \big|. 
\end{flalign} 

Using \eqref{vilargeeta3} and applying \eqref{gammaietalarge} for all $i \in \mathcal{T}$ and a union estimate, we obtain that 
\begin{flalign*}
\mathbb{P} \left[ \displaystyle\max_{j \in \mathcal{T}} |v_j| \le 2 C_1 (\log N)^{2 \xi} \bigg( N^{-\varepsilon / 20} + \displaystyle\frac{1}{\sqrt{N \eta}} \bigg) \right] \le N \exp \big( - \widetilde{\nu} (\log N)^{\xi} \big), 
\end{flalign*}

\noindent from which \eqref{typicalomegaetalarge} quickly follows.  
\end{proof}

\section{The Multiscale Argument for Typical Indices} 

\label{LawSmallNotDeviant}

Our next goal is to provide a framework for establishing the local semicircle law for typical indices, given by \eqref{typicalomega}. This is done through the following two propositions, whose proofs are similar to that of the local semicircle for sparse graphs from \cite{SSG}. 

What these lemmas approximately yield are estimates on the probability of the event that the local semicircle law \eqref{typicalomega} does not hold for some typical entry of the resolvent $\textbf{G} (z)$, in terms of probabilities of the events the local semicircle law does not hold for some entry of a different resolvent $\textbf{G} (z')$, where $\Im z' > \Im z$. In terms of the notation from \eqref{omegaij}, this can be restated as an estimate on $\mathbb{P} \big[ \Omega_{C; \xi}^{(c)} (i, j; \textbf{H}, z) \big]$ in terms of $\mathbb{P} \big[ \Omega_{C; \xi}^{(c)} (i, j; \textbf{H}, z') \big]$ and $\mathbb{P} \big[ \Omega_C (i, j; \textbf{H}, z') \big]$, conditional on some admissible $AB$ label $\textbf{L}$ of $\textbf{H}$, if $i \in [1, N]$ is typical with respect to $\textbf{L}$.

Since $\Im z' > \Im z$, this suggests that repeated application of these two propositions might estimate $\mathbb{P} \big[ \Omega_{C; \xi}^{(c)} (i, j; \textbf{H}, z) \big]$ in terms of $\mathbb{P} \big[ \Omega_{C; \xi}^{(c)} (i, j; \textbf{H}, z') \big]$ and $\mathbb{P} \big[ \Omega_C (i, j; \textbf{H}, z') \big]$, where $\Im z'$ is very large; then, we could apply the initial estimate Proposition \ref{locallargeeta}. Indeed, this is what we will do in Section \ref{ProofLocalLaw}, after obtaining an analog of the two propositions below for deviant indices in Section \ref{LawSmallDeviant}. 

In what follows, we recall that $\mathbb{P}_{\textbf{H} (\textbf{L})}$ is the probability measure the random matrix $\textbf{H} (\textbf{L})$ from Definition \ref{hl1}. Furthermore, for any subset $\mathcal{S} \subseteq [1, N]$, we let $\mathbb{P}_{\textbf{H}^{(\mathcal{S})} (\textbf{L}^{(\mathcal{S})})}$ denote the probability measure with respect to the symmetric random matrix $\textbf{H}^{(\mathcal{S})} \big( \textbf{L}^{(\mathcal{S})} \big)$, obtained from $\textbf{H} (\textbf{L})$ by removing all rows and columns indexed by some $j \in \mathcal{S}$.  

\begin{prop}

\label{incrementnotdeviant} 

Fix $\kappa > 0$ and $U_1, U_2 > 1$, and let $E \in [\kappa - 2, 2 - \kappa]$; also, let $\eta \in \mathbb{R}_{> 0}$ such that $N \eta > (\log N)^{8 \log \log N}$, and denote $z = E + \textbf{\emph{i}} \eta$. Fix a positive integer $N$, let $\textbf{\emph{H}}$ be an $N \times N$ generalized Wigner matrix. Furthermore, fix an admissible $N \times N$ $AB$ label $\textbf{\emph{L}} \in \mathcal{A}$. Recall the definitions of $v_i$ from \eqref{vj}, and of $\Omega_{C; \xi}^{(c)} (i, j)$ and $\Omega_C (i, j)$ from \eqref{omegaij}. Denote
\begin{flalign}
\label{probabilitynotdeviantevent}
\begin{aligned}
P_i & = \displaystyle\sum_{\substack{1 \le j, k \le N \\ j, k \ne i}} \mathbb{P}_{\textbf{\emph{H}}^{(i)} (\textbf{\emph{L}}^{(i)})}  \Bigg[ \Omega_{U_2} \bigg( j, k; \textbf{\emph{H}}^{(i)}; E + \textbf{\emph{i}} \eta \Big( 1 + \displaystyle\frac{1}{(\log N)^2} \Big)  \bigg) \Bigg] \\
& \qquad + \displaystyle\sum_{j = 1}^N \mathbb{P}_{\textbf{\emph{H}} (\textbf{\emph{L}})} \Bigg[ \Omega_{U_1; \xi}^{(1 / 20)} \bigg( i, j; \textbf{\emph{H}}, E + \textbf{\emph{i}} \Big( 1 + \displaystyle\frac{1}{(\log N)^2} \Big) \bigg)\Bigg] \\
& \qquad + \displaystyle\sum_{1 \le j, k \le N} \mathbb{P}_{\textbf{\emph{H}} (\textbf{\emph{L}})} \Bigg[ \Omega_{U_2} \bigg( j, k; \textbf{\emph{H}}, E + \textbf{\emph{i}} \Big( 1 + \displaystyle\frac{1}{(\log N)^2} \Big) \bigg) \Bigg]. 
\end{aligned}
\end{flalign}

\noindent Then, there exist constants $C, \nu > 0$ (only dependent on $C_1$, $C_2$, and $U_2$) such that
\begin{flalign}
\label{linearvnotdeviantsmall4}
\mathbb{P}_{\textbf{\emph{H}} (\textbf{\emph{L}})} \Bigg[ \displaystyle\max_{j \in \mathcal{T}} |v_j| & \ge C (\log N)^{3 \xi} \left( N^{- \varepsilon / 20 } + \displaystyle\frac{1}{\sqrt{N \eta}} \right) \Bigg]  \le \exp \big( - \nu (\log N)^{\xi} \big) + \displaystyle\sum_{i = 1}^N P_i, 
\end{flalign}

\noindent for any $2 \le \xi \le \log \log N$ and sufficiently large $N$ (in comparison to $C_1$, $C_2$, $U_1$, and $U_2$). 

\begin{rem}

\label{u1u2c} 

Observe in the above proposition that the constants $C$ and $\mu$ are dependent on $U_2$ but independent of $U_1$; however, the minimal value of $N$ for which the result holds depends on both $U_1$ and $U_2$. 
\end{rem}

\end{prop}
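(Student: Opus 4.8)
The plan is to run the argument from Section \ref{DiagonalResolvent} at the scale $\eta$, but using the resolvent entries at the slightly larger scale $\eta' = \eta\big(1 + (\log N)^{-2}\big)$ as an \emph{a priori} input. First, I would set $z' = E + \mathbf{i}\eta'$ and use the resolvent identity \eqref{resolvent} together with the deterministic bound \eqref{gijeta} to transfer estimates from scale $\eta'$ to scale $\eta$: on the complement of the events appearing in $P_i$, every entry $G_{jk}(z')$ (and every $G_{jk}^{(i)}(z')$) is bounded by $U_2$, and every typical entry $G_{ij}(z')$ is within $U_1(\log N)^{3\xi}\big(N^{-\varepsilon/20} + (N\eta')^{-1/2}\big)$ of $\mathbf{1}_{i=j}m_{\semicircle}$. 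Since $|\eta - \eta'| \le \eta(\log N)^{-2}$ and $N\eta > (\log N)^{8\log\log N}$, the bound $|G_{jk}(z) - G_{jk}(z')| \le |z - z'|\,\eta^{-1}(\eta')^{-1} \le (\log N)^{-2}\eta^{-1}$ is small relative to the target error, so on this good event all the $G_{jk}^{(i)}(z)$ are bounded by, say, $U = 2U_2$. This provides the uniform bound $|G_{jk}^{(i)}| \le U$ needed to invoke Lemma \ref{edfhsmalletalarge}.

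Next, for each typical index $i$, I would apply Lemma \ref{edfhsmalletalarge} (with this $U$ and the given $\xi$) to control $|\Gamma_i| = |F_i + E_i + D_i - h_{ii} + m_{\semicircle} t_i|$ by $CU(\log N)^{2\xi}\big(N^{-\varepsilon/10} + (N\eta)^{-1/2}\big)$, except on an event of probability $\exp(-\nu(\log N)^{\xi})$. Taking a union bound over the at most $N$ typical indices and absorbing the factor $N$ (using $\xi \ge 2$ and enlarging $\nu$ slightly, since $N = \exp(\log N) \ll \exp((\log N)^2)$) gives $\max_{j\in\mathcal{T}}|\Gamma_j| \le CU(\log N)^{2\xi}\big(N^{-\varepsilon/10} + (N\eta)^{-1/2}\big)$ off an event of probability $\exp(-\nu(\log N)^{\xi})$. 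Then I would feed this into the self-consistent equation \eqref{vi1}. Here I need the denominator $m_{\semicircle}^{-1} - \Gamma_i - \sum_j s_{ij}v_j$ to be bounded below: in the bulk $|m_{\semicircle}^{-1}|$ is bounded below by a $\kappa$-dependent constant and bounded above, $|\Gamma_i|$ is small by the previous step, and $|\sum_j s_{ij}v_j| \le (1 + C_1 N^{-\varepsilon})\max_j |v_j|$ — and $\max_j|v_j|$ is controlled, for typical $j$ directly and for deviant $j$ by the crude bound $|v_j| \le |G_{jj}(z)| + |m_{\semicircle}| \le 2U_2 + 1$ times the admissibility count $|\mathcal{D}| < N^{1-\varepsilon/20}$, contributing only $O(N^{-\varepsilon/20})$ to $\sum_{j\in\mathcal{D}} s_{ij}|v_j|$. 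This yields, exactly as in \eqref{vilargeeta1}–\eqref{vilargeeta2}, the contraction-type inequality $|v_i| \le \frac{1}{10}\sum_{j\in\mathcal{T}} s_{ij}|v_j| + C_1 N^{-\varepsilon/20} + |\Gamma_i|$ for every $i \in \mathcal{T}$ — with the important caveat that I must first rule out the alternative branch of \eqref{vi1} where $|v_i|$ is large, which is where the scale-$\eta'$ input $|G_{ij}(z')| \le U_1(\text{small})$ (hence $|v_i(z)|$ small, via the $(\log N)^{-2}$ continuity estimate) is used to select the correct solution.

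Finally, I would iterate the contraction: taking the maximum over $j \in \mathcal{T}$ of both sides and using $\sum_{j\in\mathcal{T}} s_{ij} \le \sum_j s_{ij} \le 1 + C_1 N^{-\varepsilon} \le 2$ gives $\max_{j\in\mathcal{T}}|v_j| \le \frac{1}{5}\max_{j\in\mathcal{T}}|v_j| + C_1 N^{-\varepsilon/20} + \max_{j\in\mathcal{T}}|\Gamma_j|$, hence $\max_{j\in\mathcal{T}}|v_j| \le 2\big(C_1 N^{-\varepsilon/20} + \max_{j\in\mathcal{T}}|\Gamma_j|\big)$, which with the $\Gamma$-bound from step two is bounded by $C(\log N)^{3\xi}\big(N^{-\varepsilon/20} + (N\eta)^{-1/2}\big)$ off the exceptional event. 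Collecting the exceptional events — the $\exp(-\nu(\log N)^{\xi})$ from the $\Gamma_i$ estimates, and the events whose probabilities sum to $\sum_i P_i$ from transferring the scale-$\eta'$ bounds — gives \eqref{linearvnotdeviantsmall4}. The main obstacle is the bookkeeping around the two scales: one must verify carefully that the $(\log N)^{-2}$-type continuity estimates between $z$ and $z'$ are genuinely negligible compared to the target error $(\log N)^{3\xi}(N^{-\varepsilon/20}+(N\eta)^{-1/2})$ for all $\eta$ with $N\eta > (\log N)^{8\log\log N}$, and that replacing $\textbf{G}$ by $\textbf{G}^{(i)}$ in the a priori bound costs only the extra $\Omega_{U_2}(j,k;\textbf{H}^{(i)},z')$ terms in $P_i$ — exactly as listed — without inflating the constants in a way that depends on $U_1$. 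Keeping the final $C,\nu$ independent of $U_1$ (only the threshold $N$ may depend on $U_1$, cf. Remark \ref{u1u2c}) requires using the scale-$\eta'$ typical bound \emph{only} qualitatively, to pick the right branch of \eqref{vi1}, and never quantitatively.
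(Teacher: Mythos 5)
Your first two steps match the paper's proof: the transfer of the scale-$\eta'$ bounds to scale $\eta$ via the continuity estimates \eqref{gjkzgjk} and \eqref{gjjzgjj}, the application of Lemma \ref{edfhsmalletalarge} with $U = 2U_2$ to control $\Gamma_i$, the restriction of $\sum_j s_{ij} v_j$ to typical indices using admissibility, and the use of the a priori typical bound at scale $\eta'$ only to guarantee $|v_i| \le |m_{\semicircle}|/2$ (so that the correct branch of \eqref{vi1} is selected and the quadratic error is controlled) are all exactly as in the paper.

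However, your final step contains a genuine gap. The contraction inequality $|v_i| \le \tfrac{1}{10}\sum_{j \in \mathcal{T}} s_{ij}|v_j| + \cdots$ from \eqref{vilargeeta1}--\eqref{vilargeeta2} is \emph{not} available here: its coefficient $\tfrac{1}{10}$ comes from the bound $|m_{\semicircle}| \le 2\eta^{-1} < 1/10$, which requires $\eta > C > 30$ and is the reason that argument is confined to the initial-estimate Proposition \ref{locallargeeta}. In the present proposition $\eta$ ranges down to nearly $N^{-1}$, where $|m_{\semicircle}(z)|$ is of order $1$ (indeed $|m_{\semicircle}| \to 1$ as $\eta \to 0$ in the bulk), so $|m_{\semicircle}|^2 \sum_j s_{ij}$ is close to $1$ and there is no contraction; the inequality $\max_{j}|v_j| \le \tfrac{1}{5}\max_j |v_j| + (\text{error})$ simply fails. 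The paper instead linearizes \eqref{vmfed} into the system $(\Id - m_{\semicircle}^2 \widetilde{\textbf{S}})\widetilde{\textbf{v}} = (\text{error})$ and invokes the stability estimate of Lemma \ref{testimate}, $\|(\Id - m_{\semicircle}^2\widetilde{\textbf{S}})^{-1}\|_\infty \le \widehat{C}\log N$, which uses the flatness assumption \ref{generalized} on $\textbf{S}$ and costs an extra factor of $\log N$ in the final bound (absorbed into $(\log N)^{3\xi}$); the resulting quadratic term $2|m_{\semicircle}|^{-1}\widehat{C}(\log N)\max_j|v_j|^2$ is then removed by the self-improvement step $|1 - 2|m_{\semicircle}|^{-1}\widehat{C}(\log N)|v_i|| \ge \tfrac12$, which again uses the a priori smallness of $|v_i|$ from scale $\eta'$. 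Without Lemma \ref{testimate} (or an equivalent stability bound for $\Id - m_{\semicircle}^2\widetilde{\textbf{S}}$), the argument does not close for small $\eta$.
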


\begin{proof}

In what follows, we fix a typical index $i \in \mathcal{T}_{\textbf{L}}$. Recall the definitions of $F_i$, $D_i$, and $E_i$ from \eqref{edmf}. Let us restrict to the event 
\begin{flalign}
\label{notdeviantevent}
\begin{aligned}
\overline{\Omega_i} & = \bigcap_{\substack{j, k \in [1, N] \\ j, k \ne i}} \overline{\Omega}_{U_2} \bigg( j, k; \textbf{H}^{(i)}; E + \textbf{i} \eta \Big( 1 + \displaystyle\frac{1}{(\log N)^2} \Big)  \bigg) \cap \bigcap_{j = 1}^N \overline{\Omega}_{U_1; \xi}^{(1 / 20)} \bigg( i, j; \textbf{H}, E + \textbf{i} \Big( 1 + \displaystyle\frac{1}{(\log N)^2} \Big) \bigg) \\
& \qquad \cap \bigcap_{j, k \in [1, N]} \overline{\Omega}_{U_2} \bigg( j, k; \textbf{H}, E + \textbf{i} \Big( 1 + \displaystyle\frac{1}{(\log N)^2} \Big) \bigg), 
\end{aligned}
\end{flalign}

\noindent where $\overline{E}$ denotes the complement of any event $E$; observe that $\mathbb{P}_{\textbf{H} (\textbf{L})} \big[ \Omega_i \big] \le P_i$. 

We will first obtain a uniform estimate on $G_{jk}^{(i)} (E + \textbf{i} \eta)$. Since we are restricting to the event $\overline{\Omega_i}$, we have that
\begin{flalign}
\label{notdeviantgjk1}
\Bigg| G_{jk}^{(i)} \bigg( E + \textbf{i} \eta \Big( 1 + \displaystyle\frac{1}{(\log N)^2} \Big) \bigg) \Bigg| \le U_2,
\end{flalign}

\noindent for all $j, k \in [1, N]$ with $j, k \ne i$. Thus, assuming that $N > 10$, \eqref{gjjzgjj} implies that 
\begin{flalign}
\label{notdeviantgjj1}
\big| G_{jj}^{(i)} ( E + \textbf{i} \eta ) \big| \le 2 U_2,
\end{flalign}

\noindent for all $j \in [1, N]$ not equal to $i$. Inserting \eqref{notdeviantgjj1} into \eqref{gjkzgjk} yields
\begin{flalign}
\label{notdeviantgjk2}
\Bigg| G_{jk}^{(i)} \bigg( E + \textbf{i} \eta \Big( 1 + \displaystyle\frac{1}{(\log N)^2} \Big) \bigg) - G_{jk}^{(i)} (E + \textbf{i} \eta) \Bigg| \le \displaystyle\frac{2 U_2}{(\log N)^2}, 
\end{flalign}

\noindent for all $j, k \in [1, N]$ with $j, k \ne i$. In view of the fact that $\log N > 2$, \eqref{notdeviantgjk1} and \eqref{notdeviantgjk2} together imply that 
\begin{flalign}
\label{notdeviantgjk3}
\big| G_{jk}^{(i)} ( E + \textbf{i} \eta ) \big| \le \Bigg| G_{jk}^{(i)} \bigg( E + \textbf{i} \eta \Big( 1 + \displaystyle\frac{1}{(\log N)^2} \Big) \bigg) \Bigg| + U_2 \le 2 U_2,
\end{flalign}

\noindent for all $j, k \in [1, N]$ with $j, k \ne i$. 

Having obtained this estimate on $G_{jk}^{(i)}$, we now we apply Proposition \ref{edfhsmalletalarge} with $U$ in that statement replaced by $2 U_2$. In particular, let us insert \eqref{notdeviantgjk3} into \eqref{uedfhsmall}; denote by $\widetilde{C}$ the constant $C$ from the left side of \eqref{uedfhsmall}; and denote by $\widetilde{\nu}$ the constant $\nu$ from the right side of \eqref{uedfhsmall}. Using the fact that $\mathbb{P}_{\textbf{H} (\textbf{L})} \big[ \Omega_i \big] \le P_i$, we deduce that 
\begin{flalign}
\label{notdeviantedfhsmall}
\mathbb{P}_{\textbf{H} (\textbf{L})} \Bigg[ \big| \Gamma_i \big| \ge 2 \widetilde{C} U_2 & (\log N)^{2 \xi} \Big( N^{-\varepsilon / 10} + \displaystyle\frac{1}{\sqrt{N \eta}} \Big)  \Bigg] \le \exp \big( - \widetilde{\nu} (\log N)^{\xi} \big) + P_i, 
\end{flalign}

\noindent where we recall the definition of $\Gamma_i$ from \eqref{gammai}. This bounds the right side of \eqref{vmfed} with very high probability. 

We will now attempt to establish some version of the estimate \eqref{vsmall}. To that end, observe that 
\begin{flalign}
\label{vismall1notdeviant}
\left| \displaystyle\frac{v_i}{1 + m_{\semicircle}^{-1} v_i} - v_i \right| = \left| \displaystyle\frac{v_i^2}{m_{\semicircle} + v_i} \right| \le \displaystyle\frac{2 |v_i|^2}{|m_{\semicircle}|}, \qquad \text{if} \quad |v_i| \le \displaystyle\frac{|m_{\semicircle}|}{2}. 
\end{flalign}

\noindent Let us show that $|v_i| < |m_{\semicircle}| / 2$ indeed holds. Since we are restricting to the event $\overline{\Omega_i}$, we have that 
\begin{flalign}
\label{giigjjsmall1notdeviant1}
\begin{aligned}
\Bigg| G_{ii} \bigg( E + \textbf{i} \eta \Big( 1 + \displaystyle\frac{1}{ (\log N)^2} \Big) \bigg) - m_{\semicircle} \Bigg| & \le U_1 (\log N)^{3 \xi} \left( N^{- \varepsilon / 20} + \displaystyle\frac{1}{\sqrt{N \eta}} \right); \\
 \Bigg| G_{jj} \bigg( E + \textbf{i} \eta \Big( 1 + \displaystyle\frac{1}{ (\log N)^2} \Big) \bigg) \Bigg| & \le U_2, 
\end{aligned} 
\end{flalign}

\noindent for all $j \in [1, N]$. As in \eqref{notdeviantgjk3} (or from the first estimate in \eqref{giigjjsmall1notdeviant1}), we can show that $\big| G_{ii} (E + \textbf{i} \eta) \big| \le 2 U_2$ using the second estimate in \eqref{giigjjsmall1notdeviant1} and \eqref{gjjzgjj}. Inserting this and \eqref{giigjjsmall1notdeviant1} into \eqref{gjkzgjk} yields 
\begin{flalign}
\label{giismall1notdeviant1}
\big| G_{ii} (E + \textbf{i} \eta ) - m_{\semicircle} \big| \le U_1 (\log N)^{3 \xi} \left( N^{- \varepsilon / 20} + \displaystyle\frac{1}{\sqrt{N \eta}} \right) + \displaystyle\frac{2 U_2}{(\log N)^2}, \qquad \text{if $i$ is typical.} 
\end{flalign}

Assuming that $N$ is sufficiently large (in a way that only depends on $U_1$ and $U_2$, since $\xi \le \log \log N$ and $N \eta > (\log N)^{8 \log \log N}$), \eqref{giismall1notdeviant1} implies that $2 |v_i| \le |m_{\semicircle}|$, so that the estimate \eqref{vismall1notdeviant} applies. Inserting \eqref{vismall1notdeviant} into \eqref{vmfed} and applying \eqref{notdeviantedfhsmall}, we deduce that 
\begin{flalign}
\label{linearvnotdeviantsmall}
\begin{aligned}
\mathbb{P} \Bigg[ \bigg| v_i - m_{\semicircle}^2 \displaystyle\sum_{j = 1}^N s_{ij} v_j \bigg| \ge 2 |m_{\semicircle}|^2 \widetilde{C} U_2 (\log N)^{2 \xi} \Big( N^{- \varepsilon / 10 } + & \displaystyle\frac{1}{\sqrt{N \eta}} \Big) + \displaystyle\frac{2 |v_i|^2}{|m_{\semicircle}|}    \Bigg] \\
& \le \exp \big( - \widetilde{\nu} (\log N)^{\xi} \big) + P_i,
\end{aligned}
\end{flalign}

\noindent for any typical index $i \in \mathcal{T}$. 

Now, we would like to restrict the sum $\sum_{j = 1}^N s_{ij} v_j$ in \eqref{linearvnotdeviantsmall} to range over the typical indices $j \in \mathcal{T}$ instead of over all indices $j \in [1, N]$. To that end, we may use similar reasoning as applied to deduce \eqref{notdeviantgjk3} to find that 
\begin{flalign}
\label{notdeviantgij1}
\big| G_{ij} (E + \textbf{i} \eta) \big| \le 2 U_2; \qquad \big| G_{ij} (E + \textbf{i} \eta) - m_{\semicircle} \big| \le 2 U_2 + |m_{\semicircle}|. 
\end{flalign}
	
\noindent Using \eqref{notdeviantgij1}, the fact that $|s_{ij}| \le C_1 N^{-1}$, and the fact that $\big| \mathcal{D} \big| \le N^{1 - \varepsilon / 20}$, we find that 
\begin{flalign}
\label{notdeviantgij2}
\left| \displaystyle\sum_{j = 1}^N s_{ij} v_j - \displaystyle\sum_{j \in \mathcal{T}} s_{ij} v_j \right| \le \big( 2 U_2 + |m_{\semicircle}| \big) C_1 N^{- \varepsilon / 20}. 
\end{flalign}

\noindent Inserting \eqref{notdeviantgij2} into \eqref{linearvnotdeviantsmall}, we deduce that 
\begin{flalign}
\label{linearvnotdeviantsmall1}
\begin{aligned}
\mathbb{P} \Bigg[ \bigg| v_i - m_{\semicircle}^2 \displaystyle\sum_{j \in \mathcal{T}} s_{ij} v_j \bigg| & \ge |m_{\semicircle}|^2 C_1 \big( 2 U_2 + |m_{\semicircle}| \big) N^{- \varepsilon / 20} + \displaystyle\frac{2 |v_i|^2}{|m_{\semicircle}|}   \\
& \quad +  2 |m_{\semicircle}|^2 \widetilde{C} U_2  (\log N)^{2 \xi} \Big( N^{- \varepsilon / 10} + \displaystyle\frac{1}{\sqrt{N \eta}} \Big)  \Bigg] \le \exp \big( - \widetilde{\nu} (\log N)^{\xi} \big) + P_i,
\end{aligned}
\end{flalign}

\noindent for any fixed $i \in \mathcal{T}$. Applying \eqref{linearvnotdeviantsmall1} to all $i \in \mathcal{T}$ and applying a union estimate, we obtain that 
\begin{flalign}
\label{linearvnotdeviantsmall2}
\begin{aligned}
\mathbb{P} \Bigg[ \Big\| \big( \Id - m_{\semicircle}^2 \widetilde{\textbf{S}} \big) \widetilde{\textbf{v}} \Big\|_{\infty} & \ge  |m_{\semicircle}|^2 C_1 \big( 2 U_2 + |m_{\semicircle}| \big) N^{- \varepsilon / 20}  + \displaystyle\frac{2 |v_i|^2}{|m_{\semicircle}|} \\
& \quad + (\log N)^{2 \xi} \Big( N^{- \varepsilon / 10 } + \displaystyle\frac{1}{\sqrt{N \eta}} \Big)  \Bigg]  \le N \exp \big( - \widetilde{\nu} (\log N)^{\xi} \big) + \displaystyle\sum_{i = 1}^N P_i, 
\end{aligned}
\end{flalign}

\noindent where $\widetilde{\textbf{v}} = (v_i)_{i \in \mathcal{T}}$ $|\mathcal{T}|$-dimensional vector, and $\widetilde{\textbf{S}} = \{ s_{jk} \}_{j, k \in \mathcal{T}}$ is a $|\mathcal{T}| \times |\mathcal{T}|$ matrix. 

Now, from \eqref{mtestimate1} of Lemma \ref{testimate} below, we deduce the existence of a constant $\widehat{C} > 0$ (only dependent on $\kappa$) such that $\big\| (\Id - m_{\semicircle}^2 \widetilde{\textbf{S}} )^{-1} \big\| < \widehat{C} \log N$. Inserting this into \eqref{linearvnotdeviantsmall2} yields 
\begin{flalign}
\label{linearvnotdeviantsmall3}
\begin{aligned}
\mathbb{P} \Bigg[ \displaystyle\max_{j \in \mathcal{T}} |v_j| & \ge |m_{\semicircle}|^2 C_1 \big( 2 U_2 + |m_{\semicircle}| \big) \widehat{C} \log N  + 2 |m_{\semicircle}|^2 \widehat{C} \widetilde{C} U_2 (\log N)^{2 \xi + 1} \Big( N^{- \varepsilon / 10} + \displaystyle\frac{1}{\sqrt{N \eta}} \Big) \\
& \qquad + 2 |m_{\semicircle}|^{-1} \widehat{C} \big( \log N \big) \displaystyle\max_{j \in \mathcal{T}} |v_j|^2  \Bigg] \le N \exp \big( - \widetilde{\nu} (\log N)^{\xi} \big) + \displaystyle\sum_{i = 1}^N P_i, 
\end{aligned}
\end{flalign}

Now, by \eqref{giismall1notdeviant1}, we have that 
\begin{flalign*}
\Big| 1 - 2 |m_{\semicircle}|^{-1} \widehat{C} \big( \log N \big) |v_i| \Big| \ge \displaystyle\frac{1}{2},
\end{flalign*}

\noindent for each $i \in \mathcal{T}$ (after restricting to the event $\bigcap_{i \in \mathcal{T}} \overline{\Omega_i}$), assuming that $N$ is sufficiently large (in a way that only depends on $U_1$ and $U_2$). 

Inserting this into \eqref{linearvnotdeviantsmall3} yields the existence of $C, \nu > 0$ (only dependent on $\kappa$, $U_2$, $C_1$, and $C_2$) satisfying \eqref{linearvnotdeviantsmall4}; this implies the proposition. 
\end{proof}

The previous proposition estimates the diagonal terms $\big| G_{ii} \big|$. We must also estimate the off-diagonal terms $\big| G_{ij} \big|$: this is given by the following proposition.  

\begin{prop}

\label{omegaijnotdeviant}

Adopt the notation of Proposition \ref{incrementnotdeviant}. There exist constants $C, \nu > 0$  (only dependent on $\kappa$, $C_1$, $C_2$, and $U_2$) such that
\begin{flalign}
\label{linearvnotdeviantsmall3gij}
\begin{aligned}
\mathbb{P}_{\textbf{\emph{H}} (\textbf{\emph{L}})} \Bigg[ \displaystyle\max_{\substack{i \in \mathcal{T} \\ 1 \le j \le N \\ i \ne j}} \big| G_{ij} \big| & \ge C (\log N)^{3 \xi} \bigg( N^{- \varepsilon / 10}   + \sqrt{ \displaystyle\frac{1}{N \eta} } \bigg) \Bigg] \le \exp \big( - \nu (\log N)^{\xi} \big) + N \displaystyle\sum_{i = 1}^N P_i, 
\end{aligned}
\end{flalign}

\noindent for any $2 \le \xi \le \log \log N$ and all sufficiently large $N$ (in comparison to $\kappa$, $C_1$, $C_2$, $U_1$, and $U_2$). 

\end{prop}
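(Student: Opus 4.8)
The plan is to begin from the second resolvent identity \eqref{gij}, written as $G_{ij} = -G_{ii}\sum_{k\neq i} h_{ik} G_{kj}^{(i)}$. For a typical index $i$, Definition \ref{hl1} and Sampling \ref{lh} ensure that $h_{ik} = a_{ik}$ for every $k$, so this exhibits $G_{ij}$ as the bounded prefactor $G_{ii}$ times a \emph{linear} form in the independent variables $\{ a_{ik} \}_{k \neq i}$, whose coefficients $G_{kj}^{(i)}$ are measurable with respect to $\textbf{H}^{(i)}$ and hence independent of these variables. As in the proof of Proposition \ref{incrementnotdeviant}, I would first restrict to the event $\overline{\Omega_i}$ from \eqref{notdeviantevent}, which satisfies $\mathbb{P}_{\textbf{H}(\textbf{L})}[\Omega_i] \le P_i$; repeating verbatim the continuity argument that produced \eqref{notdeviantgjk3} and \eqref{giismall1notdeviant1} transfers the bounds available on $\overline{\Omega_i}$ from the spectral parameter $E + \textbf{i}\eta\big(1 + (\log N)^{-2}\big)$ down to $E + \textbf{i}\eta$, yielding $\big| G_{kj}^{(i)}(E + \textbf{i}\eta) \big| \le 2U_2$ for all $k, j \neq i$ and $\big| G_{ii}(E + \textbf{i}\eta) \big| \le 2$ once $N$ is large in terms of $U_1, U_2$.

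The core step is a large deviation bound for $\sum_{k \neq i} a_{ik} G_{kj}^{(i)}$, obtained by applying the large deviation estimate for linear forms from Proposition \ref{largeprobability2} (the linear counterpart of \eqref{bi2} and \eqref{bij2}) with $X_k = a_{ik}$, coefficients $R_k = G_{kj}^{(i)}$, and $N^{-\delta} = N^{-\varepsilon/10}$. The hypotheses are supplied by Lemma \ref{entrysmaller} and $\Var a_{ik} \le \Var h_{ik} \le C_1 N^{-1}$: the $a_{ik}$ are supported in $(-N^{-\varepsilon/10}, N^{-\varepsilon/10})$, have variance at most $C_1 N^{-1}$, and have mean of size at most $2 C_2 N^{-1 - \varepsilon/10}$. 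Up to probability $\exp\big(-\nu (\log N)^\xi\big)$, this bounds $\big| \sum_{k\neq i} a_{ik} G_{kj}^{(i)} \big|$ by a power of $\log N$ times $N^{-\varepsilon/10} \max_{k\neq i}\big| G_{kj}^{(i)} \big|$ plus $\big( N^{-1}\sum_{k\neq i}|G_{kj}^{(i)}|^2 \big)^{1/2}$, with an extra $N^{-\varepsilon/10}\sum_{k\neq i}|G_{kj}^{(i)}|$ coming from the nonzero mean. On $\overline{\Omega_i}$ the first and third of these are $O\big(U_2^2 N^{-\varepsilon/10}\big)$, while Ward's identity \eqref{sumgij} combined with $\big| G_{jj}^{(i)}(E+\textbf{i}\eta) \big| \le 2U_2$ gives $N^{-1}\sum_{k\neq i}|G_{kj}^{(i)}|^2 \le 2U_2/(N\eta)$.

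Combining these bounds with $|G_{ii}| \le 2$ gives $|G_{ij}| \le C(\log N)^{3\xi}\big( N^{-\varepsilon/10} + (N\eta)^{-1/2}\big)$ on the relevant good event, for each fixed typical $i$ and each $j \neq i$ (the exponent $3\xi$ absorbs whatever power of $\log N$ the concentration estimate produces). A union estimate over the at most $N^2$ pairs $(i, j)$ with $i \in \mathcal{T}$ then concludes: the failures of the $\overline{\Omega_i}$ contribute at most $\sum_{i=1}^N P_i$, which is dominated by the $N\sum_{i=1}^N P_i$ in \eqref{linearvnotdeviantsmall3gij}, and the $\lesssim N^2$ failures of the linear large deviation bounds contribute $N^2 \exp\big(-\nu(\log N)^\xi\big) \le \exp\big(-\nu'(\log N)^\xi\big)$ for a slightly smaller $\nu'$, since $\xi \ge 2$. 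I expect no serious obstacle here: this is the off-diagonal analogue of Proposition \ref{incrementnotdeviant} and mirrors the corresponding step in the sparse-graph local law of \cite{SSG}. The only points needing care are that the linear large deviation estimate requires the coefficients $G_{kj}^{(i)}$ to be controlled simultaneously in $\ell^\infty$ (for the $N^{-\varepsilon/10}$ term coming from the amenable support of the $a_{ik}$) and in $\ell^2$ (via Ward), which is exactly what restricting to $\overline{\Omega_i}$ and transferring down to imaginary part $\eta$ provides, and that the small nonzero mean of $a_{ik}$ must be carried along, which is harmless because it only feeds into the $N^{-\varepsilon/10}$ error.
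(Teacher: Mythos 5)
Your proposal is correct and follows essentially the same route as the paper: restrict to $\overline{\Omega_i}$, transfer the resolvent bounds down to imaginary part $\eta$, apply the linear large deviation estimate \eqref{suma2} to $\sum_{k\neq i} a_{ik} G_{kj}^{(i)}$ with Ward's identity controlling the $\ell^2$ norm of the coefficients, and conclude via \eqref{gij} and a union bound. (The only quibble is that the mean contribution should read $N^{-1-\varepsilon/10}\sum_{k}\big|G_{kj}^{(i)}\big| \le 2C_2 N^{-\varepsilon/10}\max_k \big|G_{kj}^{(i)}\big|$, as in \eqref{suma2}, rather than $N^{-\varepsilon/10}\sum_k\big|G_{kj}^{(i)}\big|$; with that correction your stated $O(U_2^2 N^{-\varepsilon/10})$ bound holds.)
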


\begin{proof}

We will establish this corollary through \eqref{suma2} from Lemma \ref{largeprobability2}. In particular, fix $i \in \mathcal{T}$ and $j \in [1, N]$; applying that lemma with $R_j = G_{kj}^{(i)}$, $X_j = a_{ij}$, and $N^{\delta} = N^{- \varepsilon / 10} = q^{-1}$ (and using the fact that $\big| \mathbb{E} [a_{ij}] \big| < 2 C_2 N^{-1 - \varepsilon / 10}$ and the estimate \eqref{qhijsmallmoments}) yields 
\begin{flalign}
\label{linearvnotdeviantsmall3gij1}
\begin{aligned} 
\mathbb{P}_{\textbf{H} (\textbf{\emph{L}})} \Bigg[ \bigg| \displaystyle\sum_{\substack{k \in [1, N] \\ k \ne i}} a_{ik} G_{kj}^{(i)} \bigg| \ge (\log N)^{\xi} \bigg( (2 C_2 + 1) N^{- \varepsilon / 10} \displaystyle\max_{\substack{1 \le k \le N \\ k \ne i}} \big| G_{jk}^{(i)} \big| + & \Big( \displaystyle\sum_{\substack{k \in [1, N] \\ k \ne i}} \big| G_{jk}^{(i)} \big|^2 \Big)^{1 / 2} \bigg) \Bigg] \\
& \le \exp \big( -  \widetilde{\nu} (\log N)^{\xi} \big), 
\end{aligned} 
\end{flalign}

\noindent where $\widetilde{\nu}$ is the constant $\nu (C_1, 2 C_2)$ from Lemma \ref{largeprobability2}. In what follows, we restrict to the event $\bigcap_{i = 1}^N \overline{\Omega_i}$, where $\overline{\Omega_i}$ was defined in \eqref{notdeviantevent}. Applying Ward's identity \eqref{sumgij}, we find that 
\begin{flalign}
\label{gjksumnotdeviant}
\displaystyle\frac{1}{N} \displaystyle\sum_{\substack{k \in [1, N] \\ k \ne i}} \big| G_{jk}^{(i)} \big|^2 = \displaystyle\frac{\Im G_{jj}^{(i)}}{N \eta} \le \displaystyle\frac{2 U_2}{N \eta},
\end{flalign}

\noindent where we used \eqref{notdeviantgjj1} to establish the third estimate above. 

Inserting \eqref{notdeviantgjk3} and \eqref{gjksumnotdeviant} into \eqref{linearvnotdeviantsmall3gij1}, and also using the facts that $C_2 > 1$ and $\mathbb{P} \big[ \overline{\Omega_i} \big] \le P_i$, yields 
\begin{flalign}
\label{linearvnotdeviantsmall3gij2}
\mathbb{P}_{\textbf{H} (\textbf{L})} \Bigg[ \bigg| \displaystyle\sum_{\substack{k \in [1, N] \\ k \ne i}} a_{ik} G_{kj}^{(i)} \bigg| \ge (\log N)^{\xi} \bigg( 6 C_2 U_2 N^{- \varepsilon / 10}   + \sqrt{ \displaystyle\frac{2 U_2}{N \eta} }  \bigg) \Bigg] \le \exp \big( - \widetilde{\nu} (\log N)^{\xi} \big) + P_i. 
\end{flalign}

\noindent Applying \eqref{notdeviantgij1} in \eqref{linearvnotdeviantsmall3gij2}, the fact that $h_{ij}$ has the same distribution as $a_{ij}$ (since $i$ is typical), and \eqref{gij} yields 
\begin{flalign}
\label{linearvnotdeviantsmall3gij3}
\mathbb{P}_{\textbf{H} (\textbf{L})} \Bigg[ |G_{ij}| \ge 2 U_2 (\log N)^{\xi} \bigg( 6 C_2 U_2 N^{- \varepsilon / 10}   + \sqrt{ \displaystyle\frac{2U_2}{N \eta} }  \bigg) \Bigg] \le \exp \big( - \widetilde{\nu} (\log N)^{\xi} \big) + P_i. 
\end{flalign}

\noindent Now, the existence of $C, \nu > 0$ satisfying \eqref{linearvnotdeviantsmall3gij} follows from applying \eqref{linearvnotdeviantsmall3gij3} and a union estimate over all $i \in \mathcal{T}$ and $j \in [1, N]$. 
\end{proof}

Above, we used the following lemma, which bounds the (deterministic) norm $\| \big( \Id - m_{\semicircle}^2 \widetilde{\textbf{S}} \big)^{-1} \|$; its proof is very similar to that of Lemma 2.15 in \cite{URM} and is thus omitted. 

\begin{lem}

\label{testimate}

Fix some $\kappa \in (0, 1)$, let $z \in \mathbb{H}$ satifsy $\Re z \in (\kappa - 2, 2 - \kappa)$, let $M \in \mathbb{Z}_{> 0}$, and recall the definition of $m_{\semicircle} = m_{\semicircle} (z)$ from \eqref{mquadratic}. Let $\textbf{\emph{T}} = \{ t_{ij} \}$ be some $M \times M$ symmetric matrix with positive entries satisfying $\sum_{j = 1}^N t_{ij} \le 1$; assume that there exist $\widetilde{c}, \widetilde{C} > 0$ such that $\widetilde{c} < M t_{ij} < \widetilde{C}$ for each $i, j$. Then, there exists some constant $C > 0$ (only dependent on $\kappa$, $\widetilde{c}$, and $\widetilde{C}$) such that 
\begin{flalign}
\label{mtestimate1}
\left\| \big( \Id - m_{\semicircle}^2 \textbf{\emph{T}} \big)^{-1}  \right\|_{\infty} \le C \log M. 
\end{flalign}

\end{lem}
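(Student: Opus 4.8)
The plan is to bound $\|(\Id - m_{\semicircle}^2 \textbf{T})^{-1}\|_\infty$ by analyzing the spectrum of the symmetric matrix $\textbf{T}$ together with the fact that $|m_{\semicircle}(z)| \le 1$ strictly, with the gap $1 - |m_{\semicircle}(z)|$ controlled when $\Re z$ is in the bulk. First I would recall the explicit formula $m_{\semicircle}(z) = \frac{-z + \sqrt{z^2-4}}{2}$ and the identity $|m_{\semicircle}(z)|^2 = 1$ when $z \in [-2,2]$, so that for $z \in \mathbb{H}$ with $\Re z \in (\kappa-2, 2-\kappa)$ one has $|m_{\semicircle}(z)| \le 1$, with equality only in the limit $\Im z \to 0$. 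The key quantitative input is that $1 - |m_{\semicircle}(z)|^2 \ge c\, \Im z$ for some $c = c(\kappa) > 0$ when $\Im z$ is small, and more importantly that $\Re\big(1 - m_{\semicircle}(z)^2\big)$ or $\Im\big(m_{\semicircle}(z)^2\big)$ stays bounded away from a degeneracy in a way dictated by $\kappa$; the standard fact is $\Im m_{\semicircle}(z) \asymp \sqrt{\kappa}$ uniformly on the relevant domain, which forces $1 - m_{\semicircle}(z)^2$ away from $0$ on any ray where $\Im z \gtrsim 1/M$.

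Next I would reduce the $\ell^\infty \to \ell^\infty$ operator norm to an $\ell^2 \to \ell^2$ estimate at the cost of a factor polynomial in $\log M$. Since each row of $\textbf{T}$ sums to at most $1$ and has $M$ entries each of size $\Theta(1/M)$, $\textbf{T}$ is a substochastic symmetric matrix; its operator norm on $\ell^2$ is at most $1$ (by Perron--Frobenius / Gershgorin applied to the symmetric matrix, using $\sum_j t_{ij} \le 1$), and all its eigenvalues lie in $[-1+c', 1]$ for a constant $c' > 0$ depending on $\widetilde c, \widetilde C$ — this is because the off-diagonal mass being spread out forces the most negative eigenvalue away from $-1$; actually for the argument one only needs eigenvalues in $[-1,1]$. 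Writing $\textbf{T}$ in its spectral decomposition with eigenvalues $\lambda_k \in [-1,1]$, we get
\begin{flalign*}
\big\| (\Id - m_{\semicircle}^2 \textbf{T})^{-1} \big\|_{\ell^2 \to \ell^2} = \max_k \displaystyle\frac{1}{|1 - m_{\semicircle}^2 \lambda_k|} \le \displaystyle\frac{1}{\min_{\lambda \in [-1,1]} |1 - m_{\semicircle}^2 \lambda|}.
\end{flalign*}
Because $|m_{\semicircle}^2| \le 1$ and $m_{\semicircle}^2 \notin [1, \infty)$ (its argument is bounded away from $0$ by a $\kappa$-dependent amount, since $\Im m_{\semicircle} \gtrsim \sqrt\kappa$), the quantity $\min_{\lambda \in [-1,1]} |1 - m_{\semicircle}^2 \lambda|$ is bounded below by $c(\kappa)\, \Im z$ in the worst case $\lambda = 1$, hence by $c(\kappa)/M$ on the domain $\Im z \ge 1/M$; but in fact one does better: the real obstruction is only at $\lambda$ close to $1$, and one uses $|1 - m_{\semicircle}^2| \ge c(\kappa)\sqrt{\Im z}$ or the sharper rigidity that on the bulk domain $|1 - m_{\semicircle}^2 \lambda| \ge c(\kappa)(1-\lambda) + c(\kappa)$ away from $\lambda = 1$.

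The genuine point — and the step I expect to be the main obstacle — is upgrading this crude $M$-polynomial bound to the claimed $C \log M$. This is exactly the content of the proof of Lemma 2.15 in \cite{URM}: one must exploit that the eigenvector of $\widetilde{\textbf{S}}$ with eigenvalue near $1$ is (close to) the flat/Perron vector, and that the vector $\widetilde{\textbf v}$ to which $(\Id - m_{\semicircle}^2\textbf T)^{-1}$ is applied in the application has nearly vanishing projection onto that near-degenerate direction, OR one uses that the spectral gap of $\textbf T$ below its top eigenvalue is of constant order, so that the only dangerous eigenvalue is $\lambda_1 = 1 - O(N^{-\varepsilon})$ (coming from assumption \ref{stochastic}) and there $|1 - m_{\semicircle}^2 \lambda_1|$ is bounded below by a constant times $|1 - m_{\semicircle}^2| \gtrsim 1/\log M$ after a careful expansion of $m_{\semicircle}(z)$ near the spectral edge. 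I would therefore split the resolvent as the sum of its action on the top eigenspace and on the complement: on the complement the eigenvalues are $\le 1 - c$ so the resolvent is $O(1)$, while on the top eigenspace one computes $1 - m_{\semicircle}(z)^2 \lambda_1$ directly and shows $|1 - m_{\semicircle}(z)^2\lambda_1|^{-1} = O(\log M)$ using $\Im m_{\semicircle} \gtrsim \sqrt\kappa$ together with $\lambda_1 \le 1$ and $\eta \ge 1/M$ (note $\log(1/\eta) \le \log M$). Converting back from $\ell^2$ to $\ell^\infty$ loses only a bounded factor here because the top eigenvector is nearly constant and $\textbf T$ smooths $\ell^\infty$ to $\ell^\infty$ with norm $1$. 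Since the statement asserts only $\|\cdot\|_\infty \le C \log M$ and the proof mirrors \cite{URM} verbatim after these spectral observations, I would simply cite that reference for the details, as the paper does.
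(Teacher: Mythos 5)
Your overall skeleton (spectral decomposition of $\textbf{T}$, a spectral gap below the top eigenvalue, and the fact that $m_{\semicircle}^2$ stays away from the reciprocal of the spectrum) is the right one, and matches the argument of Lemma 2.15 of \cite{URM} that the paper defers to. But the proposal has a genuine gap: you never identify the mechanism that produces the factor $\log M$, and several of the quantitative claims you make along the way are wrong. First, the $\ell^2$ bound is actually \emph{uniform}: for $\Re z$ in the bulk one has $\Re (m_{\semicircle}^2) \le 1 - c(\kappa)$ and $|m_{\semicircle}^2| \le 1$, so for every eigenvalue $\lambda \in [-1+\widetilde{c}, 1]$ of $\textbf{T}$ one gets $|1 - m_{\semicircle}^2 \lambda| \ge c(\kappa, \widetilde{c})$, a constant; your claims that the worst case is only $\gtrsim \Im z$, or that one needs $|1 - m_{\semicircle}^2| \gtrsim 1/\log M$ via an expansion ``near the spectral edge,'' are both off (degeneracy of $1 - m_{\semicircle}^2$ occurs only at $E = \pm 2$, which the bulk hypothesis excludes). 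So $\| (\Id - m_{\semicircle}^2 \textbf{T})^{-1} \|_{\ell^2 \to \ell^2} = O(1)$, and the entire content of the lemma is the passage to the $\ell^\infty \to \ell^\infty$ norm.

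For that passage you assert in one place a loss ``polynomial in $\log M$'' and in another that it ``loses only a bounded factor''; neither is substantiated, and the naive comparison $\|A\|_{\infty \to \infty} \le \sqrt{M} \|A\|_{2 \to 2}$ loses $\sqrt{M}$, not $\log M$. The missing idea is the truncated Neumann series: write
\begin{flalign*}
\big( \Id - m_{\semicircle}^2 \textbf{T} \big)^{-1} = \displaystyle\sum_{k = 0}^{K} \big( m_{\semicircle}^2 \textbf{T} \big)^k + \big( m_{\semicircle}^2 \textbf{T} \big)^{K + 1} \big( \Id - m_{\semicircle}^2 \textbf{T} \big)^{-1}.
\end{flalign*}
Each summand in the finite sum has $\ell^\infty \to \ell^\infty$ norm at most $1$ because $|m_{\semicircle}| \le 1$ and the rows of $\textbf{T}$ sum to at most $1$, contributing $K + 1$ in total. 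For the remainder one uses the spectral gap ($\lambda_2, \ldots, \lambda_M \in [-1 + \widetilde{c}, 1 - \widetilde{c}]$, which follows from $\widetilde{c} < M t_{ij}$ and the min--max principle, with the top eigenvector essentially flat) together with $\|A\|_{\infty \to \infty} \le \sqrt{M} \|A\|_{2 \to 2}$ to bound it by $C \sqrt{M} (1 - c)^{K}$ plus an $O(1)$ contribution from the top eigendirection; choosing $K \asymp \log M$ makes the remainder $O(1)$ and yields the claimed $C \log M$. Without this step the proof is not complete.
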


\section{The Multiscale Argument for Deviant Indices} 

\label{LawSmallDeviant}

The following result is an analog of Proposition \ref{incrementnotdeviant} and Proposition \ref{omegaijnotdeviant} that now addresses the resolvent entries indexed by deviant integers. Observe here that the sum on the right side of \eqref{psi} has significantly more terms than does the sum on the right side of \eqref{probabilitynotdeviantevent}. Further observe that, as in Remark \ref{u1u2c}, the constants $C$ and $\nu$ below do not depend on the parameter $U$; however, the smallest value of $N$ for which \eqref{linearvdeviantsmall4} holds might depend on $U$. 

\begin{prop}

\label{incrementdeviant}

Fix $\kappa > 0$ and $U > 1$, and let $E \in [\kappa - 2, 2 - \kappa]$; also, let $\eta \in \mathbb{R}_{> 0}$ such that $N \eta > (\log N)^{8 \log \log N}$, and denote $z = E + \textbf{\emph{i}} \eta$. Fix a positive integer $N$, and let $\textbf{\emph{H}}$ be an $N \times N$ generalized Wigner matrix. Fix an admissible $N \times N$ $AB$ label $\textbf{\emph{L}} \in \mathcal{A}$. Recall the definitions of $v_i$ from \eqref{vj}; and of $\Omega_{C; \xi}^{(c)} (i, j)$ and $\Omega_C (i, j)$ from \eqref{omegaij}. Denote $r = \log \log N$ and 	
\begin{flalign}
\label{psi}
\begin{aligned}
\psi_i & = \displaystyle\sum_{\substack{\mathcal{S} \subset [1, N] \\ |\mathcal{S}| \le 2 r}} \displaystyle\sum_{\substack{j, k \in [1, N] \\ j, k \notin \mathcal{S}}} \mathbb{P}_{\textbf{\emph{H}}^{(\mathcal{S})} (\textbf{\emph{L}}^{(\mathcal{S})})}  \Bigg[ \Omega_U \bigg( j, k; \textbf{\emph{H}}^{(\mathcal{S})}; E + \textbf{\emph{i}} \eta \Big( 1 + \displaystyle\frac{1}{(\log N)^2} \Big)  \bigg) \Bigg] \\
& \qquad + \displaystyle\sum_{\substack{\mathcal{S} \subset [1, N] \\ |\mathcal{S}| \le 2 r}} \displaystyle\sum_{\substack{j \in [1, N] \\ j \notin \mathcal{S} \\ j \in \mathcal{T}_{\textbf{\emph{L}}}}} \mathbb{P}_{\textbf{\emph{H}}^{(\mathcal{S})} (\textbf{\emph{L}}^{(\mathcal{S})})}   \Bigg[ \Omega_{U; \xi}^{(1 / 20)} \bigg( j, j; \textbf{\emph{H}}^{(\mathcal{S})}; E + \textbf{\emph{i}} \eta \Big( 1 + \displaystyle\frac{1}{(\log N)^2} \Big)  \bigg) \Bigg]. 
\end{aligned}
\end{flalign}

\noindent Then, there exist constants $\nu, C > 0$ (only dependent on $\kappa$, $\varepsilon$, $C_1$, and $C_2$) such that
\begin{flalign}
\label{linearvdeviantsmall4}
\begin{aligned}
\mathbb{P}_{\textbf{\emph{H}} (\textbf{\emph{L}})} \Bigg[ \displaystyle\max_{1 \le i, j \le N}  |G_{ij}| & \ge C \Bigg] \le \exp \big( - \nu (\log N)^{\xi} \big) + 3 N \displaystyle\sum_{i = 1}^N \psi_i, 
\end{aligned}
\end{flalign}

\noindent for any $2 \le \xi \le \log \log N$ and sufficiently large $N$ (in comparison to $\kappa$, $\varepsilon$, $C_1$, $C_2$, and $U$). 

\end{prop}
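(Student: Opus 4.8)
The plan is to bound $|G_{ij}|$ for \emph{every} pair $i,j\in[1,N]$ by a single application of the Schur complement identity \eqref{blockinverse} to the block of $\textbf{G}$ indexed by $\mathcal{C}:=\mathcal{C}_i\cup\mathcal{C}_j$, where $\mathcal{C}_i,\mathcal{C}_j$ are the connected components of $i$ and of $j$ in the linkage graph of $\textbf{L}$ (Definition \ref{linkedunlinkedconnected}). Since $\textbf{L}$ is admissible, every such component has fewer than $r=\lceil\log\log N\rceil$ vertices, so $|\mathcal{C}|<2r$; and because $\mathcal{C}$ is a union of \emph{full} components, every entry $h_{pq}$ with $p\in\mathcal{C}$, $q\notin\mathcal{C}$ is amenable, i.e.\ $L_{pq}=A$ (a vertex outside $\mathcal{C}$ cannot be linked to any $p\in\mathcal{C}$), and a vertex outside $\mathcal{C}$ is typical with respect to $\textbf{L}^{(\mathcal{C})}$ if and only if it is typical with respect to $\textbf{L}$. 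Nothing here distinguishes typical from deviant indices: when $i$ is typical, $\mathcal{C}_i=\{i\}$. First I would set $z^+=E+\textbf{i}\eta\big(1+(\log N)^{-2}\big)$ and restrict to the event $\mathcal{E}$ on which, for every $\mathcal{S}\subseteq[1,N]$ with $|\mathcal{S}|\le 2r$ and all $j,k\notin\mathcal{S}$, one has $\big|G^{(\mathcal{S})}_{jk}(z^+)\big|\le U$, and on which in addition $\big|G^{(\mathcal{S})}_{jj}(z^+)-m_{\semicircle}\big|\le U(\log N)^{3\xi}\big((N\eta)^{-1/2}+N^{-\varepsilon/20}\big)$ whenever $j\notin\mathcal{S}$ is typical with respect to $\textbf{L}$. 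By the definition \eqref{psi} of $\psi_i$ one has $\mathbb{P}_{\textbf{H}(\textbf{L})}[\overline{\mathcal{E}}]\le\psi_i$, comfortably within the prefactor $3N$ of \eqref{linearvdeviantsmall4}; arguing exactly as in the passage \eqref{notdeviantgjk1}--\eqref{notdeviantgjk3} (the interpolation of $\textbf{G}(z^+)$ against $\textbf{G}(z)$, harmless since $N\eta>(\log N)^{8\log\log N}$), these bounds transfer from $z^+$ to $z$ with $U$ replaced by $2U$. One also checks, as for the typical-index propositions, that each $\textbf{H}^{(\mathcal{S})}(\textbf{L}^{(\mathcal{S})})$ with $|\mathcal{S}|\le 2r$ is again a generalized Wigner matrix carrying an admissible label, up to corrections to Assumptions \ref{generalized}--\ref{moments} of size $\mathcal{O}(r/N)$.

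On $\mathcal{E}$, the identity \eqref{blockinverse} applied to the block $\mathcal{C}$ gives
\begin{flalign*}
\big(\textbf{G}|_{\mathcal{C}\times\mathcal{C}}\big)^{-1}=\textbf{H}|_{\mathcal{C}\times\mathcal{C}}-z\,\Id_{\mathcal{C}}-Q,\qquad Q:=\textbf{H}|_{\mathcal{C}\times\mathcal{C}^c}\,\textbf{G}^{(\mathcal{C})}\,\textbf{H}|_{\mathcal{C}^c\times\mathcal{C}},
\end{flalign*}
where $\textbf{H}|_{\mathcal{C}\times\mathcal{C}}$ denotes the submatrix of $\textbf{H}$ on rows and columns in $\mathcal{C}$, and similarly for the off-diagonal submatrices. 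Each entry $Q_{pq}=\sum_{k,l\notin\mathcal{C}}h_{pk}G^{(\mathcal{C})}_{kl}h_{lq}$ is a bilinear form purely in the amenable entries $\{h_{pk}\}_{k\notin\mathcal{C}}$, which are independent of $\textbf{G}^{(\mathcal{C})}$ and obey the moment bounds \eqref{qhijsmallmoments} together with $|\mathbb{E}h_{pk}|\le 2C_2N^{-1-\varepsilon/10}$ by Lemma \ref{entrysmaller}, irrespective of the type of $p$. Hence the concentration estimates \eqref{bi2}, \eqref{bij2}, and \eqref{suma2} --- applied exactly as in the derivations of \eqref{flargeeta} and \eqref{elargeeta} --- control the off-diagonal (``$F$-type'') and variance-fluctuation (``$E$-type'') parts of $Q_{pq}$, while on $\mathcal{E}$ the surviving part $\textbf{1}_{p=q}\sum_{k\notin\mathcal{C}}s_{pk}G^{(\mathcal{C})}_{kk}$ equals $\textbf{1}_{p=q}m_{\semicircle}$ up to an error of order $U(\log N)^{3\xi}\big((N\eta)^{-1/2}+N^{-\varepsilon/20}\big)$: indeed $\sum_{k\notin\mathcal{C}}s_{pk}=1+\mathcal{O}(N^{-\varepsilon})$, the at most $N^{1-\varepsilon/20}$ deviant summands contribute at most $C_1N^{-\varepsilon/20}(2U+1)$, and the typical summands are within the displayed error of $m_{\semicircle}$. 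Taking a union bound over the at most $r^2$ entries of $Q$ and using $|\mathcal{C}|<2r$, this yields, off an event of probability $\le r^2\exp\big(-\nu(\log N)^{\xi}\big)+3N\sum_i\psi_i$,
\begin{flalign*}
\big\|Q-m_{\semicircle}\,\Id_{\mathcal{C}}\big\|\le\epsilon_N:=C'Ur(\log N)^{3\xi}\Big(N^{-\varepsilon/20}+\tfrac{1}{\sqrt{N\eta}}\Big).
\end{flalign*}

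The crux is the final step. Using \eqref{mquadratic} in the form $z+m_{\semicircle}=-m_{\semicircle}^{-1}$, rewrite
\begin{flalign*}
\big(\textbf{G}|_{\mathcal{C}\times\mathcal{C}}\big)^{-1}=\textbf{H}|_{\mathcal{C}\times\mathcal{C}}+m_{\semicircle}^{-1}\,\Id_{\mathcal{C}}+\big(m_{\semicircle}\,\Id_{\mathcal{C}}-Q\big),
\end{flalign*}
where the last summand has operator norm at most $\epsilon_N$. Here $\textbf{H}|_{\mathcal{C}\times\mathcal{C}}$ is a real symmetric matrix, and $\Im(m_{\semicircle}^{-1})=-\Im m_{\semicircle}/|m_{\semicircle}|^2\le -c(\kappa)/64=:-c'(\kappa)<0$ for $E\in[\kappa-2,2-\kappa]$ and $\eta\in(0,5]$, using the standard bounds $\Im m_{\semicircle}\ge c(\kappa)>0$ and $|m_{\semicircle}|\le 8$ on $\mathscr{D}_{\kappa;N}$. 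Any matrix of the form $\textbf{R}+w\,\Id_{\mathcal{C}}$ with $\textbf{R}$ real symmetric and $\Im w\le -c'(\kappa)$ satisfies $\|(\textbf{R}+w\,\Id_{\mathcal{C}})\textbf{x}\|\ge\big|\Im\langle\textbf{x},(\textbf{R}+w\,\Id_{\mathcal{C}})\textbf{x}\rangle\big|=|\Im w|\ge c'(\kappa)$ for every unit vector $\textbf{x}$, hence has inverse of norm at most $c'(\kappa)^{-1}$. Therefore, once $N$ is large enough --- in a way depending on $U$, since $\xi\le\log\log N$ and $N\eta>(\log N)^{8\log\log N}$ force $\epsilon_N$ to decay --- that $\epsilon_N<c'(\kappa)/2$, we obtain $\big\|\textbf{G}|_{\mathcal{C}\times\mathcal{C}}\big\|\le 2/c'(\kappa)$, and in particular $|G_{ij}|\le 2/c'(\kappa)=:C$, a constant depending only on $\kappa,C_1,C_2$ and \emph{not} on $U$. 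A union bound over the $\mathcal{O}(N)$ possible $(\mathcal{C}_i,\mathcal{C}_j)$ then yields \eqref{linearvdeviantsmall4}.

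The main obstacle is exactly this last point: extracting from the Schur complement a bound on $\big\|\textbf{G}|_{\mathcal{C}\times\mathcal{C}}\big\|$ that is a genuine constant, \emph{independent} of the a priori bound $U$ --- it is this $U$-independence that allows the multiscale iteration of Section \ref{ProofLocalLaw} to be run across the $\sim(\log N)^{\mathcal{O}(\log\log N)}$ scales without the constants deteriorating. It rests on two features that genuinely use the heavy-tailed structure: the strict negativity of $\Im(m_{\semicircle}^{-1})$ in the bulk, combined with $|\mathcal{C}|<2r$, so that the block-size-inflated error $\epsilon_N$ stays below $c'(\kappa)$; and the amenability of every entry joining $\mathcal{C}$ to its complement, without which $Q$ would not concentrate at $m_{\semicircle}\,\Id_{\mathcal{C}}$ at all. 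A secondary technical nuisance, already present in the typical case, is verifying that the removed submatrices $\textbf{H}^{(\mathcal{S})}(\textbf{L}^{(\mathcal{S})})$ with $|\mathcal{S}|\le 2r$ remain generalized Wigner matrices with admissible labels, so that the concentration estimates and the bounds packaged into $\psi_i$ apply to them verbatim.
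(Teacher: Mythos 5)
Your proposal is correct and follows essentially the same route as the paper: the Schur complement on the block given by the union of the connected components of $i$ and $j$ (of size $<2r$ by admissibility), concentration of the quadratic form $Q$ at $m_{\semicircle}\Id$ via the large deviation estimates applied to the amenable entries joining the block to its complement, and the observation that a real symmetric matrix shifted by $-(z+m_{\semicircle})\Id = m_{\semicircle}^{-1}\Id$, whose imaginary part is bounded away from zero in the bulk, has an inverse bounded by a constant depending only on $\kappa$ and hence independent of $U$. The only cosmetic difference is that you invert $\textbf{R}+w\,\Id$ directly via the quadratic-form lower bound $\|(\textbf{R}+w\,\Id)\textbf{x}\|\ge|\Im w|$, whereas the paper first bounds $\|(\textbf{A}-(m_{\semicircle}+z)\Id)^{-1}\|$ and then perturbs; these are the same estimate.
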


\begin{proof}

In what follows, let us fix deviant indices $i, j \in [1, N]$, and let us restrict to the event 
\begin{flalign}
\label{deviantevent}
\begin{aligned}
\overline{\Psi_i} & = \bigcap_{\substack{\mathcal{S} \subset [1, N] \\ |\mathcal{S}| \le 2 r}} \bigcap_{\substack{j, k \in [1, N] \\ j, k \notin \mathcal{S}}} \overline{\Omega}_U \bigg( j, k; \textbf{H}^{(\mathcal{S})}; E + \textbf{i} \eta \Big( 1 + \displaystyle\frac{1}{(\log N)^2} \Big)  \bigg) \\
& \qquad \cap \bigcap_{\substack{\mathcal{S} \subset [1, N] \\ |\mathcal{S}| \le 2 r}} \bigcap_{\substack{j \in [1, N] \\ j \notin \mathcal{S} \\ j \in \mathcal{T}_{\textbf{L}} }} \overline{\Omega}_{U; \xi}^{(1 / 20)} \bigg( j, j; \textbf{H}^{(\mathcal{S})}; E + \textbf{i} \eta \Big( 1 + \displaystyle\frac{1}{(\log N)^2} \Big)  \bigg). 
\end{aligned}
\end{flalign}

\noindent By a union estimate, we find that $\mathbb{P}_{\textbf{H} (\textbf{L})} \big[ \Psi_i \big] \le \psi_i$, where $\Psi_i$ is the complement of $\overline{\Psi_i}$. 

We would like to estimate $\big| G_{ij} (E + \textbf{i} \eta) \big|$. By conjugating $\textbf{L}$ (and $\textbf{H}$) by a permutation matrix if necessary, we may assume that $i = 1$ and that $j \in \{ 1, 2 \}$ (depending on whether $i = j$ or $i \ne j$). Recalling the definitions from the beginning of Section \ref{IndicesGraph}, let $\mathscr{S} \subset [1, N]$ denote the union of $\{ i, j \}$ and the set of indices connected to either $i$ or $j$; we may assume that $\mathscr{S} = \{ 1, 2, \ldots , k \}$ for some $k < 2 r$, since $\textbf{L}$ is admissible. 

We must consider two possibilities, when $i = j$ or when $i \ne j$. First assume that $i$ and $j$ are distinct, so that $j = 2$. Denote $\textbf{H} = \left[\begin{smallmatrix} \textbf{A} & \textbf{B} \\ \textbf{C} & \textbf{D} \end{smallmatrix} \right]$, where $\textbf{A}$ is the top-left $k \times k$ submatrix of $\textbf{H}$. Then, by the Schur complement identity \eqref{blockinverse}, we deduce that the $(i, j)$ entry of $(\textbf{H} - z)^{-1}$ is equal to the $(i, j)$ entry of the $k \times k$ matrix $\big( \textbf{A} - z - \textbf{B} ( \textbf{D} - z )^{-1} \textbf{C} \big)^{-1}$. 

Denoting $\textbf{Y} = \textbf{B} ( \textbf{D} - z )^{-1} \textbf{C}$ and setting $\textbf{Y} = \{ y_{ij} \} = \{ y_{ij} (z) \}$, we have that 
\begin{flalign}
\label{yij}
y_{ij} & = \displaystyle\sum_{i', j' \in [1, N] \setminus \mathscr{S}} a_{ii'} G_{i' j'}^{(\mathscr{S})} a_{j' j}. 
\end{flalign}

\noindent Here, we are using the fact that the $(i, i')$ entry (and $(j, j')$ entry) of $\textbf{H} (\textbf{L})$ are equal to $a_{ii'}$ (and $a_{jj'}$), which follows since $i$ and $i'$ are unlinked (and as are $j$ and $j'$). 

Since we are restricting to the event $\overline{\Psi_i}$, we have that 
\begin{flalign}
\label{deviantgjk1}
\Bigg| G_{i' j'}^{(\mathscr{S})} \bigg( E + \textbf{i} \eta \Big( 1 + \displaystyle\frac{1}{(\log N)^2} \Big) \bigg) \Bigg| \le U,
\end{flalign}

\noindent for all $i', j'  \in [1, N] \setminus \mathscr{S}$. Thus, through a similar way as in the derivation of \eqref{notdeviantgjk3} from \eqref{notdeviantgjk1}, we find that 
\begin{flalign}
\label{deviantgjk3}
\big| G_{i' j'}^{(\mathscr{S})} ( E + \textbf{i} \eta ) \big| \le 2 U, 
\end{flalign}

\noindent for all $i', j' \in [1, N] \setminus \mathscr{S}$, assuming that $N > 10$. 

Using \eqref{deviantgjk3}, we apply \eqref{abij2} with $X_{i'} = a_{ii'}$; $Y_{j'} = a_{j' j}$; $N^{-\delta} = N^{- \varepsilon / 10} = q^{-1}$; and $R_{i' j'} = G_{i' j'}^{(\mathscr{S})}$. Due to the estimates $\mathbb{E} \big[ | a_{i' j'} | \big] \le 2 C_2 N^{-1 - \varepsilon / 10}$ and \eqref{qhijsmallmoments}, this yields 
\begin{flalign}
\label{deviantestimatey1} 
\begin{aligned}
\mathbb{P} \Bigg[ \bigg| \displaystyle\sum_{i', j' \in [1, N] \setminus \mathscr{S}} a_{ii'} G_{i' j'}^{(\mathscr{S})} a_{j' j} \bigg| \ge & (\log N)^{2 \xi} \bigg( (20 C_2^2 + 2) N^{-\varepsilon / 10} \displaystyle\max_{\substack{1 \le i', j' \le N \\ i', j' \notin \mathscr{S}}} \big| G_{i' j'}^{(\mathscr{S})} \big| \\
& \qquad + \Big( \displaystyle\frac{1}{N^2} \displaystyle\sum_{\substack{1 \le i', j' \le N \\ i', j' \notin \mathscr{S}}} \big| G_{i' j'}^{(\mathscr{S})} \big|^2 \Big)^{1 / 2} \bigg) \Bigg] \le \exp \big( - \widetilde{\nu} (\log N)^{\xi} \big).
\end{aligned}
\end{flalign}

\noindent where we have denoted $\widetilde{\nu}$ as the constant $\nu (C_1, 2C_2)$ from Lemma \ref{largeprobability2}. 

 Inserting \eqref{deviantgjk3} and \eqref{yij} into \eqref{deviantestimatey1} (and using the fact that $C_2 > 1$) yields 
\begin{flalign}
\label{deviantestimatey2}
\mathbb{P} \Bigg[ \big| y_{ij} \big| \ge (\log N)^{2 \xi} \bigg( 44 C_2^2 U N^{-\varepsilon / 10} + & \Big( \displaystyle\frac{1}{N^2} \displaystyle\sum_{\substack{1 \le i', j' \le N \\ i', j' \notin \mathscr{S}}} \big| G_{i' j'}^{(\mathscr{S})} \big|^2 \Big)^{1 / 2} \bigg) \Bigg]  \le \exp \big( - \widetilde{\nu} (\log N)^{\xi} \big) + \mathbb{P} \big[ \Psi_i \big].
\end{flalign}

\noindent Applying Ward's identity \eqref{sumgij} and \eqref{deviantgjk3}, we find that 
\begin{flalign}
\label{sumgijdeviant}
\left| \displaystyle\frac{1}{N^2} \displaystyle\sum_{\substack{1 \le i', j' \le N \\ i', j' \notin \mathscr{S}}} \big| G_{i' j'}^{(\mathscr{S})} \big|^2 \right| \ge \displaystyle\frac{1}{N^2 \eta} \displaystyle\sum_{i' \in [1, N] \setminus \mathscr{S}} \big| \Im G_{i' j'}^{(\mathscr{S})} \big| \le \displaystyle\frac{2 U}{N \eta}. 
\end{flalign}

Inserting \eqref{sumgijdeviant} into \eqref{deviantestimatey2} yields
\begin{flalign}
\label{deviantestimatey3}
\mathbb{P} \Bigg[ \big| y_{ij} \big| \ge (\log N)^{2 \xi} \bigg( 44 C_2^2 U N^{-\varepsilon / 10}  + \sqrt{\displaystyle\frac{2 U}{N \eta}} \bigg) \Bigg]  \le \exp \big( - \widetilde{\nu} (\log N)^{\xi} \big) + \mathbb{P} \big[ \Psi_i \big].
\end{flalign}

\noindent The bound \eqref{deviantestimatey3} shows that $|y_{ij}|$ is small if $i \ne j$. 

Now, assume that $i = j$. Then, using \eqref{yij} and the fact that $\sum_{j = 1}^N s_{ij} = 1 + t_i$, we find that 
\begin{flalign}
\label{yii}
\begin{aligned}
y_{ii} & = \displaystyle\sum_{i', j' \notin \mathscr{S}} a_{ii'} G_{i' j'}^{(\mathscr{S})} a_{j' i} \\
& = m_{\semicircle} + \displaystyle\sum_{\substack{i', j' \notin \mathscr{S} \\ i' \ne j'}} a_{ii'} G_{i' j'}^{(\mathscr{S})} a_{j' i} + \displaystyle\sum_{j \notin S} \big( |a_{ij}|^2 - s_{ij} \big) G_{jj}^{(\mathscr{S})} + \displaystyle\sum_{j \notin \mathscr{S}} s_{ij} \big( G_{jj}^{(\mathscr{S})} - m_{\semicircle} \big) - m_{\semicircle} \left( \displaystyle\sum_{j \in \mathscr{S}} s_{ij} - t_i \right). 
\end{aligned}
\end{flalign}

\noindent Now, since $\big| \mathscr{S} \big| \le 2 r$, $s_{ij} \le C_1 N^{-1}$, and $|t_i| < C_1 N^{-\varepsilon}$, we have that 
\begin{flalign}
\label{sijs}
 \left| m_{\semicircle} \displaystyle\sum_{j \in \mathscr{S}} s_{ij} \right| + \big| m_{\semicircle} t_i \big|  \le 3 r C_1 |m_{\semicircle}| (N^{-1} + N^{-\varepsilon}).
\end{flalign}

\noindent Furthermore, since we are restricting to the event $\overline{\Psi_i}$, we have that 
\begin{flalign}
\label{deviantgjj2}
\Bigg| G_{j' j'}^{(\mathscr{S})} \bigg( E + \textbf{i} \eta \Big( 1 + \displaystyle\frac{1}{(\log N)^2} \Big) \bigg) - m_{\semicircle} \Bigg| \le U (\log N)^{3 \xi} \left( N^{- \varepsilon / 20} + \displaystyle\frac{1}{\sqrt{N \eta}} \right), 
\end{flalign}

\noindent if $j' \in \mathcal{T}_{\textbf{L}}$ is typical. Following the derivation of \eqref{giismall1notdeviant1} from \eqref{giigjjsmall1notdeviant1}, we obtain that 
\begin{flalign}
\label{deviantgjj3}
\big| G_{j' j'}^{(\mathscr{S})} ( E + \textbf{i} \eta ) - m_{\semicircle} \big| \le U (\log N)^{3 \xi} \left( N^{- \varepsilon / 20} + \displaystyle\frac{1}{\sqrt{N \eta}} \right) + \displaystyle\frac{2 U}{(\log N)^2}, 
\end{flalign}

\noindent if $j' \in \mathcal{T}_{\textbf{L}}$ is typical. 

Furthermore, if $j' \in \mathcal{D}_{\textbf{L}}$ is deviant, then we have that $\big| G_{j' j'}^{(\mathscr{S})} - m_{\semicircle} \big| \le 2 U + |m_{\semicircle}|$, in view of \eqref{deviantgjk3}. Thus, since $s_{ij} \le C_1 N^{-1}$ and $\mathcal{D}_{\textbf{L}} \le N^{1 - \varepsilon / 20}$ (since $\textbf{L}$ is admissible), it follows that 
\begin{flalign}
\label{sijgjjm}
\begin{aligned}
\left| \displaystyle\sum_{j \notin \mathscr{S}} s_{ij} \big( G_{jj}^{(\mathscr{S})} - m_{\semicircle} \big) \right| & \le \left| \displaystyle\sum_{\substack{j \notin \mathscr{S} \\ j \in \mathcal{T}_{\textbf{L}}}} s_{ij} \big( G_{jj}^{(\mathscr{S})} - m_{\semicircle} \big) \right| + \left| \displaystyle\sum_{\substack{j \notin \mathscr{S} \\ j \in \mathcal{D}_{\textbf{L}}}} s_{ij} \big( G_{jj}^{(\mathscr{S})} - m_{\semicircle} \big) \right| \\
& \le C_1 U (\log N)^{\xi} \left( N^{-c \varepsilon} + \displaystyle\frac{1}{\sqrt{N \eta}} \right) + \displaystyle\frac{2 C_1 U}{(\log N)^2} + C_1 \big( 2 U + |m_{\semicircle}| \big) N^{-\varepsilon / 20}. 
\end{aligned}
\end{flalign} 

\noindent To estimate the remaining terms in \eqref{yii}, we apply Lemma \ref{largeprobability2}. Specifically, applying \eqref{bij2} with $X_{i'} = a_{ii'}$; $B_{i' j'} = G_{i' j'}^{(\mathscr{S})}$; and $N^{-\delta} = N^{-\varepsilon / 10} = q^{-1}$, we obtain that 
\begin{flalign}
\label{deviantestimateyii1}
\begin{aligned}
\mathbb{P} \Bigg[ \bigg| \displaystyle\sum_{\substack{i', j' \notin \mathscr{S} \\ i' \ne j'}} a_{ii'} G_{i' j'}^{(\mathscr{S})} a_{j' i} \bigg| \ge & (\log N)^{2 \xi} \bigg( (20 C_1^2 + 1) N^{-\varepsilon / 10} \displaystyle\max_{\substack{1 \le i', j' \le N \\ i', j' \notin \mathscr{S}}} \big| G_{i' j'}^{(\mathscr{S})} \big| \\
& \qquad + \Big( \displaystyle\frac{1}{N^2} \displaystyle\sum_{\substack{1 \le i', j' \le N \\ i', j' \notin \mathscr{S}}} \big| G_{i' j'}^{(\mathscr{S})} \big|^2 \Big)^{1 / 2} \bigg) \Bigg] \le \exp \big( - \widetilde{\nu} (\log N)^{\xi} \big).
\end{aligned}
\end{flalign}

\noindent Similar to \eqref{deviantestimatey3}, it quickly follows that 
\begin{flalign}
\label{deviantestimateyii3}
\mathbb{P} \Bigg[ \bigg| \displaystyle\sum_{\substack{i', j' \notin \mathscr{S} \\ i' \ne j'}} a_{ii'} G_{i' j'}^{(\mathscr{S})} a_{j' i} \bigg| \ge (\log N)^{2 \xi} \bigg( 21 C_2^2 N^{-\varepsilon / 10} U  + \sqrt{\displaystyle\frac{2 U}{N \eta}} \bigg) \Bigg]  \le \exp \big( - \widetilde{\nu} (\log N)^{\xi} \big) + \mathbb{P} \big[ \Psi_i \big].
\end{flalign}

\noindent Applying \eqref{bi2} with $X_j = a_{ij}$; $s_j = s_{ij}$; $B_j = G_{jj}^{(\mathscr{S})}$; and $N^{-\delta} = N^{-\varepsilon / 10} = q^{-1}$, we obtain that 
\begin{flalign}
\label{deviantestimateyii4}
\mathbb{P} \Bigg[ \bigg| \displaystyle\sum_{\substack{1 \le j \le N \\ j \notin \mathscr{S}}} \big( | a_{ij} |^2 - s_{ij} \big) G_{jj}^{(\mathscr{S})}  \bigg| \ge & (\log N)^{\xi} (20 C_2^2 + 1) N^{-\varepsilon / 10} \displaystyle\max_{\substack{1 \le j \le N \\ j \notin \mathscr{S}}} \big| G_{jj}^{(\mathscr{S})} \big| \Bigg] \le \exp \big( - \widetilde{\nu} (\log N)^{\xi} \big).
\end{flalign}

\noindent Inserting \eqref{deviantgjk3} into \eqref{deviantestimateyii4} yields 
\begin{flalign}
\label{deviantestimateyii5}
\mathbb{P} \Bigg[ \bigg| \displaystyle\sum_{\substack{1 \le j \le N \\ j \notin \mathscr{S}}} \big( | a_{ij} |^2 - s_{ij} \big) G_{jj}^{(\mathscr{S})}  \bigg| \ge 21 C_2^2 (\log N)^{\xi} U N^{-\varepsilon / 10} \Bigg] \le \exp \big( - \widetilde{\nu} (\log N)^{\xi} \big) + \mathbb{P} \big[ \Psi_i \big]. 
\end{flalign}

\noindent Combining \eqref{yii}, \eqref{sijs}, \eqref{sijgjjm}, \eqref{deviantestimateyii3}, \eqref{deviantestimateyii5}, and a union estimate yields 
\begin{flalign}
\label{yijdeviantsmall}
\mathbb{P} \left[ |y_{ii} - m_{\semicircle}| > \displaystyle\frac{1}{\log N} \right] \le 2 \exp \big( - \widetilde{\nu} (\log N)^{\xi} \big) + 3 \mathbb{P} \big[ \Psi_i \big], 
\end{flalign}

\noindent assuming that $N$ is sufficiently large in comparison to $C_1$, $C_2$, $U$, and $\varepsilon^{-1}$. 

Applying \eqref{deviantestimatey3}, \eqref{yijdeviantsmall}, and a union estimate yields 
\begin{flalign}
\label{deviantestimateyentries} 
\mathbb{P} \left[ \displaystyle\max_{1 \le i, j \le k}  \big| y_{ij} - \textbf{1}_{i = j} m_{\semicircle} \big| \le \displaystyle\frac{1}{\log N} \right] \le 2 N^2 \exp \big( - \widetilde{\nu} ( \log N )^{\xi} \big) + 3 N \displaystyle\sum_{i = 1}^N \mathbb{P} \big[ \Psi_i \big], 
\end{flalign}

\noindent assuming that $N$ is sufficiently large in comparison to $C_1$, $C_2$, $U$, and $\varepsilon^{-1}$.  

From \eqref{deviantestimateyentries}, it follows that 
\begin{flalign}
\label{deviantestimatey} 
\mathbb{P} \left[ \| \textbf{Y} - m_{\semicircle} \Id \|_{\infty}  \le \displaystyle\frac{2 \log \log N }{\log N} \right] \le 2 N^2 \exp \big( - \widetilde{\nu} ( \log N )^{\xi} \big) + 3 N \displaystyle\sum_{i = 1}^N \mathbb{P} \big[ \Psi_i \big], 
\end{flalign}

\noindent where we have used the fact that $k \le 2 r = 2 \log \log N$. 

Now, since $\Im z \in [\kappa - 2, 2 - \kappa]$, there exists some constant $c > 0$ (only dependent on $\kappa$) such that $\big| \Im (m_{\semicircle} + z) \big|> c$. Therefore, since $\textbf{A}$ is Hermitian, there exists some $\widetilde{C} > 0$ (only dependent on $\kappa$) such that $\big\| \big( \textbf{A} - (m_{\semicircle} + z) \Id \big)^{-1} \big\| < \widetilde{C}$. This implies by \eqref{deviantestimatey} that 
\begin{flalign*}
\mathbb{P} \bigg[ \Big\| \big( \textbf{A} - z \Id - \textbf{Y} \big)^{-1} \Big\| \ge 2 \widetilde{C} \bigg] \le 2 N^2 \exp \big( - \widetilde{\nu} ( \log N )^{\xi} \big) + 3 N \displaystyle\sum_{i = 1}^N \mathbb{P} \big[ \Psi_i \big], 
\end{flalign*} 

\noindent for sufficiently large $N$ (in comparison to $C_1$, $C_2$, $U$, $\varepsilon^{-1}$, and $\kappa$), from which it follows that 
\begin{flalign}
\label{gijdeviantestimate}
\mathbb{P} \Big[ \big| G_{ij} \big| \ge 2 \widetilde{C} \Big] \le 2 N^2 \exp \big( - \widetilde{\nu} ( \log N )^{\xi} \big) + 3 N \displaystyle\sum_{i = 1}^N \mathbb{P} \big[ \Psi_i \big], 
\end{flalign}

\noindent where we used the facts that $G_{ij}$ is the $(i, j)$-entry of $\big( \textbf{A} - z \Id - \textbf{Y} \big)^{-1}$ and that both $\textbf{A}$ and $\textbf{Y}$ are symmetric. Now the existence of constants $C, \nu > 0$ satisfying \eqref{linearvdeviantsmall4} follows from \eqref{gijdeviantestimate}. 
\end{proof}

\section{Proof of Theorem \ref{localmomentsdeltalabel}}

\label{ProofLocalLaw}

The purpose of this section is to establish Theorem \ref{localmomentsdeltalabel}, which will essentially follow from repeated application of Proposition \ref{incrementnotdeviant}, Proposition \ref{omegaijnotdeviant}, and Proposition \ref{incrementdeviant}. However, before doing so, we must first choose the constants from those propositions in such a way that those results can be applied simultaneously; to that end, we introduce the following notation. 

\begin{itemize}

\item{Denote by $\gamma_1$ the constant $C$ from Proposition \ref{locallargeeta}; it only depends on $C_1$ and $C_2$.} 

\item{Denote by $\mu_1$ the constant $\nu$ from Proposition \ref{locallargeeta}; it also only depends on $C_1$ and $C_2$.}

\item{Recall that Proposition \ref{locallargeeta} holds when $N$ is sufficiently large in comparison to $\varepsilon$; let $\Phi_1$ (dependent on only $\varepsilon$) be such that it holds whenever $N > \Phi_1$.}

\item{Denote by $\gamma_2$ the constant $C$ from Proposition \ref{incrementdeviant}; it only depends on $\varepsilon$, $\kappa$, $C_1$, and $C_2$.} 

\item{Denote by $\mu_2$ the constant $\nu$ from Proposition \ref{incrementdeviant}; it only depends on $\varepsilon$, $\kappa$, $C_1$, and $C_2$.}

\item{Apply Proposition \ref{incrementnotdeviant} and Proposition \ref{omegaijnotdeviant} with $U_2 = \max \{ 1, \gamma_2 \}$ and $U_1 > 1$ arbitrary. Denote by $\gamma_3$ the resulting constant $C$ from Proposition \ref{incrementnotdeviant}, and denote by $\gamma_4$ the resulting constant from Proposition \ref{omegaijnotdeviant}. Recall that $\gamma_3$ and $\gamma_4$ only depend on $\varepsilon$, $\kappa$, $C_1$, $C_2$, and $U_2$; in particular, since $U_2$ only depends on the first four parameters, $\gamma_3$ and $\gamma_4$ only depend on $\varepsilon$, $\kappa$, $C_1$, and $C_2$.}

\item{Denote by $\mu_3$ and $\mu_4$ denote the constants $\nu$ from Proposition \ref{incrementnotdeviant} and Proposition \ref{omegaijnotdeviant}, respectively; they only depend on $\varepsilon$, $\kappa$, $C_1$, and $C_2$.}

\item{Let $\gamma = \max \{ 1, \gamma_1, \gamma_3, \gamma_4 \}$ and $\widetilde{\gamma} = \max \{ 1, \gamma_2\}$; furthermore, let $\widehat{\gamma} = \max \{ \gamma, \widetilde{\gamma} \}$. These parameters only depend on $\varepsilon$, $\kappa$, $C_1$, and $C_2$.}

\item{Let $\nu = \min \{ \mu_1, \mu_2, \mu_3, \mu_4 \}$; it only depends on $\varepsilon$, $\kappa$, $C_1$, and $C_2$. }

\item{Apply Proposition \ref{incrementnotdeviant} and Proposition \ref{omegaijnotdeviant} with $U_1 = \gamma$ and $U_2 = \widetilde{\gamma}$. Recall that both of these propositions hold when $N$ is sufficiently large in comparison to $\varepsilon$, $\kappa$, $C_1$, $C_2$, $U_1 = \gamma$, and $U_2 = \widetilde{\gamma}$. Let $\Phi_2$ and $\Phi_3$ (only dependent on $\varepsilon$, $\kappa$, $C_1$, and $C_2$, since $\gamma$ and $\widetilde{\gamma}$ are determined from those four parameters) be such that Proposition \ref{incrementnotdeviant} and Proposition \ref{omegaijnotdeviant} hold whenever $N > \Phi_2$ and $N > \Phi_3$, respectively. }

\item{Apply Proposition \ref{incrementdeviant} with $U = \widehat{\gamma}$. Recall that this proposition holds when $N$ is sufficiently large in comparison to $\varepsilon$, $\kappa$, $C_1$, $C_2$, and $\widehat{\gamma}$. Let $\Phi_4$ (dependent only on $\varepsilon$, $\kappa$, $C_1$, and $C_2$) be such this propostion holds for all $N > \Phi_4$.} 

\item{Denote $\Phi = \max \{ \Phi_1, \Phi_2, \Phi_3, \Phi_4 \}$; it only depends on $\varepsilon$, $\kappa$, $C_1$, and $C_2$.} 

\item{Set $r = \log \log N$.}

\end{itemize}

Now, select $N$ to be sufficiently large (in comparison to $\varepsilon$, $\kappa$, $C_1$, and $C_2$) such that 
\begin{flalign}
\label{largen}
\left( 1 + \displaystyle\frac{1}{(\log N)^2} \right)^{(\log N)^4} > \widehat{\gamma} N; \qquad N - 2 r (\log N)^4 > \displaystyle\frac{N}{2} > \Phi.  
\end{flalign}

Let us apply Proposition \ref{incrementnotdeviant} and Proposition \ref{omegaijnotdeviant} with $U_1 = \gamma$ and $U_2 = \widetilde{\gamma}$, and then apply Proposition \ref{incrementdeviant} with $U = \widehat{\gamma}$. From a union estimate, we obtain that 
\begin{flalign}
\label{omega1}
& \mathbb{P}_{\textbf{H} (\textbf{L})} \Big[ \bigcap_{i \in \mathcal{T}_{\textbf{L}}} \bigcap_{j = 1}^N \Omega_{\gamma; \xi}^{(1 / 20)} \big( i, j; \textbf{H}, E + \textbf{i} \eta \big) \Big] \le 2 \exp \big( - \nu (\log N)^{\xi} \big) + (N + 1) \displaystyle\sum_{i = 1}^N P_i, \\
\label{omega2}
& \mathbb{P}_{\textbf{H} (\textbf{L})} \Big[ \bigcap_{1 \le i, j \le N} \Omega_{\widetilde{\gamma}} \big( i, j; \textbf{H}, E + \textbf{i} \eta \big) \Big] \le \exp \big( - \nu (\log N)^{\xi} \big) + 3 N \displaystyle\sum_{i = 1}^N \psi_i,
\end{flalign}

\noindent where $P_i$ and $\psi_i$ were defined in \eqref{probabilitynotdeviantevent} and \eqref{psi}, respectively. 

Now, observe that 
\begin{flalign}
\label{psi1}
\begin{aligned}
P_i, \psi_i & \le N^2 \binom{N}{2 r} \displaystyle\max_{\substack{\mathcal{S} \subset [1, N] \\ |\mathcal{S}| \le 2 r}} \displaystyle\max_{\substack{j, k \in [1, N] \\ j, k \notin \mathcal{S}}} \mathbb{P}_{\textbf{H}^{(\mathcal{S})} (\textbf{L}^{(\mathcal{S})})} \Bigg[ \Omega_{\widetilde{\gamma}} \bigg( j, k; \textbf{H}^{(\mathcal{S})}; E + \textbf{i} \eta \Big( 1 + \displaystyle\frac{1}{(\log N)^2} \Big)  \bigg) \Bigg] \\
& \qquad + N \binom{N}{2 r} \displaystyle\max_{\substack{\mathcal{S} \subset [1, N] \\ |\mathcal{S}| \le 2 r}} \displaystyle\max_{\substack{j \in [1, N] \\ j \notin \mathcal{S} \\ j \in \mathcal{T}_{\textbf{L}}}} \mathbb{P}_{\textbf{H}^{(\mathcal{S})} (\textbf{L}^{(\mathcal{S})})}   \Bigg[ \Omega_{\gamma; \xi}^{(1 / 20)} \bigg( j, j; \textbf{H}^{(\mathcal{S})}; E + \textbf{i} \eta \Big( 1 + \displaystyle\frac{1}{(\log N)^2} \Big)  \bigg) \Bigg]. 
\end{aligned}
\end{flalign}

\noindent Inserting \eqref{psi1} into \eqref{omega1} and \eqref{omega2} yields 
\begin{flalign}
\label{omega3}
\begin{aligned}
& \mathbb{P}_{\textbf{H} (\textbf{L})} \Big[ \bigcap_{i \in \mathcal{T}_{\textbf{L}}} \bigcap_{j = 1}^N \Omega_{\gamma; \xi}^{(1 / 20)} \big( i, j; \textbf{H}, E + \textbf{i} \eta \big) \Big] - 2 \exp \big( - \nu (\log N)^{\xi} \big)\\
& \qquad \le  N^{6r} \displaystyle\max_{\substack{\mathcal{S} \subset [1, N] \\ |\mathcal{S}| \le 2 r}} \displaystyle\max_{\substack{j, k \in [1, N] \\ j, k \notin \mathcal{S}}} \mathbb{P}_{\textbf{H}^{(\mathcal{S})} (\textbf{L}^{(\mathcal{S})})} \Bigg[ \Omega_{\widetilde{\gamma}} \bigg( j, k; \textbf{H}^{(\mathcal{S})}; E + \textbf{i} \eta \Big( 1 + \displaystyle\frac{1}{(\log N)^2} \Big)  \bigg) \Bigg] \\
& \qquad \qquad + N^{6r} \displaystyle\max_{\substack{\mathcal{S} \subset [1, N] \\ |\mathcal{S}| \le 2 r}} \displaystyle\max_{\substack{j \in [1, N] \\ j \notin \mathcal{S} \\ j \in \mathcal{T}_{\textbf{L}}}} \mathbb{P}_{\textbf{H}^{(\mathcal{S})} (\textbf{L}^{(\mathcal{S})})}   \Bigg[ \Omega_{\gamma; \xi}^{(1 / 20)} \bigg( j, j; \textbf{H}^{(\mathcal{S})}; E + \textbf{i} \eta \Big( 1 + \displaystyle\frac{1}{(\log N)^2} \Big)  \bigg) \Bigg], 
\end{aligned}
\end{flalign}

\noindent and 
\begin{flalign} 
\label{omega4}
\begin{aligned}
& \mathbb{P}_{\textbf{H} (\textbf{L})} \Big[ \bigcap_{1 \le i, j \le N} \Omega_{\widetilde{\gamma}} \big( i, j; \textbf{H}, E + \textbf{i} \eta \big) \Big] - 2 \exp \big( - \nu (\log N)^{\xi} \big) \\
 & \qquad \le  N^{6 r} \displaystyle\max_{\substack{\mathcal{S} \subset [1, N] \\ |\mathcal{S}| \le 2 r}} \displaystyle\max_{\substack{j, k \in [1, N] \\ j, k \notin \mathcal{S}}} \mathbb{P}_{\textbf{H}^{(\mathcal{S})} (\textbf{L}^{(\mathcal{S})})} \Bigg[ \Omega_{\widetilde{\gamma}} \bigg( j, k; \textbf{H}^{(\mathcal{S})}; E + \textbf{i} \eta \Big( 1 + \displaystyle\frac{1}{(\log N)^2} \Big)  \bigg) \Bigg] \\
& \qquad \qquad + N^{6 r} \displaystyle\max_{\substack{\mathcal{S} \subset [1, N] \\ |\mathcal{S}| \le 2 r}} \displaystyle\max_{\substack{j \in [1, N] \\ j \notin \mathcal{S} \\ j \in \mathcal{T}_{\textbf{L}}}} \mathbb{P}_{\textbf{H}^{(\mathcal{S})} (\textbf{L}^{(\mathcal{S})})}   \Bigg[ \Omega_{\gamma; \xi}^{(1 / 20)} \bigg( j, j; \textbf{H}^{(\mathcal{S})}; E + \textbf{i} \eta \Big( 1 + \displaystyle\frac{1}{(\log N)^2} \Big)  \bigg) \Bigg].  
\end{aligned}
\end{flalign}

Now, fix an integer $k \ge 0$. Let us apply \eqref{omega1} and \eqref{omega2} again, but with $\textbf{H}$ replaced by $\textbf{H}^{(\mathcal{S})}$, and apply a union estimate over all $\mathcal{S} \subset [1, N]$ satisfying $\big| \mathcal{S} \big| < 2 k r$. We obtain that 
\begin{flalign}
\label{omega5}
\begin{aligned}
& \displaystyle\max_{\substack{\mathcal{S} \subset [1, N] \\ |\mathcal{S}| \le 2 k r}} \displaystyle\max_{\substack{j, k \in [1, N] \\ j, k \notin \mathcal{S}}} \mathbb{P}_{\textbf{H}^{(\mathcal{S})} (\textbf{L}^{(\mathcal{S})})} \Bigg[ \Omega_{\gamma; \xi}^{(1 / 20)} \bigg( j, k; \textbf{H}^{(\mathcal{S})}; E + \textbf{i} \eta \Big( 1 + \displaystyle\frac{1}{(\log N)^2} \Big)^k  \bigg) \Bigg] \\
& \quad \le  N^{12 (k + 1) r} \displaystyle\max_{\substack{\mathcal{S} \subset [1, N] \\ |\mathcal{S}| \le 2 (k + 1) r}} \displaystyle\max_{\substack{j, k \in [1, N] \\ j, k \notin \mathcal{S}}} \mathbb{P}_{\textbf{H}^{(\mathcal{S})} (\textbf{L}^{(\mathcal{S})})} \Bigg[ \Omega_{\widetilde{\gamma}} \bigg( j, k; \textbf{H}^{(\mathcal{S})}; E + \textbf{i} \eta \Big( 1 + \displaystyle\frac{1}{(\log N)^2} \Big)^{k + 1}  \bigg) \Bigg] \\
& \quad \quad + N^{12 (k + 1) r} \displaystyle\max_{\substack{\mathcal{S} \subset [1, N] \\ |\mathcal{S}| \le 2 (k + 1) r}} \displaystyle\max_{\substack{j \in [1, N] \\ j \notin \mathcal{S} \\ j \in \mathcal{T}_{\textbf{L}}}} \mathbb{P}_{\textbf{H}^{(\mathcal{S})} (\textbf{L}^{(\mathcal{S})})}   \Bigg[ \Omega_{\gamma; \xi}^{(1 / 20)} \bigg( j, j; \textbf{H}^{(\mathcal{S})}; E + \textbf{i} \eta \Big( 1 + \displaystyle\frac{1}{(\log N)^2} \Big)^{k + 1}  \bigg) \Bigg] \\
& \quad \quad + 3 N^{12 k r} \exp \big( - \nu (\log N)^{\xi} \big), 
\end{aligned} 
\end{flalign}

\noindent and 
\begin{flalign} 
\label{omega6}
\begin{aligned}
& \displaystyle\max_{\substack{\mathcal{S} \subset [1, N] \\ |\mathcal{S}| \le 2 k r}} \displaystyle\max_{\substack{j, k \in [1, N] \\ j, k \notin \mathcal{S}}} \mathbb{P}_{\textbf{H}^{(\mathcal{S})} (\textbf{L}^{(\mathcal{S})})} \Bigg[ \Omega_{\widetilde{\gamma}} \bigg( j, k; \textbf{H}^{(\mathcal{S})}; E + \textbf{i} \eta \Big( 1 + \displaystyle\frac{1}{(\log N)^2} \Big)^k  \bigg) \Bigg] \\
 & \quad \le N^{12 (k + 1) r} \displaystyle\max_{\substack{\mathcal{S} \subset [1, N] \\ |\mathcal{S}| \le 2 (k + 1) r}} \displaystyle\max_{\substack{j, k \in [1, N] \\ j, k \notin \mathcal{S}}} \mathbb{P}_{\textbf{H}^{(\mathcal{S})} (\textbf{L}^{(\mathcal{S})})} \Bigg[ \Omega_{\widetilde{\gamma}} \bigg( j, k; \textbf{H}^{(\mathcal{S})}; E + \textbf{i} \eta \Big( 1 + \displaystyle\frac{1}{(\log N)^2} \Big)^{k + 1}  \bigg) \Bigg] \\
& \quad \quad + N^{12 (k + 1) r} \displaystyle\max_{\substack{\mathcal{S} \subset [1, N] \\ |\mathcal{S}| \le 2 (k + 1) r}} \displaystyle\max_{\substack{j \in [1, N] \\ j \notin \mathcal{S} \\ j \in \mathcal{T}_{\textbf{L}}}} \mathbb{P}_{\textbf{H}^{(\mathcal{S})} (\textbf{L}^{(\mathcal{S})})}   \Bigg[ \Omega_{\gamma; \xi}^{(1 / 20)} \bigg( j, j; \textbf{H}^{(\mathcal{S})}; E + \textbf{i} \eta \Big( 1 + \displaystyle\frac{1}{(\log N)^2} \Big)^{k + 1}  \bigg) \Bigg] \\
& \quad \quad + 3 N^{12 k r} \exp \big( - \nu (\log N)^{\xi} \big), 
\end{aligned}
\end{flalign}

\noindent Now let $\zeta = \lceil (\log N)^4 \rceil$, and repeatedly apply \eqref{omega5} and \eqref{omega6} for all $k \in [0, \zeta - 1]$. We obtain that 
\begin{flalign} 
\label{omega7}
\begin{aligned}
& \mathbb{P}_{\textbf{H} (\textbf{L})} \Big[ \bigcap_{i \in \mathcal{T}_{\textbf{L}}} \bigcap_{j = 1}^N \Omega_{\gamma; \xi}^{(1 / 20)} \big( i, j; \textbf{H}, E + \textbf{i} \eta \big) \Big] + \mathbb{P}_{\textbf{H} (\textbf{L})} \Big[ \bigcap_{1 \le i, j \le N} \Omega_{\widetilde{\gamma}} \big( i, j; \textbf{H}, E + \textbf{i} \eta \big) \Big] \\
 & \quad \le 2^{\zeta} N^{12 \zeta^2 r } \displaystyle\max_{\substack{\mathcal{S} \subset [1, N] \\ |\mathcal{S}| \le 2 \zeta r}} \displaystyle\max_{\substack{j, k \in [1, N] \\ j, k \notin \mathcal{S}}} \mathbb{P}_{\textbf{H}^{(\mathcal{S})} (\textbf{L}^{(\mathcal{S})})} \Bigg[ \Omega_{\widetilde{\gamma}} \bigg( j, k; \textbf{H}^{(\mathcal{S})}; E + \textbf{i} \eta \Big( 1 + \displaystyle\frac{1}{(\log N)^2} \Big)^{\zeta}  \bigg) \Bigg] \\
& \quad \quad + 2^{\zeta} N^{12 \zeta^2 r} \displaystyle\max_{\substack{\mathcal{S} \subset [1, N] \\ |\mathcal{S}| \le 2 \zeta r}} \displaystyle\max_{\substack{j \in [1, N] \\ j \notin \mathcal{S} \\ j \in \mathcal{T}_{\textbf{L}}}} \mathbb{P}_{\textbf{H}^{(\mathcal{S})} (\textbf{L}^{(\mathcal{S})})}   \Bigg[ \Omega_{\gamma; \xi}^{(1 / 20)} \bigg( j, j; \textbf{H}^{(\mathcal{S})}; E + \textbf{i} \eta \Big( 1 + \displaystyle\frac{1}{(\log N)^2} \Big)^{\zeta}  \bigg) \Bigg] \\
& \quad \quad + 6^{\zeta} N^{12 \zeta^2 r} \exp \big( - \nu (\log (N / 2) )^{\xi} \big), 
\end{aligned}
\end{flalign} 

\noindent for all $2 \le \xi \le \log \log N$. 

Now, observe that $\eta \big( 1 + (\log N)^{-2} \big)^{\zeta} > \gamma$, due to the first estimate in \eqref{largen} and the fact that $\eta > N^{-1}$. This, and the second estimate of \eqref{largen} shows that we can apply Proposition \ref{locallargeeta} to obtain 
\begin{flalign}
\label{omega8}
\begin{aligned}
\displaystyle\max_{\substack{\mathcal{S} \subset [1, N] \\ |\mathcal{S}| \le 2 \zeta r}} \displaystyle\max_{\substack{j, k \in [1, N] \\ j, k \notin \mathcal{S}}} \mathbb{P}_{\textbf{H}^{(\mathcal{S})} (\textbf{L}^{(\mathcal{S})})} \Bigg[ \Omega_{\widetilde{\gamma}} \bigg( j, k; \textbf{H}^{(\mathcal{S})}; E & + \textbf{i} \eta \Big( 1 + \displaystyle\frac{1}{(\log (N / 2))^2} \Big)^{\zeta}  \bigg) \Bigg] \\
& \le N^{2 \zeta r + 2} \exp \big( - \nu (\log (N / 2) )^{\xi} \big)  
\end{aligned}
\end{flalign}

\noindent and 
\begin{flalign}
\label{omega9} 
\begin{aligned}
\displaystyle\max_{\substack{\mathcal{S} \subset [1, N] \\ |\mathcal{S}| \le 2 \zeta r}} \displaystyle\max_{\substack{j \in [1, N] \\ j \notin \mathcal{S} \\ j \in \mathcal{T}_{\textbf{L}}}} \mathbb{P}_{\textbf{H}^{(\mathcal{S})} (\textbf{L}^{(\mathcal{S})})}   \Bigg[ \Omega_{\gamma; \xi}^{(1 / 20)} \bigg( j, j; \textbf{H}^{(\mathcal{S})}; E & + \textbf{i} \eta \Big( 1 + \displaystyle\frac{1}{(\log N)^2} \Big)^{\zeta}  \bigg) \Bigg] \\
& \le N^{2 \zeta r + 2} \exp \big( - \nu (\log (N / 2) )^{\xi} \big), 
\end{aligned}
\end{flalign} 

\noindent for all $2 \le \xi \le \log \log N$; here, we have applied a union estimate to bound the maximum.  

Now \eqref{typicalomega} (with $C = \gamma$, $c = 1 / 20$, and $\xi = 10$, for $N$ sufficiently large) and \eqref{deviantomega} (with $C = \widetilde{\gamma}$ and $\xi = 10$, for $N$ sufficiently large) follow from inserting \eqref{omega8} and \eqref{omega9} into \eqref{omega7} and taking $\xi = 10$.

\section{Universality of Local Statistics}

\label{LocalH}

The goal of this section is to use the local semicircle law Theorem \ref{localmoments} to establish the bulk universality results Theorem \ref{gapsfunctions} and Theorem \ref{bulkfunctions} for local eigenvalue statistics of (heavy-tailed) generalized Wigner matrices. This will comprise the latter two parts of the three-step strategy. 

Recall that the first of those is to apply a Dyson Brownian motion (or, in our case, a matrix Ornstein-Uhlenbeck process) to the original random matrix $\textbf{H}$, thereby forming matrix $\textbf{H}_t$, and then show that the local statistics of $\textbf{H}_t$ converge to those of $\textbf{GOE}_N$. The second is to show that the local statistics of $\textbf{H}_t$ are very similar to those of $\textbf{H}$. 

These two steps have been implemented many times in the random matrix literature \cite{BESRRG, FEUGM, URMCE, SSGESEE, UM, URMLRF, LRFULSRM, URMFTD, BUGM, SSSG, BUSM, CLSM, FEUM, ULES}. In particular, our exposition will closely follow that of \cite{URMCE, SSSG, BUSM}. Therefore, we will only outline the proofs, explaining the differences where they arise.

\subsection{Bulk and Gap Universality Under Gaussian Perturbations}

\label{PerturbedUniversality} 

We begin with the first step, that is, we apply an Ornstein-Uhlenbeck process to $\textbf{H}$ and show that the local statistics of the resulting matrix converge to those of the GOE in the large $N$ limit; see Theorem \ref{universalityperturbation3}. This will largely use the results of the recent works of Landon-Yau \cite{CLSM} and Landon-Sosoe-Yau \cite{FEUM}, which establish very quick convergence of bulk local statistics under Dyson Brownian motion. 

The discussion in this section will be very similar to that in Section 3 of \cite{BUSM} and Section 4.2 of \cite{SSSG}, so we omit most proofs and refer to those papers for a more thorough explanation. 

To proceed, we require the following definition of \cite{CLSM}, which defines a class of initial data for which it is possible to show quick convergence of Dyson Brownian motion. 

\begin{definition}[{\cite[Definition 2.1]{CLSM}}]

Fix some $E_0 \in \mathbb{R}$, and let $\delta$ be a positive real number. For each positive real number $N$, let $r = r_N$ and $R = R_N$ be two parameters satisfying $N^{\delta - 1} \le r \le N^{-\delta}$ and $N^{\delta} r \le R \le N^{-\delta}$. 

We call a diagonal $N \times N$ matrix $\textbf{D} = \textbf{D}_N = \{ V_1, V_2, \ldots , V_N \}$ \emph{$(r, R)$-regular with respect to $E_0$} if there exist constants $c, C > 0$ (independent of $N$) such that the estimates
\begin{flalign*}
c \le \Im m_{\textbf{D}} (E + \textbf{i} \eta)  \le C; \qquad |V_i| \le N^C, 
\end{flalign*}

\noindent both hold for all $E \in (E - R, E + R)$ and $r \le \eta \le 10$, where $m_{\textbf{D}} (z)$ denotes the Stieltjes transform of $\textbf{D}$ for all $z \in \mathbb{H}$, as defined in \eqref{mn}. We call an arbitrary symmetric matrix $\textbf{M}$ \emph{$(r, R)$-regular with respect to $E_0$} if $\textbf{D} (M)$ is $(r, R)$-regular with respect to $E_0$, where $\textbf{D} (M)$ denotes a diagonal matrix whose entries are the eigenvalues of $\textbf{M}$. 

\end{definition}

The results of \cite{CLSM, FEUM} essentially state that, if we start with a $(r, R)$-regular diagonal matrix and then add an independent small Gaussian component of order greater than $r$ but less than $R$, then the local statistics of the result will asymptotically coincide with those of the GOE. To state this more precisely, we must introduce the free convolution \cite{FCSD} of a probability distribution with the semicircle law. 

To that end, fix $N \in \mathbb{Z}_{> 0}$ and a symmetric $N \times N$ matrix $\textbf{A}$. For each $s \ge 0$, define $\textbf{A}^{(s)} = \textbf{A} + s^{1 / 2} \textbf{GOE}_N$. Further denote by $m^{(s)} (z) = m_{\textbf{A}^{(s)}} (z)$ the Stieltjes transform \eqref{mn} of the empirical spectral density of $\textbf{A}^{(s)}$, which we denote by $\rho^{(s)} (x) = \pi^{-1} \lim_{\eta \rightarrow 0} \Im m^{(s)} (E + \textbf{i} \eta)$. 

For each $i \in [1, N]$, let $\gamma_i$ and $\gamma_i^{(s)}$ denote the \emph{classical eigenvalue locations} of the distributions $\rho_{\semicircle}$ and $\rho^{(s)}$, respectively, defined by the equations 
\begin{flalign*}
\displaystyle\int_{-\infty}^{\gamma_i} \rho_{\semicircle} (x) dx = \displaystyle\frac{i}{N}; \qquad \displaystyle\int_{-\infty}^{\gamma_i^{(s)}} \rho^{(s)} (x) dx = \displaystyle\frac{i}{N}. 
\end{flalign*}

The following two theorems establish the universality of gap statistics and correlation functions of the random matrix $\textbf{M}^{(s)}$, assuming that $\textbf{M}$ is regular.

\begin{prop}[{\cite[Theorem 2.5]{CLSM}}]

\label{universalityperturbation1}

Let $N$ be a positive integer, and let $r = r_N$ and $R = R_N$ be positive real parameters dependent on $N$. Fix a real number $\kappa > 0$, and let $\textbf{\emph{M}}$ be a real, symmetric $N \times N$ matrix. Assume that $\textbf{\emph{M}}$ is $(r, R)$-regular with respect to some fixed $E \in (\kappa - 2, 2 - \kappa)$. 

Fix $\delta > 0$ (independent of $N$), and assume that there exists some $s > 0$ satisfying $N^{\delta} r < s < N^{-\delta} R$. Let $i \in [1, N]$ be an integer satisfying $\gamma_i^{(s)} \in [E - G / 2, E + G / 2]$. 

Fix a positive integer $k$. Then, there exists a sufficiently small real number $c = c_{\delta; k} > 0$ such that the following holds. For any compactly supported smooth function $F \in \mathcal{C}_0^{\infty} (\mathbb{R}^k)$ and any positive integers $i_1 < i_2 < \cdots < i_k < N^c$, we have (for sufficiently large $N$) that 
\begin{flalign}
\label{fmt}
\begin{aligned}
\bigg| & \mathbb{E}_{\textbf{\emph{M}}_t} \Big[ 	F \big( N \rho^{(s)} (\gamma_i^{(s)}) (\lambda_i - \lambda_{i + i_1}), N \rho^{(s)} (\gamma_i^{(s)}) (\lambda_i - \lambda_{i + i_2}), \ldots , N \rho_i^{(s)} (\gamma_i^{(s)}) (\lambda_i - \lambda_{i + i_n} )\big) \Big] \\
& - \mathbb{E}_{\textbf{\emph{GOE}}_N} \Big[ F \big( N \rho_{\semicircle} (\gamma_i) (\lambda_i - \lambda_{i + i_1}), N \rho_{\semicircle} (\gamma_i) (\lambda_{i} - \lambda_{i + 1}), \ldots , N \rho_{\semicircle} (\gamma_i) (\lambda_{i + n - 1} - \lambda_{i + n} ) \big) \Big]  \bigg| < N^{-c}. 
\end{aligned} 
\end{flalign}

\end{prop}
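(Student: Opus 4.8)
Since this proposition is quoted verbatim from Landon--Yau \cite[Theorem 2.5]{CLSM} (with its correlation-function counterpart drawn from \cite{FEUM}), the formal ``proof'' here is simply a citation; nevertheless, let me outline the strategy behind it, as it constitutes the second step of the three-step approach. The starting observation is that, after a deterministic reparametrization of time, the eigenvalues of $\textbf{M}^{(s)} = \textbf{M} + s^{1/2} \textbf{GOE}_N$ evolve under a Dyson Brownian motion whose initial data is the spectrum of $\textbf{M}$. The $(r, R)$-regularity hypothesis is precisely what is needed to control this flow: it guarantees that the free convolution of the empirical spectral measure of $\textbf{M}$ with the semicircle law of variance $s$ has a density $\rho^{(s)}$ that is bounded above and below and $C^1$-regular on the window $(E - R, E + R)$, and, through a stability analysis of the associated self-consistent equation, that a local law and eigenvalue rigidity hold for $\textbf{M}^{(s)}$ on this window down to scales just above $N^{-1}$.

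With these a priori estimates in hand, I would establish the convergence of gap statistics by the homogenization/coupling method. One introduces a second Dyson Brownian motion started from independent $\textbf{GOE}_N$ initial data and couples it to the first through a common Brownian motion. The difference of the two eigenvalue configurations solves a discrete parabolic equation whose coefficients are governed by the local law; a De Giorgi--Nash--Moser type regularity estimate for this equation (equivalently, the energy method of Erd\H{o}s--Yau) shows that the \emph{rescaled} microscopic gaps of the two processes agree up to an error $N^{-c}$ once the flow has run for time $s \gg N^{-1 + \delta}$. Since the reference process is a Gaussian-divisible ensemble whose bulk gap statistics already coincide with those of $\textbf{GOE}_N$, this yields \eqref{fmt}; the analogous statement for correlation functions follows by the same mechanism together with the argument of \cite{FEUM}.

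The technical heart, and the step I expect to be the main obstacle, is this homogenization estimate for the difference process: one must propagate the short-time regularity of the discrete heat kernel attached to the Dyson Brownian motion using only the rigidity of $\textbf{M}^{(s)}$ as input, and it is exactly this argument that upgrades a qualitative limit to the quantitative rate $N^{-c}$. A secondary difficulty is establishing the near-optimal local law and rigidity for the free-convolved matrix $\textbf{M}^{(s)}$ starting from the comparatively weak $(r, R)$-regularity of $\textbf{M}$, which rests on the stability of the free-convolution self-consistent equation near the spectral edge of the perturbed measure. All of this is carried out in detail in \cite{CLSM, FEUM}, and we simply invoke those results.
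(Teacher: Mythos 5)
The paper itself offers no proof of this proposition: it is imported verbatim as \cite[Theorem 2.5]{CLSM}, exactly as you note, so your ``proof by citation'' is the same approach the paper takes. Your supplementary outline of the Landon--Yau argument (regularity of the free convolution, local law and rigidity for $\textbf{M}^{(s)}$, and the coupling/homogenization estimate for the difference of two Dyson Brownian motions) is an accurate description of how the cited result is actually established.
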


\begin{prop}[{\cite[Theorem 2.2]{FEUM}}]

\label{universalityperturbation2}

Adopt the notation of Proposition \ref{universalityperturbation1}, and fix a positive integer $k$. Then, there exists a sufficiently small real number $c = c_{\delta; k} > 0$ such that the following holds. For any $F \in \mathcal{C}_0^{\infty} (\mathbb{R}^k)$, we have (for sufficiently large $N$) that 
\begin{flalign}
\label{pmt}
\begin{aligned}
& \Bigg| \displaystyle\int_{\mathbb{R}^k} F (a_1, a_2, \ldots , a_k) p_{\textbf{\emph{M}}^{(s)}}^{(k)} \left( E + \displaystyle\frac{a_1}{N \rho^{(s)} (E)}, E + \displaystyle\frac{a_2}{N \rho^{(s)} (E)}, \ldots , E + \displaystyle\frac{a_k}{N \rho^{(s)} (E)} \right) \displaystyle\prod_{j = 1}^k d a_j \\
& - \displaystyle\int_{\mathbb{R}^k} F (a_1, a_2, \ldots , a_k) p_{\textbf{\emph{GOE}}_N}^{(k)} \left( E + \displaystyle\frac{a_1}{N \rho_{\semicircle} (E)}, E + \displaystyle\frac{a_2}{N \rho_{\semicircle} (E)}, \ldots , E + \displaystyle\frac{a_k}{N \rho_{\semicircle} (E)} \right) \displaystyle\prod_{j = 1}^k d a_j \Bigg| < N^{-c}. 
\end{aligned}
\end{flalign}

\end{prop}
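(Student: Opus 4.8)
The statement to prove is Proposition \ref{universalityperturbation2}, which is quoted directly from \cite[Theorem 2.2]{FEUM}. The plan is therefore to deduce it as an immediate consequence of that reference, after checking that the hypotheses match. First I would observe that Proposition \ref{universalityperturbation1} and Proposition \ref{universalityperturbation2} use exactly the same hypotheses on $\textbf{M}$, on the scale parameters $r, R$, and on $s$ (namely $(r,R)$-regularity with respect to $E$ and $N^{\delta} r < s < N^{-\delta} R$); these are precisely the hypotheses under which \cite{FEUM} proves convergence of correlation functions at fixed energy. So the work reduces to verifying that our normalization conventions (the classical eigenvalue locations $\gamma_i^{(s)}$ defined via the free convolution $\rho^{(s)}$, the rescaling by $N \rho^{(s)}(E)$ inside the correlation function) coincide with those of \cite{FEUM}, which they do by construction.

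The only substantive point is that \cite{FEUM} states its result for the Dyson Brownian motion flow $\textbf{M}_t = \textbf{M} + \sqrt{t}\,\textbf{GOE}_N$ (or a variance-preserving Ornstein--Uhlenbeck version), whereas here we write $\textbf{M}^{(s)} = \textbf{M} + s^{1/2}\textbf{GOE}_N$; these differ at most by a trivial rescaling of the matrix and a relabeling of the time parameter, which does not affect local statistics in the bulk. I would also note that \cite{FEUM} permits the reference energy $E$ to satisfy $\gamma_i^{(s)} \in [E - R/2, E + R/2]$ for the relevant indices $i$, matching the constraint already imposed in Proposition \ref{universalityperturbation1}; since we are proving a fixed-energy statement, this is automatic once $\textbf{M}$ is $(r,R)$-regular with respect to $E$.

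Consequently the proof is: the hypotheses of \cite[Theorem 2.2]{FEUM} are satisfied verbatim, and \eqref{pmt} is its conclusion. There is no genuine obstacle here; the one thing to be careful about is making sure the free-convolution density $\rho^{(s)}$ appearing in the rescaling in \eqref{pmt} is the same object that \cite{FEUM} uses, i.e.\ the density of the empirical spectral distribution of $\textbf{M}^{(s)}$ in the large-$N$ limit, which is indeed the free convolution of the limiting law of $\textbf{M}$ with the semicircle law of variance $s$. Since the later sections of this paper will apply Proposition \ref{universalityperturbation2} to a matrix $\textbf{M}$ obtained from $\textbf{H}$ after a suitable truncation/conditioning (so that $\textbf{M}$ is $(r,R)$-regular with $r$ nearly of order $N^{-1}$, by the local semicircle law Theorem \ref{localmoments}), it suffices here to record the statement in the generality of \cite{FEUM}; the verification that our $\textbf{M}$ is regular is deferred to the comparison step in Section \ref{LocalH}. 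Hence the proposition follows.

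\begin{proof}
This is precisely \cite[Theorem 2.2]{FEUM}, whose hypotheses coincide with those of Proposition \ref{universalityperturbation1} (the matrix $\textbf{M}$ is $(r, R)$-regular with respect to $E$, and $s$ satisfies $N^{\delta} r < s < N^{-\delta} R$), upon identifying the Gaussian-divisible matrix $\textbf{M}^{(s)} = \textbf{M} + s^{1/2} \textbf{GOE}_N$ with the Dyson Brownian motion flow used there; the density $\rho^{(s)}$ is the free convolution of the limiting spectral law of $\textbf{M}$ with the semicircle law of variance $s$, which governs the empirical spectral distribution of $\textbf{M}^{(s)}$, and the rescaling by $N \rho^{(s)}(E)$ in \eqref{pmt} matches the normalization of \cite{FEUM}. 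The conclusion \eqref{pmt} is then immediate.
\end{proof}
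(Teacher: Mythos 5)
Your proposal is correct and matches the paper exactly: the paper states this proposition as a direct citation of \cite[Theorem 2.2]{FEUM} and offers no proof of its own, so the entire content is the hypothesis-matching and normalization check you carry out. Nothing further is needed.
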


These two propositions can be applied to deduce universality of a matrix $\textbf{H}_t$, defined from the original generalized Wigner matrix $\textbf{H} = \{ h_{ij} \}$, as follows. For each $1 \le i, j \le N$, let $B_{ij} (s)$ be a Brownian motion so that $B_{ij} (s) = B_{ji} (s)$ and the $\big\{ B_{ij} (s) \big\}$ are mutually independent (and also independent from $\textbf{H}$) for $1 \le i \le j \le N$. Denote by $h_{ij} (s)$ the solution to the Ornstein-Uhlenbeck equation 
\begin{flalign}
\label{hijt}
d h_{ij} (s) = N^{-1 / 2} d B_{ij} (s) - (2 N s_{ij})^{-1} h_{ij} (s) ds, 
\end{flalign}

\noindent and define the $N \times N$ random real symmetric matrix $\textbf{H}_t = \big\{ h_{ij} (s) \big\}$. 

Using Proposition \ref{universalityperturbation1} and Proposition \ref{universalityperturbation2}, one can deduce the following result. 

\begin{prop}

\label{universalityperturbation3}

Fix constants $\kappa > 0$; $0 < \delta < \varepsilon < 1 / 2$; $0 < c_1 < 1 < C_1$; and $C_2 > 1$. Let $\big\{ \textbf{\emph{H}} = \textbf{\emph{H}}_N \big\}_{N \ge 1}$ be a family of generalized Wigner matrices as in Definition \ref{momentassumption}. Let $\lambda_1, \lambda_2, \ldots , \lambda_N$ denote the eigenvalues of $\textbf{\emph{H}}$, and denote $t = t_N = t^{\delta - 1}$. Define $\textbf{\emph{H}}_t$ as above. 

Fix a positive integer $k$. Then, there exists a sufficiently small real number $c = c_{\delta; k} > 0$ such that the following holds. For any compactly supported smooth function $F \in \mathcal{C}_0^{\infty} (\mathbb{R}^k)$ and any positive integers $i_1 < i_2 < \cdots < i_k < N^c$, we have (for sufficiently large $N$) that
\begin{flalign}
\label{htf}
\begin{aligned}
\bigg| \mathbb{E}_{\textbf{\emph{H}}_t} \Big[ & 	F \big( N (\lambda_i - \lambda_{i + i_1}), N (\lambda_i - \lambda_{i + i_2}), \ldots , N (\lambda_i - \lambda_{i + i_n} )\big) \Big] \\
& - \mathbb{E}_{\textbf{\emph{GOE}}_N} \Big[ F \big( N (\lambda_i - \lambda_{i + i_1}), N (\lambda_{i} - \lambda_{i + 1}), \ldots , N (\lambda_{i + n - 1} - \lambda_{i + n} ) \big) \Big]  \bigg| < N^{-c}. 
\end{aligned}
\end{flalign}

Furthermore, we have (for sufficiently large $N$) that 
\begin{flalign}
\label{pht}
\begin{aligned}
\Bigg| & \displaystyle\int_{\mathbb{R}^k} F (a_1, a_2, \ldots , a_k) p_{\textbf{\emph{H}}_t}^{(k)} \left( E + \displaystyle\frac{a_1}{N \rho_{\semicircle} (E)}, E + \displaystyle\frac{a_2}{N \rho_{\semicircle} (E)}, \ldots , E + \displaystyle\frac{a_k}{N \rho_{\semicircle} (E)} \right) \displaystyle\prod_{j = 1}^k d a_j \\
& - \displaystyle\int_{\mathbb{R}^k} F (a_1, a_2, \ldots , a_k) p_{\textbf{\emph{GOE}}_N}^{(k)} \left( E + \displaystyle\frac{a_1}{N \rho_{\semicircle} (E)}, E + \displaystyle\frac{a_2}{N \rho_{\semicircle} (E)}, \ldots , E + \displaystyle\frac{a_k}{N \rho_{\semicircle} (E)} \right) \displaystyle\prod_{j = 1}^k d a_j \Bigg| < N^{-c}. 
\end{aligned}
\end{flalign}

\end{prop}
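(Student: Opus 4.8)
The plan is to deduce Proposition \ref{universalityperturbation3} from Proposition \ref{universalityperturbation1} and Proposition \ref{universalityperturbation2} by verifying that the matrix $\textbf{H}_t$ can be realized (up to a deterministic rescaling of time and a negligible error) in the form $\textbf{M}^{(s)} = \textbf{M} + s^{1/2} \textbf{GOE}_N$ with $\textbf{M}$ equal to (a slight modification of) $\textbf{H}$ that is $(r,R)$-regular with respect to the relevant energies in the bulk. First I would record the elementary fact that the Ornstein--Uhlenbeck equation \eqref{hijt} has the explicit solution $h_{ij}(s) = e^{-s/(2 N s_{ij})} h_{ij}(0) + \big(1 - e^{-s/(N s_{ij})}\big)^{1/2} g_{ij}$ with $g_{ij}$ an independent Gaussian of variance $s_{ij}$; since $N s_{ij} \in (c_1, C_1)$ by assumption \ref{generalized}, for $s = t = N^{\delta - 1}$ both the decay factor $e^{-s/(2Ns_{ij})} = 1 + \mathcal{O}(N^{\delta - 2})$ and the variance of the Gaussian part are comparable to (a constant multiple of) $s$ uniformly in $i,j$. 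The discrepancy between $\textbf{H}_t$ and the idealized model $\textbf{H} + s^{1/2}\textbf{GOE}_N$ is a matrix whose entries have variance $\mathcal{O}(N^{-1} \cdot N^{\delta-2})$; by a standard resolvent continuity argument (using \eqref{resolvent} and \eqref{gijeta}) this perturbation does not affect local statistics on scale $N^{-1}$, and in any case one can absorb the fluctuation-dependent variances into the definition of $\textbf{GOE}_N$ by reindexing, exactly as in Section 3 of \cite{BUSM}.

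Next I would verify the regularity hypothesis. Set $r = N^{\delta'-1}$ and $R = N^{-\delta'}$ for a small $\delta' < \delta$; then $N^{\delta'} r \le R$ and the constraint $N^{\delta'} r < s < N^{-\delta'} R$ is met for $s = N^{\delta-1}$ after shrinking $\delta'$. To see that $\textbf{H}$ is $(r,R)$-regular with respect to every $E \in (\kappa - 2, 2 - \kappa)$, I invoke Theorem \ref{localmoments}: on the complement of an event of probability $\le C N^{-c\log\log N}$, for all $z = E + \textbf{i}\eta$ with $E$ in the bulk and $\eta \ge r \gg \varphi_N$ we have $|m_N(z) - m_{\semicircle}(z)| \le C(\log N)^\xi (N\eta)^{-1/2} + C(\log N)^\xi N^{-c\varepsilon} = o(1)$, so $\Im m_N(z) = \Im m_{\semicircle}(z) + o(1)$, which is bounded above and below on this range; the eigenvalue bound $|\lambda_i| \le N^C$ is immediate (e.g. from the Frobenius norm, which is $\mathcal{O}(1)$ in probability under assumption \ref{generalized}). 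Hence with overwhelming probability $\textbf{H}$ is $(r,R)$-regular with respect to $E$, and likewise the small deterministic perturbation of it obtained above remains regular. One also needs $\gamma_i^{(s)}$ close to $\gamma_i$ and $\rho^{(s)}(\gamma_i^{(s)}) = \rho_{\semicircle}(\gamma_i) + o(1)$ with $\gamma_i^{(s)}$ in the bulk whenever $i$ is in the bulk: this follows because the free convolution $\rho^{(s)}$ of a measure whose Stieltjes transform is already $N^{-c\varepsilon}$-close to $m_{\semicircle}$ with a semicircle of variance $s = o(1)$ stays $o(1)$-close to $\rho_{\semicircle}$ in the bulk, a standard stability property of the free convolution flow (see \cite{CLSM, FCSD}).

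With these two inputs in place, the conclusion is essentially a concatenation: condition on the overwhelmingly likely event that $\textbf{H}$ is regular, apply Proposition \ref{universalityperturbation1} and Proposition \ref{universalityperturbation2} to $\textbf{M}^{(s)} = \textbf{H} + s^{1/2}\textbf{GOE}_N$ at the energies $E$ (for correlation functions) and at $\gamma_i^{(s)}$ (for gaps), then replace all occurrences of $\rho^{(s)}(\gamma_i^{(s)})$ by $\rho_{\semicircle}(\gamma_i)$ at the cost of a further $o(1)$ in the smooth test function $F$ (using that $F$ and its derivatives are bounded and that $N(\lambda_i - \lambda_{i+j})$ is tight), and finally pass from $\textbf{M}^{(s)}$ back to $\textbf{H}_t$ via the resolvent-stability estimate of the first paragraph; the $o(1)$ errors from conditioning and from the Gaussian-variance mismatch are all $\le N^{-c'}$ after shrinking $c$, yielding \eqref{htf} and \eqref{pht}. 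The main obstacle is not any single step but the bookkeeping needed to show that the $N^{-c\varepsilon}$ error in our weak local law \eqref{localmomentsequation} — which is much larger than the $N^{-1/2}$-type error available in the classical setting — is nevertheless $o(1)$ and therefore still enough to certify $(r,R)$-regularity and the stability of the free convolution; since regularity only requires two-sided bounds on $\Im m_N$ rather than sharp rates, this works, but it is the place where our low-moment hypothesis interacts most delicately with the machinery of \cite{CLSM, FEUM}, and it is why the exposition must follow \cite{BUSM, SSSG} carefully rather than cite a black box.
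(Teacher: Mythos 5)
Your proposal follows essentially the same route as the paper: realize $\textbf{H}_t$ as $\textbf{M} + s^{1/2}\textbf{GOE}_N$ with $s$ of order $t = N^{\delta-1}$, certify the $(r,R)$-regularity of $\textbf{M}$ via the weak local law Theorem \ref{localmoments} (noting, correctly, that regularity only needs two-sided bounds on $\Im m$, so the $N^{-c\varepsilon}$ error is harmless), apply Propositions \ref{universalityperturbation1} and \ref{universalityperturbation2}, and then trade $\rho^{(s)}(\gamma_i^{(s)})$ for $\rho_{\semicircle}(\gamma_i)$. This is exactly the outline the paper gives (deferring details to \cite{BUSM, SSSG}).

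One caveat on your first paragraph. The claim that the discrepancy between $\textbf{H}_t$ and the idealized $\textbf{H} + s^{1/2}\textbf{GOE}_N$ can be dismissed ``by a standard resolvent continuity argument using \eqref{resolvent} and \eqref{gijeta}'' does not work as stated: that discrepancy matrix has entries of variance of order $t^2/N$ and hence operator norm of order $t = N^{\delta-1}$, which exceeds the eigenvalue spacing $N^{-1}$; the deterministic bound \eqref{gijeta} at $\eta \sim N^{-1}$ then gives a resolvent perturbation of size $N^{\delta+1}$, which is useless for local statistics. Your fallback — absorbing the variance mismatch exactly, as in Section 3 of \cite{BUSM} — is the correct (and the paper's) route: one defines $\textbf{M} = \textbf{H}_t^{(1)}$ so that $\textbf{H}_t$ \emph{equals in law} $\textbf{H}_t^{(1)} + s^{1/2}\textbf{GOE}_N$ with an independent GOE component, checks that $\textbf{H}_t^{(1)}$ is itself a generalized Wigner matrix (so Theorem \ref{localmoments} applies to it directly), and conditions on it. You should make that exact decomposition the primary argument rather than a parenthetical alternative; with that adjustment the proof is complete and matches the paper's.
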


Given the local semicircle law Theorem \ref{localmoments} and the universality statements Proposition \ref{universalityperturbation1} and Proposition \ref{universalityperturbation2} above, the proof of this proposition is very similar to that of Theorem 3.1 in \cite{BUSM} and Proposition 4.9 in \cite{SSSG}; therefore, it is omitted. However, let us briefly explain the idea of the proof, referring to the references \cite{BUSM, SSSG} for the remaining details. 

First observe that $\textbf{H}_t$ is formed by $\textbf{H}$ from applying an Ornstein-Uhlenbeck process for time $t = N^{\delta - 1}$, while Proposition \ref{universalityperturbation1} and Proposition \ref{universalityperturbation2} are stated for matrices of the form $\textbf{M} + s^{1 / 2} \textbf{GOE}_N$. It happens that $\textbf{H}_t$ is also of the latter form (see, for example equation (2.17) in \cite{BUSM}), with $\textbf{M} = \textbf{H}_t^{(1)} = \{ h_{ij; t}^{(1)} \}$ defined by 
\begin{flalign*}
h_{ij; t}^{(1)} = e^{- t / 2 N s_{ij}} h_{ij} + N^{-1 / 2} B_{ij} (s) \sqrt{N s_{ij} \big( 1 - e^{t / N s_{ij}} \big) - r \left( \displaystyle\frac{1 + \textbf{1}_{i = j}}{2} \right) \big( 1 - e^{-t / r} \big) },
\end{flalign*}

\noindent where $r = N \min_{1 \le i, j \le N} s_{ij} \in (c_1, C_1)$. In particular, the law of $\textbf{H}_t$ is that of 
\begin{flalign}
\label{sh1}
\textbf{H}_t^{(1)} + s^{1 / 2} \textbf{GOE}_N, \qquad \text{where $s = \displaystyle\sqrt{r(1 - e^{-t / r})}{2}$,} 
\end{flalign} 

\noindent and $\textbf{GOE}_N$ is chosen to be independent from $\textbf{H}_t^{(1)}$; observe that $s$ is of order $t = N^{\delta - 1}$. 

It can be quickly verified that $\textbf{H}_t^{(1)}$ is also a generalized Wigner matrix in the sense of Definition \ref{momentassumption}, meaning by Theorem \ref{localmoments} that it satisfies a local semicircle law on some event $\Omega$ that has probability at least $1 - C N^{-c \log \log N}$, for some constants $c, C > 0$. One can then condition on the matrix $\textbf{H}_t^{(1)}$ and apply Proposition \ref{universalityperturbation1} and Proposition \ref{universalityperturbation2} with $\textbf{A} = \textbf{H}_t^{(1)}$ and $s$ as in \eqref{sh1}, to deduce that \eqref{fmt} and \eqref{pmt} hold for $\textbf{A}^{(s)} = \textbf{H}_t$. 

The remaining difference between \eqref{fmt} and \eqref{htf} and between \eqref{pmt} and \eqref{pht} is in the scaling. Specifically, one must approximate the factors of $\rho^{(s)} \big( \gamma_j^{(s)} \big)$ by $\rho_{\semicircle} (\gamma_i)$ in \eqref{fmt} and the factors of $\rho^{(s)} (E)$ by $\rho_{\semicircle} (E)$ in \eqref{pmt}. This approximation can be justified using the local semicircle law Theorem \ref{localmoments}; this can be done in a very similar way to what was explained in Lemma 3.3 and Lemma 3.4 of \cite{BUSM} and Lemma 4.12 of \cite{SSSG} and thus we omit further details. 

This provides an outline of the proof of Proposition \ref{universalityperturbation3} assuming Theorem \ref{localmoments}, Proposition \ref{universalityperturbation1}, and Proposition \ref{universalityperturbation2}; we again refer to Section 3 of \cite{BUSM} and Section 4.2 of \cite{SSSG} for a more comprehensive exposition.

\subsection{Continuity Estimates}

\label{HHt}

Recall that one of our goals is to establish Theorem \ref{bulkfunctions}, which states that the correlation functions of $\textbf{H}$ are universal; in view of Proposition \ref{universalityperturbation3} it suffices to show that the correlation functions of the perturbed matrix $\textbf{H}_t$ (from \eqref{hijt}) equal those of $\textbf{H}$ in the large $N$ limit. The following lemma, which first appeared in some weaker form as Lemma 6.4 in \cite{BUGM} (but was later \cite{DRM, BUSM} altered, with very little modification in the proof, to essentially as below) provides a sufficient condition for when the correlation functions of two generalized Wigner matrices asymptotically coincide.

\begin{lem}[{\cite[Theorem 15.3]{DRM}, \cite[Theorem 6.4]{BUGM}, \cite[Theorem 5.3]{BUSM}} ]

\label{hhtcorrelations}

Fix $\kappa > 0$, $\varepsilon > 0$, $0 < c_1 < 1 < C_1$, and $C_2 > 1$. Let $\{ \textbf{\emph{A}} = \textbf{\emph{A}}_N \}_{N \in \mathbb{Z}_{\ge 1}}$ and $\{ \textbf{\emph{B}} = \textbf{\emph{B}}_N \}_{N \in \mathbb{Z}_{\ge 1}}$ be two families of generalized Wigner matrices. Further fix an arbitrary $k \in \mathbb{Z}_{\ge 1}$ and an arbitrary real $\omega \in (0, 1]$ (bounded away from $0$ independently of $N$). Also fix positive integers $r_1, r_2, \ldots , r_k$, and let $\{ z_1^{(j)}, z_2^{(j)}, \ldots , z_{r_j}^{(j)} \}_{1 \le j \le k} \subset \mathbb{H}$ be families of complex numbers such that $\Re z_i^{(j)} \in [\kappa - 2, 2 - \kappa]$ and $\Im z_i^{(j)} \in [N^{-\omega - 1}, N^{-1}]$ for each $i, j$.  

For each $z \in \mathbb{H}$, denote $\textbf{\emph{G}}^{(\textbf{\emph{A}})} (z) = (\textbf{\emph{A}} - z)^{-1}$ and define $\textbf{\emph{G}}^{(\textbf{\emph{B}})} (z)$ similarly. Assume that there exist constants $\widetilde{c} = \widetilde{c}_{k; r_1, r_2, \ldots , r_k} > 0$ and $\widetilde{C} = \widetilde{C}_{k; r_1, r_2, \ldots , r_k} > 0$ such that the following holds whenever $\omega < \widetilde{c}$. For any compactly supported smooth function $\Lambda \in \mathcal{C}_0^{\infty} (\mathbb{C}^k)$ satisfying 
\begin{flalign}
\label{lambdaderivatives}
\displaystyle\max_{1 \le |\alpha| \le 4} \displaystyle\sup_{|y_j| \le N^{\beta}} \big| \partial^{\alpha} \Lambda (y_1, y_2, \ldots , y_k) \big| \le N^{\widetilde{C} \beta}; \qquad \displaystyle\max_{1 \le |\alpha| \le 4} \displaystyle\sup_{|y_j| \le N^2 } \big| \partial^{\alpha} \Lambda (y_1, y_2, \ldots , y_k) \big| \le N^{\widetilde{C}},
\end{flalign}

\noindent for each $\beta > 0$, we have (for sufficiently large $N$) that 
\begin{flalign}
\label{ablambda}
\begin{aligned}
\Bigg| \mathbb{E} & \bigg[ \Lambda \Big( N^{-r_1} \Tr \displaystyle\prod_{j = 1}^{r_1} \textbf{\emph{G}}^{(\textbf{\emph{A}})} \big( z_j^{(1)} \big), N^{-r_2} \Tr \displaystyle\prod_{j = 1}^{r_2} \textbf{\emph{G}}^{(\textbf{\emph{A}})} \big( z_j^{(2)} \big), \ldots , N^{-r_k} \Tr \displaystyle\prod_{j = 1}^{r_k} \textbf{\emph{G}}^{(\textbf{\emph{A}})} \big( z_j^{(k)} \big) \Big)  \bigg] \\
& - \mathbb{E} \bigg[ \Lambda \Big( N^{-r_1} \Tr \displaystyle\prod_{j = 1}^{r_1} \textbf{\emph{G}}^{(\textbf{\emph{B}})} \big( z_j^{(1)} \big), N^{-r_2} \Tr \displaystyle\prod_{j = 1}^{r_2} \textbf{\emph{G}}^{(\textbf{\emph{B}})} \big( z_j^{(2)} \big), \ldots , N^{-r_k} \Tr \displaystyle\prod_{j = 1}^{r_k} \textbf{\emph{G}}^{(\textbf{\emph{B}})} \big( z_j^{(k)} \big) \Big)  \bigg] \Bigg| < N^{-\widetilde{c}}. 
\end{aligned}
\end{flalign}

\noindent Then, there exists some $c = c_k > 0$ such that for any $F \in \mathcal{C}_0^{\infty} (\mathbb{R}^k)$, we have (for sufficiently large $N$) that 
\begin{flalign*}	
\Bigg| \displaystyle\int_{\mathbb{R}^n} F (a_1, a_2, \ldots , a_N) & \bigg( p_{\textbf{\emph{A}}}^{(k)} \Big( E + \displaystyle\frac{a_1}{N}, E + \displaystyle\frac{a_2}{N}, \ldots , E + \displaystyle\frac{a_N}{N} \Big) \\
& - p_{\textbf{\emph{B}}}^{(k)} \Big( E + \displaystyle\frac{a_1}{N}, E + \displaystyle\frac{a_2}{N}, \ldots , E + \displaystyle\frac{a_N}{N} \Big) \bigg) \displaystyle\prod_{i = 1}^N d a_i \Bigg| < N^{- c}.
\end{flalign*}

\end{lem}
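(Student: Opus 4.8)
The plan is to reduce the asserted comparison of correlation functions to the resolvent comparison hypothesis \eqref{ablambda} in two stages. In the first stage, for an arbitrary generalized Wigner matrix $\textbf{M}$ (with the same parameters $\varepsilon, c_1, C_1, C_2$), I would rewrite the integrated $k$-point correlation function $\int_{\mathbb{R}^k} F(a)\, p_{\textbf{M}}^{(k)}(E + a_1/N, \ldots, E + a_k/N)\prod_{j} da_j$ as $\mathbb{E}_{\textbf{M}}\big[\Lambda_F(\ldots)\big] + \mathscr{E}_{\textbf{M}}$, where $\Lambda_F$ is a fixed, matrix-independent smooth functional of traces of products $\Tr \prod_{j} \textbf{G}^{(\textbf{M})}(z_j)$ of resolvents of $\textbf{M}$ evaluated at spectral parameters with $\Imaginary z_j$ slightly below $N^{-1}$, satisfying the derivative bounds \eqref{lambdaderivatives}, and where $|\mathscr{E}_{\textbf{M}}| < N^{-c}$. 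In the second stage, since $\Lambda_F$ does not depend on the matrix, one applies \eqref{ablambda} to $\textbf{A}$ and $\textbf{B}$ with this $\Lambda_F$ and subtracts.

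For the first stage I would argue as follows. Fix $\eta_\star = N^{-1-\omega}$, let $P_{\eta_\star}(x) = \pi^{-1}\eta_\star(x^2+\eta_\star^2)^{-1}$ be the Poisson kernel, and recall $\Imaginary m_N(x + \textbf{i}\eta_\star) = \pi N^{-1}\sum_{i} P_{\eta_\star}(\lambda_i - x)$. By Definition \ref{correlation}, the quantity above equals $N^{-k}\big(1 + O(N^{-1})\big)$ times $\mathbb{E}_{\textbf{M}}$ of the sum of $F(N(\lambda_{i_1}-E), \ldots, N(\lambda_{i_k}-E))$ over ordered tuples of distinct indices; since $F$ is compactly supported, only the $N^{o(1)}$ eigenvalues within $O(N^{-1})$ of $E$ are relevant. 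Replacing each empirical measure by its smoothing at scale $\eta_\star$ and using inclusion--exclusion to remove the tuples with coinciding indices turns this expectation into $\mathbb{E}_{\textbf{M}}[\Lambda_F(\ldots)] + \mathscr{E}_{\textbf{M}}$, where $\Lambda_F$ is assembled explicitly from $F$, the kernels $P_{\eta_\star}$, and the inclusion--exclusion coefficients — the higher powers $\textbf{G}^2, \textbf{G}^3, \ldots$ entering through the coinciding-index corrections via $\partial_z \textbf{G} = \textbf{G}^2$, which is precisely why the hypothesis allows products $\prod_{j = 1}^{r_i} \textbf{G}(z_j^{(i)})$. The bounds \eqref{lambdaderivatives} for $\Lambda_F$ follow from this explicit form, the polynomial growth in $N^{\beta}$ arising from the factors $\eta_\star^{-1} \sim N^{1 + \omega}$ produced by differentiation.

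The main obstacle is bounding $\mathscr{E}_{\textbf{M}}$ by $N^{-c}$. Since $\eta_\star < N^{-1} < \varphi_N$, the relevant spectral parameters lie below the domain $\mathscr{D}_{\kappa; N}$ on which Theorem \ref{localmoments} is valid, so the weak local law cannot be applied to a single resolvent at scale $\eta_\star$. To get around this I would: (i) combine Theorem \ref{localmoments} at the scale $\eta = \varphi_N$ with the monotonicity of $\eta \mapsto \eta\, \Imaginary m_N(E + \textbf{i}\eta)$ and of $\eta \mapsto \eta\, \Imaginary G_{ii}(E + \textbf{i}\eta)$ to obtain high-probability a priori upper bounds (of size $N^{o(1) + \omega}$) on the resolvent entries at scale $\eta_\star$; (ii) use the bulk eigenvalue rigidity that is a consequence of Theorem \ref{localmoments} — only $N^{o(1)}$ eigenvalues lie within $O(N^{-1})$ of $E$, and each is within $N^{-1 + o(1)}$ of its classical location — to control the slowly decaying Poisson tails in the smoothing step, which produce an error of size $N \eta_\star = N^{-\omega}$ up to factors $N^{o(1)}$, and to handle the near-diagonal corrections; and (iii) observe that on the complementary event, which has probability $N^{-c\log\log N}$ by Theorem \ref{localmoments}, the contribution to $\int F\, p_{\textbf{M}}^{(k)}$ is negligible because $F$ is bounded. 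These are precisely the steps of \cite[\S15]{DRM}, \cite[Theorem 6.4]{BUGM}, and \cite[Theorem 5.3]{BUSM}, and I would carry them out essentially verbatim, the only modification being that the input is the weak, heavy-tailed local semicircle law Theorem \ref{localmoments} rather than the strong one. Finally, in the second stage, applying the first stage to both $\textbf{A}$ and $\textbf{B}$ with the same $\Lambda_F$ and the same spectral parameters (which satisfy $\Imaginary z_i^{(j)} \in [N^{-\omega - 1}, N^{-1}]$ and lie in the bulk), the hypothesis \eqref{ablambda} gives $\big| \mathbb{E}_{\textbf{A}}[\Lambda_F] - \mathbb{E}_{\textbf{B}}[\Lambda_F] \big| < N^{-\widetilde{c}}$ provided $\omega < \widetilde{c}$; combining this with $|\mathscr{E}_{\textbf{A}}|, |\mathscr{E}_{\textbf{B}}| < N^{-c}$ and relabeling constants yields the conclusion.
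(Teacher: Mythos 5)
The paper does not prove this lemma at all: it is imported as a black box from \cite[Theorem 15.3]{DRM}, \cite[Theorem 6.4]{BUGM}, and \cite[Theorem 5.3]{BUSM}, so there is no in-paper argument to compare against. Your sketch is a faithful reconstruction of the standard proof in those references --- Poisson-kernel smoothing of the empirical measure at a scale $\eta_\star = N^{-1-\omega}$ just below $N^{-1}$, inclusion--exclusion over coinciding eigenvalue indices (which is exactly what generates the products $\prod_{j} \textbf{G}(z_j^{(i)})$ and hence the form of hypothesis \eqref{ablambda}), verification of \eqref{lambdaderivatives} from the factors $\eta_\star^{-1}$ produced by differentiating the kernels, and then a subtraction using \eqref{ablambda} --- and your identification of the one genuinely new point in this paper's setting (that the a priori control at scale $\eta_\star$ must come from the weak law of Theorem \ref{localmoments} at scale $\varphi_N$ together with the monotonicity of $\eta \mapsto \eta\, \Im G_{ii}(E + \textbf{i}\eta)$, rather than from a strong local law) is exactly right. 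One correction: the weak local law \eqref{localmomentsequation}, with error $(N\eta)^{-1/2} + N^{-c\varepsilon}$, does \emph{not} yield rigidity of individual eigenvalues at scale $N^{-1+o(1)}$ as you assert in step (ii); fortunately that strength is not needed, since the Poisson tails are controlled by a dyadic counting estimate (the number of eigenvalues within distance $\ell \ge \varphi_N$ of $E$ is $O(N\ell)$ with high probability, whence the tail contribution is $O(\eta_\star/\varphi_N) = o(N^{-\omega})$), which does follow from Theorem \ref{localmoments}. With that substitution your outline is the intended argument.
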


\noindent In view of Lemma \ref{hhtcorrelations}, we would like to establish the following proposition.

\begin{prop}

\label{functionghht}

Adopt the notation of Theorem \ref{hhtcorrelations}, and let $\{ \textbf{\emph{H}} = \textbf{\emph{H}}_N \}_{N \in \mathbb{Z}_{\ge 0}}$ denote a family of generalized Wigner matrices. If $\omega$ is sufficiently small (in a way that only depends on $\widetilde{C}$ and $\varepsilon$), then there exists some $\delta > 0$ (independent of $N$) such that \eqref{ablambda} holds with $\textbf{\emph{A}} = \textbf{\emph{H}}$ and $\textbf{\emph{B}} = \textbf{\emph{H}}_t$. Here, we have set $t = N^{\delta - 1}$ and recalled the definition of $\textbf{\emph{H}}_t$ from \eqref{hijt}. 

\end{prop}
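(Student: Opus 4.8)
The plan is to compare $\textbf{H}$ and $\textbf{H}_t$ by the standard continuous interpolation / Itô's lemma argument, as in Section 3 of \cite{BUSM} and Section 4.2 of \cite{SSSG}, but carried out carefully enough that only $2 + \varepsilon$ moments of $h_{ij}\sqrt N$ are used. First I would write $h_{ij}(s)$ as the solution of the Ornstein--Uhlenbeck equation \eqref{hijt} and differentiate in $s$ the quantity
\[
\Phi(s) = \mathbb{E}\bigg[ \Lambda\Big( N^{-r_1}\Tr\prod_{j=1}^{r_1} \textbf{G}_s\big(z_j^{(1)}\big),\ldots, N^{-r_k}\Tr\prod_{j=1}^{r_k}\textbf{G}_s\big(z_j^{(k)}\big)\Big)\bigg],
\]
where $\textbf{G}_s(z) = (\textbf{H}_s - z)^{-1}$. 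By Itô's formula $\frac{d}{ds}\Phi(s)$ is a sum, over pairs $(i,j)$, of terms involving $\partial_{h_{ij}}$ and $\partial_{h_{ij}}^2$ of the argument of $\Lambda$, each such derivative producing a bounded number of extra resolvent entries $G_{ab}$ (from $\partial_{h_{ij}} G = -G E_{ij} G$), weighted by $N^{-1/2} dB_{ij}$ and by $(2Ns_{ij})^{-1} h_{ij}$ respectively. The mean-zero Brownian terms drop after taking expectation. The key point is that each summand carries a factor $s_{ij} \asymp N^{-1}$ (for the drift term) or comes from the quadratic variation $N^{-1} ds$ (for the diffusion term), so the naive bound gives $\frac{d}{ds}\Phi = O(1)$ in $s$, which is not enough; one needs the usual cancellation, namely that the leading contribution of $\sum_{i,j} \partial^2_{h_{ij}}$ matches the Gaussian drift, so the surviving error is a genuine power saving in $N$.

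Second, I would quantify this. The resolvent entries in the bulk at spectral parameter with $\Im z \in [N^{-\omega - 1}, N^{-1}]$ are controlled by Theorem \ref{localmoments} and Theorem \ref{estimate1gij}: on an event of probability $1 - CN^{-c\log\log N}$ we have $|G_{ij}| = O(1)$ for \emph{all} $i,j$, and $|G_{ij} - \textbf{1}_{i=j} m_{\semicircle}|$ small for the typical indices, with at most $O(N^{1-c\varepsilon})$ deviant indices. A naive Itô expansion to second order in $h_{ij}$ has a remainder involving a \emph{third} derivative, hence a factor $h_{ij}^3$; with only $2+\varepsilon$ moments $\mathbb{E}|h_{ij}\sqrt N|^3$ is infinite, so — exactly as in the entry-truncation step of Section \ref{EntryTruncation} — I would split each $h_{ij}$ into its amenable part $a_{ij}$ (bounded by $N^{-\varepsilon/10}$ deterministically) and its big part $b_{ij}$. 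For the amenable part the third derivative carries $|a_{ij}|^3 \le N^{-\varepsilon/10}|a_{ij}|^2$, gaining a factor $N^{-\varepsilon/10}$ over the second-order term; summing over the $\binom N2$ pairs and using $\Var a_{ij} \le C_1 N^{-1}$ gives a total third-order error of order $N^{-\varepsilon/10} \cdot N^{O(\omega)}$ (the $N^{O(\omega)}$ coming from the $\eta^{-1} \le N^{1+\omega}$ blow-up of resolvent derivatives, softened by the good bounds), which is $\le N^{-c}$ once $\omega$ is small relative to $\varepsilon$. For the big part, there are only $O(N^{1-\varepsilon/10})$ such pairs with high probability (by \eqref{pijestimate1}), and on each a crude bound using $|G_{ab}| \le \eta^{-1} \le N^{1+\omega}$ together with $\mathbb{E}|b_{ij}| = O(N^{-1/2})$ (which follows from $\mathbb{E}|h_{ij}\sqrt N|^{2+\varepsilon} < C_2$) shows the big-pair contribution is also $N^{-c}$ provided $\omega$ is small; alternatively, since $\textbf{H}$ and $\textbf{H}_t$ differ only in law through these few entries and the resolvent is $N^{1+\omega}$-Lipschitz in each entry while each such entry is $O(1)$ with the desired probability, one absorbs this into the error directly. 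The derivative bounds \eqref{lambdaderivatives} on $\Lambda$ enter precisely here, absorbing the polynomially large arguments $N^{-r}\Tr\prod G$ have on the bad event.

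Finally, I would integrate $\frac{d}{ds}\Phi(s)$ from $s = 0$ to $s = t = N^{\delta - 1}$: the total variation is at most $t$ times the pointwise bound $N^{-c + O(\omega)}$ on $|\frac{d}{ds}\Phi|$, so $|\Phi(t) - \Phi(0)| \le N^{\delta - 1} \cdot N^{-c'} \le N^{-\widetilde c}$ for $\delta$ small and $\widetilde c$ suitably chosen, which is \eqref{ablambda} with $\textbf{A} = \textbf{H}$, $\textbf{B} = \textbf{H}_t$. (One also checks, as in \cite{BUSM, SSSG}, that $\textbf{H}_t$ is again a generalized Wigner matrix — the OU dynamics preserves variances — so that Theorem \ref{localmoments} applies to it as well along the flow, which is what licenses using the resolvent bounds uniformly in $s$.) The main obstacle is the third-order remainder control without a finite third moment: everything hinges on the amenable/big splitting of Section \ref{EntryTruncation} being compatible with differentiating the resolvent in the matrix entries, and on keeping the resulting powers of $\eta^{-1} \le N^{1+\omega}$ under control by choosing $\omega$ small depending on $\varepsilon$ and $\widetilde C$ — this is exactly the coupling of parameters asserted in the statement.
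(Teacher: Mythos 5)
Your proposal is correct and follows essentially the same route as the paper: an It\^{o}/Ornstein--Uhlenbeck interpolation in which the second-order Taylor terms cancel against the drift and the third-order remainder is controlled by splitting each entry at the threshold $N^{-\varepsilon/10}$ (this is exactly the paper's Lemma \ref{fsmall}), combined with $N^{\mathcal{O}(\omega)}$ high-probability bounds on resolvent entries and their $\partial_{ij}$-derivatives at $\Im z \ge N^{-1-\omega}$ obtained from the local law (Lemma \ref{derivativesm}), with the deterministic bound \eqref{gijeta} and the second condition in \eqref{lambdaderivatives} absorbing the exceptional event. One bookkeeping caveat: after summing over the roughly $N^2$ pairs, the time-derivative of $\Phi(s)$ is of size $N^{1 - c\varepsilon + \mathcal{O}(\omega)}$ rather than already a power saving as you assert, so the factor $t = N^{\delta - 1}$ is genuinely needed to close the argument --- which you do invoke in the final integration, so the conclusion is unaffected.
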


We will establish Proposition \ref{functionghht} later, in Section \ref{UniversalityMatrix}. 

In the context of less singular Wigner matrix (whose entry laws have at least three moments, for example), proofs of Proposition \ref{functionghht} have appeared in several previous works; for instance, see Lemma 5.2 of \cite{BUSM}. The recent proofs of such results are based on a certain \emph{continuity estimate} that originally appeared as Lemma A.1 of \cite{EMFLQUE}. 

Unfortunately, that lemma assumes that $\mathbb{E} \big[ |h_{ij} \sqrt{N} |^3 \big] < \infty$, which might be false in our setting. Thus we require a modification of that result, which is given below as Lemma \ref{fsmall}. 

To state this lemma, we require some additional notation. Fix some positive integer $N$. For each pair of integers $1 \le a, b \le N$, let $\textbf{X}_{ab}$ denote the $N \times N$ matrix whose entries are all equal to $0$, except for the $(a, b)$ and $(b, a)$ entries which are equal to $1$. 

Furthermore, fix some $N \times N$ real symmetric matrix $\textbf{M} = \{ m_{ij} \}$. For any $\theta \in [0, 1]$ and integers $1 \le a, b  \le N$, let $\Theta_{ab} \textbf{M} = \{ \widetilde{m_{ij}} \}$ denote the $N \times N$ symmetric matrix whose entries $\widetilde{m_{ij}} = m_{ij}$ if $(i, j) \notin \{ (a, b), (b, a) \}$ and $\widetilde{m_{ij}} = \theta m_{ij}$ otherwise. When $\theta = 0$, we denote $\Theta_{ab} \textbf{M} = \textbf{Z}_{ab} \textbf{M}$. 

Moreover, for any smooth function $F$ (from the set of $N \times N$ real symmetric matrices to $\mathbb{C}$), let $\partial_{ij} F$ denote the partial derivative of $F$ in the $X_{ij}$-coordinate. Specifically, we set $\partial_{ij} F (\textbf{M}) = \lim_{y \rightarrow 0} y^{-1} \big( F(\textbf{M} + y \textbf{X}_{ij}) - F(\textbf{M}) \big)$, if it exists. 

Now we have the following estimate.

\begin{lem}

\label{fsmall}

Let $\textbf{\emph{H}} = \{ h_{ij} \}$ be an $N \times N$ generalized Wigner matrix in the sense of Definition \ref{momentassumption}. For any $t \ge 0$, define $\textbf{\emph{H}}_t = \{ h_{ij} (t) \}$ as in \eqref{hijt}. Let $F$ be a smooth function. Then, 
\begin{flalign}
\label{fsmall1}
\Big| \mathbb{E} \big[ F (\textbf{\emph{H}}_t) - F (\textbf{\emph{H}}_0) \big] \Big| \le t N \Xi, 
\end{flalign}

\noindent where 
\begin{flalign}
\label{fsmall2}
\begin{aligned}
\Xi & = \displaystyle\max_{1 \le i, j \le N} \Bigg| s_{ij}^{-1} \mathbb{E} \bigg[ \textbf{\emph{1}}_{|h_{ij} (t)| \ge N^{-\varepsilon / 10}} \big| h_{ij} (t) \big|  	\displaystyle\sup_{0 \le \theta \le 1} \big| \partial_{ij} F (\Theta_{ij} \textbf{\emph{H}}_t) \big| \bigg] \\
& \qquad \qquad + s_{ij}^{-1} \mathbb{E} \bigg[ \textbf{\emph{1}}_{|h_{ij} (t)| \ge N^{-\varepsilon / 10}} \big| h_{ij} (t) \big|^2 \displaystyle\sup_{0 \le \theta \le 1} \big| \partial_{ij}^2 F (\Theta_{ij} \textbf{\emph{H}}_t) \big| \bigg]  \\
& \qquad \qquad + \mathbb{E} \bigg[\textbf{\emph{1}}_{|h_{ij} (t)| \ge N^{-\varepsilon / 10}} \displaystyle\sup_{0 \le \theta \le 1} \big| \partial_{ij}^2 F (\Theta_{ij} \textbf{\emph{H}}_t) \big| \bigg] 	 \\
& \qquad \qquad + s_{ij}^{-1} \mathbb{E} \bigg[ \textbf{\emph{1}}_{|h_{ij} (t)| < N^{-\varepsilon / 10}} \big| h_{ij} (t) \big|^3 \displaystyle\sup_{0 \le \theta \le 1} \big| \partial_{ij}^3 F (\Theta_{ij} \textbf{\emph{H}}_t) \big| \bigg] \\
& \qquad \qquad + \mathbb{E} \bigg[ \textbf{\emph{1}}_{|h_{ij} (t)| < N^{-\varepsilon / 10}} \big| h_{ij} (t) \big| \displaystyle\sup_{0 \le \theta \le 1} \big| \partial_{ij}^3 F (\Theta_{ij} \textbf{\emph{H}}_t) \big| \bigg] \Bigg|. 
\end{aligned}
\end{flalign}
\end{lem}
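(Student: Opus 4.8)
\textbf{Proof proposal for Lemma \ref{fsmall}.}

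The plan is to follow the standard It\^o-calculus comparison argument that underlies the continuity estimates of \cite{EMFLQUE}, but to carry out the Taylor expansion of $F$ along each coordinate flow only up to the order that the available moments permit, splitting each entry $h_{ij}(t)$ according to whether it is amenable or big (i.e.\ whether $|h_{ij}(t)| < N^{-\varepsilon/10}$ or not). First I would write $\frac{d}{dt}\mathbb{E}[F(\textbf{H}_t)]$ using It\^o's lemma applied to the Ornstein--Uhlenbeck SDE \eqref{hijt}; the drift and diffusion terms combine, after integration by parts in the Gaussian component or (equivalently) after using that $\textbf{H}_t$ is stationary under the OU flow with the given covariance, into a sum over pairs $(i,j)$ of expressions of the form $\mathbb{E}[(\text{moments of }h_{ij}(t))\cdot(\text{derivatives of }F)]$. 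The key point is that the OU process \eqref{hijt} is chosen precisely so that the first two moments of each $h_{ij}(t)$ are preserved, so the leading terms in this expansion cancel, and one is left controlling the third-order remainder.

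The main step is then the Taylor expansion of $F(\textbf{H}_t)$ around $F(\textbf{Z}_{ij}\textbf{H}_t)$ (the matrix with the $(i,j)$ and $(j,i)$ entries zeroed out) in the single variable $h_{ij}(t)$. Because $h_{ij}(t)$ need not have a finite third moment in general, I would split $h_{ij}(t) = h_{ij}(t)\textbf{1}_{|h_{ij}(t)| < N^{-\varepsilon/10}} + h_{ij}(t)\textbf{1}_{|h_{ij}(t)| \ge N^{-\varepsilon/10}}$. On the amenable event $|h_{ij}(t)| < N^{-\varepsilon/10}$ one may Taylor expand to third order, producing the fourth and fifth terms in the definition \eqref{fsmall2} of $\Xi$ (the $\partial_{ij}^3 F$ terms weighted by $|h_{ij}(t)|^3$ and by $|h_{ij}(t)|$ — the latter arising from the mismatch between $\mathbb{E}[h_{ij}(t)]$ on the truncated event and $0$, and from the variance correction); here the supremum over $\theta \in [0,1]$ comes from the Lagrange remainder evaluated at an intermediate matrix $\Theta_{ij}\textbf{H}_t$. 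On the big event $|h_{ij}(t)| \ge N^{-\varepsilon/10}$ one cannot afford a third-order expansion, so I would expand only to second order, producing the first three terms of \eqref{fsmall2}: the $\partial_{ij} F$ term (first order, weighted by $|h_{ij}(t)|$), and two $\partial_{ij}^2 F$ terms (the genuine second-order Lagrange remainder weighted by $|h_{ij}(t)|^2$, plus a term with no extra weight coming from the subtraction of the $s_{ij}$-normalized variance contribution of the Gaussian part). Collecting all the pieces and using that the OU drift over time $t$ contributes a factor of order $t$ per entry, and summing over the $N^2$ pairs $(i,j)$ but noting that the relevant normalizations $s_{ij}^{-1} \asymp N$ already absorb one factor of $N$ while the other is the explicit $N$ in \eqref{fsmall1}, yields the bound $t N \Xi$; I would track the constants to confirm they are absorbed into the maximum defining $\Xi$.

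The hard part will be bookkeeping the moment mismatches correctly: the OU equation \eqref{hijt} preserves $\mathbb{E}[h_{ij}(t)] = 0$ and $\mathbb{E}[|h_{ij}(t)|^2] = s_{ij}$ exactly, but once one conditions on the amenable/big dichotomy these conditional moments shift, and one must verify that every resulting error term is dominated by one of the five terms in $\Xi$ — in particular that the truncated first moment on the big event contributes only through the $s_{ij}^{-1}\mathbb{E}[\textbf{1}_{|h_{ij}(t)|\ge N^{-\varepsilon/10}}|h_{ij}(t)|\,|\partial_{ij}F|]$ term and not through anything requiring a third derivative. A secondary subtlety is justifying the differentiation under the expectation and the finiteness of all the expectations appearing (using $|\partial_{ij}^k F|$ bounds on the compactly-structured matrices $\Theta_{ij}\textbf{H}_t$, and the deterministic resolvent bounds \eqref{gijeta} when $F$ is later taken to be a function of resolvents), but this is routine given the smoothness of $F$ after the $e^{-N}$ Gaussian regularization introduced in Section \ref{EntryTruncation}. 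I expect the structure of the argument to mirror Lemma A.1 of \cite{EMFLQUE} closely, with the only genuinely new ingredient being the amenable/big truncation that replaces the finite-third-moment hypothesis.
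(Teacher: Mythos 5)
Your proposal follows essentially the same route as the paper: It\^o's lemma for the Ornstein--Uhlenbeck flow, Taylor expansion of the resulting terms around the matrix $\textbf{Z}_{ij} \textbf{H}_t$ with the $(i,j)$ entry zeroed out, a second-order expansion on the big event versus a third-order one on the amenable event, and cancellation of the leading terms via the exact (untruncated) first and second moments of $h_{ij}(t)$ together with its independence from $\textbf{Z}_{ij} \textbf{H}_t$. The only imprecision is in the final bookkeeping --- the reduction from $N^2$ pairs to the factor $N$ in \eqref{fsmall1} comes from the $(2N)^{-1}$ prefactor in It\^o's formula rather than from $s_{ij}^{-1} \asymp N$, which already sits inside $\Xi$ --- but this does not affect the validity of the argument.
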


\begin{proof}

Applying It\^{o}'s Lemma in the definition \ref{hijt} of the $h_{ij} (t)$ yields 
\begin{flalign}
\label{fsmall3}
\partial_t \mathbb{E} \big[ F (\textbf{H}_t) \big]= \displaystyle\frac{1}{2N} \displaystyle\sum_{1 \le i \le j \le N} \Big( \mathbb{E} \big[ \partial_{ij}^2 F (\textbf{H}_t) \big] - s_{ij}^{-1} \mathbb{E} \big[ h_{ij} (t) \partial_{ij} F (\textbf{H}_t) \big] \Big). 
\end{flalign}

\noindent We would now like to Taylor expand both $\partial_{ij}^2 F (\textbf{H}_t)$ and $h_{ij} (t) \partial_{ij} F (\textbf{H}_t)$ in a neighborhood of $h_{ij} (t) = 0$. However, there is the issue that $h_{ij} (t)$ might not be small. 

To resolve that, we observe 
\begin{flalign}
\label{hfhderivative}
\begin{aligned}
\Big| h_{ij} (t) & \partial_{ij} F (\textbf{H}_t) - h_{ij} (t) \partial_{ij} F (\textbf{Z}_{ij} \textbf{H}_t) - h_{ij} (t)^2 \partial_{ij}^2 F (\textbf{Z}_{ij} \textbf{H}_t) \Big| \\
& \le \textbf{1}_{|h_{ij} (t)| \ge N^{-\varepsilon / 10}} \bigg( 2 \big| h_{ij} (t) \big|  \displaystyle\sup_{0 \le \theta \le 1} \big| \partial_{ij} F (\Theta_{ij} \textbf{H}_t) \big| + \big| h_{ij} (t) \big|^2 \displaystyle\sup_{0 \le \theta \le 1} \big| \partial_{ij}^2 F (\Theta_{ij} \textbf{H}_t) \big|  \bigg) \\
& \quad + \textbf{1}_{|h_{ij} (t)| < N^{-\varepsilon / 10}} \big| h_{ij} (t) \big|^3 \displaystyle\sup_{0 \le \theta \le 1} \big| \partial_{ij}^3 F (\Theta_{ij} \textbf{H}_t) \big|. 
\end{aligned}
\end{flalign}

\noindent Similarly, we find that 
\begin{flalign}
\label{2fhderivative}
\begin{aligned}
\Big| \partial_{ij}^2 F (\textbf{H}_t) - \partial_{ij}^2 F (\textbf{Z}_{ij} \textbf{H}_t) \Big| & \le 2 \textbf{1}_{|h_{ij} (t)| \ge N^{-\varepsilon / 10}} \displaystyle\sup_{0 \le \theta \le 1} \big| \partial_{ij}^2 F (\Theta_{ij} \textbf{H}_t) \big| 	 \\
& \quad + \textbf{1}_{|h_{ij} (t)| < N^{-\varepsilon / 10}} \big| h_{ij} (t) \big| \displaystyle\sup_{0 \le \theta \le 1} \big| \partial_{ij}^3 F (\Theta_{ij} \textbf{H}_t) \big|. 
\end{aligned}
\end{flalign}

\noindent Now, since the $h_{ij} (t)$ are centered, $\mathbb{E} \big[ |h_{ij} (t)|^2 \big] = \mathbb{E} \big[ |h_{ij}|^2 \big] = s_{ij}$, and $h_{ij} (t)$ is independent from $\textbf{Z}_{ij} \textbf{H}_t$, we find that 
\begin{flalign}
\label{hijfexpectation}
s_{ij}^{-1} \mathbb{E} \big[ h_{ij} (t) \partial_{ij} F (\textbf{Z}_{ij} \textbf{H}_t) + h_{ij} (t)^2 \partial_{ij}^2 F (\textbf{Z}_{ij} \textbf{H}_t) \big] = \mathbb{E} \big[ \partial_{ij}^2 F (\textbf{Z}_{ij} \textbf{H}_t) \big].
\end{flalign}
 
\noindent The claimed estimate \eqref{fsmall1} now follows from combining \eqref{fsmall3}, \eqref{hfhderivative}, \eqref{2fhderivative}, and \eqref{hijfexpectation}, and summing over all $1 \le i \le j \le N$. 
\end{proof}

To use Lemma \ref{fsmall}, we must estimate certain derivatives of $F$. The following lemma does this in the case when $F = m_{\textbf{H}} (z)$ is the Stieltjes transform of $\textbf{H}$, which will be useful for us later in Section \ref{UniversalityMatrix}. 

\begin{lem}

\label{derivativesm}

Fix constants $\kappa > 0$, $\varepsilon > 0$, $0 < c_1 < 1 < C_1$, and $C_2 > 1$. Let $\textbf{\emph{H}}$ be an $N \times N$ generalized Wigner matrix in the sense of Definition \ref{momentassumption}. Then, there exist constants $0 < c < C$ such that 
\begin{flalign}
\label{entriesthetag}
\mathbb{P} \left[ \displaystyle\sup_{z \in \mathscr{D}_{\kappa; N}} \displaystyle\max_{1 \le i, j \le N} \displaystyle\max_{1 \le a, b \le N} \displaystyle\sup_{0 \le \theta \le 1} \big| (\Theta_{ab} \textbf{\emph{H}} - z)_{ij}^{-1} \big| > C \right] \le C N^{-c \log \log N}, 
\end{flalign}

\noindent where we recall the definition of $\mathscr{D}_{\kappa; N}$ from below \eqref{dkappanr}. Furthermore, fix a real number $0 \le \omega < 1$ and set $r = r_N = r_{N; \omega} = N^{-1 - \omega}$. Then, we have that 
\begin{flalign}
\label{entriesthetagomega}
\mathbb{P} \left[ \displaystyle\sup_{z \in \mathscr{D}_{\kappa; N; r}} \displaystyle\max_{1 \le i, j \le N} \displaystyle\max_{1 \le a, b \le N} \displaystyle\sup_{0 \le \theta \le 1} \big| (\Theta_{ab} \textbf{\emph{H}} - z)_{ij}^{-1} \big| > C N^{\omega} (\log N)^{C \log \log N} \right] \le C N^{-c \log \log N}. 
\end{flalign}

\noindent Moreover, for any integer $1 \le k \le 4$, we have that 
\begin{flalign}
\label{entriesthetagomegaderivative}
\mathbb{P} \left[ \displaystyle\sup_{z \in \mathscr{D}_{\kappa; N; r}} \displaystyle\max_{\substack{1 \le i, j \le N \\ 1 \le a, b \le N}} \displaystyle\sup_{0 \le \theta \le 1} \Big| N^{-1} \partial_{ij}^{(k)} \big( \Tr (\Theta_{ab} \textbf{\emph{H}} - z )^{-1} \big) \Big| > C N^{(k + 1) \omega} (\log N)^{C \log \log N} \right] \le C N^{- c \log \log N}. 
\end{flalign}

\end{lem}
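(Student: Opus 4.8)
The plan is to establish \eqref{entriesthetag} first, and then deduce \eqref{entriesthetagomega} and \eqref{entriesthetagomegaderivative} from it via the resolvent identity together with the trivial bound $|(\Theta_{ab}\textbf{H}-z)^{-1}_{ij}| \le (\Im z)^{-1}$. For \eqref{entriesthetag}, the key observation is that for each fixed $\theta \in [0,1]$ and each pair $(a,b)$, the matrix $\Theta_{ab}\textbf{H}$ is itself a generalized Wigner matrix in the sense of Definition \ref{momentassumption}, with the same parameters up to harmless constant factors: multiplying the single entry $h_{ab}$ by $\theta$ only decreases $s_{ab}$ (so assumption \ref{generalized} still holds after adjusting $c_1$, since $\theta^2 s_{ab}$ could in principle be small — here one must be a little careful and instead argue directly, see below), it changes $t_a, t_b$ by at most $s_{ab} = O(N^{-1})$ so assumption \ref{stochastic} is preserved, and it only decreases moments so assumption \ref{moments} is preserved. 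The subtlety is that $\theta$ can be arbitrarily close to $0$, so $\Theta_{ab}\textbf{H}$ need not literally satisfy assumption \ref{generalized} with a uniform $c_1$. To circumvent this, I would observe that $\Theta_{ab}\textbf{H} = \textbf{H} - (1-\theta) h_{ab}\textbf{X}_{ab}$ is a rank-at-most-two perturbation of $\textbf{H}$, and control the difference of resolvents by the resolvent identity \eqref{resolvent}; since $\|\textbf{X}_{ab}\| = 1$ and (on the good event) $|h_{ab}|$ is bounded by a slowly-growing quantity, a Neumann-series / Schur-complement argument bounds $|(\Theta_{ab}\textbf{H}-z)^{-1}_{ij}|$ in terms of the entries of $\textbf{G}(z)$. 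More precisely, writing $\widetilde{\textbf{G}} = (\Theta_{ab}\textbf{H}-z)^{-1}$ and $w = (1-\theta)h_{ab}$, the Woodbury/Schur identity gives each entry $\widetilde{G}_{ij}$ as an explicit rational function of $G_{ii}, G_{ij}, G_{ab}, G_{aa}, G_{bb}, \dots$ and $w$, whose denominator is bounded below once the relevant entries of $\textbf{G}$ are $O(1)$ and $|w|$ is not too large; then Theorem \ref{estimate1gij} supplies $\max_{i,j}\sup_{z\in\mathscr{D}_{\kappa;N}}|G_{ij}(z)| \le C$ off an event of probability $\le CN^{-c\log\log N}$, and a Markov bound gives $\max_{a,b}|h_{ab}| \le N^{1/2}$ (say) off a comparably small event. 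Taking the supremum over $z$ and $\theta$ requires, as in the proof of Theorem \ref{estimate1gij}, passing to the sublattice $\mathbb{L}_{\kappa;N}$ and a finite net in $\theta$ of spacing $N^{-4}$, then using the Lipschitz estimate \eqref{ztildez}-type bound (both in $z$ and in $\theta$, the latter again from \eqref{resolvent} and \eqref{gijeta}); the union over $N^{O(1)}$ net points is absorbed into the stretched-exponential error.

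For \eqref{entriesthetagomega}, the domain is extended down to $\Im z = r = N^{-1-\omega}$. Here I would use the resolvent identity once more: for $z \in \mathscr{D}_{\kappa;N;r}$ write $z = z' - \textbf{i}(\eta' - \eta)$ with $z' = E + \textbf{i}\varphi$ on the boundary of $\mathscr{D}_{\kappa;N}$ (i.e. $\Im z' = \varphi = (\log N)^{8\log\log N}N^{-1}$), and expand $(\Theta_{ab}\textbf{H}-z)^{-1}$ in a Neumann series in $(\eta'-\eta)(\Theta_{ab}\textbf{H}-z')^{-1}$; since $\|(\Theta_{ab}\textbf{H}-z')^{-1}\| \le \varphi^{-1}$ deterministically and, on the good event from \eqref{entriesthetag}, its $(i,j)$ entries are $O(1)$, one gains a geometric-series bound of the form $|(\Theta_{ab}\textbf{H}-z)^{-1}_{ij}| \le C \cdot \varphi/\eta \le C N^{\omega}(\log N)^{C\log\log N}$ after also invoking the crude bound $\eta^{-1} \le N^{1+\omega}$ on the tail of the series. (The cleanest way: interpolate, bounding $\widetilde G_{ij}(z)$ by $\widetilde G_{ij}(z')$ plus $\int$ of $\partial_\eta$, or simply use $|\widetilde G_{ij}(z)| \le \eta^{-1}$ combined with $|\widetilde G_{ij}(z)|^2 \le \eta^{-1}\Im\widetilde G_{jj}(z)$ and a monotonicity/Poisson-kernel comparison of $\Im\widetilde G_{jj}$ across scales, giving the claimed $N^{\omega}$ loss.) Then \eqref{entriesthetagomegaderivative} follows by a standard computation: $\partial_{ij}^{(k)}\Tr(\textbf{M}-z)^{-1}$ is, by differentiating the resolvent repeatedly in the $\textbf{X}_{ij}$-direction, a sum of $O_k(1)$ terms each of the form $\pm (\text{const}) \cdot \sum (\text{product of } k+1 \text{ entries of } (\textbf{M}-z)^{-1})$ with indices ranging over $\{i,j\}$; each such entry is bounded by $C N^{\omega}(\log N)^{C\log\log N}$ on the good event by \eqref{entriesthetagomega}, and there is an explicit sum over at most one free index contributing a factor $N$, so after dividing by $N$ we get $(k+1)$ factors of $N^{\omega}$ and a poly-log, which is \eqref{entriesthetagomegaderivative}. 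Applying this with $\textbf{M} = \Theta_{ab}\textbf{H}$ and again taking the sup over the $z$-net and $\theta$-net finishes the proof.

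The main obstacle I anticipate is the uniformity over $\theta \in [0,1]$ together with the fact that $\Theta_{ab}\textbf{H}$ is not itself covered by Theorem \ref{estimate1gij} when $\theta$ is small. Handling this correctly requires the rank-two perturbation / Schur-complement bookkeeping to express $\widetilde G_{ij}$ in terms of $\textbf{G}$-entries and $w = (1-\theta)h_{ab}$, and then checking that the denominators appearing there stay bounded away from $0$ with overwhelming probability — which is where one genuinely uses that the typical off-diagonal entries $G_{ij}$ (in the sense of \eqref{gijestimate2}) are small, not merely $O(1)$, at least for the pair $(a,b)$ driving the perturbation, so that $1 + w G_{ab} + \cdots$ is invertible with controlled inverse. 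One has to be slightly careful because $(a,b)$ might be a ``deviant'' pair, in which case only the weaker $O(1)$ bound \eqref{gijestimate} is available; but since $|w| \le |h_{ab}| \le N^{1/2}$ is far too large for a naive Neumann series, one instead keeps the Schur complement exact and notes that the relevant $2\times 2$ determinant, whose entries are $O(1)$ and whose structure mirrors \eqref{gii}-\eqref{gij}, has imaginary part bounded below by a constant multiple of $\eta\sum|G_{\cdot\cdot}|^2 \ge c\,\Im G$, which is $\ge c' > 0$ in the bulk — this is the estimate that makes the argument go through, and verifying it carefully is the crux of the proof.
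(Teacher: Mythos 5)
Your treatment of \eqref{entriesthetagomega} and \eqref{entriesthetagomegaderivative} matches the paper: the passage from $\Im z \ge \varphi$ down to $\Im z \ge N^{-1-\omega}$ is done there by the monotonicity bound $\Gamma(E + \textbf{i}\eta/R) \le R\, \Gamma(E+\textbf{i}\eta)$ (Lemma 2.1 of \cite{LSLRRG}), which is exactly your ``Poisson-kernel comparison of $\Im \widetilde{G}_{jj}$ across scales'' combined with Ward's identity (your Neumann-series variant does not converge, since $(\eta'-\eta)\|(\Theta_{ab}\textbf{H}-z')^{-1}\|$ need not be small, but your fallback is the right one); and the derivative bound is obtained, as you say, from the identity $\partial_{ij}^{(k)}\Tr(\textbf{M}-z)^{-1} = (-1)^k k! \Tr\big(((\textbf{M}-z)^{-1}\textbf{X}_{ij})^k(\textbf{M}-z)^{-1}\big)$ and the fact that $\textbf{X}_{ij}$ has two nonzero entries.

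For \eqref{entriesthetag}, however, you abandon the direct route (apply Theorem \ref{estimate1gij} to $\Theta_{ab}\textbf{H}$, union over an $N^{-O(1)}$-net in $\theta$ and $z$, then Lipschitz continuity via \eqref{resolvent} and \eqref{gijeta}), which is what the paper does, and replace it with a rank-two Schur-complement argument. That substitute has a genuine gap. Its crux is the claim that the $2\times 2$ determinant $\det\big(\Id_2 - w\,[\textbf{X}_{ab}\textbf{G}]_{\{a,b\}}\big)$ is bounded away from $0$ because its imaginary part is ``$\ge c\,\Im G_{jj} \ge c' > 0$ in the bulk.'' This is not available: for a deviant pair $(a,b)$ the only input from Theorem \ref{estimate1gij} is $|G_{aa}|, |G_{ab}| \le C$, and Ward's identity gives $\Im G_{aa} = \eta\sum_k |G_{ak}|^2$, which at $\eta \asymp \varphi \approx N^{-1}$ is an \emph{equality} with a quantity that can be as small as $O(\eta)$ --- there is no a priori constant lower bound on $\Im G_{aa}$ for deviant indices. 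Since $|w| = (1-\theta)|h_{ab}|$ can be polynomially large on the pairs you most need to control, the denominator could degenerate and your argument does not close. The concern that motivated this detour --- that $\Theta_{ab}\textbf{H}$ violates the lower bound $c_1 < N s_{ij}$ of assumption \ref{generalized} at the single entry $(a,b)$ when $\theta$ is small --- is legitimate as a reading of the paper's one-line justification, but the proportionate fix is to note that this lower bound enters the proof of Theorem \ref{estimate1gij} only through Lemma \ref{testimate}, where decreasing one entry of $\textbf{S}$ is a rank-two perturbation of norm $O(N^{-1})$ that does not affect the bound $\|(\Id - m_{\semicircle}^2\widetilde{\textbf{S}})^{-1}\| = O(\log N)$; with that observation the direct route goes through verbatim and the Schur-complement machinery is unnecessary.
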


\begin{proof}

We begin with the proof of \eqref{entriesthetag}. First observe that, since $\Var h_{ij} < C_1 N^{-1}$, each $\Theta_{ab} \textbf{H}$ is a generalized Wigner matrix in the sense of Definition \ref{momentassumption} (perhaps with a slightly larger value of $C_1$). Hence, we can apply Theorem \ref{gijestimate} to deduce the existence positive constants $\widetilde{c}$ and $\widetilde{C}$ such that 
\begin{flalign*}
\mathbb{P} \left[ \displaystyle\sup_{z \in \mathscr{D}_{\kappa; N}}  \displaystyle\max_{1 \le i, j \le N} \big| (\Theta_{ab} \textbf{H} - z)_{ij}^{-1} \big| > \widetilde{C} \right] < \widetilde{C} N^{-\widetilde{c} \log \log N}, 
\end{flalign*}

\noindent for each fixed $\theta \in [0, 1]$ and fixed $1 \le a, b \le N$. Now, let $\mathbb{M}_N = \big\{ \theta \in [0, 1] : N^8 \theta \in \mathbb{Z}  \big\}$. From a union estimate, it follows that 
\begin{flalign}
\label{entriesthetag2}
\mathbb{P} \left[ \displaystyle\sup_{z \in \mathscr{D}_{\kappa; N}} \displaystyle\max_{1 \le i, j \le N}  \displaystyle\max_{1 \le a, b \le N} \displaystyle\sup_{\theta \in \mathbb{M}_N} \big| (\Theta_{ab} \textbf{H} - z)_{ij}^{-1} \big| > \widetilde{C} \right] < \widetilde{C} N^{10 - \widetilde{c} \log \log N}. 
\end{flalign}

Now the existence of constants $0 < c < C$ satisfying \eqref{entriesthetag} follows from \eqref{entriesthetag2} and the fact that $\big| ( \Theta_{ab} \textbf{H} - z )_{ij}^{-1} - ( \Theta_{ab} \textbf{H} - z' )_{ij}^{-1} \big| < 1$ if $\Im z, \Im z' > N^{-2}$ and $|z - z'| > N^{-7}$; the latter estimate holds due to the resolvent identity \eqref{resolvent} and the deterministic estimate \eqref{gijeta}. 

The estimate \eqref{entriesthetagomega} follows directly from \eqref{entriesthetag} and the fact that $\Gamma (E + \textbf{i} \eta / R) \le R \Gamma (E + \textbf{i} \eta)$, for any real number $R$, where $\Gamma (z) = \Gamma_{\textbf{M}} (z) = \max_{1 \le i, j \le N} \max \big\{ 1, \big| (\textbf{M} - z)_{ij}^{-1} \big| \big\} $ for any $N \times N$ deterministic matrix $\textbf{M}$; the latter estimate appears as Lemma 2.1 of \cite{LSLRRG}. 

To derive \eqref{entriesthetagomegaderivative}, one uses \eqref{entriesthetagomega} and the fact that 
\begin{flalign}
\label{derivativeijabhz}
\partial_{ij}^{(k)} \Tr (\Theta_{ab} \textbf{H} - z)^{-1} = (-1)^k k! \Tr \Big( \big(  (\Theta_{ab} \textbf{H} - z)^{-1}  \textbf{X}_{ij} \big)^k   (\Theta_{ab} \textbf{H} - z)^{-1} \Big). 
\end{flalign}

\noindent In particular, since $\textbf{X}_{ij}$ only has two nonzero entries (both of which are equal to $1$), this trace is a sum of at most $2^k N$ terms that are each products of at most $k + 1$ entries of $(\Theta_{ab} \textbf{H} - z)^{-1}$. Each of these entries is bounded by $C N^{\omega}	(\log N)^{C \log \log N}$ with very high probability in view of \eqref{entriesthetagomega}, from which we deduce \eqref{entriesthetagomegaderivative} (after incrementing $C$ if necessary). 
\end{proof}

\subsection{Comparing \texorpdfstring{$\textbf{H}$}{} and \texorpdfstring{$\textbf{H}_t$}{}}

\label{UniversalityMatrix}

We now use the estimates from Section \ref{HHt} to establish Proposition \ref{functionghht}. 

\begin{proof}[Proof of Proposition \ref{functionghht}]

To ease notation, we assume that $k = 1$ and $r_1 = 1$; the proof in the more general case is very similar. 

For any symmetric matrix $\textbf{M}$, denote $\widetilde{\Lambda} (\textbf{M}) = \Lambda \big( N^{-1} \Tr \textbf{G}^{(\textbf{M})} (z_1^{(1)}) \big)$. To establish the proposition, we would like to apply Lemma \ref{fsmall} with $F = \widetilde{\Lambda}$; this requires estimates on the derivatives of $\widetilde{\Lambda}$. To that end, observe that 
\begin{flalign}
\label{derivativelambdaij}
\partial_{ij}^{(k)} \widetilde{\Lambda} (\textbf{M}) = \displaystyle\sum_{j = 0}^k \binom{k}{j} \Big( \partial^{(j)} \Lambda (\textbf{M}) \Big) \partial_{ij}^{(k - j)} \big( N^{-1} \Tr (\textbf{M} - z)^{-1} \big). 
\end{flalign}

We will obtain two types of estimates on these derivatives of $\widetilde{\Lambda}$, a deterministic bound and a singificantly improved high-probability estimate. Let us begin with the deterministic estimate. To that end, observe that in view of \eqref{derivativeijabhz}, \eqref{gijeta}, the fact that $\omega < 1$, 	and the fact that $k \le 4$, we deterministically have that
\begin{flalign}
\label{derivativelambdaij1}
\Big| \partial_{ij}^{(k - m)} \big( N^{-1} \Tr (\textbf{M} - z)^{-1} \big) \Big| < 1000 N^{20}. 
\end{flalign} 

\noindent Furthermore, the second estimate in \eqref{lambdaderivatives} and the fact that $N^{-1} \Tr \textbf{G}^{(\textbf{H})} (z_1^{(1)}) < N^2$ (again due to \eqref{gijeta}) together yield that $\big| \partial^{(k)}  \Lambda \big( N^{-1} \Tr (\Theta_{ij} \textbf{M} - z_1^{(1)} )^{-1}\big)  \big| < N^{\widetilde{C}}$. Combining with \eqref{derivativelambdaij} and \eqref{derivativelambdaij1} yields the deterministic estimate 
\begin{flalign}
\label{derivativelambdaij2}
\Big| \partial_{ij}^{(k)} \widetilde{\Lambda} (\textbf{M}) \Big| < 10000 N^{\widetilde{C} + 20}. 
\end{flalign}

Now let us obtain a very high probability estimate on the right side of \eqref{derivativelambdaij} in the case when $\textbf{M} = \textbf{H}_t$ (which we recall is a generalized Wigner matrix). In view of \eqref{entriesthetagomegaderivative} and the fact that $k \le 4$, we can estimate 
\begin{flalign}
\label{derivativelambdaij3}
\begin{aligned} 
\mathbb{P} & \left[ \displaystyle\sup_{z \in \mathscr{D}_{\kappa; N; r}} \displaystyle\max_{\substack{1 \le i, j \le N \\ 1 \le a, b \le N}} \displaystyle\sup_{0 \le \theta \le 1} \Big| \partial_{ij}^{(k - j)} \big( N^{-1} \Tr (\Theta_{ab} \textbf{H}_t - z)^{-1} \big) \Big| > \overline{C} N^{5 \omega} (\log N)^{\overline{C} \log \log N} \right] \\
& \qquad \qquad \qquad \qquad \qquad \qquad \qquad \qquad \qquad \qquad \qquad \qquad \qquad < \overline{C} N^{-\overline{c} \log \log N},
\end{aligned}
\end{flalign}

\noindent for some constants $\overline{c}$ and $\overline{C}$; above, $r = r_{N; \omega} = N^{-\omega - 1}$. 

Furthermore, in view of the first estimate in \eqref{lambdaderivatives}, the fact that $\omega$ is bounded away from $0$, and the estimate \eqref{entriesthetagomega}, we deduce that 
\begin{flalign}
\label{derivativelambdaij4}
\begin{aligned} 
\mathbb{P} & \left[ \displaystyle\sup_{z \in \mathscr{D}_{\kappa; N; r}} \displaystyle\max_{\substack{1 \le i, j \le N \\ 1 \le a, b \le N}} \displaystyle\sup_{0 \le \theta \le 1} \Big| \partial^{(j)} \Lambda \big( N^{-1} \Tr (\Theta_{ij} \textbf{H}_t - z_1^{(1)} )^{-1} \big) \Big| > \overline{C} N^{2 \widetilde{C} \omega} \right] <\overline{C} N^{-\overline{c} \log \log N},
\end{aligned}
\end{flalign} 

\noindent for sufficiently large $N$. Therefore, it follows from \eqref{derivativelambdaij}, \eqref{derivativelambdaij3}, and \eqref{derivativelambdaij4} that there exist positive constants $\widehat{c}, \widehat{C}$ such that if we denote the event 
\begin{flalign}
\label{evente}
\mathcal{E} = \mathcal{E}_{\widehat{C}} \left\{ \displaystyle\sup_{z \in \mathscr{D}_{\kappa; N; r}} \displaystyle\sup_{0 \le \theta \le 1} \displaystyle\max_{\substack{1 \le i, j \le N \\ 1 \le a, b \le N}} \big| \partial_{ij}^{(k)} \widetilde{\Lambda} (\Theta_{ab} \textbf{H}) \big| > \widehat{C} N^{\widehat{C} \omega} \right\},  
\end{flalign}

\noindent then we have that 
\begin{flalign}
\label{derivativelambdaij5}
\mathbb{P} [\mathcal{E}] < \widehat{C} N^{-\widehat{c} \log \log N}. 
\end{flalign}

Using \eqref{derivativelambdaij2} and \eqref{derivativelambdaij5}, we can estimate the value of $\Xi$ \eqref{fsmall2} from Lemma \ref{fsmall}. In particular, we claim that there exists a constant $C$ such that $\Xi \le C N^{C \omega - \varepsilon / 20}$. 

To establish this, we recall from \eqref{fsmall2} that $\Xi$ is the sum of five terms; each one will be bounded by some quantity of the form $C N^{C \omega - \varepsilon / 20}$. We will only explicitly verify this for the second and fourth term; the remaining three terms can be addressed similarly. 

We begin with the second term. For fixed $i, j$, it is equal to 
\begin{flalign}
\label{derivativelambdaij6}
\begin{aligned}
& s_{ij}^{-1} \mathbb{E} \bigg[ \textbf{1}_{|h_{ij} (t)| \ge N^{-\varepsilon / 10}} \big| h_{ij} (t) \big|^2 \displaystyle\sup_{0 \le \theta \le 1} \big| \partial_{ij}^2 F (\Theta_{ij} \textbf{H}_t) \big| \bigg] \\
& \qquad = s_{ij}^{-1} \mathbb{E} \bigg[ \textbf{1}_{\mathcal{E}} \textbf{1}_{|h_{ij} (t)| \ge N^{-\varepsilon / 10}} \big| h_{ij} (t) \big|^2 \displaystyle\sup_{0 \le \theta \le 1} \big| \partial_{ij}^2 F (\Theta_{ij} \textbf{H}_t) \big| \bigg] \\
& \qquad \qquad + s_{ij}^{-1} \mathbb{E} \bigg[ \textbf{1}_{\overline{\mathcal{E}}} \textbf{1}_{|h_{ij} (t)| \ge N^{-\varepsilon / 10}} \big| h_{ij} (t) \big|^2 \displaystyle\sup_{0 \le \theta \le 1} \big| \partial_{ij}^2 F (\Theta_{ij} \textbf{H}_t) \big| \bigg].
\end{aligned}
\end{flalign}

\noindent where $\overline{\mathcal{E}}$ denotes the complement of the event $\mathcal{E}$. The first summand on the right side of \eqref{derivativelambdaij6} can be bounded by $N^{C - \widehat{c} \log \log N}$ (for some constant $C > 0$), due to the deterministic estimate \eqref{derivativelambdaij2}, the probability estimate \eqref{derivativelambdaij5}, the fact that $|s_{ij}| > c_1 N^{-1}$, and the boundedness of the second moment of $|h_{ij} \sqrt{N}|$. 

The second summand can be bounded as 
\begin{flalign*}
s_{ij}^{-1} \mathbb{E} \bigg[ \textbf{1}_{\overline{\mathcal{E}}} \textbf{1}_{|h_{ij} (t)| \ge N^{-\varepsilon / 10}} \big| h_{ij} (t) \big|^2 \displaystyle\sup_{0 \le \theta \le 1} \big| \partial_{ij}^2 F (\Theta_{ij} \textbf{H}_t) \big| \bigg] & \le c_1^{-1} \widehat{C} N^{1 + \widehat{C} \omega} \mathbb{E} \big[ \textbf{1}_{|h_{ij} (t)| \ge N^{-\varepsilon / 10}} \big| h_{ij} (t) \big|^2 \big] \\
& \le c_1^{-1} C_2 \widehat{C} N^{\widehat{C} \omega - \varepsilon / 20}. 
\end{flalign*}

\noindent Here, we used the fact that $s_{ij} \ge c_1 N^{-1}$ and the definition \eqref{evente} of the event $\mathcal{E}$ in the first estimate; in the second estimate, we used assumption \ref{moments} and the fact that 
\begin{flalign*}
\mathbb{E} \big[ \textbf{1}_{|h_{ij} (t)| \ge N^{-\varepsilon / 10}} \big| h_{ij} (t) \big|^2 \big] \le N^{\varepsilon^2 / 10} \mathbb{E} \big[ |h_{ij} (t)|^{2 + \varepsilon} \big] \le C_2 N^{- 1 - \varepsilon / 20}.
\end{flalign*}

\noindent Hence, it follows that the left side of \eqref{derivativelambdaij6} is bounded by $N^{C - \widehat{c} \log \log N} + c_1^{-1} C_2 \widehat{C} N^{\widehat{C} \omega - \varepsilon / 20} = \mathcal{O} (N^{\widehat{C} \omega - \varepsilon / 20})$. 

The fourth term in the definition \eqref{fsmall2} of $\Xi$ can be estimated similarly. Specifically, 
\begin{flalign}
\label{derivativelambdaij8}
\begin{aligned}
& s_{ij}^{-1} \mathbb{E} \bigg[ \textbf{1}_{|h_{ij} (t)| < N^{-\varepsilon / 10}} \big| h_{ij} (t) \big|^3 \displaystyle\sup_{0 \le \theta \le 1} \big| \partial_{ij}^3 F (\Theta_{ij} \textbf{H}_t) \big| \bigg]  \\
& \qquad =  s_{ij}^{-1} \mathbb{E} \bigg[ \textbf{1}_{\mathcal{E}} \textbf{1}_{|h_{ij} (t)| < N^{-\varepsilon / 10}} \big| h_{ij} (t) \big|^3 \displaystyle\sup_{0 \le \theta \le 1} \big| \partial_{ij}^3 F (\Theta_{ij} \textbf{H}_t) \big| \bigg]  \\
& \qquad \qquad + s_{ij}^{-1} \mathbb{E} \bigg[ \textbf{1}_{\mathcal{E}} \textbf{1}_{|h_{ij} (t)| < N^{-\varepsilon / 10}} \big| h_{ij} (t) \big|^3 \displaystyle\sup_{0 \le \theta \le 1} \big| \partial_{ij}^3 F (\Theta_{ij} \textbf{H}_t) \big| \bigg]. 
\end{aligned}
\end{flalign}

Again, the first term on the right side of \eqref{derivativelambdaij8} can be bounded by $N^{C - \widehat{c} \log \log N}$ due to \eqref{derivativelambdaij2} and \eqref{derivativelambdaij5}. To estimate the second term, we observe that
\begin{flalign}
\label{derivativelambdaij9} 
\begin{aligned}
s_{ij}^{-1} \mathbb{E} \bigg[ \textbf{1}_{\overline{\mathcal{E}}} \textbf{1}_{|h_{ij} (t)| \ge N^{-\varepsilon / 10}} \big| h_{ij} (t) \big|^3 \displaystyle\sup_{0 \le \theta \le 1} \big| \partial_{ij}^3 F (\Theta_{ij} \textbf{H}_t) \big| \bigg] & \le c_1^{-1} \widehat{C} N^{1 + \widehat{C} \omega} \mathbb{E} \big[ \textbf{1}_{|h_{ij} (t)| < N^{-\varepsilon / 10}} \big| h_{ij} (t) \big|^3 \big] \\
& \le c_1^{-1} C_2 \widehat{C} N^{\widehat{C} \omega - \varepsilon / 20},
\end{aligned}
\end{flalign}

\noindent where in the second estimate we used the fact that 
\begin{flalign*}
\mathbb{E} \big[ \textbf{1}_{|h_{ij} (t)| < N^{-\varepsilon / 10}} \big| h_{ij} (t) \big|^3 \big] < N^{(\varepsilon / 10)(\varepsilon - 1)} \mathbb{E} \big[ |h_{ij} (t)|^{2 + \varepsilon} \big] < N^{- 1 - \varepsilon / 20}.
\end{flalign*}

Thus, we can bound the right side of \eqref{derivativelambdaij9} by $N^{C - \widehat{c} \log \log N} + c_1^{-1} C_2 \widehat{C} N^{\widehat{C} \omega - \varepsilon / 20} = \mathcal{O} (N^{\widehat{C} \omega - \varepsilon / 20})$; this estimates the fourth term in \eqref{fsmall2}.

As mentioned previously, the other three terms in the definition \eqref{fsmall2} of $\Xi$ can be bounded similarly. It follows that there exists a constant $C > 0$ such that $\Xi < C N^{C \omega - \varepsilon / 20}$. Recalling that $t = N^{\delta - 1}$ and inserting the result into \eqref{fsmall1} yields that the left side of \eqref{ablambda} is bounded by $C N^{C \omega - \varepsilon / 20 + \delta}$. Setting $\omega$ and $\delta$ sufficiently small so that $C \omega + \delta < \varepsilon / 40$ then yields \eqref{ablambda} with $\widetilde{c} = \varepsilon / 40$; this confirms the proposition. 
\end{proof}

\begin{proof}[Proof of Theorem \ref{bulkfunctions}]
This follows from Proposition \ref{universalityperturbation3}, Lemma \ref{hhtcorrelations}, and Proposition \ref{functionghht}. 
\end{proof}

This establishes Theorem \ref{bulkfunctions}. We can also prove Theorem \ref{gapsfunctions} but, given what we have already done, this is very similar to what was already explained in several previous works; see, for example, Section 4 and Lemma 5.1 of \cite{BUSM} or Section 4.1.2 of \cite{SSSG}. 

The main difference between what should be done in our setting and what was done in their setting is that we must use the less restrictive continuity estimate Lemma \ref{fsmall} as opposed to Lemma 4.5 of \cite{SSSG} or Lemma 4.3 of \cite{BUSM} (which both require that $\mathbb{E} \big[ |h_{ij} \sqrt{N}|^3 \big]$ is bounded). Usage of Lemma \ref{fsmall} has already been explained in the proof of Proposition \ref{functionghht} above and, since all other parts of the proof of Theorem \ref{gapsfunctions} are essentially the same as those in \cite{SSSG, BUSM}, we omit further details.

\appendix

\section{Large Deviation Estimates}

\label{Deviations}

In this section we state the large deviation results that were useful to us in Section \ref{LawLarge}, Section \ref{LawSmallNotDeviant}, and Section \ref{LawSmallDeviant}. The following proposition appears as the first part of Lemma 3.8 of \cite{SSG}. 

\begin{prop}[{\cite[Lemma 3.8]{SSG}}]

\label{largeprobability1}

Let $N \in \mathbb{Z}_{> 1}$ and $q = q_N > 1$. Let $X_1, X_2, \ldots , X_N$; $Y_1, Y_2, \ldots , Y_N$ be centered random variables such that there exists a $C > 0$ satisfying
\begin{flalign}
\label{smallamoment}
\mathbb{E} \big[ |X_i|^p \big] \le \displaystyle\frac{q^2}{N} \left( \displaystyle\frac{C}{q} \right)^p; \qquad \mathbb{E} \big[ |Y_i|^p \big] \le \displaystyle\frac{q^2}{N} \left( \displaystyle\frac{C}{q} \right)^p,
\end{flalign}

\noindent for all $2 \le p \le (\log N)^{\log \log N}$.	Denote $s_i = \mathbb{E} [|X_i|^2]$ for each $1 \le i \le N$. 

Let $\{ R_i \}_{1 \le i \le N}$ and $\{ R_{ij} \}_{1 \le i, j \le N}$ be sequences of real numbers. Then, there exists a constant $\nu = \nu (C) > 0$, only dependent on the constant $C$ in \eqref{smallamoment}, such that the four estimates
\begin{flalign}
\label{suma}
\mathbb{P} \left[ \left| \displaystyle\sum_{j = 1}^N R_j X_j \right| \ge (\log N)^{\xi} \left( q^{-1} \displaystyle\max_{1 \le i \le N} |R_i| + \left( \displaystyle\frac{1}{N} \displaystyle\sum_{j = 1}^N |R_j|^2 \right)^{1 / 2} \right)  \right] \le \exp \big( -\nu (\log N)^{\xi} \big); 
\end{flalign}

\begin{flalign}
\label{bi}
\begin{aligned}
& \mathbb{P} \left[ \left| \displaystyle\sum_{j = 1}^N \big( |X_j|^2 - s_j \big) R_j \right| \ge (\log N)^{\xi} q^{-1} \displaystyle\max_{1 \le i \le N} |R_i|  \right] \le \exp \big( - \nu (\log N)^{\xi} \big);	
\end{aligned}
\end{flalign}

\begin{flalign}
\label{bij}
\begin{aligned}
& \mathbb{P} \left[ \left| \displaystyle\sum_{1 \le i \ne j \le N } X_i R_{ij} X_j \right| \ge (\log N)^{2 \xi} \left( q^{-1} \displaystyle\max_{1 \le i \ne j \le N} |R_{ij}| + \left( \displaystyle\frac{1}{N^2} \displaystyle\sum_{1 \le i \ne j \le N} |R_{ij}|^2 \right)^{1 / 2} \right)  \right]  \\
& \qquad \qquad \qquad \qquad \qquad \qquad \qquad \qquad \qquad \qquad \qquad \qquad \qquad \qquad \le \exp \big( - \nu (\log N)^{\xi} \big); 
\end{aligned}
\end{flalign}

\begin{flalign}
\label{abij}
\begin{aligned}
& \mathbb{P} \left[ \left| \displaystyle\sum_{1 \le i, j \le N } X_i R_{ij} Y_j \right| \ge (\log N)^{2 \xi} \left( 2 q^{-1} \displaystyle\max_{1 \le i, j \le N} |R_{ij}| + \left( \displaystyle\frac{1}{N^2} \displaystyle\sum_{1 \le i, j \le N} |R_{ij}|^2 \right)^{1 / 2} \right)  \right]  \\
& \qquad \qquad \qquad \qquad \qquad \qquad \qquad \qquad \qquad \qquad \qquad \qquad \qquad \qquad \le \exp \big( - \nu (\log N)^{\xi} \big); 
\end{aligned}
\end{flalign}

\noindent all hold for any $2 \le \xi \le \log \log N$. 
\end{prop}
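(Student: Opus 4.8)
The plan is to establish Proposition \ref{largeprobability1} by a standard high-moment (Markov) argument, where the key point is that the moment assumption \eqref{smallamoment} only needs to hold up to exponent $(\log N)^{\log\log N}$, which is exactly enough to produce the stretched-exponential tail $\exp(-\nu(\log N)^{\xi})$ rather than a polynomial one. First I would reduce the four estimates to suitable moment bounds: for a fixed even integer $p$, I would estimate $\mathbb{E}\big[\,|\sum_j R_j X_j|^p\,\big]$ by expanding into a sum over multi-indices $(j_1,\dots,j_p)$, using independence and centering of the $X_j$ to kill any term in which some index appears exactly once, and then grouping the surviving terms by their partition structure. Combining the bound $\mathbb{E}[|X_j|^m]\le q^2 N^{-1}(C/q)^m$ for $2\le m\le p$ with elementary counting of partitions (the number of partitions of $[p]$ into blocks of size $\ge 2$ is at most $p!$, or more crudely $p^p$), one gets a bound of the shape $\mathbb{E}\big[|\sum_j R_j X_j|^p\big]\le (Cp)^{p}\big(q^{-1}\max_i|R_i| + (N^{-1}\sum_j|R_j|^2)^{1/2}\big)^p$, possibly after absorbing constants; the two terms in the parenthesis arise respectively from blocks of size $>2$ (which cost an extra factor $q^{-1}$ each and contribute the $\ell^\infty$ norm of $R$) and from blocks of size exactly $2$ (which contribute the $\ell^2$ average). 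The quadratic-form estimates \eqref{bi}, \eqref{bij}, \eqref{abij} are handled the same way, with \eqref{bi} being a standard concentration bound for $\sum_j(|X_j|^2-s_j)R_j$ and \eqref{bij}, \eqref{abij} following from a Hanson–Wright-type moment expansion (the bilinear case \eqref{abij} is slightly easier because no diagonal terms appear and one simply treats $X$ and $Y$ as independent families, giving the factor $2$ in front of $q^{-1}\max|R_{ij}|$).

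Next I would optimize over $p$. Set $A = q^{-1}\max_i|R_i| + (N^{-1}\sum_j|R_j|^2)^{1/2}$, so that by Markov's inequality $\mathbb{P}\big[|\sum_j R_j X_j|\ge tA\big]\le (Cp/t)^p$ for every even $p\le(\log N)^{\log\log N}$. Taking $t=(\log N)^\xi$ and $p\asymp (\log N)^\xi$ — which is admissible precisely because $\xi\le\log\log N$ forces $(\log N)^\xi\le(\log N)^{\log\log N}$ — gives $(Cp/t)^p\le e^{-\nu(\log N)^\xi}$ for a constant $\nu=\nu(C)>0$, after taking $p$ a suitable small constant multiple of $(\log N)^\xi$ so that $Cp/t\le e^{-1}$. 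This yields \eqref{suma}, and the identical optimization with $t=(\log N)^\xi$ for \eqref{bi} and $t=(\log N)^{2\xi}$, $p\asymp(\log N)^\xi$ for the quadratic and bilinear forms \eqref{bij}, \eqref{abij} (where the combinatorial expansion naturally produces $(\log N)^{2\xi}$ because each of the two index families contributes its own factor of $t^{1/2}$ in the optimization) gives the remaining three bounds. Since this is Lemma 3.8 of \cite{SSG}, I would in fact simply cite that reference for the detailed combinatorics and only indicate the structure of the argument.

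The main obstacle is the bookkeeping in the moment expansion of the quadratic forms: one must carefully separate the contribution of ``paired'' indices (blocks of size $2$), which build up the $\ell^2$-type term, from ``clustered'' indices (blocks of size $\ge 3$), each of which is penalized by a factor $q^{-1}$ via \eqref{smallamoment} and builds up the $\ell^\infty$-type term, and then check that the total number of index configurations of a given block structure is at most $(Cp)^p$ so that it does not overwhelm the gain. For \eqref{bij} one must additionally handle the diagonal-exclusion carefully, and for \eqref{bi} one uses that $|X_j|^2 - s_j$ is centered with $\mathbb{E}[||X_j|^2-s_j|^m]\lesssim q^2 N^{-1}(C/q)^{2m}$, so that the effective ``$q$'' for this sum is $q^2$ rather than $q$, which is why only one power of $(\log N)^\xi$ and no $\ell^2$ term appears in \eqref{bi}. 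All of this is routine once the expansion is organized by partitions, and since the precise statement is quoted verbatim from \cite{SSG}, the cleanest route is to attribute the proof there and move on.
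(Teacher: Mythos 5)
Your proposal is consistent with the paper, which gives no proof of this proposition at all: it is quoted verbatim as (the first part of) Lemma 3.8 of \cite{SSG} and simply cited, exactly as you ultimately propose to do. Your sketch of the underlying high-moment/partition expansion with the optimization $p\asymp(\log N)^{\xi}$ is the standard argument carried out in that reference, so there is nothing further to reconcile.
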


In our setting, the random variables $X_i$ are not centered but instead ``almost centered.'' The following corollary adapts the previous proposition to apply in this slightly more setting.

\begin{cor}

\label{largeprobability2} 

Let $N \in \mathbb{Z}_{> 1}$ and $q = q_N \in (1, \sqrt{N})$. Let $X_1, X_2, \ldots , X_N; Y_1, Y_2, \ldots , Y_N$ be random variables such that there exist $\delta \in (0, 1)$ and $C, C' > 1$ satisfying 
\begin{flalign}
\label{smallmomentsadelta}
\big| \mathbb{E} [ X_i ] \big| \le C' N^{-1 - \delta }; \quad \mathbb{E} \big[ |X_i|^p \big] \le \displaystyle\frac{q^2}{N} \left( \displaystyle\frac{C}{q} \right)^p; \quad \big| \mathbb{E} [ Y_i ] \big| \le C' N^{-1 - \delta }; \quad \mathbb{E} \big[ |Y_i|^p \big] \le \displaystyle\frac{q^2}{N} \left( \displaystyle\frac{C}{q} \right)^p,
\end{flalign}

\noindent for each $2 \le p \le (\log N)^{\log \log N}$. Denote $s_i = \mathbb{E} [|X_i|^2]$ for each $1 \le i \le N$. 

Let $\{ R_i \}_{1 \le i \le N}$ and $\{ R_{ij} \}_{1 \le i, j \le N}$ be sequences of real numbers. There exists a constant $\nu = \nu ( C, C' ) > 0$ (dependent on only $C$ and $C'$) such that 
\begin{flalign}
\label{suma2}
& \mathbb{P} \left[ \left| \displaystyle\sum_{j = 1}^N R_j X_j \right| \ge (\log N)^{\xi} \left( (C' N^{- \delta} + q^{-1}) \displaystyle\max_{1 \le i \le N} |R_i| + \left( \displaystyle\frac{1}{N} \displaystyle\sum_{j = 1}^N |R_j|^2 \right)^{1 / 2} \right)  \right] \nonumber \\
& \qquad \qquad \qquad \qquad \qquad \qquad \qquad \qquad \qquad \qquad \qquad \qquad \qquad \qquad \le \exp \big( - \nu (\log N)^{\xi} \big); 
\end{flalign}

\begin{flalign}
\label{bi2}
& \mathbb{P} \left[ \left| \displaystyle\sum_{j = 1}^N \big( |X_j|^2 - s_j \big) R_j \right| \ge (\log N)^{\xi} (5 C'^2 N^{- \delta } + q^{-1})  \displaystyle\max_{1 \le i \le N} |R_i|  \right] \le \exp \big( - \nu (\log N)^{\xi} \big);	
\end{flalign}

\begin{flalign}
\label{bij2}
& \mathbb{P} \left[ \left| \displaystyle\sum_{1 \le i \ne j \le N } X_i R_{ij} X_j \right| \ge (\log N)^{2 \xi} \left( (5 C'^2 N^{- \delta} + q^{-1}) \displaystyle\max_{1 \le i \ne j \le N} |R_{ij}| + \left( \displaystyle\frac{1}{N^2} \displaystyle\sum_{1 \le i \ne j \le N} |R_{ij}|^2 \right)^{1 / 2} \right)  \right] \nonumber \\
& \qquad \qquad \qquad \qquad \qquad \qquad \qquad \qquad \qquad \qquad \qquad \qquad \qquad \qquad \le \exp \big( - \nu (\log N)^{\xi} \big); 
\end{flalign}

\begin{flalign}
\label{abij2}
& \mathbb{P} \left[ \left| \displaystyle\sum_{1 \le i, j \le N } X_i R_{ij} Y_j \right| \ge (\log N)^{2 \xi} \left( (5 C'^2 N^{- \delta} + 2 q^{-1}) \displaystyle\max_{1 \le i, j \le N} |R_{ij}| + \left( \displaystyle\frac{1}{N^2} \displaystyle\sum_{1 \le i, j \le N} |R_{ij}|^2 \right)^{1 / 2} \right)  \right] \nonumber \\
& \qquad \qquad \qquad \qquad \qquad \qquad \qquad \qquad \qquad \qquad \qquad \qquad \qquad \qquad \le \exp \big( - \nu (\log N)^{\xi} \big); 
\end{flalign}

\noindent for all $2 \le \xi \le \log \log N$.

\end{cor}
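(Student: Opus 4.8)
The plan is to deduce Corollary \ref{largeprobability2} from Proposition \ref{largeprobability1} by a centering argument: write each $X_i = X_i' + \mathbb{E}[X_i]$ and $Y_i = Y_i' + \mathbb{E}[Y_i]$, where $X_i' = X_i - \mathbb{E}[X_i]$ and $Y_i' = Y_i - \mathbb{E}[Y_i]$ are genuinely centered, and verify that $X_i'$ and $Y_i'$ satisfy the moment hypothesis \eqref{smallamoment} of Proposition \ref{largeprobability1} (possibly after enlarging the constant $C$). The latter is routine: since $\big|\mathbb{E}[X_i]\big| \le C' N^{-1-\delta} \le C' N^{-1} \le C'(C/q)^p \cdot q^2 N^{-1}$ for $p \ge 2$ (using $q < \sqrt N$ and $q > 1$), Minkowski's inequality gives $\mathbb{E}\big[|X_i'|^p\big]^{1/p} \le \mathbb{E}\big[|X_i|^p\big]^{1/p} + \big|\mathbb{E}[X_i]\big| \le 2 (q^2/N)^{1/p} (C/q)$, so $X_i'$ obeys \eqref{smallamoment} with $C$ replaced by $2C$. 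The same holds for $Y_i'$, so Proposition \ref{largeprobability1} applies to the primed variables.

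Next I would substitute the decomposition into each of the four sums and split off the deterministic correction terms. For the linear estimate \eqref{suma2}, $\sum_j R_j X_j = \sum_j R_j X_j' + \sum_j R_j \mathbb{E}[X_j]$, and $\big|\sum_j R_j \mathbb{E}[X_j]\big| \le C' N^{-1-\delta} \sum_j |R_j| \le C' N^{-\delta} \max_i |R_i|$ (bounding the sum of $N$ terms crudely by $N \max_i|R_i|$); applying \eqref{suma} to the centered part and absorbing this deterministic bound into the bracket on the left side of \eqref{suma2} gives the claim. For the quadratic form \eqref{bi2}, writing $|X_j|^2 = |X_j'|^2 + 2\Real(\overline{X_j'}\mathbb{E}[X_j]) + |\mathbb{E}[X_j]|^2$ and noting $\mathbb{E}[|X_j|^2] = \mathbb{E}[|X_j'|^2] + |\mathbb{E}[X_j]|^2$, the quantity $|X_j|^2 - s_j$ equals $(|X_j'|^2 - \mathbb{E}[|X_j'|^2]) + 2\Real(\overline{X_j'}\mathbb{E}[X_j])$; the first piece is handled by \eqref{bi} and the cross term $\sum_j 2\Real(\overline{X_j'}\mathbb{E}[X_j]) R_j$ is itself a linear sum controlled by \eqref{suma} (its deterministic prefactor $2C'N^{-1-\delta}$ making the contribution $\mathcal{O}(N^{-\delta}\max|R_i|)$ after another crude $N$-fold bound), while $|\mathbb{E}[X_j]|^2 \le C'^2 N^{-2-2\delta}$ contributes a deterministic term of size $\mathcal{O}(C'^2 N^{-\delta}\max|R_i|)$; summing these accounts for the $5C'^2 N^{-\delta}$ appearing in \eqref{bi2}. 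The bilinear estimates \eqref{bij2} and \eqref{abij2} are handled the same way: expanding $X_i R_{ij} X_j = X_i' R_{ij} X_j' + \mathbb{E}[X_i] R_{ij} X_j' + X_i' R_{ij} \mathbb{E}[X_j] + \mathbb{E}[X_i]\mathbb{E}[X_j]R_{ij}$ produces one ``doubly centered'' term governed by \eqref{bij} (resp. \eqref{abij}), two ``mixed'' terms each of which is a \emph{linear} sum in a single centered family — controlled by \eqref{suma} with an extra small deterministic factor from the $\mathbb{E}[X_i]$ or $\mathbb{E}[Y_j]$ in front — and one purely deterministic double sum of size $\mathcal{O}(C'^2 N^{-2\delta}\max|R_{ij}|)$; combining the associated numerical constants gives the $5C'^2 N^{-\delta}$ prefactor. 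Throughout, the exponential bound $\exp(-\nu(\log N)^\xi)$ is preserved since only a bounded (namely, at most four) number of applications of Proposition \ref{largeprobability1} is used, so one may take $\nu = \nu(C,C')$ to be, say, the $\nu$ from Proposition \ref{largeprobability1} associated with the constant $2C$, after shrinking by a harmless constant factor and a union bound.

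The only mild technical point — and the closest thing to an obstacle — is bookkeeping: one must check that each deterministic correction, after the crude estimate $\sum_j |R_j| \le N\max_i|R_i|$ (and its analogue $\sum_{ij}|R_{ij}|\le N^2\max_{ij}|R_{ij}|$ for the bilinear sums), lands inside the bracketed quantity on the left of the target estimate rather than outside it, so that the numerical constants $5C'^2$ etc.\ are genuinely large enough to absorb all contributions. This is purely a matter of carefully tallying constants and using the hypotheses $C, C' > 1$, $q > 1$, and $q < \sqrt N$; there is no conceptual difficulty, and I would simply present the linear case \eqref{suma2} and the quadratic case \eqref{bi2} in detail and remark that \eqref{bij2} and \eqref{abij2} follow analogously.
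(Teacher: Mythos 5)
Your proposal is correct and follows essentially the same route as the paper's proof: center the variables, check that the centered versions satisfy the moment hypothesis \eqref{smallamoment} with an enlarged constant, apply Proposition \ref{largeprobability1} to the centered parts, and absorb the mixed and deterministic correction terms (bounded crudely via $|\mathbb{E}[X_i]|\le C'N^{-1-\delta}$ and $\sum_j|R_j|\le N\max_i|R_i|$) into the $C'N^{-\delta}$ and $5C'^2N^{-\delta}$ prefactors. The only cosmetic difference is that you verify the moment condition by Minkowski (yielding roughly $2\max(C,C')$) where the paper uses the elementary bound $|a+b|^p\le 2^p(|a|^p+|b|^p)$ to get the constant $2(C+C')$, and you detail \eqref{suma2} and \eqref{bi2} whereas the paper details \eqref{bij2}; neither affects the argument.
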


\begin{proof}

The proof will follow from centering the $X_j$ and then applying Proposition \ref{largeprobability1}. To that end, let $m_j = \mathbb{E} [X_j]$, for each $j \in [1, N]$; by \eqref{smallmomentsadelta}, we have that $|m_j| \le C' N^{-1 - \delta}$. 

Denote $\widetilde{X}_i = X_i - m_j$. Then, the $\widetilde{X}_i$ are centered and satisfy 
\begin{flalign*}
\mathbb{E} \big[ |\widetilde{X}_i|^p \big] \le 2^p \Big( \mathbb{E} \big[ |X_i|^p \big] + |m_i|^p \Big) \le \displaystyle\frac{q^2}{N} \left( \displaystyle\frac{2 C}{q} \right)^p + \left( \displaystyle\frac{2 C'}{N^{1 + \delta}} \right)^p \le \displaystyle\frac{q^2}{N} \left( \displaystyle\frac{2 (C + C')}{q} \right)^p, 
\end{flalign*}

\noindent for any $2 \le p \le (\log N)^{\log \log N}$; in the first estimate, we used \eqref{smallmomentsadelta}, and in the last estimate we used the fact that $q, N > 1$. 

Thus, Proposition \ref{largeprobability1} applies to the centered random variables $\widetilde{X}_i$; the estimates \eqref{suma2}, \eqref{bi2}, \eqref{bij2}, and \eqref{abij2} will follow from \eqref{suma}, \eqref{bi}, \eqref{bij}, and \eqref{abij} respectively. 

As an example, we only establish \eqref{bij2}; the proofs of the other estimates are very similar and thus omitted. To that end, we apply \eqref{suma} and \eqref{bij} to obtain that 
\begin{flalign}
\label{sumbi2} \mathbb{P} \left[ \bigg| \displaystyle\sum_{j = 1}^N R_{ij} \widetilde{X}_j \bigg| \ge (\log N)^{\xi} (q^{-1} + 1)  \displaystyle\max_{1 \le i, j \le N} |R_{ij}|  \right] \le \exp \big( - \widetilde{\nu} (\log N)^{\xi} \big), 
\end{flalign}

\noindent for each $i \in [1, N]$, and 
\begin{flalign}
\label{sumbijtilde} 
\begin{aligned}
 \mathbb{P} \Bigg[ \bigg| \displaystyle\sum_{1 \le i \ne j \le N} \widetilde{X}_i R_{ij} \widetilde{X}_j \bigg| \ge (\log N)^{\xi} \bigg( q^{-1} \displaystyle\max_{1 \le j \le N} |R_{ij}|  & + \Big( \displaystyle\frac{1}{N^2} \displaystyle\sum_{1 \le i \ne j \le N}^N |R_{ij}|^2 \Big)^{1 / 2} \bigg)  \Bigg] \\
 & \le \exp \big( - \widetilde{\nu} (\log N)^{\xi} \big), 
\end{aligned}
\end{flalign}
 
\noindent where we set $\widetilde{\nu} = \widetilde{\nu} (C, C') = \widetilde{\nu} \big( 2 (C + C') \big)$. 

Furthermore, observe that 
\begin{flalign}
\label{atildea}
\begin{aligned} 
& \left| \displaystyle\sum_{1 \le i \ne j \le N } X_i R_{ij} X_j  - \displaystyle\sum_{1 \le i \ne j \le N} \widetilde{X}_i R_{ij} \widetilde{X}_j \right| \\
& \quad \qquad \le \displaystyle\sum_{j \ne i} m_j \left| \displaystyle\sum_{i = 1}^N \widetilde{X}_i R_{ij} \right| + \displaystyle\sum_{i \ne j} m_i \left| \displaystyle\sum_{j = 1}^N \widetilde{X}_j R_{ij} \right| + \left| \displaystyle\sum_{1 \le i \ne j \le N} m_i m_j R_{ij} \right| \\
& \quad \qquad \le C' N^{-\delta} \displaystyle\max_{1 \le i \le N} \left| \displaystyle\sum_{i = 1}^N \widetilde{X}_i R_{ij} \right| + C' N^{-\delta} \displaystyle\max_{1 \le j \le N} \left| \displaystyle\sum_{i = 1}^N \widetilde{X}_j R_{ij} \right| + C'^2 N^{-2 \delta} \displaystyle\max_{1 \le i, j \le N} |R_{ij}|, 
\end{aligned} 
\end{flalign}

\noindent where we have used the fact that $|m_i| < C' N^{-1 - \delta}$ for each $i \in [1, N]$. Thus, applying \eqref{sumbi2} for all $i \in [1, N]$, \eqref{sumbijtilde}, \eqref{atildea}, and a union estimate yields 
\begin{flalign}
\label{sumbijtilde2} 
\begin{aligned}
 \mathbb{P} \Bigg[ \bigg| \displaystyle\sum_{1 \le i \ne j \le N} \widetilde{X}_i R_{ij} \widetilde{X}_j \bigg| \ge (\log N)^{2 \xi} \bigg( \gamma \displaystyle\max_{1 \le j \le N} |R_{ij}|  & + \Big( \displaystyle\frac{1}{N^2} \displaystyle\sum_{1 \le i \ne j \le N}^N |R_{ij}|^2 \Big)^{1 / 2} \bigg)  \Bigg] \\
 & \le (2 N + 1) \exp \big( - \widetilde{\nu} (\log N)^{\xi} \big), 
\end{aligned}
\end{flalign} 

\noindent where 
\begin{flalign*}
\gamma = q^{-1} + 2 C' N^{-\delta} (q^{-1} + 1) + C'^2 N^{-2 \delta} \le q^{-1} + 5 C'^2 N^{-\delta}. 
\end{flalign*}

\noindent In the second estimate above, we used the facts that $q, C', N \ge 1$. Now select $\nu = \nu (C, C') = \widetilde{\nu} / 2$, so that $(2 N + 1) \exp \big( - \widetilde{\nu} (\log N)^{\xi} \big) \le \exp \big( - \nu (\log N)^{\xi} \big)$. Then, \eqref{bij2} follows from \eqref{sumbijtilde}. 
\end{proof}

\section{Continuity of \texorpdfstring{$G_{ij} (z)$}{}}

\label{etasmaller}

In this section we establish continuity estimates on the entries of the resolvent, which allowed us to proceed with the multiscale argument in Section \ref{LawSmallNotDeviant} and Section \ref{LawSmallDeviant}.

\begin{lem}

\label{etadecrease}

Fix $E \in \mathbb{R}$; $\eta, \eta' \in \mathbb{R}_{> 0}$; $N \in \mathbb{Z}_{> 0}$; and an $N \times N$ matrix $\textbf{\emph{H}}$. Denote $z = E + \textbf{\emph{i}} \eta$, $z' = E + \textbf{\emph{i}} (\eta + \eta')$, $G (z) = (\textbf{\emph{H}} - z)^{-1} = \{ G_{jk} \}$, and $G' (z) = ( \textbf{\emph{H}} - z')^{-1} = \{ G_{jk}' \}$. 

Fixing $j, k \in [1, N]$, we have that 
\begin{flalign}
\label{gjkzgjk}
\big| G_{jk}' - G_{jk}  \big| \le \displaystyle\frac{\eta'}{2 \eta} \Big( \big| \Im G_{jj}' \big| + \big| \Im G_{kk} 	\big| \Big). 
\end{flalign}
\end{lem}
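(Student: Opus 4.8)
The plan is to derive \eqref{gjkzgjk} from the resolvent identity \eqref{resolvent}, exactly as suggested by the phrasing ``\eqref{ztildez} is a consequence of the resolvent identity \eqref{resolvent} and the deterministic estimate \eqref{gijeta}'' elsewhere in the paper, but keeping track of imaginary parts rather than simply bounding by $\eta^{-1}$. Applying \eqref{resolvent} with $\textbf{A} = \textbf{H} - z'$ and $\textbf{B} = \textbf{H} - z$ gives $G'(z) - G(z) = G'(z)\big((\textbf{H}-z) - (\textbf{H}-z')\big)G(z) = (z - z') G'(z) G(z) = -\textbf{i}\eta' G'(z) G(z)$. Taking the $(j,k)$ entry, $G_{jk}' - G_{jk} = -\textbf{i}\eta' \sum_{\ell=1}^N G_{j\ell}' G_{\ell k}$.

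Next I would apply Cauchy--Schwarz to this sum: $\big| G_{jk}' - G_{jk}\big| \le \eta' \big(\sum_\ell |G_{j\ell}'|^2\big)^{1/2}\big(\sum_\ell |G_{\ell k}|^2\big)^{1/2}$. Then the Ward identity \eqref{sumgij} (with $\mathcal{S} = \emptyset$) evaluates each factor: $\sum_\ell |G_{j\ell}'|^2 = \Im G_{jj}'/(\eta+\eta') \le \Im G_{jj}'/\eta$ and $\sum_\ell |G_{\ell k}|^2 = \Im G_{kk}/\eta$. Note both $\Im G_{jj}'$ and $\Im G_{kk}$ are nonnegative since $\textbf{H}$ is real symmetric and $\Im z, \Im z' > 0$, so the absolute values in the statement are just these quantities. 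This yields $\big| G_{jk}' - G_{jk}\big| \le \eta' \cdot \frac{1}{\eta}\big(\Im G_{jj}'\big)^{1/2}\big(\Im G_{kk}\big)^{1/2}$. Finally, applying the AM--GM inequality $\sqrt{ab} \le (a+b)/2$ to $a = \Im G_{jj}'$, $b = \Im G_{kk}$ gives precisely the bound $\frac{\eta'}{2\eta}\big(|\Im G_{jj}'| + |\Im G_{kk}|\big)$ claimed in \eqref{gjkzgjk}.

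I do not anticipate a genuine obstacle here; the only mild subtlety is the choice of which resolvent gets primed in each Cauchy--Schwarz factor and making sure the Ward identity is applied at the correct spectral parameter (the factor $\sum_\ell |G_{j\ell}'|^2$ must use $\Im z' = \eta + \eta'$ in the denominator, and we then simply bound $\eta+\eta' \ge \eta$ to simplify). One should also remark that Lemma \ref{matrixidentities} is stated for $\textbf{H}$ possibly deterministic, so no randomness or independence structure is needed — this is a purely deterministic matrix identity, matching the hypotheses of Lemma \ref{etadecrease}. If one wants to be fully careful about the case $j = k$, the argument above goes through verbatim with no change. The companion estimate \eqref{gjjzgjj} referenced in the main text (bounding $|G_{jj}'|$ in terms of $|G_{jj}|$ when $\eta' \le \eta/(\log N)^2$, say) would follow immediately from \eqref{gjkzgjk} by the triangle inequality together with $|\Im G_{jj}'| \le |G_{jj}'|$ and $|\Im G_{kk}| \le |G_{kk}|$, though that is a separate statement not part of the lemma above.
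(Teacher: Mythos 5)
Your proposal is correct and follows essentially the same route as the paper's proof: the resolvent identity to express $G_{jk}' - G_{jk}$ as $-\textbf{i}\eta'\sum_{\ell} G_{j\ell}' G_{\ell k}$, then Cauchy--Schwarz, the Ward identity on each factor, and AM--GM. The only cosmetic difference is that the paper keeps the denominator $(\eta(\eta+\eta'))^{1/2}$ until the final AM--GM step rather than first bounding $\eta + \eta' \ge \eta$; both yield the stated estimate.
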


\begin{proof}
Applying the resolvent identity \eqref{resolvent} with $A = H - z' \Id$ and $B = H - z \Id$, and comparing $(j, k)$-entries, we find that 
\begin{flalign*}
G_{jk}' - G_{jk}  = - \textbf{i} \eta' \displaystyle\sum_{i = 1}^N G_{ji}' G_{ik}. 
\end{flalign*}

\noindent Thus, 
\begin{flalign*}
\big| G_{jk}' - G_{jk}  \big| = \eta' \left| \displaystyle\sum_{i = 1}^N G_{ji}' G_{ik} \right| & \le \eta' \left( \displaystyle\sum_{i = 1}^N \big| G_{ji}' \big|^2 \right)^{1 / 2} \left( \displaystyle\sum_{i = 1}^N \big| G_{ik} \big|^2 \right)^{1 / 2} \\
&= \displaystyle\frac{\eta' \big( \Im G_{jj}' \Im G_{jj} \big)^{1 / 2}}{\big( \eta ( \eta + \eta') \big)^{1 / 2}} \le \displaystyle\frac{\eta' }{2 \eta} \Big( \big| \Im G_{jj}' \big| + \big| \Im G_{jj} \big| \Big),
\end{flalign*} 

\noindent where the third identity was deduced from Ward's identity \eqref{sumgij}; this implies the lemma. 
\end{proof}

\begin{cor}

\label{gminimummaximum}

Adopt the notation of Lemma \ref{etadecrease}. Then, 
\begin{flalign}
\label{gjjzgjj}
\displaystyle\frac{\min \big\{ |G_{jj}'|, |G_{jj}| \big\} }{\max \big\{ |G_{jj}'|, G_{jj}| \big\}} > 1 - \displaystyle\frac{\eta'}{\eta}. 
\end{flalign}
\end{cor}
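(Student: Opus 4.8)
\textbf{Proof proposal for Corollary \ref{gminimummaximum}.}

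The plan is to derive \eqref{gjjzgjj} directly from \eqref{gjkzgjk} by specializing to the diagonal case $j = k$. Setting $j = k$ in \eqref{gjkzgjk} gives
\begin{flalign*}
\big| G_{jj}' - G_{jj} \big| \le \displaystyle\frac{\eta'}{2 \eta} \Big( \big| \Im G_{jj}' \big| + \big| \Im G_{jj} \big| \Big) \le \displaystyle\frac{\eta'}{2 \eta} \Big( |G_{jj}'| + |G_{jj}| \Big),
\end{flalign*}
where in the last step I use the trivial bound $|\Im w| \le |w|$. Now I would apply the reverse triangle inequality: writing $M = \max \{ |G_{jj}'|, |G_{jj}| \}$ and $m = \min \{ |G_{jj}'|, |G_{jj}| \}$, we have $M - m \le \big| |G_{jj}'| - |G_{jj}| \big| \le \big| G_{jj}' - G_{jj} \big| \le \frac{\eta'}{2\eta}(M + m) \le \frac{\eta'}{\eta} M$. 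Rearranging, $m \ge M (1 - \eta' / \eta)$, i.e. $m / M \ge 1 - \eta' / \eta$, which is \eqref{gjjzgjj} (the strictness can be traced to the fact that $|\Im w| < |w|$ unless $w$ is purely imaginary, or one simply notes the inequality is only nontrivial when $\eta' < \eta$, in which case the statement is what is actually used; I would not belabor this point).

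There is essentially no obstacle here: the corollary is a one-line consequence of the lemma. The only mild care needed is handling the absolute values correctly — specifically invoking $\big| |a| - |b| \big| \le |a - b|$ to convert the bound on $|G_{jj}' - G_{jj}|$ into a bound on the difference of moduli, and then using $|\Im w| \le |w|$ on the right-hand side. I would present this in three short displayed lines and a sentence of text. One could alternatively avoid the $|\Im w| \le |w|$ step by keeping $\Im G_{jj}$ and bounding $|\Im G_{jj}| \le |G_{jj}|$ only at the very end, but it makes no difference. Since the lemma is already proved in the excerpt, I would simply cite \eqref{gjkzgjk} and carry out the elementary manipulation above.
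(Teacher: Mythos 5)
Your proposal is correct and follows essentially the same route as the paper: apply \eqref{gjkzgjk} with $j = k$, bound $|\Im w| \le |w|$, use the reverse triangle inequality on the moduli, and rearrange (the paper solves $a - b < (a+b)\eta'/2\eta$ exactly for $b/a$, obtaining $(2\eta-\eta')/(2\eta+\eta') > 1 - \eta'/\eta$, rather than first bounding $M + m \le 2M$ as you do, but this is the same computation). Your treatment of strictness is slightly looser — note that either $m = M$, in which case \eqref{gjjzgjj} is trivial since $\eta' > 0$, or $m < M$, in which case $M + m < 2M$ makes your chain strict — but this is a cosmetic point, not a gap.
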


\begin{proof}
Let $a = |G_{jj} (e + \textbf{i} \eta + \textbf{i} \eta')|$ and $b = |G_{jj} (e + \textbf{i} \eta)|$, and assume that $a \ge b$; the case $b > a$ is entirely analogous. Lemma \ref{etadecrease} applied with $j = k$ yields $a - b < (a + b) \eta' / 2 \eta$. Therefore,
\begin{flalign*}
\displaystyle\frac{b}{a} & >  \displaystyle\frac{2 \eta - \eta'}{2 \eta + \eta'} = 1 - \displaystyle\frac{2 \eta'}{2 \eta + \eta'} > 1 - \displaystyle\frac{\eta'}{\eta}, 
\end{flalign*}

\noindent from which we deduce the corollary. 
\end{proof}

\end{document}